\theoremstyle{plain}
\newtheorem{theorem}{Theorem}[section]
\newtheorem{lemma}[theorem]{Lemma}
\newtheorem{prop}[theorem]{Proposition}
\newtheorem{cor}[theorem]{Corollary}
\newtheorem{definition}[theorem]{Definition}
\newtheorem{setting}[theorem]{Setting}
\newcommand{\E}{\mathbb{E}}
\renewcommand{\P}{\mathbb{P}}
\newcommand{\Q}{\mathbb{Q}}
\newcommand{\R}{\mathbb{R}}
\newcommand{\N}{\mathbb{N}}
\newcommand{\smallsum}{\textstyle\sum}
\newcommand{\smallprod}{\textstyle\prod}
\newcommand{\scp}[3]{\langle#1,#2\rangle_{#3}}
\newcommand{\WE}{W}
\newcommand{\BI}{B}
\newcommand{\af}{\mathbf{a}}
\newcommand{\Af}{\mathbf{A}}
\begin{document}
\title{Uniform error estimates for artificial neural network approximations for heat equations}

\author{
Lukas Gonon$^{1}$,
Philipp Grohs$^2$, 
Arnulf Jentzen$^{3,4}$, \\
David Kofler$^5$,
and David \v{S}i\v{s}ka$^6$ \bigskip
\\
\small{$^1$ Faculty of Mathematics and Statistics, University of St.\ Gallen,} \\
\small{Switzerland, e-mail: \href{mailto:lukas.gonon@unisg.ch}{lukas.gonon@unisg.ch}}\smallskip \\
\small{$^2$ Faculty of Mathematics and Research Platform Data Science, } \\
\small{University of Vienna, Austria, e-mail: \href{mailto:philipp.grohs@univie.ac.at}{philipp.grohs@univie.ac.at}}\smallskip \\
\small{$^3$ Department of Mathematics, ETH Zurich,} \\
\small{Switzerland, e-mail: \href{mailto:arnulf.jentzen@sam.math.ethz.ch}{arnulf.jentzen@sam.math.ethz.ch}}\smallskip \\
\small{$^4$ Faculty of Mathematics and Computer Science, University of M\"unster,} \\
\small{Germany, e-mail: \href{mailto:ajentzen@uni-muenster.de}{ajentzen@uni-muenster.de}}\smallskip \\
\small{$^5$ Department of Mathematics, ETH Zurich, }\\
\small{Switzerland, e-mail: \href{mailto:david.kofler@zenai.ch}{david.kofler@zenai.ch}}\smallskip \\
\small{$^6$ School of Mathematics, University of Edinburgh,}\\
\small{United Kingdom, e-mail: \href{mailto:d.siska@ed.ac.uk}{d.siska@ed.ac.uk}}
\smallskip \\
}

\maketitle
\begin{abstract}
Recently, artificial neural networks (ANNs) in conjunction with stochastic gradient descent optimization methods have been employed to approximately compute solutions of possibly rather high-dimensional partial differential equations (PDEs). Very recently, there have also been a number of rigorous mathematical results in the scientific literature which examine the approximation capabilities of such deep learning based approximation algorithms for PDEs. These mathematical results from the scientific literature prove in part that algorithms based on ANNs are capable of overcoming the curse of dimensionality in the numerical approximation of high-dimensional PDEs. In these mathematical results from the scientific literature usually the error between the solution of the PDE and the approximating ANN is measured in the $L^p$-sense with respect to some $p \in [1,\infty)$ and some probability measure.  In many applications it is, however, also important to control the error in a uniform $L^\infty$-sense. The key contribution of the main result of this article is to develop the techniques to obtain error estimates between solutions of PDEs and approximating ANNs in the uniform $L^\infty$-sense.
In particular, we prove that the number of parameters 
of an ANN
to uniformly approximate the classical solution of the heat equation 
in a region $ [a,b]^d $
for a fixed time point $ T \in (0,\infty) $
grows at most polynomially 
in the dimension $ d \in \N $ 
and 
the reciprocal of the approximation precision $ \varepsilon >  0 $. This verifies that  ANNs can overcome the curse of dimensionality 
in the numerical approximation of the heat equation when the error is measured in the uniform $L^\infty$-norm.
\end{abstract}
\tableofcontents

\section{Introduction}
\label{sec:intro}
Artificial neural networks (ANNs) 
play a central role in machine learning applications
such as  computer vision  (cf., e.g., \cite{Huang2017DenselyCC,NIPS2012_4824,DBLP:journals/corr/SimonyanZ14a}),
 speech recognition (cf., e.g., \cite{graves2013speech,38131,wu2016stimulated}), 
game intelligence
 (cf., e.g., 
 \cite{Silver2016,Silver2017}),
and finance (cf., e.g., \cite{Becker2019,Buehler2018,Sirignano2019}).
Recently, ANNs in conjunction with stochastic gradient descent optimization methods have also been employed to approximately compute solutions of possibly rather high-dimensional partial differential equations (PDEs); cf., for example, \cite{FujiiTakahashi2019,Becker2018,BeckJentzenE2019,Becker2019,BeckerCheriditoJentzen2019,PhamWarin2019,Magill2018NeuralNT,BeckBeckerCheridito2019,Berg2018AUD,ChanMikaelWarin2019,chen2019deep,EHanJentzen2017CMStat,GoudenegeMolent2019,HanLong2018,Han2018,HurePhamWarin2019,JacquierOumgari2019,LyeMishraRay2019,LongLuMaDong2018,Sirignano2018,weinan2018deep,Farahmand2017DeepRL,henry2017deep,Dockhorn2019} and the references mentioned therein. The numerical simulation results in the above named references indicate that such deep learning based approximation methods for PDEs have the fundamental power to overcome the {\it{curse of dimensionality}} (cf., e.g., Bellman~\cite{MR2641641}) in the sense that the precise number of the real parameters of the approximating ANN grows at most polynomially in both  the dimension $d \in \mathbb{N} = \{1,2,3,\ldots \}$ of the PDE under consideration and the reciprocal $\varepsilon^{-1}$ of the prescribed approximation precision $\varepsilon > 0$.
Very recently, there have also been a number of rigorous mathematical results examining the approximation capabilities of these deep learning based approximation algorithms for PDEs (see, e.g., \cite{HanLong2018,Sirignano2018,BernerGrohsJentzen2018,ElbraechterSchwab2018, HornungJentzen2018,grohs2019deep,HutzenthalerJentzenKruse2019,JentzenSalimovaWelti2018,KutyniokPeterseb2019,ReisingerZhang2019}). These works prove in part that algorithms based on ANNs are capable of overcoming the curse of dimensionality in the numerical approximation of high-dimensional PDEs. In particular, the works
 \cite{BernerGrohsJentzen2018,ElbraechterSchwab2018, HornungJentzen2018,grohs2019deep,HutzenthalerJentzenKruse2019,JentzenSalimovaWelti2018,KutyniokPeterseb2019,ReisingerZhang2019} provide mathematical convergence results of such deep learning based numerical approximation methods for PDEs with dimension-independent error constants and convergence rates which depend on the dimension only polynomially.

 Except of in the article Elbr\"achter et al.~\cite{ElbraechterSchwab2018}, in each of the approximation results in the above cited articles \cite{HanLong2018,Sirignano2018,BernerGrohsJentzen2018, HornungJentzen2018,grohs2019deep,HutzenthalerJentzenKruse2019,JentzenSalimovaWelti2018,KutyniokPeterseb2019,ReisingerZhang2019} the error between the solution of the PDE and the approximating ANN is measured in the $L^p$-sense with respect to some $p \in [1,\infty)$ and some probability measure.  In many applications it is, however, also important to control the error in a uniform $L^\infty$-sense. This is precisely the subject of this article. More specifically, it is the key contribution of Theorem~\ref{thm:heat-eq} in Subsection~\ref{ssec:ql-heat} below, which is the main result of this article, to prove that ANNs can overcome the curse of dimensionality 
 in the numerical approximation of the heat equation when the error is measured in the uniform $L^\infty$-norm. The arguments used to prove the approximation results in the above cited articles, where the error between the solution of the PDE and the approximating ANN is measured in the $L^p$-sense with respect to some $p \in [1,\infty)$ and some probability measure, can not be employed for the uniform $L^\infty$-norm approximation and the article Elbr\"achter et al.~\cite{ElbraechterSchwab2018} is concerned with a specific class of PDEs so that the PDEs can essentially be solved analytically and the error analysis in \cite{ElbraechterSchwab2018} strongly exploits this explicit solution representation. The key contribution of the main result of this article, Theorem~\ref{thm:heat-eq} in Subsection~\ref{ssec:ql-heat} below,  is to develop the techniques to obtain error estimates between solutions of PDEs and approximating ANNs in the uniform $L^\infty$-sense.
 To illustrate the findings of the main result of this article in more detail, we formulate in the next result a particular case of Theorem~\ref{thm:heat-eq}.

\begin{theorem}
\label{thm:intro}
Let $a \in \R$, $b \in (a,\infty)$,   
$c,T \in (0,\infty)$, $ \mathbf a \in C^1(\R,\R)$, let
$\Af_d \colon \R^d \to \R^d$, $ d \in \N$, satisfy for all  $d \in \N$, $x=(x_1,\ldots,x_d) \in \R^d$ that 
$ \Af_d(x) = (\mathbf a(x_1),\ldots,\mathbf a(x_d)) $, 
let $
\mathbf{N} = \cup_{L \in \N \cap [2,\infty)}\cup_{\substack{l_0,\ldots,l_L \in  \N 
}} \big( 
\times_{k=1}^L (\R^{l_k\times l_{k-1}}\times\R^{l_k})
\big)$, 
let $\mathcal{P}\colon \mathbf{N} \to \N$ 
and
$\mathcal{R} \colon \mathbf{N} \to \cup_{m,n \in \N} C(\R^m,\R^n)$ satisfy for all 
$L \in \N \cap [2,\infty)$, 
$l_0,\ldots,l_L \in \N $,  
$\Phi = 
(
(\WE_1,\BI_1),
\ldots,
(\WE_L,  \BI_L)
)
\in (\times_{k=1}^{L} (\R^{l_k \times l_{k-1}} \times \R^{l_k}))$, 
$x_0 \in \R^{l_0},\ldots, x_{L-1}\in \R^{l_{L-1}}$ with 
$\forall \, k \in \N \cap (0,L) \colon x_k = \Af_{l_k}(\WE_k x_{k-1} + \BI_k)$ that
$\mathcal{P}(\Phi) =  \smallsum_{k=1}^{L} l_k(l_{k-1} + 1)$,
$(\mathcal R \Phi) \in C(\mathbb R^{l_0}, \mathbb R^{l_L})$,
and
$(\mathcal{R}\Phi)(x_0) =  \WE_L x_{L-1} + \BI_L$,
let
$ \varphi_d \in C(\R^d, \R)$, $d \in \N$,
let $(\phi_{\varepsilon,d})_{(\varepsilon,d) \in (0,1]\times \N}  \subseteq \mathbf{N}$, let  $\left \| \cdot \right \| \colon \left(\cup_{d \in \mathbb{N}} \R^d\right) \to [0,\infty)$ satisfy for all $d \in \N$, $x=(x_1,\ldots,x_d) \in \R^d$ that  $\left \| x \right \| = [ \sum_{j=1}^d |x_j|^2 ]^{1/2} $, 
and assume for all $\varepsilon \in (0,1]$, $d \in \N$, $ x \in \R^d$ that
\begin{equation}
\label{eq:heat-eq3-ass1}
(\mathcal{R} \phi_{\varepsilon,d}) \in C(\R^d,\mathbb R)\,, \qquad \left| (\mathcal{R}\phi_{\varepsilon,d})(x)\right| 
+
\| (\nabla (\mathcal{R}\phi_{\varepsilon,d}))(x)\|
\leq
c  d^c(1+\|x\|^{c})\,,
\end{equation}
\begin{equation}
\text{and} \qquad \mathcal{P}(\phi_{\varepsilon,d})  \leq c  d^c\varepsilon^{-c}\,,
\qquad
| \varphi_d(x) - (\mathcal{R}\phi_{\varepsilon,d})(x)| 
\leq 
\varepsilon c  d^c(1+\|x\|^{c})
\,.
\end{equation}
Then
\begin{enumerate}[(i)]
\item \label{it:heat-eq3-1} 
there exist unique at most polynomially growing 
$u_d \in C([0,T] \times \R^d, \R)$,
$d \in \N$, 
which satisfy for all
$d \in \N$, 
$ t \in (0,T] $,
$x \in \R^d$ 
that 
$ u_d|_{(0,T] \times \R^d } \in C^{1,2}((0,T] \times \R^d, \R ) $,
$u_d(0,x) = \varphi_d(x)$,
and 
\begin{align} \label{eq:intro-heat}
(\tfrac{\partial }{\partial t}u_d)(t,x) 
&= 
(\Delta_x u_d)(t,x)
\end{align}
and
\item \label{it:heat-eq3-2}
there exist $(\psi_{\varepsilon,d})_{(\varepsilon,d) \in (0,1]\times \N} \subseteq \mathbf{N}$ and
$\kappa \in \R$ such that 
for all $\varepsilon \in (0,1]$, $d \in \N$ we have that
$
\mathcal{P}( \psi_{ \varepsilon,d  } )
\leq \kappa d^{\kappa} \varepsilon^{-\kappa}
$,
$
(\mathcal{R}\psi_{\varepsilon,d}) \in C(\R^d,\R)
$,
and
\begin{equation}
\sup_{x \in [a,b]^d} \left| u_d(T,x) -  ( \mathcal{R} \psi_{ \varepsilon, d  } )( x ) \right| \leq \varepsilon.
\end{equation}
\end{enumerate}
\end{theorem}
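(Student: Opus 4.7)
The plan is to represent the solution via the Feynman--Kac formula, approximate it by a Monte Carlo average of translates of $\mathcal R\phi_{\delta,d}$, and then upgrade this pointwise approximation to a uniform one on $[a,b]^d$ by combining concentration at a deterministic $\eta$-net with the gradient bound in \eqref{eq:heat-eq3-ass1}.

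\textbf{Step 1 (Feynman--Kac and existence/uniqueness).} I would first establish item \eqref{it:heat-eq3-1} by invoking standard parabolic regularity and the Feynman--Kac representation to identify the unique at most polynomially growing classical solution as
\[
u_d(T,x)=\E\bigl[\varphi_d(x+\sqrt{2T}\,\xi_d)\bigr],\qquad \xi_d\sim N(0,I_d),
\]
with uniqueness in this class coming from a standard maximum-principle argument.

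\textbf{Step 2 (Monte Carlo approximation).} For a small parameter $\delta\in(0,1]$ to be chosen, let $\xi^{(1)},\dots,\xi^{(M)}$ be i.i.d.\ $N(0,I_d)$ on some probability space and set
\[
V^M_{\delta,d}(\omega,x)=\frac{1}{M}\sum_{m=1}^M(\mathcal R\phi_{\delta,d})\bigl(x+\sqrt{2T}\,\xi^{(m)}(\omega)\bigr).
\]
Using the polynomial growth of $\varphi_d-\mathcal R\phi_{\delta,d}$ from \eqref{eq:heat-eq3-ass1} together with Gaussian moment bounds, the bias satisfies
$
\sup_{x\in[a,b]^d}|u_d(T,x)-\E V^M_{\delta,d}(\cdot,x)|\le \delta K d^K
$
for some $K$ depending only on $c,T,a,b$, and the pointwise variance of $V^M_{\delta,d}(\cdot,x)$ is at most $K d^{K}/M$.

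\textbf{Step 3 (Uniformization via lattice plus gradient bound).} The key and most delicate step is to pass from this pointwise $L^2$ control to a uniform bound over $[a,b]^d$, whose $\eta$-covering number $|\mathcal G_\eta|\le(1+(b-a)/\eta)^d$ is exponential in $d$. The gradient bound in \eqref{eq:heat-eq3-ass1} together with Gaussian moment estimates show that the random map $x\mapsto V^M_{\delta,d}(\omega,x)$ is Lipschitz with a random constant $L(\omega)$ whose $q$-th moment grows only polynomially in $d$. A Bernstein/Hoeffding-type inequality applied at each grid point together with a union bound yields
\[
\P\!\left(\max_{y\in\mathcal G_\eta}|V^M_{\delta,d}(\cdot,y)-\E V^M_{\delta,d}(\cdot,y)|>\tfrac{\varepsilon}{3}\right)\le |\mathcal G_\eta|\exp\!\bigl(-C M\varepsilon^2 d^{-K}\bigr),
\]
while Markov's inequality controls $L$. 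Choosing $\eta\sim\varepsilon d^{-K}$, $M\sim \varepsilon^{-2}d^K\log(d/\varepsilon)$, and $\delta\sim \varepsilon d^{-K}$, the union of the bad events has probability strictly less than $1$ because $\log|\mathcal G_\eta|$ only scales like $d\log(d/\varepsilon)$. Hence some realization $\omega^\ast$ satisfies
$
\sup_{x\in[a,b]^d}|u_d(T,x)-V^M_{\delta,d}(\omega^\ast,x)|\le\varepsilon.
$

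\textbf{Step 4 (ANN assembly and parameter count).} For the fixed realization $\omega^\ast$, $V^M_{\delta,d}(\omega^\ast,\cdot)$ is an affine combination of $M$ translates of $\mathcal R\phi_{\delta,d}$. By parallelizing $M$ independent copies of $\phi_{\delta,d}$ (absorbing each translation $x\mapsto x+\sqrt{2T}\xi^{(m)}(\omega^\ast)$ into the corresponding first-layer biases) and summing the outputs in the final affine layer, standard ANN parallelization/calculus lemmas produce a single $\psi_{\varepsilon,d}\in\mathbf N$ whose realization equals $V^M_{\delta,d}(\omega^\ast,\cdot)$ and whose parameter count satisfies $\mathcal P(\psi_{\varepsilon,d})\le C M\,\mathcal P(\phi_{\delta,d})\le \kappa d^\kappa\varepsilon^{-\kappa}$, yielding \eqref{it:heat-eq3-2}. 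The main obstacle is precisely Step~3: the $\eta$-covering number of $[a,b]^d$ is exponential in $d$, so the argument only succeeds because $\log|\mathcal G_\eta|$ is sub-linear in $M$ and because the assumed polynomial gradient bound keeps the Lipschitz constant of the Monte Carlo estimator polynomial in $d$; losing either feature would destroy the polynomial scaling.
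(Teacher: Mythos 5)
Your overall plan---Feynman--Kac, Monte Carlo averaging of translated realizations of $\phi_{\delta,d}$, choosing a good realization by a probabilistic argument, then parallelizing into a single network---matches the paper at the level of Steps~1, 2, and 4. Step~3, the uniformization, is where you take a genuinely different route and where there is also a gap.

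The paper does \emph{not} use an $\eta$-net/union-bound argument. Instead it proves a dimension-free Sobolev inequality (Corollary~\ref{cor:sob-const3}): for $\Phi\in C^1((0,1)^d,\R)$,
\begin{equation}
\sup_{x\in(0,1)^d}|\Phi(x)|
\leq 8\sqrt e
\left[\int_{(0,1)^d}\bigl(|\Phi|^{\max\{2,d^2\}}+\|\nabla\Phi\|^{\max\{2,d^2\}}\bigr)\,dx\right]^{1/\max\{2,d^2\}},
\end{equation}
obtained by taking the exponent $p=d^2$ in the classical $W^{1,p}\hookrightarrow L^\infty$ embedding and tiling $(0,1)^d$ with $2^{md}$ small cubes so that the constant stays bounded. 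Applying this to the (rescaled) random field $Y(x)=\E[\phi(X^{1,x}_T)]-\tfrac1n\sum_i\phi(X^{i,x}_T)$, taking expectation, and using Fubini converts the uncountable supremum into a single $L^{\max\{2,d^2\}}$ Monte Carlo estimate at each fixed $x$, which is controlled by the Kahane--Khintchine inequality ($\mathfrak K_{d^2}\leq d$). In effect the paper replaces your discrete chaining by continuous chaining through a Sobolev embedding; this sidesteps the exponential covering number entirely and never needs exponential concentration.

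The concrete gap in your Step~3 is the tail bound
$\P(\ldots>\varepsilon/3)\leq|\mathcal G_\eta|\exp(-CM\varepsilon^2 d^{-K})$. Each summand in $V^M_{\delta,d}(\omega,x)$ is of the form $(\mathcal R\phi_{\delta,d})(x+\sqrt{2T}\,\xi)$ with $\xi\sim N(0,I_d)$, and the only bound available from~\eqref{eq:heat-eq3-ass1} is a polynomial envelope of degree $c$. When $c>2$ these variables are \emph{not} sub-exponential (they are only sub-Weibull of order $2/c$), so neither Hoeffding nor Bernstein gives an $\exp(-CM\varepsilon^2 d^{-K})$ tail. Since $c$ is a free constant in the hypotheses, the stated inequality is in general false. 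The fix is to replace the exponential tail by a high-moment Markov bound: take the $p$-th moment of the Monte Carlo error at each grid point with $p\sim d\log(d/\varepsilon)$ (or even $p\sim d^2$, as the paper does), use $\mathfrak K_p\leq\sqrt p$ and a priori moment estimates of order $\sqrt p$ for $\|X_T\|$ (Corollary~\ref{cor:apriori1}), and then union-bound over the grid. This still beats $|\mathcal G_\eta|=\exp(O(d\log(d/\varepsilon)))$ with a polynomially sized $M$, so your alternative net-based route is salvageable, but as written the exponential-concentration step is not correct. Your Lipschitz/Markov control of the random Lipschitz constant $L(\omega)$ in Step~3 and your parallelization count in Step~4 are consistent with what the paper does in its Proposition~\ref{prop:nnet-ap1}.
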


Theorem~\ref{thm:intro}
follows directly from Corollary~\ref{cor:heat-eq2} in Subsection~\ref{ssec:ql-heat}  below. Corollary~\ref{cor:heat-eq2}, in turn, is a consequence of Theorem~\ref{thm:heat-eq} in Subsection~\ref{ssec:ql-heat}, the main result of the article. Let us add a few comments on some of the mathematical objects appearing in Theorem~\ref{thm:intro} above. The real number $T \in (0,\infty)$ denotes the time horizon on which we consider the heat equations in \eqref{eq:intro-heat}. The function  $ \mathbf a \in C^1(\R,\R)$ describes the activation function which we employ for the considered ANN approximations. In particular, Theorem~\ref{thm:intro} applies to ANNs with the standard logistic function as the activation function in which case the function  $ \mathbf a \in C^1(\R,\R)$ in Theorem~\ref{thm:intro} satisfies that for all $x \in \R$ we have that $\mathbf a(x)=(1+e^{-x})^{-1}$. The set $\mathbf{N}$ contains all possible ANNs, where each ANN is described abstractly in terms of the number of hidden layers, the number of nodes in each layer, and the values of the parameters (weights and biases in each layer), the function  $\mathcal{P}\colon \mathbf{N} \to \N$ maps each ANN $\Phi \in \mathbf{N}$ to its total number of parameters $\mathcal{P}(\Phi)$, and the function
$\mathcal{R} \colon \mathbf{N} \to \cup_{m,n \in \N} C(\R^m,\R^n)$ maps each ANN $\Phi \in \mathbf{N}$ to the actual function $(\mathcal{R}\Phi)$ (its realization) associated to  $\Phi$ (cf., e.g.,  Grohs et al.~\cite[Section~2.1]{GrohsHornungJentzen2019} and Petersen \& Voigtlaender~\cite[Section~2]{petersen2017optimal}). Item~(ii) in Theorem~\ref{thm:intro} above establishes under the hypotheses of Theorem~\ref{thm:intro} that the solution $u_d \colon [0,T] \times \R^d \to \R$ of the heat equation can at time $T$ be approximated by means of an ANN without the curse of dimensionality. To sum up, roughly speaking, Theorem~\ref{thm:intro} verifies that  if the initial conditions of the
heat equations can be approximated well by ANNs, then the number of parameters 
of an ANN
to uniformly approximate the classical solution of the heat equation 
in a region $ [a,b]^d $
for a fixed time point $ T \in (0,\infty) $
grows at most polynomially 
in the dimension $ d \in \N $ 
and 
the reciprocal of the approximation precision $ \varepsilon >  0 $.

In our proof of 
Theorem~\ref{thm:intro} and Theorem~\ref{thm:heat-eq}, respectively, 
we employ several probabilistic and analytic arguments.
In particular,
we use
a Feynman-Kac type formula for PDEs of the Kolmogorov type
(cf., for example, Hairer et al.\  \cite[Corollary 4.17]{hairer2015}),
Monte Carlo approximations
(cf. Proposition~\ref{prop:kahane}),
Sobolev type estimates for Monte Carlo approximations
(cf. Lemma~\ref{lem:Sobolev2}),
the fact that stochastic differerential equations 
with affine linear coefficient functions
are affine linear in the initial condition
(cf. Grohs et al.~\cite[Proposition~2.20]{HornungJentzen2018}),
as well as
an existence result for realizations of random variables
(cf. Grohs et al.~\cite[Proposition 3.3]{HornungJentzen2018}).

The rest of this research paper is structured in the following way. 
In Section~\ref{sec:sob} below preliminary results 
on 
Monte Carlo approximations together with
Sobolev type estimates are established. Section~\ref{sec:sde} contains preliminary results
on
stochastic differential equations.
In Section~\ref{sec:error} we employ 
the results from Sections~\ref{sec:sob}--\ref{sec:sde}
to obtain uniform error estimates for ANN approximations.
In Section~\ref{sec:heat-eq} these uniform error estimates 
are used to prove Theorem~\ref{thm:heat-eq} in Subsection~\ref{ssec:ql-heat} below, the main result of this article.

\section{Sobolev and Monte Carlo estimates}
\label{sec:sob}

\subsection{Monte Carlo estimates}
In this subsection we recall in Lemma~\ref{lem:kahane-const} below an estimate for the $p$-Kahane--Khintchine constant from the scientific literature
(cf., for example, Cox et al.~\cite[Definition 5.4]{Coxetal2016}
or
Grohs et al.~\cite[Definition 2.1]{HornungJentzen2018}).
Lemma~\ref{lem:kahane-const}, in particular, 
ensures that the $p$-Kahane--Khintchine constant
grows at most polynomially in $p$.
Lemma~\ref{lem:kahane-const}
will be employed  in the proof of Corollary~\ref{cor:pde-nnet-ap4} in Subsection \ref{ssec:qee} below.
Our proof of Lemma~\ref{lem:kahane-const} is based on an application of Hyt\"onen et al.\ \cite[Theorem 6.2.4]{MR3752640}
and is a slight extension of Grohs et al.~\cite[Lemma 2.2]{HornungJentzen2018}. For completeness we also recall in Definition~\ref{def:p2-kahane-khintchine} below the notion of the Kahane--Khintchine constant (cf., e.g.,  Cox et al.~\cite[Definition 5.4]{Coxetal2016}). 
Proposition~\ref{prop:kahane} below
 is an  $L^p $-approximation result 
for Monte-Carlo approximations. 
This $L^p $-approximation result 
for Monte-Carlo approximations is one of the main ingredients in our proof of
Lemma~\ref{lem:Sobolev2} in Subsection~\ref{ssec:smc} below.
Proposition~\ref{prop:kahane} 
is well-known in the literature and
is proved, e.g., as Corollary~5.12 in Cox et al.~\cite{Coxetal2016}.

\begin{definition}
\label{def:p2-kahane-khintchine}
Let $p\in(0,\infty)$. 
Then we denote by $\mathfrak K_p \in [0, \infty]$ the quantity given
by
\begin{equation}
\begin{split}
& \mathfrak K_p = \\		
& \sup \left \{
c \in [0,\infty) \colon 
\left[
\begin{array}{l}
\text{$\exists$ probability space $(\Omega, \mathcal F, \mathbb P)\colon$} \\
\text{$\exists$ $\mathbb R$-Banach space $(E, \|\cdot\|_E) \colon$} \\	
\text{$\exists$ $k\in \N$: $\exists x_1, \ldots,x_k \in E \setminus \{0\}\colon$}\\
\text{$\exists$ $\mathbb P$-Rademacher family $r_j: \Omega \to \{-1,1\}$, $j\in \N \colon$}\\
\left(\E\left[\left\|\sum_{j=1}^k r_j x_j\right\|_E^p \right]\right)^{1/p} = c \left(\E\left[\left\|\sum_{j=1}^k r_j x_j\right\|_E^2 \right]\right)^{1/2}
\end{array}
\right]
\right\}
\end{split}
\end{equation}
and we refer to $\mathfrak K_p$ as the $p$-Kahane--Khintchine constant.  	
\end{definition}

\begin{lemma}
\label{lem:kahane-const}
Let $ p \in [1,\infty)$. Then
$\mathfrak{K}_p \leq \sqrt{\max\{1,p-1\}}$ (cf.\ Definition~\ref{def:p2-kahane-khintchine}).
\end{lemma}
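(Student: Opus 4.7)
The plan is to split into the two regimes $p \in [1,2]$ and $p \in (2,\infty)$ and handle each separately, then combine the bounds into the single expression $\sqrt{\max\{1,p-1\}}$.

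First I would fix an arbitrary probability space $(\Omega,\mathcal F,\mathbb P)$, a real Banach space $(E,\|\cdot\|_E)$, a $\mathbb P$-Rademacher family $(r_j)_{j\in\N}$, an integer $k\in\N$, and vectors $x_1,\ldots,x_k\in E\setminus\{0\}$, and consider the quantity $S=\sum_{j=1}^k r_j x_j$. By definition of $\mathfrak K_p$, it suffices to prove the bound $(\E\|S\|_E^p)^{1/p}\le\sqrt{\max\{1,p-1\}}\,(\E\|S\|_E^2)^{1/2}$, since the constants $c$ allowed in the definition are supremized over all such configurations.

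For the case $p\in[1,2]$ I would invoke the monotonicity of $L^q$-norms on a probability space (a direct consequence of Jensen's inequality): for any real random variable $Y$ and $1\le p\le 2$ one has $(\E|Y|^p)^{1/p}\le(\E|Y|^2)^{1/2}$. Applied to $Y=\|S\|_E$ this yields
\begin{equation*}
\left(\E\|S\|_E^p\right)^{1/p}\le\left(\E\|S\|_E^2\right)^{1/2},
\end{equation*}
so $\mathfrak K_p\le 1$. Since $p-1\le 1$ for $p\in[1,2]$, we have $\max\{1,p-1\}=1$, and therefore $\mathfrak K_p\le 1=\sqrt{\max\{1,p-1\}}$ in this range.

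For the case $p\in(2,\infty)$ I would appeal to the Banach-space-valued Khintchine--Kahane inequality in the sharp form given by Hytönen, van Neerven, Veraar and Weis \cite[Theorem 6.2.4]{MR3752640}, which states that for every Rademacher sum $S=\sum_{j=1}^k r_j x_j$ in a Banach space,
\begin{equation*}
\left(\E\|S\|_E^p\right)^{1/p}\le\sqrt{p-1}\,\left(\E\|S\|_E^2\right)^{1/2}.
\end{equation*}
In this regime $\max\{1,p-1\}=p-1$, so the bound rewrites as $(\E\|S\|_E^p)^{1/p}\le\sqrt{\max\{1,p-1\}}(\E\|S\|_E^2)^{1/2}$, giving $\mathfrak K_p\le\sqrt{\max\{1,p-1\}}$ as desired. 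I do not anticipate genuine obstacles here; the only care needed is to phrase the argument uniformly for all admissible probability spaces and Banach spaces so that the supremum in Definition~\ref{def:p2-kahane-khintchine} is controlled, and to check that the degenerate case in which $S\equiv 0$ (forbidden by $x_j\neq 0$, but worth mentioning) does not affect the bound since both sides of the inequality vanish together.
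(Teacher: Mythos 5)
Your proposal is correct and takes essentially the same route as the paper: split at $p=2$, handle $p\in[1,2]$ by Jensen/monotonicity of $L^q$-norms to get $\mathfrak K_p\le 1$, and handle $p\in(2,\infty)$ by invoking \cite[Theorem 6.2.4]{MR3752640} to get $\mathfrak K_p\le\sqrt{p-1}$, then package both as $\sqrt{\max\{1,p-1\}}$.
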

\begin{proof}[Proof of Lemma~\ref{lem:kahane-const}]
Throughout this proof 
let $ (\Omega, \mathcal{F}, \P)$ be a probability space, 
let $(E, \left\| \cdot \right\|_E)$ be a $\R$-Banach space, let $ k \in \N$,
$ x_1, \ldots, x_k \in E \backslash \{ 0 \}$,
and let $r_j \colon \Omega \to \{-1,1\} $, $ j \in \N$, be i.i.d.\ random variables which satisfy that
\begin{equation}
  \P( r_1 = -1 ) = \P (r_1 = 1) = \frac{1}{2}.
\end{equation}
Observe that Jensen's inequality ensures for all $ q \in [1,2]$ that
\begin{equation}
\label{eq:kahane-pf1}
  \left(
  \E\left[ 
  \left \|
  \sum_{j = 1}^k r_j x_j
  \right \|_E^q
  \right]
  \right)^{ \! \nicefrac{1}{q} }
=
\left(
  \E\left[ 
  \left \|
  \sum_{j = 1}^k r_j x_j
  \right \|_E^{2\frac{q}{2}}
  \right]
  \right)^{ \! \nicefrac{1}{q} }
\leq 
\left(
  \E\left[ 
  \left \|
  \sum_{j = 1}^k r_j x_j
  \right \|_E^{ 2 }
  \right]
  \right)^{ \! \nicefrac{1}{2} }
  .
\end{equation}
In addition, observe that \cite[Theorem 6.2.4]{MR3752640}
(applied with $ q \leftarrow q $, $ p \leftarrow 2 $ for  $ q \in (2,\infty)$
in the notation of \cite[Theorem 6.2.4]{MR3752640})
ensures for all 
$ q \in (2, \infty) $ that
\begin{equation}
\label{eq:kahane-pf2}
  \left(
  \E\left[ 
  \left \|
  \sum_{j = 1}^k r_j x_j
  \right \|_E^q
  \right]
  \right)^{ \! \nicefrac{1}{q} }
\leq 
\sqrt{q-1}
\left(
  \E\left[ 
  \left \|
  \sum_{j = 1}^k r_j x_j
  \right \|_E^{ 2 }
  \right]
  \right)^{ \! \nicefrac{1}{2} }
  .
\end{equation}
Combining this with \eqref{eq:kahane-pf1} demonstrates
that for all $ q \in [1,\infty) $ we have that
\begin{equation}
  \left(
  \E\left[ 
  \left \|
  \sum_{j = 1}^k r_j x_j
  \right \|_E^q
  \right]
  \right)^{ \! \nicefrac{1}{q} }
\leq 
\sqrt{\max\{1,q-1\}}
\left(
  \E\left[ 
  \left \|
  \sum_{j = 1}^k r_j x_j
  \right \|_E^{ 2 }
  \right]
  \right)^{ \! \nicefrac{1}{2} }
  .
\end{equation}
This completes the proof of Lemma~\ref{lem:kahane-const}.
\end{proof}

\begin{prop}
\label{prop:kahane}
Let $ d, n \in \N $, $p \in [2,\infty)$, 
let $\left \| \cdot \right \| \colon \R^d \to [0,\infty)$ be the standard norm on $\R^d$,
let $ ( \Omega, \mathcal{F}, \P ) $ be a probability space, 
and 
let $ X_{ i } \colon \Omega \to \R^d $,
$ i \in \{ 1, \dots, n \} $,
be i.i.d.\ random variables with 
$
\E\big[ \| X_1 \| \big] < \infty
$.
Then 
\begin{equation}
\left(
\E\!\left[ 
\left\|
\E[ X_1 ]
-
\frac{ 1 }{ n }
\left(  
\sum_{ i = 1 }^{ n }
X_{ i }
\right)
\right\|^p
\right]
\right)^{ \!\! \nicefrac{ 1 }{ p } }
\leq
\frac{
2 \, \mathfrak{K}_p
\left(
\E\!\left[ 
\left\|
X_1 
-
\E[ X_1 ]
\right\|^p
\right]
\right)^{\nicefrac{1}{p} }
}{
\sqrt{ n }
}
.
\end{equation}
(cf.\ Definition~\ref{def:p2-kahane-khintchine}).
\end{prop}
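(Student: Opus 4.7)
The plan is to reduce the Monte Carlo approximation problem to a Rademacher sum via a standard symmetrization trick, then exploit the Kahane--Khintchine constant $\mathfrak{K}_p$ to pass from the $L^p$-norm to the much more tractable $L^2$-norm of that Rademacher sum. First I would introduce an independent copy $X_1', \ldots, X_n'$ of $X_1, \ldots, X_n$ on a (possibly enlarged) probability space. Since $\E[X_i'] = \E[X_1]$ and the $X_i'$ are independent of the $X_i$, Jensen's inequality applied conditionally on $\sigma(X_1, \ldots, X_n)$ yields
\begin{equation*}
\left( \E\!\left[ \left\| \tfrac{1}{n}\textstyle\sum_{i=1}^n (X_i - \E[X_1]) \right\|^p \right] \right)^{\!1/p}
\leq
\left( \E\!\left[ \left\| \tfrac{1}{n}\textstyle\sum_{i=1}^n (X_i - X_i') \right\|^p \right] \right)^{\!1/p}.
\end{equation*}

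Next I would introduce i.i.d.\ Rademachers $r_1, \ldots, r_n$ independent of everything else. Because $X_i - X_i'$ is symmetric, the joint law of $(r_i(X_i - X_i'))_{i=1}^n$ agrees with that of $(X_i - X_i')_{i=1}^n$, so the right-hand side above is unchanged if we insert the signs $r_i$. Conditioning on $\sigma(X_i, X_i' : i \leq n)$ and applying the defining property of the $p$-Kahane--Khintchine constant (Definition~\ref{def:p2-kahane-khintchine}) to the Rademacher sum with vectors $x_i := X_i(\omega) - X_i'(\omega) \in \R^d$ gives
\begin{equation*}
\E_r\!\left[ \left\| \textstyle\sum_{i=1}^n r_i (X_i - X_i') \right\|^p \right]^{\!1/p}
\leq
\mathfrak{K}_p \left( \E_r\!\left[ \left\| \textstyle\sum_{i=1}^n r_i (X_i - X_i') \right\|^2 \right] \right)^{\!1/2}
= \mathfrak{K}_p \left( \textstyle\sum_{i=1}^n \|X_i - X_i'\|^2 \right)^{\!1/2},
\end{equation*}
where the last equality uses the orthonormality $\E[r_i r_j] = \delta_{ij}$ together with the inner-product expansion of $\| \cdot \|^2$ on $\R^d$.

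To convert this into an honest $L^p$-statement I would raise to the $p$-th power, integrate out the $X_i, X_i'$, and then apply the power-mean inequality $\bigl( \sum_{i=1}^n a_i \bigr)^{p/2} \leq n^{p/2 - 1} \sum_{i=1}^n a_i^{p/2}$ (valid since $p \geq 2$) to the nonnegative reals $a_i = \|X_i - X_i'\|^2$. Using the i.i.d.\ assumption, this yields
\begin{equation*}
\left( \E\!\left[ \left\| \textstyle\sum_{i=1}^n r_i(X_i - X_i') \right\|^p \right] \right)^{\!1/p}
\leq
\mathfrak{K}_p \sqrt{n} \left( \E\!\left[ \|X_1 - X_1'\|^p \right] \right)^{\!1/p}.
\end{equation*}
Finally, the triangle inequality in $L^p(\P;\R^d)$ combined with $\E[X_1'] = \E[X_1]$ gives $(\E\|X_1 - X_1'\|^p)^{1/p} \leq 2(\E\|X_1 - \E[X_1]\|^p)^{1/p}$, and dividing both sides by $n$ delivers the claimed bound.

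The only delicate step is the conditional application of $\mathfrak{K}_p$: one must justify that the $\R^d$-valued Kahane--Khintchine inequality holds pointwise in $\omega$ for the (random) vectors $x_i = X_i(\omega) - X_i'(\omega)$, including the degenerate case $x_i = 0$ not covered by Definition~\ref{def:p2-kahane-khintchine}, which is handled by a trivial perturbation argument. Everything else is symmetrization, Fubini, and elementary inequalities.
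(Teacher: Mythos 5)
Your proof is correct and follows the canonical symmetrization route. The paper does not include its own proof of Proposition~\ref{prop:kahane} but cites Cox et al.~\cite[Corollary 5.12]{Coxetal2016}, and that reference establishes the result by exactly the same chain of ideas: introduce an independent copy, apply conditional Jensen to pass to the symmetrized sum $\sum_i (X_i - X_i')$, insert Rademacher signs, invoke the Kahane--Khintchine inequality conditionally to drop from $L^p$ to $L^2$, evaluate the $L^2$-norm of the Rademacher sum via orthogonality, and de-symmetrize with the triangle inequality at the end. Each of your steps is sound; in particular the power-mean estimate $\bigl(\sum_{i=1}^n a_i\bigr)^{p/2} \leq n^{p/2-1}\sum_{i=1}^n a_i^{p/2}$ with $a_i = \|X_i - X_i'\|^2$ yields the $\sqrt n$ factor correctly after taking expectations (one could equivalently use Minkowski's inequality in $L^{p/2}$, giving the same bound), and your remark about the degenerate case $x_i = 0$ in Definition~\ref{def:p2-kahane-khintchine} is the right thing to flag, since the definition quantifies only over nonzero vectors and the degenerate terms simply drop out of the Rademacher sum. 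No gaps.
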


\subsection{Volumes of the Euclidean unit balls}
In this subsection we provide in  Corollary~\ref{cor:vol-ball} below an elementary and well-known  upper bound for the volumes of the Euclidean unit balls. 
Corollary~\ref{cor:vol-ball} will be used in our proof of Corollary~\ref{cor:sob-const2} 
in Subsection~\ref{ssec:ssf} below.
Our proof of Corollary~\ref{cor:vol-ball} employs the elementary 
and well-known results in Lemmas~\ref{lem:gamma}--\ref{lem:ball-gamma} below.
For completeness we also provide in this subsection detailed proofs for Lemmas~\ref{lem:gamma}--\ref{lem:ball-gamma}
and
Corollary~\ref{cor:vol-ball}.

\begin{lemma}
\label{lem:gamma}
Let
$\Gamma \colon (0,\infty) \to (0,\infty)$
and
$ \mathrm{B} \colon (0,\infty)^2 \to (0,\infty)$ satisfy for all $ x,y \in (0,\infty) $ that
$ \Gamma(x) = \int_0^{\infty} t^{x-1} e^{-t} \, dt $
and
$ \mathrm{B}(x,y) = \int_0^1 t^{x-1} (1-t)^{y-1} \, dt $
and
let 
$ x,y \in (0,\infty)$.
Then
\begin{enumerate}[(i)]
\item \label{it:gamma3} we have that $\Gamma(x+1) = x\Gamma(x)$,
\item \label{it:gamma2} we have that $\mathrm{B}(x,y) = \frac{\Gamma(x)\Gamma(y)}{\Gamma(x+y)}$,
\item \label{it:gamma1} we have that $\Gamma(1) = 1$ and  $\Gamma(\frac12) = \sqrt{\pi}$,
and
\item \label{it:gamma4} we have that
\begin{equation}
\sqrt{\frac{2\pi}{x}}\left[\frac{x}{e} \right]^x 
\leq \Gamma(x)
\leq
\sqrt{\frac{2\pi}{x}} \left[ \frac{x}{e} \right]^xe^{\frac{1}{12x}} 
.
\end{equation}
\end{enumerate}
\end{lemma}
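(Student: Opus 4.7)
The plan is to establish the four parts in the order they are stated, since each later part will draw on the earlier ones. All four are standard facts about the Gamma function, and the first three are routine; the real work is in the Stirling bound (iv).

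For (i) I would apply integration by parts to $\Gamma(x+1) = \int_0^\infty t^x e^{-t}\,dt$, using $u = t^x$ and $dv = e^{-t}\,dt$. The boundary term $[-t^x e^{-t}]_0^\infty$ vanishes because $x>0$, which leaves $x\int_0^\infty t^{x-1}e^{-t}\,dt = x\Gamma(x)$. For (ii) I would use the classical Fubini/change-of-variables argument: write
\begin{equation}
\Gamma(x)\Gamma(y) = \int_0^\infty\!\!\int_0^\infty u^{x-1}v^{y-1}e^{-(u+v)}\,du\,dv,
\end{equation}
then substitute $u = st$, $v = s(1-t)$ with $(s,t)\in (0,\infty)\times(0,1)$, whose Jacobian is $s$. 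The integrand factorises into $s^{x+y-1}e^{-s}$ times $t^{x-1}(1-t)^{y-1}$, so Tonelli's theorem yields $\Gamma(x+y)\mathrm{B}(x,y)$. Part (iii) is then immediate: $\Gamma(1) = \int_0^\infty e^{-t}\,dt = 1$, and applying (ii) with $x = y = 1/2$ together with the substitution $t = \sin^2\theta$ in $\mathrm{B}(1/2,1/2)$ shows $\mathrm{B}(1/2,1/2) = \pi$, hence $\Gamma(1/2)^2 = \pi\Gamma(1) = \pi$; positivity of $\Gamma$ then gives $\Gamma(1/2) = \sqrt{\pi}$.

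The main obstacle is (iv), which is Stirling's formula together with Robbins' explicit two-sided error bound. My plan would be to define the auxiliary function
\begin{equation}
S(x) = \log\Gamma(x) - \bigl(x-\tfrac12\bigr)\log x + x - \tfrac12\log(2\pi)
\end{equation}
and to prove the two bounds $0 < S(x) < \tfrac{1}{12x}$, which are equivalent to the claimed inequalities after exponentiation. First I would reduce to integer $x$: using (i), the difference $S(x) - S(x+1)$ can be evaluated in closed form as $(x+\tfrac12)\log(1+\tfrac{1}{x}) - 1$, and a Taylor expansion of $\log(1+u)$ shows that this quantity lies in $(0,\tfrac{1}{12x(x+1)}) = (0,\tfrac{1}{12x} - \tfrac{1}{12(x+1)})$. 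Telescoping then gives two-sided control of $S(x) - \lim_{n\to\infty} S(x+n)$ by $0$ and $\tfrac{1}{12x}$. Finally, I would show that this limit equals $0$ by establishing the Wallis-product identity (or an equivalent normalisation), which pins down the additive constant $\tfrac12\log(2\pi)$ and thus completes the bound.

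The delicate step is the telescoping estimate for $S(x) - S(x+1)$: one has to be careful that the Taylor remainder for $\log(1+1/x)$ is estimated from both sides and that the resulting bound has exactly the form $\tfrac{1}{12x} - \tfrac{1}{12(x+1)}$ needed for the telescope to reproduce the constant $\tfrac{1}{12x}$ in the statement. Everything else is bookkeeping, and after (iv) is in hand the remaining results (Lemma~\ref{lem:ball-gamma} and Corollary~\ref{cor:vol-ball}) can be deduced by combining the recurrence in (i), the Beta-function identity in (ii), and the special values in (iii).
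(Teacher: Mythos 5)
Your proof of items (i)--(iii) matches the paper exactly: integration by parts for (i), Fubini together with the substitution $(s,t)\mapsto(s(1-t),st)$ for (ii), and the evaluation $\mathrm{B}(\tfrac12,\tfrac12)=\pi$ via $t=\sin^2\theta$ for (iii). The real difference is in item (iv), the two-sided Stirling bound with Robbins' error term. The paper does not prove this at all; it simply cites Artin's monograph for the existence of a function $\mu$ with $0<\mu(t)<\tfrac{1}{12t}$ and $\Gamma(t)=\sqrt{2\pi}\,t^{t-1/2}e^{-t}e^{\mu(t)}$, then rewrites. You instead propose a self-contained Robbins-style argument: set $S(x)=\log\Gamma(x)-(x-\tfrac12)\log x + x - \tfrac12\log(2\pi)$, compute $S(x)-S(x+1)=(x+\tfrac12)\log(1+\tfrac1x)-1$, and use the expansion $\log\frac{1+u}{1-u}=2\sum_{k\ge0}\frac{u^{2k+1}}{2k+1}$ with $u=\frac{1}{2x+1}$ to show $0<S(x)-S(x+1)<\frac{1}{12x(x+1)}=\frac{1}{12x}-\frac{1}{12(x+1)}$, then telescope and normalise via Wallis. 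This is correct and is indeed the classical derivation that underlies the very result Artin records, so the two routes are equivalent in content. What your approach buys is self-containment at the cost of length (you would still need to spell out the Wallis normalisation showing $\lim_{n\to\infty}S(x+n)=0$, which is itself a nontrivial paragraph); what the paper's approach buys is brevity at the cost of relying on an external reference for the one genuinely nonelementary ingredient.
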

\begin{proof}[Proof of Lemma~\ref{lem:gamma}]
Throughout this proof 
let $ \Phi \colon (0,\infty) \times (0,1) \to (0,\infty)^2 $ satisfy for all 
$ s \in (0,\infty)$,
$ t \in (0,1) $
that
\begin{equation}
\label{eq:gamma-phi}
\Phi(s,t) = (s(1-t),st)
\end{equation}
and let $ f \colon (0,\infty)^2 \to (0,\infty)$ satisfy for all 
$s,t \in (0,\infty)$ that
\begin{equation}
f(s,t) = s^{x-1}t^{y-1}e^{-(s+t)}.
\end{equation}
Observe that integration by parts verifies that 
\begin{equation}
\begin{split}
\Gamma(x+1) 
& = 
\int_0 ^{ \infty } t^{ x } e^{ - t } \, dt 
= 
\left[ - t^x e^{ - t } \right]_{ t = 0 } ^{ t = \infty } 
+ x \int_0 ^{ \infty } t^{ x-1 } e^{ -t } \, dt 
 = 
x \Gamma(x).
\end{split}
\end{equation}
This establishes item \eqref{it:gamma3}.
Next note that Fubini's theorem ensures that
\begin{equation}
\label{eq:betagamma}
\begin{split}
\Gamma(x) \Gamma(y) 
& = 
\int_0^{\infty} s^{x-1} e^{-s} \, ds \int_0^{\infty} t^{y-1} e^{-t} \,dt
\\ & = 
\int_0^{\infty}\int_0^{\infty} s^{x-1}t^{y-1}e^{-(s+t)} \,ds\,dt
 = 
\int_0^{\infty}\int_0^{\infty} f(s,t) \,ds \,dt\,.
\end{split}
\end{equation}
Moreover, note that for all 
$ s \in (0,\infty) $,
$ t \in (0,1)$
we have that
\begin{equation}
\det(\Phi^{\prime}(s,t)) = s \in (0,\infty).
\end{equation}
This, 
\eqref{eq:betagamma}, 
the integral transformation theorem
(cf., for example, \cite[Theorem 6.1.7]{MR3098996}),
and Fubini's theorem
prove that
\begin{equation}
\label{eq:gamma-prod}
\begin{split}
\Gamma(x) \Gamma(y) 
& = 
\int_0^\infty \int_0^1 f(\Phi(s,t)) \, |\!\det(\Phi^{\prime}(s,t)) | \, dt\,ds 
\\ & = 
\int_0^{\infty}\int_0^1 (s(1-t))^{x-1} (st)^{y-1}e^{-(s(1-t)+st)} s \,dt\,ds
\\ & = 
\int_0^{\infty} s^{x+y-1}e^{-s} \,ds \int_0^1 t^{y-1}(1-t)^{x-1} \,dt 
 =
\Gamma(x+y) \mathrm{B}(y,x).
\end{split}
\end{equation}
Next note that
the integral transformation theorem
with the diffeomorphism
$ (0,1) \ni t \mapsto (1-t) \in (0,1) $
ensures that
\begin{equation}
\mathrm{B}(x,y) 
=
\int_0^1 t^{x-1} (1-t)^{y-1} \, dt 
=
\int_0^1 (1-t)^{x-1} t^{y-1} \, dt 
= \mathrm{B}(y,x).
\end{equation}
Combining this with \eqref{eq:gamma-prod} establishes item \eqref{it:gamma2}.
Next note that
\begin{equation}
\label{eq:gamma1}
\Gamma(1) 
=
\int_0^{ \infty } t^{1 - 1} e^{ - t } \, dt 
=
\int_0^{ \infty } e^{ - t } \, dt 
=
1.
\end{equation}
Item \eqref{it:gamma2}
and
the integral transformation theorem
with the diffeomorphism
$ (0, \frac{\pi}{2} ) \ni t \mapsto [\sin(t)]^2 \in (0,1) $ 
therefore
verify that 
\begin{equation}
\begin{split}
\frac{
\left[
\Gamma\!\left( \tfrac{1}{2} \right)
\right]^2
}
{ \Gamma(1) }
= 
\mathrm{B}\!\left(\tfrac{1}{2}, \tfrac{1}{2} \right) 
=
\int_0^1 t^{ -\nicefrac{1}{2} } (1-t)^{-\nicefrac{1}{2} } \, dt 
= 
\int_0^{ \frac{\pi}{2} }
2 
\,dt
= 
\pi
.
\end{split}
\end{equation}
Combining this with \eqref{eq:gamma1} establishes item \eqref{it:gamma1}.
Next note that  
Artin~\cite[Chapter 3, (3.9)]{MR0165148}
ensures that there exists 
$\mu \colon (0,\infty) \to \R$ which satisfies for all $t\in (0,\infty)$ 
that $0 < \mu(t) < \frac{1}{12t}$ and 
\begin{equation}
\Gamma(t) = \sqrt{2\pi}t^{t-1/2}e^{-t} e^{\mu(t)} \,.
\end{equation}
Hence, we obtain that
\begin{equation}
\sqrt{2\pi}x^{x-1/2}e^{-x} 
\leq \Gamma(x)
\leq
\sqrt{2\pi}x^{x-1/2} e^{-x}e^{\frac{1}{12x}}\,.
\end{equation}
This establishes item \eqref{it:gamma4}.
This completes the proof of Lemma~\ref{lem:gamma}.
\end{proof}
\begin{lemma}
\label{lem:beta}
Let $\mathrm{B} \colon (0,\infty)^2 \to (0,\infty)$ satisfy for all
$ x,y \in (0,\infty) $ 
that
$ \mathrm{B}(x,y) = \int_0^1 t^{x-1} (1-t)^{y-1} \, dt $.
Then it holds for all $p \in [0,\infty)$ that
\begin{equation}
\int_0^{\frac{\pi}{2}} \left[ \sin(t) \right]^{ p} dt 
=
\frac{\mathrm{B}\!\left( \tfrac{p+1}{2},\tfrac12\right)}{2}
.
\end{equation}
\end{lemma}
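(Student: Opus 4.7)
The plan is to prove the identity by a single change of variables reducing the trigonometric integral to a Beta-function integral. Observe that the paper already invokes essentially this substitution in its proof of Lemma~\ref{lem:gamma}\eqref{it:gamma1} with $p=0$, so the same idea applied in the general case will yield the claim.

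First, I would fix $p \in [0,\infty)$ and consider the diffeomorphism $\Psi\colon (0,\tfrac{\pi}{2}) \to (0,1)$ given by $\Psi(t) = [\sin(t)]^2$. Then $\Psi'(t) = 2\sin(t)\cos(t)$, and on $(0,\tfrac{\pi}{2})$ we have $\sin(t) = \sqrt{\Psi(t)}$ and $\cos(t) = \sqrt{1 - \Psi(t)}$, both positive. Hence writing $u = \Psi(t)$ gives $du = 2\sqrt{u}\sqrt{1-u}\, dt$.

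Next, I would apply the integral transformation theorem to compute
\begin{equation}
\int_0^{\frac{\pi}{2}} [\sin(t)]^p \, dt
= \int_0^1 u^{\frac{p}{2}} \cdot \frac{1}{2\sqrt{u}\sqrt{1-u}} \, du
= \frac{1}{2} \int_0^1 u^{\frac{p+1}{2} - 1} (1-u)^{\frac12 - 1} \, du.
\end{equation}
By the definition of $\mathrm{B}$, the right-hand side equals $\frac{1}{2}\mathrm{B}(\tfrac{p+1}{2},\tfrac{1}{2})$, which is exactly what we want.

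There is no real obstacle here; the only minor points to mention in the proof are (a) checking that the substitution is licit on the open interval $(0,\tfrac{\pi}{2})$ (the boundary values contribute nothing since both integrands are locally integrable for $p \ge 0$), and (b) verifying that the exponents $\frac{p+1}{2}$ and $\frac{1}{2}$ lie in $(0,\infty)$ so that $\mathrm{B}(\tfrac{p+1}{2},\tfrac{1}{2})$ is defined in the sense of the statement of the lemma, which is immediate since $p \ge 0$.
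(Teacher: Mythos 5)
Your proof is correct and takes essentially the same approach as the paper: the paper performs the substitution in two stages (first $t\mapsto\arcsin(t)$, then $t\mapsto\sqrt{t}$), and your single substitution $u=\sin^2(t)$ is precisely the composition of those two diffeomorphisms, so the underlying argument is identical.
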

\begin{proof}[Proof of Lemma~\ref{lem:beta}]
First, note that for all $t \in (0,1)$ we have that
\begin{equation}
\arcsin^{\prime}(t) = (1-t^2)^{-\nicefrac12}.
\label{eq:beta1}
\end{equation}
This and  the integral transformation theorem
with the diffeomorphism $(0,1) \ni t \mapsto \arcsin(t) \in (0,\frac{\pi}{2})$ 
ensure for all $p \in [0,\infty)$ that
\begin{equation}
  \int_0^{\frac{\pi}{2}} \left[ \sin(t) \right]^{ p} dt  = \int_0^1 t^p (1-t^2)^{-\nicefrac12} \, dt
  .
\end{equation}
The integral transformation theorem   
with the diffeomorphism $(0,1) \ni t \mapsto \sqrt{t} \in (0,1)$
hence implies for all $p \in [0,\infty)$ that
\begin{equation}
\begin{split}
& \int_0^{\frac{\pi}{2}} \left[ \sin(t) \right]^{ p} dt
= 
\frac{1}{2} \int_0^1 t^{\nicefrac{ p}{2}-\nicefrac12}(1-t)^{-\nicefrac12}\, dt 
\\ & = 
\frac{1}{2} \int_0^1 t^{\nicefrac{ (p+1)}{2}-1}(1-t)^{\nicefrac12-1}\, dt
= 
\frac{ \mathrm{B}\!\left(\tfrac{p+1}{2},\tfrac12\right)}{2}.
\end{split}
\end{equation}
This completes the proof of Lemma~\ref{lem:beta}.
\end{proof}
\begin{lemma}
\label{lem:polar}
Let $ R \in (0,\infty] $,
for every $d \in \N$ let $\left \| \cdot \right \|_{\R^d} \colon \R^d \to [0,\infty)$ be the standard norm on $\R^d$,
for every $d \in \{2,3,\ldots\}$ let 
$
B_d = \{ x \in \R^d \colon  \| x \|_{\R^d}  < R \}
$
and
\begin{equation}
S_d
=
\begin{cases}
(0,2\pi) & \text{for $d = 2$ }  \\
(0,2\pi) \times (0,\pi)^{d-2} & \text{for $d \in \{3,4,\ldots\}$},
\end{cases}
\end{equation}
and let $ T_d \colon (0,R) \times S_d \to \R^d$, $d\in \{2, 3, \ldots\}$,  satisfy for all 
$d \in \{2,3,\ldots\}$, 
$ r \in (0,R) $,
$ \varphi \in (0,2\pi) $,
$ \vartheta_1,\ldots,\vartheta_{d-2} \in (0,\pi) $ 
that if $d=2$ then $T_2(r,\varphi ) = r (\cos(\varphi),\sin(\varphi))$
and if $d\geq 3$ then
\begin{equation}
\begin{split}
T_d(r, \varphi, & \vartheta_1,\ldots,\vartheta_{d-2}) 
= 
r\Big(
\cos(\varphi) \left[\smallprod_{i=1}^{d-2} \sin(\vartheta_i)\right],
\sin(\varphi)
\left[ \smallprod_{i=1}^{d-2} \sin(\vartheta_i) \right],
\\ & 
\cos(\vartheta_1) \left[ \smallprod_{i=2}^{d-2} \sin(\vartheta_i) \right],
\ldots,
\cos(\vartheta_{d-3})\sin(\vartheta_{d-2}),
\cos(\vartheta_{d-2})
\Big)
.
\end{split}
\end{equation}
Then
\begin{enumerate}[(i)]
\item \label{it:polar1} it holds for all 
$ r \in (0,R) $,
$ \varphi \in (0,2\pi)$
that
\begin{equation}
| \det((T_2)^{\prime}(r,\varphi)) |
=
r,
\end{equation}
\item \label{it:polar2} it holds for all 
$ d \in \{3, 4, \ldots\} $,
$ r \in (0,R) $,
$ \varphi \in (0,2\pi)$,
$ \vartheta_1,\ldots,\vartheta_{d-2} \in (0,\pi) $
that
\begin{equation}
| \det ((T_d)^{\prime}(r,\varphi,\vartheta_1,\ldots,\vartheta_{d-2})) |
=
r^{d-1} \left[\smallprod_{i=1}^{d-2} [\sin(\vartheta_{i})]^{i}\right],
\end{equation}
and
\item \label{it:polar3} it holds for all 
$ d \in \{2,3,\ldots\}$
and
all 
$\mathcal{B}(\R^d) / \mathcal{B}([0,\infty))$-measurable functions
$f \colon \R^d \to [0,\infty) $ 
that
\begin{equation}
\label{eq:polar}
\begin{split}
& \int_{B_d} f(x) \, dx
= 
\int_0^R \int_{S_d} 
f(T_d(r,\phi))\,
| \det ((T_d)^{\prime}(r,\phi)) |
\, d\phi\,dr\,
.
\end{split}
\end{equation}
\end{enumerate}
\end{lemma}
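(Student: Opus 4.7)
The plan is to treat the three items in order; item~\eqref{it:polar2} is by far the substantive one, while items~\eqref{it:polar1} and~\eqref{it:polar3} are short.

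For item~\eqref{it:polar1}, I would just write down the $2\times 2$ Jacobian
\begin{equation*}
(T_2)^\prime(r,\varphi) = \begin{pmatrix} \cos(\varphi) & -r\sin(\varphi) \\ \sin(\varphi) & r\cos(\varphi) \end{pmatrix}
\end{equation*}
and read off the determinant as $r([\cos(\varphi)]^2 + [\sin(\varphi)]^2) = r$.

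For item~\eqref{it:polar2}, my plan is induction on $d \geq 3$ via a block decomposition of $(T_d)^\prime$ together with a Schur complement computation. Writing $\theta = \vartheta_{d-2}$ and grouping the remaining parameters as $u = (r,\varphi,\vartheta_1,\ldots,\vartheta_{d-3})$, the explicit definition of $T_d$ yields the identity $T_d(u,\theta) = (\sin(\theta)\, T_{d-1}(u),\, r\cos(\theta))$, so that the Jacobian has the block form
\begin{equation*}
(T_d)^\prime(u,\theta) = \begin{pmatrix} \sin(\theta)(T_{d-1})^\prime(u) & \cos(\theta)\, T_{d-1}(u) \\ \cos(\theta)\, e_1^{\top} & -r\sin(\theta) \end{pmatrix},
\end{equation*}
where $e_1 \in \R^{d-1}$ is the first standard basis vector (the bottom row is $\cos(\theta)$ in the $r$-slot, $-r\sin(\theta)$ in the $\theta$-slot, and zero elsewhere). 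The key observation I would then invoke is that $T_{d-1}$ is homogeneous of degree one in $r$, so the first column of $(T_{d-1})^\prime(u)$ equals $r^{-1}\, T_{d-1}(u)$ and consequently $[(T_{d-1})^\prime(u)]^{-1}\, T_{d-1}(u) = r\, e_1$. Applying the Schur complement determinant formula with this identity gives the recursion
\begin{equation*}
\det((T_d)^\prime(u,\theta)) = [\sin(\theta)]^{d-1} \det((T_{d-1})^\prime(u)) \cdot \left(-r\sin(\theta) - \tfrac{r[\cos(\theta)]^2}{\sin(\theta)}\right) = -r[\sin(\theta)]^{d-2} \det((T_{d-1})^\prime(u)).
\end{equation*}
Taking absolute values and iterating this recursion down to the base case $|\!\det((T_2)^\prime)| = r$ supplied by item~\eqref{it:polar1} produces the formula claimed in item~\eqref{it:polar2}.

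Item~\eqref{it:polar3} is then the standard integral transformation theorem (e.g.\ \cite[Theorem~6.1.7]{MR3098996}) applied to the $C^1$ diffeomorphism $T_d$ on $(0,R) \times S_d$. Injectivity follows immediately from the restricted ranges of $\varphi \in (0,2\pi)$ and $\vartheta_i \in (0,\pi)$, and the complement $B_d \setminus T_d((0,R) \times S_d)$ is contained in the origin together with a finite union of coordinate half-hyperplanes and thus has $d$-dimensional Lebesgue measure zero; therefore the Lebesgue integral of $f$ over $B_d$ agrees with the integral over the image of $T_d$, and the change of variables formula yields \eqref{eq:polar}. The main obstacle I expect is the Schur complement step in item~\eqref{it:polar2}: the block-matrix bookkeeping is a little delicate, and the recursion does not close cleanly until one notices that the homogeneity identity $[(T_{d-1})^\prime]^{-1} T_{d-1} = r\, e_1$ collapses the off-diagonal contribution. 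Once that identity is in hand, everything else is essentially mechanical.
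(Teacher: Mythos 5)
Your item~\eqref{it:polar1} is identical to the paper's computation, and your item~\eqref{it:polar3} argument (change of variables together with the observation that the complement of $T_d((0,R)\times S_d)$ in $B_d$ lies in a coordinate half-hyperplane and hence has Lebesgue measure zero) is the same route the paper takes, though the paper spells out the complement explicitly as $[0,\infty)\times\{0\}\times\R^{d-2}$ in the $d\geq 3$ case. Where you diverge genuinely is item~\eqref{it:polar2}: the paper simply cites Amann \& Escher~\cite[Ch.~X, Lemma~8.8]{MR2500068} for the Jacobian determinant formula, whereas you give a self-contained inductive proof. Your recursion is correct: the block identity $T_d(u,\theta)=(\sin(\theta)T_{d-1}(u),\,r\cos(\theta))$ is exactly what the defining formula gives; the degree-one homogeneity of $T_{d-1}$ in $r$ yields $[(T_{d-1})^\prime(u)]^{-1}T_{d-1}(u)=r\,e_1$ wherever $(T_{d-1})^\prime(u)$ is invertible; the Schur complement then collapses to $-r\sin(\theta)-\frac{r[\cos(\theta)]^2}{\sin(\theta)}=-\frac{r}{\sin(\theta)}$, so that $\det((T_d)^\prime)=-r[\sin(\theta)]^{d-2}\det((T_{d-1})^\prime)$, and iterating from $|\det((T_2)^\prime)|=r$ gives the claimed formula. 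The one detail you gloss over is that the Schur step requires $(T_{d-1})^\prime(u)$ invertible, which fails on a measure-zero set; this is harmless because both sides of the resulting determinant identity are continuous (indeed polynomial in the entries) and so the identity extends from the dense set where $A$ is invertible. The trade-off is clear: the paper's citation is shorter but less transparent, while your block/Schur induction makes the structure of the Jacobian visible and keeps the lemma self-contained at the cost of some linear-algebra bookkeeping. Both are valid; nothing in your argument would fail.
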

\begin{proof}[Proof of Lemma~\ref{lem:polar}]
Throughout this proof for every $ d \in \{2,3,\ldots\}$ let 
$\lambda_{\R^d}:\mathcal{B}(\R^d) \to [0,\infty]$ be the Lebesgue--Borel measure on $\R^d$.
Observe that for all 
$ r \in (0,R) $,
$ \varphi \in (0,2\pi) $ 
we have that
\begin{equation}
(T_2)^{\prime}(r, \varphi ) 
= 
\begin{pmatrix}
\cos(\varphi) & -r\sin(\varphi) \\
\sin(\varphi) & r\cos(\varphi)
\end{pmatrix}
.
\end{equation}
Hence, we obtain that
for all 
$ r \in (0,R) $,
$ \varphi \in (0,2\pi) $ 
we have that
\begin{equation}
\left| \det \big( (T_2)^{\prime}(r,\varphi) \big) \right|
=
r [\cos(\varphi)]^2 + r [\sin(\varphi)]^2 = r.
\end{equation}
This establishes item \eqref{it:polar1}.
Next observe that Amann \& Escher \cite[Ch. X, Lemma 8.8]{MR2500068} establishes item \eqref{it:polar2}.
To establish item \eqref{it:polar3} we distinguish between
the case $ d = 2 $
and
the case $d \in \{3,4,\ldots\} $.
First, we consider the case $ d = 2 $.
Note that $ T_2 \colon (0,R) \times (0,2\pi) \to T_2((0,R) \times (0,2\pi)) $ is a bijective function.
This and item \eqref{it:polar1} verify that  
$ T_2 \colon (0,R) \times (0,2\pi) \to T_2((0,R)\times(0,2\pi)) $
is a diffeomorphism.
Next observe that
\begin{equation}
T_2((0,R)\times(0,2\pi)) = B_2 \backslash \{ (x,0) \colon x \in [0,\infty)\}.
\end{equation}
The fact that $ T_2 \colon (0,R) \times (0,2\pi) \to T_2((0,R)\times(0,2\pi)) $ is a diffeomorphism,
the fact that $ \lambda_{\R^2}(\{ (x,0) \colon x \in [0,\infty)\}) = 0 $,
and
the integral transformation theorem 
hence yield that  for all 
$ \mathcal{B}(\R^2) / \mathcal{B}([0,\infty))$-measurable functions
$f \colon \R^2 \to [0,\infty) $ 
we have that
\begin{equation}
\int_{B_2} f(x) \, dx 
=
\int_0^R \int_{S_2} f(T_2(r,\phi))\, \big|\det\big( (T_2)^{\prime}(r,\phi)\big) \big|
\, d\phi\,dr\,
.
\end{equation}
This establishes item \eqref{it:polar3} in the case $ d = 2 $.
Next we consider the case $ d \in \{3,4,\ldots\} $.
Note that Amann \& Escher \cite[Ch. X, Lemma 8.8]{MR2500068} 
implies that 
$ T_d \colon (0,R) \times (0,2\pi) \times (0,\pi)^{d-2} \to T_d((0,R) \times (0,2\pi) \times (0,\pi)^{d-2}) $
is a diffeomorphism with 
\begin{equation}
T_d((0,R) \times (0,2\pi) \times (0,\pi)^{d-2})
=
B_d \backslash 
\big(
[0,\infty) \times \{0\} \times \R^{d-2}
\big).
\end{equation}
The fact that 
$
\lambda_{\R^d}\big([0,\infty) \times \{0\} \times \R^{d-2}\big) 
=
0
$
and
the integral transformation theorem 
hence verify that  for all 
$ \mathcal{B}(\R^d) / \mathcal{B}([0,\infty))$-measurable functions
$f \colon \R^d \to [0,\infty) $ 
we have that
\begin{equation}
\int_{B_d} f(x) \, dx 
=
\int_0^R \int_{S_d} f(T_d(r,\phi)) \, \big| \det \big( (T_d)^{\prime}(r,\phi) \big)\big|
\, d\phi \, dr\,
.
\end{equation}
This establishes item \eqref{it:polar3} in the case $ d \in \{3,4,\ldots\}$.
This completes the proof of Lemma~\ref{lem:polar}.
\end{proof}

\begin{lemma}
\label{lem:ball-gamma}
Let $d \in \N$,  
let $\left\|  \cdot \right \| \colon \R^d \to [0,\infty)$ be the standard norm on $\R^d$, 
let $\lambda:\mathcal{B}(\R^d)\to [0,\infty]$ be the Lebesgue--Borel measure on $\R^d$,
let $\mathbbm{B} \subseteq \R^d$ be the set given by $\mathbbm{B} =  \{ x \in \R^d \colon \|x\| < 1\}$,
and let $\Gamma \colon (0,\infty) \to (0,\infty)$ satisfy for all 
$ x \in (0,\infty) $
that
$ \Gamma(x) = \int_0^{\infty} t^{x-1} e^{-t} \, dt $. 
Then
\begin{enumerate}[(i)]
\item  \label{it:ball1} for $d\in \{2,3,\ldots\}$ we have that
\begin{equation}
\label{eq:ball-int}
\lambda(\mathbbm{B})
= \frac{2\pi}{d}
\left[\prod_{i=1}^{d-2}  \int_0^\pi [\sin(\vartheta_{i})]^{i} \, d\vartheta_i\right] 	
\end{equation}	
and
\item \label{it:ball2} we have that
\begin{equation}
\label{eq:ball-gamma}
\lambda(\mathbbm{B})
= \frac{\pi^{\nicefrac{d}{2}}}{\Gamma\!\left( \frac{d}{2}+1 \right)}
.
\end{equation}
\end{enumerate}
\end{lemma}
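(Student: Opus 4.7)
The plan is to establish item~\eqref{it:ball1} by a direct application of the polar-coordinate formula in Lemma~\ref{lem:polar} and to derive item~\eqref{it:ball2} from item~\eqref{it:ball1} by evaluating each $\sin$-integral via Lemma~\ref{lem:beta}, converting the Beta values to Gamma values through item~\eqref{it:gamma2} of Lemma~\ref{lem:gamma}, and observing that the resulting product over $i$ telescopes. The case $d=1$, which is not covered by Lemma~\ref{lem:polar}, will be handled by a short separate computation.

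For item~\eqref{it:ball1}, fix $d\in\{2,3,\ldots\}$ and apply Lemma~\ref{lem:polar} with $R=1$ and $f$ equal to the indicator function of $\mathbbm{B}$. By item~\eqref{it:polar1} and item~\eqref{it:polar2} of that lemma the absolute value of the Jacobian determinant factors as $r^{d-1}\prod_{i=1}^{d-2}[\sin(\vartheta_i)]^i$ (with the empty product equal to $1$ in the case $d=2$), so by Fubini's theorem we can separate the integrals to obtain
\begin{equation}
\lambda(\mathbbm{B})
=\left[\int_0^1 r^{d-1}\,dr\right]\left[\int_0^{2\pi}\,d\varphi\right]\prod_{i=1}^{d-2}\int_0^\pi [\sin(\vartheta_i)]^i\,d\vartheta_i
=\frac{2\pi}{d}\prod_{i=1}^{d-2}\int_0^\pi [\sin(\vartheta_i)]^i\,d\vartheta_i.
\end{equation}
This yields \eqref{eq:ball-int}.

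For item~\eqref{it:ball2} in the case $d\geq 2$, observe that the substitution $\vartheta\mapsto\pi-\vartheta$ together with Lemma~\ref{lem:beta} gives, for every $i\in\N$,
\begin{equation}
\int_0^\pi [\sin(\vartheta)]^i\,d\vartheta
=2\int_0^{\pi/2}[\sin(\vartheta)]^i\,d\vartheta
=\mathrm{B}\!\left(\tfrac{i+1}{2},\tfrac{1}{2}\right)
=\frac{\Gamma\!\left(\frac{i+1}{2}\right)\Gamma\!\left(\frac{1}{2}\right)}{\Gamma\!\left(\frac{i}{2}+1\right)}
=\frac{\sqrt{\pi}\,\Gamma\!\left(\frac{i+1}{2}\right)}{\Gamma\!\left(\frac{i+2}{2}\right)},
\end{equation}
where the third equality uses item~\eqref{it:gamma2} and the fourth uses item~\eqref{it:gamma1} of Lemma~\ref{lem:gamma}. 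Substituting this into \eqref{eq:ball-int}, the product $\prod_{i=1}^{d-2}\Gamma(\tfrac{i+1}{2})/\Gamma(\tfrac{i+2}{2})$ telescopes to $\Gamma(1)/\Gamma(\tfrac{d}{2})=1/\Gamma(\tfrac{d}{2})$, so that
\begin{equation}
\lambda(\mathbbm{B})
=\frac{2\pi}{d}\cdot\frac{(\sqrt{\pi})^{d-2}}{\Gamma(\tfrac{d}{2})}
=\frac{2\pi^{d/2}}{d\,\Gamma(\tfrac{d}{2})}
=\frac{\pi^{d/2}}{\Gamma(\tfrac{d}{2}+1)},
\end{equation}
where the final step uses item~\eqref{it:gamma3} of Lemma~\ref{lem:gamma} in the form $\tfrac{d}{2}\Gamma(\tfrac{d}{2})=\Gamma(\tfrac{d}{2}+1)$. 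For $d=1$ one verifies \eqref{eq:ball-gamma} directly: $\lambda(\mathbbm{B})=\lambda((-1,1))=2$, while $\Gamma(\tfrac{3}{2})=\tfrac{1}{2}\Gamma(\tfrac{1}{2})=\tfrac{\sqrt{\pi}}{2}$ by items \eqref{it:gamma3} and \eqref{it:gamma1} of Lemma~\ref{lem:gamma}, whence $\pi^{1/2}/\Gamma(\tfrac{3}{2})=2$.

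There is no real obstacle here; the only point requiring a little care is the telescoping, which works precisely because $\Gamma(\tfrac{i}{2}+1)=\Gamma(\tfrac{i+2}{2})$, allowing the denominator at index $i$ to cancel the numerator at index $i+1$. The separate treatment of $d=1$ is necessary only because Lemma~\ref{lem:polar} is stated for $d\geq 2$.
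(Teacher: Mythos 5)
Your proof is correct and follows essentially the same route as the paper: item~(i) from Lemma~\ref{lem:polar} and Fubini, and item~(ii) by reducing the $\sin$-integrals to Beta values via Lemma~\ref{lem:beta}, converting to Gamma quotients through Lemma~\ref{lem:gamma}, and telescoping, with $d=1$ handled separately. The only superficial difference is that the paper splits the argument into three cases ($d=1$, $d=2$, $d\geq 3$) whereas you treat $d\geq 2$ uniformly via the empty-product convention, which is a harmless simplification.
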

\begin{proof}[Proof of Lemma~\ref{lem:ball-gamma}]
To establish~\eqref{eq:ball-int} and~\eqref{eq:ball-gamma} we distinguish between the case $d=1$, the case $d=2$, and the case $d \geq 3$.
First, we consider the case $d=1$. 
Note that 
items \eqref{it:gamma3} 
and
\eqref{it:gamma1}
in Lemma~\ref{lem:gamma} verify that 
\begin{equation}
\Gamma\!\left(\tfrac{1}{2} +1 \right) 
=
\frac{\Gamma\!\left(\tfrac12\right)}{2}
=
\frac{\pi^{\nicefrac12}}{2}.
\label{}
\end{equation}
This implies that
\begin{equation}
\frac{\pi^{\nicefrac12}}{\Gamma\!\left(\frac{1}{2}+1\right)} = 2.
\end{equation}
Combining this and the fact that $\lambda(\mathbbm{B}) = \lambda((-1,1)) = 2$ establishes \eqref{eq:ball-gamma} in the case $d=1$.
Next we consider the case $d=2$. 
Note that items \eqref{it:polar1} and \eqref{it:polar3} in Lemma~\ref{lem:polar} 
and
Fubini's theorem 
prove that
\begin{equation}
\label{eq:ball-gamma-pf1}
\begin{split}
\lambda(\mathbbm{B}) = \int_{\mathbbm{B}} \, dx 
=
\int_{0}^{2\pi}\int_{0}^1 r \, dr \, d\varphi 
=
\pi.
\end{split}
\end{equation}
Next note that items \eqref{it:gamma3}
and
\eqref{it:gamma1} 
in Lemma~\ref{lem:gamma} verify that 
\begin{equation}
  \Gamma( 2) = \Gamma(1+1) = \Gamma(1) = 1.
  \label{}
\end{equation}
This implies that
\begin{equation}
  \frac{\pi}{\Gamma(1+1)} = \pi.
\end{equation}
Combining this with \eqref{eq:ball-gamma-pf1} establishes~\eqref{eq:ball-int}  
and~\eqref{eq:ball-gamma} in the case $d=2$.
Next we consider the case $d \geq 3$. 
Note that
items \eqref{it:polar2}--\eqref{it:polar3} in Lemma~\ref{lem:polar} 
and
Fubini's theorem
ensure that 
\begin{equation}
\label{eq:ball-int-pf}
\begin{split}
  \lambda(\mathbbm{B}) 
&  =
  \int_{\mathbbm{B}} 
  \, dx 
\\ & =
  \int_{0}^{\pi} \cdots\int_{0}^{\pi}   \int_{0}^{2\pi} \int_{0}^1  r^{d-1} 
  \left[\prod_{i=1}^{d-2} [\sin(\vartheta_{i})]^{i}\right]  
  dr \, d\varphi \, d\vartheta_1 \, \cdots \, d\vartheta_{d-2}  
\\ & =
  \frac{1}{d} 
  \int_{0}^{2\pi} \, d\varphi
  \left[\prod_{i=1}^{d-2} 
  \int_{0}^{\pi} [\sin(\vartheta_{i})]^{i} \, d\vartheta_i\right] 
.
\end{split}
\end{equation}
This establishes~\eqref{eq:ball-int} in the case $d\in \{ 3, 4, \ldots \}$.
Moreover, note that~\eqref{eq:ball-int-pf} and the fact that for all $k \in \N$ we have that $\int_{0}^{ \pi } [\sin(t)]^k \, dt = 2 \int_0^{\frac{\pi}{2}} [\sin(t)]^k \, dt $
verify that 
\begin{equation}
\begin{split}
\lambda(\mathbbm{B})  
=
\frac{4}{d}
\int_{0}^{\frac{\pi}{2}} \, d\varphi 
\left[\prod_{i=1}^{d-2} 2 \int_{0}^{\frac{\pi}{2}} [\sin(\vartheta_{i})]^{i} \, d\vartheta_i\right] 
.
\end{split}
\end{equation}
Combining 
this, 
Lemma~\ref{lem:beta}
(applied with
$ p \leftarrow i $
for $ i \in \{0,\ldots,d-2\} $
in the notation of Lemma~\ref{lem:beta}), 
and
item \eqref{it:gamma2} in Lemma~\ref{lem:gamma}  yields that
\begin{equation}
\begin{split}
\lambda(\mathbbm{B}) & = 
\frac{2}{d} 
\mathrm{B}\!\left(\tfrac12,\tfrac12  \right)
\left[\prod_{i=1}^{d-2} \mathrm{B}\!\left(\tfrac{i+1}{2},\tfrac12  \right) \right] 
\\ & = 
\frac{2}{d} 
\frac{\Gamma\!\left(\frac12 \right)\Gamma\!\left(\frac12\right)}{\Gamma(1)}
\left[\prod_{i=1}^{d-2} \frac{\Gamma\!\left(\frac{i+1}{2}  \right)\Gamma\!\left(\frac{1}{2}  \right)}{\Gamma\!\left(\frac{i+2}{2}  \right)} 
  \right]   
=
\frac{2}{d} \frac{\big[\Gamma\!\left(\frac12\right)\big]^{d}
}{\Gamma\!\left( \frac{d}{2} \right)
} 
.
\end{split}
\end{equation}
Items 
\eqref{it:gamma3} 
and
\eqref{it:gamma1}
in Lemma~\ref{lem:gamma} 
hence verify that 
\begin{equation}
  \lambda(\mathbbm{B}) 
  = \frac{\big[\Gamma\!\left(\frac12\right)\big]^{d}}{\frac{d}{2}\Gamma\!\left( \frac{d}{2} \right)}
  = \frac{\pi^{\nicefrac{d}{2}}}{ \Gamma\!\left( \frac{d}{2}+1 \right)}
  .
\end{equation}
This establishes~\eqref{eq:ball-gamma} in the case $d \in \{3, 4, \ldots\}$.
This completes the proof of Lemma~\ref{lem:ball-gamma}.
\end{proof}
\begin{cor}
\label{cor:vol-ball}
Let $d \in \N$, let $\left\|  \cdot \right \| \colon \R^d \to [0,\infty)$ be the standard norm on $\R^d$, and let $\lambda:\mathcal{B}(\R^d)\to [0,\infty]$ be the Lebesgue--Borel measure on $\R^d$.
Then
\begin{equation}
\lambda\big(\{x \in \R^d \colon \|x\| < 1\}\big) 
\leq 
\frac{1}{\sqrt{d\pi}} \Big[\frac{2 \pi e}{d} \Big]^{\nicefrac{d}{2}}.
\end{equation}
\end{cor}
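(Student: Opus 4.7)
The plan is to combine the exact formula for the volume of the unit ball from Lemma~\ref{lem:ball-gamma} with the Stirling-type lower bound for the Gamma function from item~\eqref{it:gamma4} of Lemma~\ref{lem:gamma}. By item~\eqref{it:ball2} of Lemma~\ref{lem:ball-gamma} we have that $\lambda(\{x \in \R^d : \|x\| < 1\}) = \pi^{d/2}/\Gamma(d/2 + 1)$, so obtaining the desired upper bound on the volume is equivalent to obtaining a lower bound of the form
\[
\Gamma(d/2 + 1) \geq \sqrt{d\pi}\,\bigl[\tfrac{d}{2e}\bigr]^{d/2}.
\]

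To produce this lower bound, I would first invoke item~\eqref{it:gamma3} of Lemma~\ref{lem:gamma} to write $\Gamma(d/2+1) = (d/2)\,\Gamma(d/2)$ and then apply the inequality in item~\eqref{it:gamma4} of Lemma~\ref{lem:gamma} with $x \leftarrow d/2$ to obtain
\[
\Gamma(d/2) \geq \sqrt{\tfrac{2\pi}{d/2}}\,\bigl[\tfrac{d/2}{e}\bigr]^{d/2} = \tfrac{2\sqrt{\pi}}{\sqrt{d}}\,\bigl[\tfrac{d}{2e}\bigr]^{d/2}.
\]
Multiplying through by $d/2$ then yields the required inequality $\Gamma(d/2+1) \geq \sqrt{d\pi}\,[d/(2e)]^{d/2}$. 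Inserting this back into the formula from Lemma~\ref{lem:ball-gamma}\eqref{it:ball2} gives
\[
\lambda(\{x \in \R^d : \|x\| < 1\}) = \tfrac{\pi^{d/2}}{\Gamma(d/2+1)} \leq \tfrac{\pi^{d/2}}{\sqrt{d\pi}\,[d/(2e)]^{d/2}} = \tfrac{1}{\sqrt{d\pi}}\bigl[\tfrac{2\pi e}{d}\bigr]^{d/2},
\]
which is the claim.

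There is essentially no obstacle to this argument; it is algebraic bookkeeping after the analytic input (the Stirling-type lower bound for $\Gamma$) has been supplied by Lemma~\ref{lem:gamma}. The mild point worth checking is that both Lemma~\ref{lem:ball-gamma}\eqref{it:ball2} and Lemma~\ref{lem:gamma}\eqref{it:gamma4} apply uniformly over all $d \in \N$, so no case split between $d=1$, $d=2$, and $d \geq 3$ is needed in the corollary itself, despite the case distinction present in the proof of Lemma~\ref{lem:ball-gamma}.
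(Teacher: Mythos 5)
Your proposal is correct and follows essentially the same route as the paper's proof: both combine the exact formula $\lambda(\{\|x\|<1\}) = \pi^{d/2}/\Gamma(d/2+1)$ from Lemma~\ref{lem:ball-gamma} with the functional equation $\Gamma(x+1) = x\Gamma(x)$ and the Stirling-type lower bound from item~\eqref{it:gamma4} of Lemma~\ref{lem:gamma} to bound $\Gamma(d/2+1)$ from below by $\sqrt{d\pi}\,[d/(2e)]^{d/2}$. The only cosmetic difference is that the paper states the intermediate inequality $\Gamma(x+1) \geq \sqrt{2\pi x}\,[x/e]^x$ for general $x$ and then substitutes $x = d/2$, whereas you substitute first; your closing observation that no case split on $d$ is needed is also accurate.
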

\begin{proof}[Proof of Lemma~\ref{cor:vol-ball}]
Throughout this proof 
let $\Gamma \colon (0,\infty) \to (0,\infty)$ satisfy for all 
$ x \in (0,\infty) $
that
$ \Gamma(x) = \int_0^{\infty} t^{x-1} e^{-t} \, dt $.
Note that Lemma~\ref{lem:ball-gamma} verifies that  
\begin{equation}
\label{eq:vol-ball-pf1}
\lambda\big(\{x \in \R^d \colon \|x\| < 1\}\big)  = \frac{\pi^{\frac{d}{2}}}{
\Gamma\!\left(\frac{d}{2} + 1\right)}
.
\end{equation}
Moreover, note that 
items \eqref{it:gamma3}
and
\eqref{it:gamma4} in Lemma~\ref{lem:gamma} 
imply that for all $x \in (0,\infty)$ we have that
\begin{equation}
{\Gamma(x+1)} = x \Gamma(x) \geq \sqrt{2\pi x} \left[\frac{x }{e}\right]^{x}
.
\end{equation}
Hence, we obtain that
\begin{equation}
\frac{1}{\Gamma\!\left(\frac{d}{2} + 1\right)} \leq 
\frac{1}{\sqrt{d \pi}} \left[ \frac{2e}{d} \right]^{\nicefrac{d}{2}}
.
\end{equation}
Combining this with \eqref{eq:vol-ball-pf1}  yields that
\begin{equation}
\lambda\big(\{x \in \R^d \colon \|x\| < 1\}\big) 
\leq
\frac{1}{\sqrt{d\pi}} \Big[\frac{2 \pi e}{d} \Big]^{\nicefrac{d}{2}}.
\end{equation}
This completes the proof of Lemma~\ref{cor:vol-ball}.
\end{proof}
\subsection{Sobolev type estimates for smooth functions}
\label{ssec:ssf}
In this subsection we present in Corollary~\ref{cor:sob-const3} below  a Sobolev type estimate
for smooth functions with an explicit and dimension-independent constant in the Sobolev type estimate.
Corollary~\ref{cor:sob-const3} will be employed in our proof of
Corollary~\ref{cor:pde-nnet-ap4} in Subsection~\ref{ssec:qee} below. 
Corollary~\ref{cor:sob-const3} is a consequence 
of the elementary results in Lemma~\ref{lem:sob-1d}
and
Corollary~\ref{cor:sob-const2} below.
Corollary~\ref{cor:sob-const2}, in turn, follows from 
Proposition~\ref{prop:sobolev-const1} below.
Proposition~\ref{prop:sobolev-const1} is a special case 
of Mizuguchi et al. \cite[Theorem 2.1]{MR3736595}.
For completeness we also provide in this subsection a proof for Proposition~\ref{prop:sobolev-const1}. Our proof of Proposition~\ref{prop:sobolev-const1}
 employs the well-known results in Lemmas~\ref{lem:int-bound}--\ref{lem:sob-const0} below. 
Results similar to Lemmas~\ref{lem:int-bound}--\ref{lem:sob-const0} can, e.g., be found in Mizuguchi et al.~\cite[Lemma 3.1, Theorem 3.3, and Theorem 3.4]{MR3736595} 
and Gilbarg \& Trudinger~\cite[Lemma 7.16]{MR1814364}.

\begin{lemma}
\label{lem:sob-1d}
Let $ \Phi \in C^1((0,1),\R)$ satisfy that 
$\int_0^1
\left(
\left|
\Phi(x)
\right|
+
\left|
\Phi^{ \prime } ( x )
\right|
\right)
dx
<
\infty.
$
Then
\begin{equation}
\label{eq:sob-1d}
\sup_{ x \in (0,1) }
\left| \Phi(x) \right|
\leq
\int_0^1
\left(
\left|
\Phi(x)
\right|
+
\left|
\Phi^{ \prime } ( x )
\right|
\right)
dx
\,
.
\end{equation}
\end{lemma}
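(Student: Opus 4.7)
The plan is to pair the fundamental theorem of calculus with a simple averaging trick over the interval, which is the standard route to the one dimensional Sobolev embedding $W^{1,1}\hookrightarrow L^\infty$ on a bounded interval.

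Concretely, I would first fix arbitrary $x, y \in (0,1)$. Since $\Phi \in C^1((0,1),\R)$, the function $\Phi'$ is continuous on every compact subinterval of $(0,1)$ containing $x$ and $y$, so the fundamental theorem of calculus yields
\begin{equation*}
\Phi(x) = \Phi(y) + \int_y^x \Phi'(t)\, dt.
\end{equation*}
Taking absolute values, applying the triangle inequality, and then enlarging the interval of integration from $[\min\{x,y\}, \max\{x,y\}] \subseteq (0,1)$ to all of $(0,1)$ gives
\begin{equation*}
|\Phi(x)| \leq |\Phi(y)| + \int_0^1 |\Phi'(t)|\, dt.
\end{equation*}

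Next, I would view the right hand side as a function of $y$ and integrate the inequality with respect to $y$ over $(0,1)$. Since the left hand side is independent of $y$ and $(0,1)$ has Lebesgue measure one, I obtain
\begin{equation*}
|\Phi(x)| = \int_0^1 |\Phi(x)|\, dy \leq \int_0^1 |\Phi(y)|\, dy + \int_0^1 |\Phi'(t)|\, dt = \int_0^1 \bigl( |\Phi(y)| + |\Phi'(y)| \bigr)\, dy.
\end{equation*}
The right hand side no longer depends on $x$, so taking the supremum over $x \in (0,1)$ yields~\eqref{eq:sob-1d}.

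There is no real obstacle here; the only point that requires a mild comment is that the fundamental theorem of calculus applies even though $\Phi$ is only assumed $C^1$ on the open interval $(0,1)$. This is fine because we only ever use it on intervals of the form $[\min\{x,y\}, \max\{x,y\}] \subset (0,1)$, on which $\Phi'$ is continuous and hence Riemann integrable, while the integrability hypothesis $\int_0^1 (|\Phi| + |\Phi'|)\, dx < \infty$ ensures that the final upper bound is finite.
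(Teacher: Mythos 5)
Your proof is correct and is essentially the same argument as the paper's: the paper's single identity $\Phi(x) = \int_0^1 \bigl[\Phi(s) - \int_x^s \Phi'(t)\,dt\bigr]\,ds$ is exactly the fundamental theorem of calculus followed by averaging over the second endpoint, which you simply carry out in two explicit steps. No meaningful difference in approach or rigor.
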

\begin{proof}[Proof of Lemma~\ref{lem:sob-1d}]
First, note that the fundamental theorem of calculus ensures that
for all $ x \in (0,1) $ we have that
\begin{equation}
\Phi(x) = \int_0^1
\left[
\Phi(s) - \int_{x}^s\Phi^{\prime}(t) \, dt 
\right]
ds
.
\end{equation}
The triangle inequality 
and
the hypothesis that 
$\int_0^1
\left(
\left|
\Phi(x)
\right|
+
\left|
\Phi^{ \prime } ( x )
\right|
\right)
dx
<
\infty
$
hence verify that 
for all
$ x \in (0,1) $
we have that
\begin{equation}
\label{eq:sob-1d-pf1}
\begin{split}
\left |
\Phi(x)
\right |
& = 
\left|
\int_0^1
\left[
\Phi(s) - \int_{x}^s\Phi^{\prime}(t) \, dt 
\right]
ds
\right| 
\\ & \leq
\int_0^1
\left[
\left| \Phi(s) \right| 
+ \int_{x}^s\left| \Phi^{\prime}(t) \right|  dt 
\right]
ds 
\\ & \leq
\int_0^1
\left| \Phi(s) \right| ds 
+
\int_0^1\left| \Phi^{\prime}(t) \right|  dt 
<
\infty
.
\end{split}
\end{equation}
This implies~\eqref{eq:sob-1d}.
This completes the proof of Lemma~\ref{lem:sob-1d}.
\end{proof}
\begin{lemma}
\label{lem:int-bound}
Let $d \in \{2,3,\ldots\} $, $p \in (d,\infty)$, let $\left\|  \cdot \right \| \colon \R^d \to [0,\infty)$ be the standard norm on $\R^d$, 
let $\lambda:\mathcal{B}(\R^d)\to[0,\infty]$ be the Lebesgue--Borel measure on $\R^d$,
and let $W \subseteq \R^d$ be a non-empty, bounded, and open set. 
Then 
\begin{equation}
\label{eq:hoelder}
\begin{split}
&  \int_{\cup_{x \in W}\{y-x \colon y \in W\}} \|z\|^{(1-d)\frac{p}{p-1}} \, dz 
\\ & \leq 
  \frac{d(p-1)}{p-d}  \left[ \sup_{v,w \in W} \|v-w\|\right]^{\frac{p-d}{p-1}}
  \lambda\big(\{x \in \R^d \colon \|x\| < 1\}\big)
.
\end{split}
\end{equation}
\label{lem:hoelder}
\end{lemma}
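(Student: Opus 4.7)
The plan is to reduce the integral to a ball, then compute it exactly in polar coordinates using the lemmas already developed.

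First, observe that the set $U := \bigcup_{x \in W}\{y-x : y \in W\} = W - W$ is contained in the open Euclidean ball $B_R(0) := \{z \in \R^d : \|z\| < R\}$ with $R := \sup_{v,w \in W}\|v - w\| \in (0,\infty)$ (finite since $W$ is bounded, positive since $W$ is nonempty and open). Because $d \geq 2$, the exponent $\alpha := \tfrac{(1-d)p}{p-1}$ is negative, so the integrand $\|z\|^{\alpha}$ is nonnegative, and monotonicity of the integral gives
\begin{equation*}
\int_{U} \|z\|^{\alpha}\, dz \leq \int_{B_R(0)} \|z\|^{\alpha}\, dz.
\end{equation*}

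Next I would pass to polar coordinates via Lemma~\ref{lem:polar} (item~\eqref{it:polar3}) applied to the radial function $z \mapsto \|z\|^{\alpha}$. Since $T_d(r,\phi)$ has norm $r$ and the Jacobian factor from items~\eqref{it:polar1}--\eqref{it:polar2} takes the form $r^{d-1} \prod_{i=1}^{d-2}[\sin(\vartheta_i)]^{i}$ (with the product being empty when $d=2$), Fubini's theorem yields
\begin{equation*}
\int_{B_R(0)} \|z\|^{\alpha}\, dz
= \left(\int_{0}^{R} r^{\alpha + d - 1}\, dr\right) \left(\int_{S_d} \prod_{i=1}^{d-2}[\sin(\vartheta_i)]^{i}\, d\phi\right).
\end{equation*}
The angular factor is recognized via Lemma~\ref{lem:ball-gamma}, item~\eqref{it:ball1}, as exactly $d\,\lambda(\{x \in \R^d : \|x\| < 1\})$ (the $d=2$ case is direct since the product is empty and the integral over $S_2 = (0,2\pi)$ equals $2\pi = 2\lambda(B_1)$).

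The radial exponent simplifies to
\begin{equation*}
\alpha + d - 1 = \frac{(1-d)p}{p-1} + (d-1) = -\frac{d-1}{p-1},
\end{equation*}
and the hypothesis $p > d$ gives $-\frac{d-1}{p-1} > -1$, which is precisely the condition ensuring integrability of $r^{\alpha+d-1}$ near $0$. Hence
\begin{equation*}
\int_{0}^{R} r^{-\frac{d-1}{p-1}}\, dr = \frac{p-1}{p-d}\, R^{\frac{p-d}{p-1}}.
\end{equation*}
Combining the two factors yields the claimed bound
\begin{equation*}
\int_{U} \|z\|^{\alpha}\, dz \leq \frac{d(p-1)}{p-d}\, R^{\frac{p-d}{p-1}}\, \lambda(\{x \in \R^d : \|x\| < 1\}).
\end{equation*}

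There is no serious obstacle in this proof; the content is entirely bookkeeping. The one place where the hypothesis is essential is the integrability of the radial integral near the origin, which is exactly the statement $p > d$; everything else is a direct invocation of the polar coordinate and ball-volume lemmas from the preceding subsections.
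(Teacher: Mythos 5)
Your proposal is correct and follows essentially the same route as the paper's own proof: bound the integral over $W-W$ by the integral over the ball $B_\rho$, pass to polar coordinates via Lemma~\ref{lem:polar}, evaluate the radial integral $\int_0^\rho r^{(1-d)p/(p-1)+d-1}\,dr = \tfrac{p-1}{p-d}\rho^{(p-d)/(p-1)}$, and identify the angular factor with $d\,\lambda(B_1)$ via Lemma~\ref{lem:ball-gamma}. The only (cosmetic) difference is that you make explicit the nonnegativity of the integrand and the role of $p>d$ in ensuring integrability near the origin, which the paper leaves implicit.
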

\begin{proof}[Proof of Lemma~\ref{lem:hoelder}]
Throughout this proof 
let $\rho \in (0,\infty) $ satisfy that $ \rho = \sup_{v,w \in W} \| v-w\|$,
let $ V \subseteq \R^d$ be the set given by $V = \cup_{x \in W}\{y-x \colon y \in W\}$,
let $S \subseteq \R^{d-1}$ be the set given by
\begin{equation}
S = 
\begin{cases}
(0,2\pi) & \colon d=2 \\
(0,2\pi) \times (0,\pi)^{d-2} & \colon d \in \{3,4,\ldots\},
\end{cases}
\end{equation}
let $B_{r} \subseteq \R^d$,
$r \in (0,\infty)$,
be the sets which satisfy for all $ r \in (0,\infty)$ that
$B_{r} = \{x \in \R^d \colon \|x \| < r\}$,
and let $T_R \colon (0,R) \times S \to \R^d$,
$ R \in (0,\infty] $, satisfy
for all $ R \in (0,\infty] $, 
$ r \in (0,R) $,
$ \varphi \in (0,2\pi) $,
$ \vartheta_1,\ldots,\vartheta_{d-2} \in (0,\pi) $
that if $d=2$ then $T_R(r,\varphi)= r(\cos(\varphi),\sin(\varphi))$ 
and if $d \in \{3,4,\ldots\}$ then
\begin{equation}
\label{eq:hoelder-tn}
\begin{split}
& T_R(r, \varphi,\vartheta_1,\ldots,\vartheta_{d-2}) 
= 
r\Big(
\cos(\varphi) \left[\smallprod_{i=1}^{d-2} \sin(\vartheta_i)\right],
\sin(\varphi)
\left[ \smallprod_{i=1}^{d-2} \sin(\vartheta_i) \right],
\\ & 
\cos(\vartheta_1) \left[ \smallprod_{i=2}^{d-2} \sin(\vartheta_i) \right],
\ldots,
\cos(\vartheta_{d-3})\sin(\vartheta_{d-2}),
\cos(\vartheta_{d-2})
\Big)
.
\end{split}
\end{equation}
Observe that 
\begin{equation}
\label{eq:hoelder-pf2}
(1-d)\frac{p}{p-1} + d = \frac{(1-d)p + d(p-1)}{p-1} = \frac{p-d}{p-1}\,
.
\end{equation}
Next note that
items \eqref{it:polar1}--\eqref{it:polar3} in Lemma~\ref{lem:polar}
and
the fact that for all $ r \in (0,\rho) $, $ \phi \in S $ 
we have that 
$
\| T_{\rho}(r,\phi)  \| = r
$
verify that 
\begin{equation}
\begin{split}
  \int_{B_{\rho}} \|x\|^{(1-d)\frac{p}{p-1}} \, dx 
& = 
\int_0^\rho \int_S  r^{(1-d)\frac{p}{p-1} } 
\left| \det \big((T_{\rho})^{\prime}(r,\phi)\big)\right| \,  d\phi\, dr \\
& = 
\int_0^\rho \int_S  r^{(1-d)\frac{p}{p-1} } r^{d-1}
\left| \det \big((T_\infty)^{\prime}(1,\phi)\big)\right| \,  d\phi\, dr  \,.
\end{split}
\end{equation}
The fact that $V \subseteq B_{\rho}$,
Fubini's theorem,
and~\eqref{eq:hoelder-pf2}
hence yield that 
\begin{equation}
\begin{split}
\int_{V} \|x\|^{(1-d)\frac{p}{p-1}} \, dx 
& \leq
\int_{B_{\rho}} \|x\|^{(1-d)\frac{p}{p-1}} \, dx
\\ & =
\int_S \bigg(\int_0^\rho r^{(1-d)\frac{p}{p-1}} r^{d-1} \,dr\bigg)
\left| \det \big((T_\infty)^{\prime}(1,\phi)\big)\right| \, d\phi \\
& = \rho^{\frac{p-d}{p-1}} \frac{(p-1)}{p-d} \int_{S} \left| \det\big((T_{\infty})^{\prime}(1,\phi)\big)\right|\, d\phi\,. 
\end{split}
\end{equation}
This, Lemma~\ref{lem:polar}, 
and Lemma~\ref{lem:ball-gamma} hence prove that
\begin{equation}
\begin{split}
\int_{V} & \|x\|^{(1-d)\frac{p}{p-1}} \, dx 
\leq
\rho^{\frac{p-d}{p-1}} \frac{(p-1)}{p-d} \int_{S} \left| \det\big((T_{\infty})^{\prime}(1,\phi)\big)\right|\, d\phi \\
& = \rho^{\frac{p-d}{p-1}} \frac{(p-1)}{p-d} \int_0^{2\pi} \int_0^\pi \cdots \int_0^\pi \left[\smallprod_{i=1}^{d-2} [\sin(\vartheta_{i})]^{i}\right] \,d\vartheta_1 \cdots d\vartheta_{d-2}\, d\varphi \\
& =  \rho^{\frac{p-d}{p-1}} \frac{(p-1)}{p-d} d \frac{2\pi}{d} \prod_{i=1}^{d-2} \int_0^\pi [\sin(\vartheta_{i})]^{i} \, d\vartheta_i\, 
 =  \rho^{\frac{p-d}{p-1}} \frac{d(p-1)}{p-d} 
\lambda(B_1)\,
.
\end{split}
\end{equation}
This completes the proof of Lemma~\ref{lem:hoelder}.
\end{proof}

\begin{lemma}
\label{lem:poincare}
Let 
$d \in \{2,3,\ldots\} $,
$ p \in (d,\infty) $, 
let $W \subseteq \R^d$ be a non-empty, open, bounded, and convex set, 
let $ \Phi \in C^1(W,\R) $,
let $\left\|  \cdot \right \| \colon \R^d \to [0,\infty)$ be the standard norm on $\R^d$, 
let $\lambda:\mathcal{B}(\R^d)\to [0,\infty]$ be the Lebesgue--Borel measure on $\R^d$,
and
assume that
$
\int_{ W }(|
\Phi(x)
|^p
+
\|
( \nabla \Phi )( x )
\|^{p})
\, 
dx
<
\infty
$.
Then it holds for all $x \in W$ that
\begin{equation}
\begin{split}
\label{eq:poincare}
  & \left| \lambda(W)\Phi(x) -  \int_{W} \Phi(y)\, dy \right|
\\ & \leq 
  \frac{1}{d} \left[\sup_{v,w \in W} \|v-w\|\right]^d  
  \int_{W}  \left \| (\nabla \Phi)(y) \right \| \left \|x-y \right\|^{1-d} dy.
\end{split}
\end{equation}
\end{lemma}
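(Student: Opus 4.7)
The plan is to start from the classical radial representation
\[
\Phi(x)-\Phi(y)=-\int_{0}^{\|y-x\|}(\nabla\Phi)(x+r\omega)\cdot \omega\,dr,
\qquad \omega=\tfrac{y-x}{\|y-x\|},
\]
which is legitimate because $W$ is convex and $x\in W$, so the segment from $x$ to $y$ stays inside $W$ and $\Phi$ is $C^1$ along it. Integrating this identity over $y\in W$ gives
\[
\lambda(W)\Phi(x)-\int_{W}\Phi(y)\,dy
=-\int_{W}\int_{0}^{\|y-x\|}(\nabla\Phi)(x+r\omega)\cdot\omega\,dr\,dy.
\]

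Next, I would switch to polar coordinates centred at $x$, writing $y=x+\rho\sigma$ with $\sigma$ on the unit sphere and $\rho\in(0,\rho_\star(\sigma))$, where $\rho_\star(\sigma)$ denotes the length of the intersection of $W-x$ with the ray $\{r\sigma:r>0\}$. Star-shapedness of $W$ with respect to $x$ (a consequence of convexity) guarantees that this interval is connected and that the parametrisation covers $W$ up to a null set, so Lemma~\ref{lem:polar} applies on each radial slice with $dy=\rho^{d-1}d\rho\,dS(\sigma)$. Applying Fubini to exchange $r$ and $\rho$ on the set $\{0<r<\rho<\rho_\star(\sigma)\}$ yields
\[
\int_{W}\int_{0}^{\|y-x\|}\!(\nabla\Phi)(x+r\omega)\cdot\omega\,dr\,dy
=\int_{S^{d-1}}\!\int_{0}^{\rho_\star(\sigma)}\!\tfrac{\rho_\star(\sigma)^d-r^d}{d}(\nabla\Phi)(x+r\sigma)\cdot\sigma\,dr\,dS(\sigma).
\]

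From here I would use the trivial bound $\rho_\star(\sigma)^d-r^d\le\rho_\star(\sigma)^d\le[\sup_{v,w\in W}\|v-w\|]^{d}$ together with the Cauchy--Schwarz inequality $|(\nabla\Phi)(x+r\sigma)\cdot\sigma|\le\|(\nabla\Phi)(x+r\sigma)\|$, and then reverse the polar change of variables via $r^{d-1}\,dr\,dS(\sigma)=dy$ to re-express the sphere integral as an integral over $W$ with the factor $\|y-x\|^{1-d}$. This reproduces the right-hand side of \eqref{eq:poincare} with the prefactor $1/d$ coming from the integration in $\rho$.

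The main obstacle is the polar-coordinate step: $W$ is only assumed open, bounded, and convex, so $\rho_\star(\sigma)$ need not be smooth in $\sigma$, and I must justify that the radial parametrisation is a.e. bijective onto $W-x$ and that Lemma~\ref{lem:polar} can be applied with the cut-off $\rho<\rho_\star(\sigma)$ (for instance by monotone-convergence truncation to balls $B_R\cap(W-x)$ and passing $R\to\infty$, combined with the indicator $\mathbf{1}_{\{x+r\sigma\in W\}}$). The measurability of $\rho_\star$ follows from convexity. Everything else is an application of Fubini and an elementary estimate on $\rho_\star$.
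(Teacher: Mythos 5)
Your argument is correct and follows the same skeleton as the paper's proof: fundamental theorem of calculus along the segment $[x,y]$, integration over $y\in W$, polar coordinates centred at $x$, a Fubini exchange, the elementary bound on the radial Jacobian integral, Cauchy--Schwarz on the directional derivative, and a reversal of the polar change of variables. The one place the paper is cleaner is exactly where you flag your ``main obstacle'': rather than parametrising $W-x$ via the boundary function $\rho_\star(\sigma)$ (which would require extending Lemma~\ref{lem:polar} beyond balls to a general open bounded convex set, plus the measurability and a.e.-bijectivity points you mention), the paper extends $\nabla\Phi$ by zero to a function $E\colon\R^d\to\R^d$, bounds the $y$-integral over $W$ by an integral over the ball $B_\rho$ of radius $\rho=\sup_{v,w\in W}\|v-w\|$, and extends the inner $r$-integral to all of $[0,\infty)$, so that Lemma~\ref{lem:polar} is only ever invoked for a ball; at the end the identity $E=0$ off $W$ is used to restrict back. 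A second minor point: you apply Fubini to the signed integrand $(\nabla\Phi)(x+r\sigma)\cdot\sigma$ before taking absolute values, which requires an integrability justification, whereas the paper takes absolute values first so that Tonelli suffices. Both points are fixable, as you note, but the extension-by-zero trick renders them moot.
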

\begin{proof}[Proof of Lemma~\ref{lem:poincare}]
Throughout this proof let 
$ x \in W $,
let $\rho \in (0,\infty)$ satisfy that $ \rho = \sup_{v,w \in W} \| v-w \|$, 
let $\scp{\cdot}{\cdot}{} \colon \R^d \times \R^d \to \R$ be the $d$-dimensional Euclidean scalar product,
let $S \subseteq \R^{d-1}$ be the set given by
\begin{equation}
S = 
\begin{cases}
(0,2\pi) & \colon d=2 \\
(0,2\pi) \times (0,\pi)^{d-2}  & \colon d \in \{3,4,\ldots\},
\end{cases}
\end{equation}
let $(\omega_{v,w})_{(v,w) \in W^2} \subseteq \R^d$ satisfy for all $ v,w \in W $ that 
\begin{equation}
  \omega_{v,w} = 
  \begin{cases}
  \tfrac{w-v}{\|w-v\|}  & \colon v \neq w \\
  0 & \colon v = w,
  \end{cases}
\end{equation}
let $B_r \subseteq \R^d$,
$ r \in (0,\infty)$, satisfy for all 
$ r \in (0,\infty)$
that
$B_{r} = \{y \in \R^d \colon \|y-x \| < r\}$,
let $E \colon \R^d \to \R^d$ satisfy for all 
$ y \in \R^d $
that
\begin{equation}
\label{eq:poincare-defe}
E(y) = 
\begin{cases}
(\nabla \Phi)(y) & \colon y \in W \\
0  & \colon y \in \R^d \backslash W,
\end{cases}
\end{equation}
and let $T_{R} \colon (0,R) \times S \to \R^d$,
$ R \in (0,\infty] $, satisfy
for all
$ R \in (0,\infty] $, 
$ r \in (0,R)$,
$ \varphi \in (0,2\pi)$,
$ \vartheta_1,\ldots,\vartheta_{d-2} \in (0,\pi) $
that if $d=2$ then 
$T_R(r,\varphi)= r(\cos(\varphi),\sin(\varphi))$
and if $d\geq 3$ then 
\begin{equation}
\label{eq:poincare-tn}
\begin{split}
& T_R(r, \varphi,\vartheta_1,\ldots,\vartheta_{d-2})
= 
r\Big(
\cos(\varphi) \left[\smallprod_{i=1}^{d-2} \sin(\vartheta_i)\right],
\sin(\varphi)
\left[ \smallprod_{i=1}^{d-2} \sin(\vartheta_i) \right],
\\ & 
\cos(\vartheta_1) \left[ \smallprod_{i=2}^{d-2} \sin(\vartheta_i) \right],
\ldots,
\cos(\vartheta_{d-3})\sin(\vartheta_{d-2}),
\cos(\vartheta_{d-2})
\Big)
.
\end{split}
\end{equation}
Observe that the hypothesis that 
$
\int_{ W }
(
|
\Phi(y)
|^{p}
+
\|
( \nabla \Phi )( y )
\|^{p}
)
\,
dy
<
\infty
$,
the hypothesis that $ W $ is bounded,
and
H\"older's inequality
ensure that 
\begin{equation}
\label{eq:poincare-wp}
\begin{split}
& \int_{ W }
|
\Phi(y)
|
+
\|
( \nabla \Phi )( y )
\|
\, 
dy
\\ & \leq
[\lambda(W)]^{\nicefrac{(p-1)}{p}}
\left(
\left[
\int_{ W }
|
\Phi(y)
|^p
\, 
dy
\right]^{\nicefrac{1}{p}}
+
\left[
\int_{ W }
\|
( \nabla \Phi )( y )
\|^p
\, 
dy
\right]^{\nicefrac{1}{p}}
\right)
< 
\infty
.
\end{split}
\end{equation}
Next note  that the assumption that $W$ is convex and the fundamental theorem of calculus 
yield that  for all $ y \in W$  we have that
\begin{equation}
\label{eq:poincare-diff}
\Phi(x) - \Phi(y) 
=
- (\Phi(x + r\omega_{x,y}))|_{r=0}^{r = \|y-x\|} 
=
- \int_{0}^{\|y-x\|}  \frac{d}{dr}\Phi(x+r\omega_{x,y})  \, dr 
.
\end{equation}
The fact that $ \lambda(W) < \infty $,
\eqref{eq:poincare-wp},
the Cauchy-Schwarz inequality,
and
the fact that for all $y \in W\setminus \{x\}$ we have that $\|\omega_{x,y} \| = 1$
hence prove that
\begin{equation}
\begin{split}
& \left | \lambda(W) \Phi(x) - \int_{W} \Phi(y) \, dy \right | 
  = 
\left| 
\int_W \int_{0}^{\|y-x\|} \frac{d}{dr}\Phi(x+r\omega_{x,y})  \, dr \, dy 
\right | 
\\ &  = 
\left| 
\int_W \int_{0}^{\|y-x\|} \scp{ (\nabla \Phi)(x+r\omega_{x,y})}{ \omega_{x,y}}{}
\, dr \, dy 
\right |
\\ & \leq
\int_W \int_{0}^{\|y-x\|} 
\left| \scp{ (\nabla \Phi)(x+r\omega_{x,y}) }{ \omega_{x,y} }{} \right| \, dr \, dy 
\\ & \leq 
\int_W \int_{0}^{\|y-x\|} \left\| (\nabla \Phi)(x+r\omega_{x,y}) \right \| \|\omega_{x,y} \| \, dr \, dy 
\\ & =  
\int_W \int_{0}^{\|y-x\|}\left \| (\nabla \Phi)(x+r\omega_{x,y}) \right \|  dr \, dy  
.
\end{split}
\end{equation}
The fact that $ W \subseteq B_{\rho} $
and Fubini's theorem therefore verify that 
\begin{equation}
\label{eq:poinc-pf2}
\begin{split}
  \left | \lambda(W) \Phi(x) - \int_{W} \Phi(y) \, dy \right | 
& \leq 
  \int_{0}^{\infty} \int_{B_{\rho}}  \left \| E(x+r\omega_{x,y}) \right \|  dy \, dr 
  .
\end{split}
\end{equation}
Next observe that 
the integral transformation theorem 
with the diffeomorphism 
$
\{ v \in \R^d \colon \| v \| < \rho \} \ni y \mapsto y+x \in B_{\rho}
$,
items \eqref{it:polar1}--\eqref{it:polar3} in Lemma~\ref{lem:polar},
the fact that for all 
$ r \in (0,\rho) $,
$ \phi \in S $ 
we have that
$
\| T_\rho(r,\phi)  \| = r
$,
and
\eqref{eq:poincare-tn}
imply that for all 
$ r \in (0,\rho) $ we have that
\begin{equation}
\begin{split}
\label{eq:poinc-pf3}
& \int_{ B_{\rho}  } 
\left \| E(x+r\omega_{x,y})\right\|   
dy
 = 
\int_{ \{ v \in \R^d \colon \|  v \| < \rho \} } 
\left \| E\!\left(x+r\omega_{0,y}\right)\right\|   
dy
\\ & = 
\int_0^\rho \int_S 
\left\| 
E \Big(x+r\tfrac{ T_\rho(s,\phi) }{\| T_\rho(s,\phi)\| }\Big)
\right\|
\left | \det \big((T_\rho)^{\prime}(s,\phi)\big) \right|  \, d\phi \, ds 
\\ & = 
\int_0^\rho \int_S 
\left\| E(x+T_\infty(r,\phi))\right\|
\left | \det \big((T_\infty)^{\prime}(s,\phi)\big) \right| \, d\phi\, ds\,
.
\end{split}
\end{equation}
Items \eqref{it:polar1}--\eqref{it:polar2} in Lemma~\ref{lem:polar},
Fubini's theorem,
and
\eqref{eq:poinc-pf2}
therefore prove that
\begin{equation}
\begin{split}
&  \left | \lambda(W) \Phi(x) - \int_{W} \Phi(y) \, dy \right | 
\\ &  \leq 
  \int_{0}^{\infty}  
  \int_0^\rho \int_S 
  \left\| E(x+T_\infty(r,\phi))\right\|
  \left| \det \big((T_\infty)^{\prime}(s,\phi)\big) \right|  d\phi \, ds \, dr 
\\ & = 
  \int_{0}^{\infty}  \int_S \int_0 ^{\rho}
  \left\| E(x+T_\infty(r,\phi))\right\|
  \left| \det \big((T_\infty)^{\prime}(s,\phi)\big) \right| ds \, d\phi \, dr
\\ & = 
  \int_{0}^{\infty}  \int_S \int_0 ^{\rho}
  \left\| E(x+T_\infty(r,\phi))\right\|
  \left| \det \big((T_\infty)^{\prime}(1,\phi)\big) \right|
  s^{d-1} \, ds \, d\phi \, dr
\\ & =
  \frac{ \rho^d }{d}
  \int_{0}^{\infty}  \int_S 
  \left\| E(x+T_\infty(r,\phi))\right\|
  \left| \det\big((T_\infty)^{\prime}(1,\phi)\big) \right| d\phi \, dr
\,.
\end{split}
\end{equation}
Combining this,
\eqref{eq:poincare-tn},
items \eqref{it:polar1}-\eqref{it:polar3} in Lemma~\ref{lem:polar},
the fact that for all 
$ r \in (0,\infty) $,
$ \phi \in S $ 
we have that
$
\| T_\infty(r,\phi) \| = r
$,
and
\eqref{eq:poincare-defe}
yields that
\begin{equation}
\begin{split}
&  \left | \lambda(W) \Phi(x) - \int_{W} \Phi(y) \, dy \right | 
\\ &  \leq
  \frac{ \rho^d }{d}
  \int_{0}^{\infty}  \int_S 
  \left\| E(x+T_\infty(r,\phi))\right\|
  \left| \det \big((T_\infty)^{\prime}(1,\phi)\big) \right|
  r^{1-d}r^{d-1}
  \,d\phi \, dr
\\ &  =
  \frac{ \rho^d }{d}
  \int_{0}^{\infty}  \int_S 
  \left\| E(x + T_\infty(r,\phi))\right\|
  \|  T_\infty(r,\phi)  \|^{1-d}
  \left| \det\big((T_\infty)^{\prime}(r,\phi)\big) \right|
  \, d\phi \, dr
\\ & = 
\frac{ \rho^d }{d}
\int_{\R^d} \left\|E(x + y) \right\| \|y\|^{1-d} \, dy
\\ & = 
\frac{ \rho^d }{d}
\int_{\R^d} \left\|E(y) \right\| \| x - y\|^{1-d} \, dy
 =  \frac{ \rho^d }{d}
\int_{W} \left\|(\nabla \Phi)(y) \right\| \|x-y\|^{1-d} \, dy
.
\end{split}
\end{equation}
This completes the proof of Lemma~\ref{lem:poincare}.
\end{proof}

\begin{lemma}
\label{lem:sob-const0}
Let $d \in \{2, 3, \ldots\}$,
$p \in (d,\infty)$, 
let $\lambda:\mathcal{B}(\R^d)\to [0,\infty]$ be the Lebesgue--Borel measure on $\R^d$,
let $W \subseteq \R^d$ be an open, bounded, and convex set 
with $\lambda(W) > 0$,
let $ \Phi \in C^1(W,\R) $,
let $\left\|  \cdot \right \| \colon \R^d \to [0,\infty)$ be the standard norm on $\R^d$, 
and 
assume that
$
\int_{ W }(|
\Phi(x)
|^p
+
\|
( \nabla \Phi )( x )
\|^{p})
\, 
dx
<
\infty
$.
Then
\begin{equation}
\begin{split}
& 
  \sup_{x \in W} \left| \Phi(x) - \frac{1}{\lambda(W)} \int_{W} \Phi(y) \, dy \right | 
\\ & \leq
  \frac{\left[\sup_{v,w \in W} \| v - w \|\right]^d}{\lambda(W)d}
  \left[
  \int_{\cup_{x \in W} \{x-y \colon y \in W \} } \|z\|^{(1-d)\frac{p}{p-1}}
  \, dz 
  \right]^{\nicefrac{(p-1)}{p}}
\\ & \cdot 
  \left[
  \int_{W} \| (\nabla \Phi)(y) \|^p \, dy
  \right]^{\nicefrac1p}
<
\infty
.
\end{split}
\end{equation}
\begin{proof}[Proof of Lemma~\ref{lem:sob-const0}]
Throughout this proof 
let $\rho \in [0,\infty) $ satisfy that $ \rho = \sup_{v,w \in W} \| v-w\|$,
let 
$W_x \subseteq \R^d$,
$x \in W$, satisfy for all $x \in W$ that
$W_x = \{x-y \colon y \in W\}$,
let $V \subseteq \R^d$ be the set given by $V = \cup_{x \in W} W_x$,
let $\psi \colon \R^d \to \R$ satisfy for all $x \in \R^d$ that
\begin{equation}
\psi(x) = 
\begin{cases}
\|x\|^{1-d} & \colon x \in V \backslash \{0\} \\
0 & \colon x \in (\R^d \backslash V) \cup \{0\},
\end{cases}
\end{equation}
and let $ E \colon \R^d \to \R^d $  satisfy for all $x \in \R^d$ that
\begin{equation}
E(x)=
\begin{cases}
(\nabla \Phi)(x) & \colon x \in W \\
0  & \colon x \in \R^d \backslash W.
\end{cases}
\end{equation}
Observe that 
the hypothesis that 
$
\int_{ W }(|
\Phi(x)
|^p
+
\|
( \nabla \Phi )( x )
\|^{p})
\, 
dx
<
\infty
$
ensures that  
\begin{equation}
\label{eq:sob-const0-lp}
\begin{split}
\int_{\R^d} \left \| E(x) \right \|^p \, dx 
= 
  \int_{W} \left\|  (\nabla \Phi)(x) \right\|^p \, dx 
<
\infty.
\end{split}
\end{equation}
Moreover, the assumption that $\lambda(W) > 0$, 
Lemma~\ref{lem:poincare},
and 
the integral transformation theorem
prove that
for all $x \in W$ we have that
\begin{equation}
\begin{split}
\label{eq:sob-const0-pf2}
& 
  \left| \Phi(x) -  \frac{1}{\lambda(W)} \int_{W} \Phi(y)\, dy \right|
\leq 
  \frac{\rho^d}{\lambda(W)d}  \int_{W}  
  \left \| (\nabla \Phi)(y) \right \| \left \|x-y \right\|^{1-d} dy 
\\ & =
  \frac{\rho^d}{\lambda(W)d}  \int_{W_x} 
  \left \| (\nabla \Phi)(x-y) \right \| \left \|y \right\|^{1-d} dy 
\\ & \leq 
  \frac{\rho^d}{\lambda(W)d}  \int_{V}  \left \| E(x-y) \right \| \left \|y \right\|^{1-d} dy
 = 
  \frac{\rho^d}{\lambda(W)d}  \int_{\R^d}  \left \| E(x-y) \right \| \psi(y) \, dy \,
.
\end{split}
\end{equation}
Lemma~\ref{lem:int-bound},
\eqref{eq:sob-const0-lp},
and
H\"older's inequality 
therefore yield that  
for all $x \in W$ we have that
\begin{equation}
\begin{split}
& \left| \Phi(x) -  \frac{1}{\lambda(W)} \int_{W} \Phi(y)\, dy \right|
\\ & \leq 
  \frac{\rho^d}{\lambda(W)d}
  \left[
  \int_{\R^d}  \left \| E(x-y) \right \|^p \, dy
  \right]^{\nicefrac1p}
  \left[\int_{\R^d}  
  \psi(y)^{\nicefrac{p}{(p-1)}} \, dy \right]^{\nicefrac{(p-1)}{p}}
\\ & = 
  \frac{\rho^d}{\lambda(W)d}
  \left[
  \int_{W}  \left \| (\nabla \Phi)(y) \right \|^p \, dy
  \right]^{\nicefrac1p}
  \left[\int_{V} \|y\|^{(1-d)\frac{p}{p-1}} \, dy \right]^{\nicefrac{(p-1)}{p}} 
 <
\infty \, .
\end{split}
\end{equation}
This completes the proof of Lemma~\ref{lem:sob-const0}.
\end{proof}

\end{lemma}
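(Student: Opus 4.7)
The plan is to combine the pointwise Poincar\'e-type inequality already established in Lemma~\ref{lem:poincare} with H\"older's inequality and the singular integral bound of Lemma~\ref{lem:int-bound}. First I would fix $x \in W$ arbitrarily, set $\rho := \sup_{v,w \in W}\|v-w\|$, and invoke Lemma~\ref{lem:poincare} to obtain the pointwise bound
\begin{equation*}
\left| \lambda(W)\Phi(x) - \int_W \Phi(y)\,dy \right|
\leq \frac{\rho^d}{d} \int_W \|(\nabla\Phi)(y)\|\,\|x-y\|^{1-d}\,dy.
\end{equation*}

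Next I would perform the translation $y \mapsto x - y$ in the integral on the right, noting that the image $\{x-y : y \in W\}$ is contained in $V := \cup_{z \in W}\{z - w : w \in W\}$. After extending $\nabla\Phi$ by zero off $W$ to a function $E \colon \R^d \to \R^d$, and defining $\psi \colon \R^d \to [0,\infty)$ by $\psi(y) = \|y\|^{1-d}$ for $y \in V \setminus \{0\}$ and $\psi(y) = 0$ otherwise, the right-hand side above is dominated by $(\rho^d/d) \int_{\R^d} \|E(x-y)\|\,\psi(y)\,dy$. An application of H\"older's inequality with conjugate exponents $p$ and $p/(p-1)$, together with translation invariance of Lebesgue measure, then bounds this by
\begin{equation*}
\frac{\rho^d}{d}\left[\int_W \|(\nabla\Phi)(y)\|^p \, dy\right]^{1/p}
\left[\int_V \|z\|^{(1-d)p/(p-1)} \, dz\right]^{(p-1)/p}.
\end{equation*}
By hypothesis, the first bracket is finite. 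By Lemma~\ref{lem:int-bound}, which crucially uses $p > d$ so that the dual exponent $(1-d)p/(p-1)$ exceeds $-d$, the second bracket is finite as well. Dividing both sides by $\lambda(W) > 0$ and taking the supremum over $x \in W$ yields simultaneously the claimed estimate and the finiteness assertion $< \infty$.

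The conceptual work has already been done in Lemma~\ref{lem:poincare}; what remains here is essentially a textbook H\"older trick, and I do not expect any genuine obstacle. The only subtle point is the bookkeeping around the domain of integration for the singular weight: one must arrange that the $L^{p/(p-1)}$-norm of the kernel $\|y\|^{1-d}$ is computed exactly over the difference set $V$ (not over all of $\R^d$, where it would fail to be integrable at infinity), which is the sole reason for introducing the truncated kernel $\psi$.
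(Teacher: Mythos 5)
Your proposal follows the paper's proof almost step for step: the same pointwise Poincar\'e bound from Lemma~\ref{lem:poincare}, the same translation $y \mapsto x-y$, the same zero extension $E$ of $\nabla\Phi$ and truncated kernel $\psi$, the same H\"older argument with exponents $p$ and $p/(p-1)$, and the same appeal to Lemma~\ref{lem:int-bound} for finiteness. The approach and bookkeeping match the paper's own argument, and your observation about why $p>d$ is needed (so that $(1-d)p/(p-1) > -d$) is correct.
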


\begin{prop}
\label{prop:sobolev-const1}
Let $d \in \{2,3,\ldots\}$,
$p \in (d,\infty)$, 
let $W \subseteq \R^d$ be an open, bounded, and convex set,
let $ \Phi \in C^1(W,\R) $,
let $\left\|  \cdot \right \| \colon \R^d \to [0,\infty)$ be the standard norm on $\R^d$, 
let $\lambda:\mathcal{B}(\R^d)\to[0,\infty]$ be the Lebesgue--Borel measure on 
$\R^d$, 
let $I$ be a finite  and non-empty set,
let 
$W_i \subseteq \R^d$,
$i \in I$,
be open and convex sets,
assume for all $i \in I$, $j \in I \backslash \{i\}$ that
$\lambda(W_i) > 0$,
$  W_i \cap W_j = \emptyset$,
and $ \overline{W} = \cup_{i \in I} \overline{W_i}$,
assume that
$
\int_{ W }
(
|
\Phi(x)
|^p
+
\|
( \nabla \Phi )( x )
\|^{p}
)
\, 
dx
<
\infty
$,
and let 
$(D_{i})_{i \in I} \subseteq [0,\infty)$ satisfy for all $i \in I$ that 
\begin{equation}
D_{i} 
= 
\frac{\left[\sup_{v,w \in W_i} \|v-w\|\right]^d }{\lambda(W_i)d} 
\left[
\int_{\cup_{x \in W_i} \{x-y\colon y \in W_i \} } \| z \|^{(1-d) \tfrac{p}{p-1}} \, dz
\right]^{\nicefrac{(p-1)}{p}}
.
\end{equation}
Then
\begin{equation}
\begin{split}
\sup_{ x \in W }
\left| \Phi(x) \right|
& \leq
2^{ \nicefrac{(p-1)}{p} }
\max\left\{\max_{i \in I}\! \left([\lambda(W_i)]^{-\nicefrac1p}\right),
\max_{i \in I} ( D_{i} ) \right\}
\\
&  \cdot 
\left[ 
\int_{ W }
\left(
\left|
\Phi(x)
\right|^{p}
+
\left\|
( \nabla \Phi )( x )
\right\|^{p}
\right)
dx
\right]^{ \nicefrac{ 1 }{ p } }
.
\end{split}
\end{equation}
\end{prop}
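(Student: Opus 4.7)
The plan is to apply Lemma~\ref{lem:sob-const0} separately on each convex piece $W_i$, use H\"older's inequality to control the mean value of $\Phi$ on $W_i$ by the $L^p$-norm, combine the two resulting $L^p$-integrals into a single one, and finally pass from the local suprema on the $W_i$ to the global supremum on $W$ via the covering assumption.

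First, I would fix $i \in I$. Since $W_i$ is an open, bounded, convex set with $\lambda(W_i) > 0$ and the integrability hypothesis restricts from $W$ to $W_i$, Lemma~\ref{lem:sob-const0} applied with $W \leftarrow W_i$ gives
\begin{equation*}
\sup_{x \in W_i} \left| \Phi(x) - \frac{1}{\lambda(W_i)} \int_{W_i} \Phi(y)\,dy \right|
\leq D_i \left[ \int_{W_i} \|(\nabla\Phi)(y)\|^p \, dy \right]^{\nicefrac{1}{p}}.
\end{equation*}
H\"older's inequality with conjugate exponents $p$ and $p/(p-1)$ yields
\begin{equation*}
\frac{1}{\lambda(W_i)} \int_{W_i} |\Phi(y)| \, dy
\leq [\lambda(W_i)]^{-\nicefrac{1}{p}} \left[ \int_{W_i} |\Phi(y)|^p \, dy \right]^{\nicefrac{1}{p}},
\end{equation*}
so the triangle inequality produces, for every $x \in W_i$,
\begin{equation*}
|\Phi(x)| \leq [\lambda(W_i)]^{-\nicefrac{1}{p}} \left[ \int_{W_i} |\Phi(y)|^p \, dy \right]^{\nicefrac{1}{p}}
+ D_i \left[ \int_{W_i} \|(\nabla\Phi)(y)\|^p \, dy \right]^{\nicefrac{1}{p}}.
\end{equation*}

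Second, the elementary inequality $a+b \leq 2^{(p-1)/p}(a^p+b^p)^{1/p}$ for $a,b \in [0,\infty)$, a direct consequence of convexity of $t \mapsto t^p$, combines the two $L^p$-integrals into one:
\begin{equation*}
\sup_{x \in W_i} |\Phi(x)|
\leq 2^{\nicefrac{(p-1)}{p}} \max\!\left\{ [\lambda(W_i)]^{-\nicefrac{1}{p}},\, D_i \right\}
\left[ \int_{W_i} \left( |\Phi(y)|^p + \|(\nabla\Phi)(y)\|^p \right) dy \right]^{\nicefrac{1}{p}}.
\end{equation*}
Enlarging the domain of integration from $W_i$ to $W$ and maximizing the prefactor over $i \in I$, I would then obtain a common upper bound, uniform in $i$, matching the right-hand side in the proposition.

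Finally, the covering hypothesis $\overline{W} = \cup_{i \in I} \overline{W_i}$ together with continuity of $\Phi$ on $W$ ensures that $\sup_{x \in W} |\Phi(x)| = \sup_{i \in I} \sup_{x \in W_i} |\Phi(x)|$: any $x \in W$ lies in some closure $\overline{W_i}$ and is approximable by points of $W_i$, so by continuity $|\Phi(x)|$ is controlled by the supremum over that $W_i$. Passing to the supremum over $i \in I$ in the estimate of the preceding paragraph yields the claimed bound. The only subtle point is this last topological bookkeeping, which relies on the covering assumption and the continuity of $\Phi$; the rest is a routine combination of Lemma~\ref{lem:sob-const0}, H\"older's inequality, and the power-mean inequality.
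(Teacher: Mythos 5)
Your proof is correct and follows essentially the same route as the paper: Lemma~\ref{lem:sob-const0} on each $W_i$, H\"older's inequality for the mean-value term, the power-mean inequality $(a+b)^p\leq 2^{p-1}(a^p+b^p)$ for the constant $2^{\nicefrac{(p-1)}{p}}$, and the covering hypothesis plus continuity to pass to the global supremum. The one cosmetic difference is at the last step: you enlarge each local integral $\int_{W_i}$ to $\int_W$ and then take the maximum over $i$, while the paper bounds $\max_i$ by an $\ell^p$-sum and uses the disjointness of the $W_i$ to sum the integrals into $\int_W$; both are valid and yield the same constant. You do implicitly use $W_i\subseteq W$ (when restricting the integrability hypothesis and when enlarging the integration domain), which is not automatic from $\overline{W}=\cup_i\overline{W_i}$ alone; the paper justifies it by citing that the interior of the closure of an open convex set is the set itself (Rockafellar, Theorem~6.3), and it would strengthen your write-up to say a word about this.
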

\begin{proof}[Proof of Proposition \ref{prop:sobolev-const1}]
Note that the hypothesis that $W \subseteq \R^d$ is open and convex and~\cite[Theorem 6.3]{rockafellar:convex} imply that for all $i\in I$ we have that $W_i \subseteq W$.
Next observe that 
the hypothesis that
$
\int_{ W }
|
\Phi(x)
|^p
+
\|
( \nabla \Phi )( x )
\|^{p}
\, 
dx
<
\infty
$,
the hypothesis that for all $i \in I$ we have that $W_i$ is bounded,
and
H\"older's inequality 
yield that  
for all 
$i \in I$
we have that
\begin{equation}
\begin{split}
\frac{1}{\lambda(W_i) }  \int_{ W_i } | \Phi(y) | \, dy  
& = 
  \int_{W_i} \left| \frac{1}{\lambda(W_i)} \Phi(y) \right| dy  
\\  \leq &
  \left[\int_{W_i} [\lambda(W_i)]^{-\nicefrac{p}{(p-1)}} \, dy \right]^{\nicefrac{(p-1)}{p}}
  \left[ \int_{W_i} | \Phi(y) |^{p} \, dy \right]^{ \nicefrac{1}{p} }
\\  \leq & \,\,
  [\lambda(W_i)]^{\nicefrac{-1}{p}}
  \left[ \int_{W_i} | \Phi(y) |^{p} \, dy \right]^{ \nicefrac{1}{p} }
<
\infty
.
\end{split}
\end{equation}
The hypothesis that 
$ \overline{W} = \cup_{i \in I} \overline{W_i}$,
the triangle inequality,
and
Lemma~\ref{lem:sob-const0}
(applied with $W \leftarrow W_i$ for $i \in I$  in the notation of Lemma~\ref{lem:sob-const0})
hence verify that 
\begin{equation}
\label{eq:sob-const1-pf1}
\begin{split}
& 
\sup_{x \in W} | \Phi(x) | 
=
  \max_{i \in I} \!\left(\sup_{x \in W_i} | \Phi(x) | \right)
\\ & \leq 
  \max_{i \in I} \left( 
  \sup_{x \in W_i}  \left|\Phi(x) - 
  \frac{1}{\lambda(W_i) } \int_{ W_i }  \Phi(y)  \, dy\right| 
  +
  \left|
  \frac{1}{\lambda(W_i) } \int_{ W_i }  \Phi(y)  \, dy  \right|
  \right) 
\\ & \leq
  \max_{i \in I} 
  \left(
  D_i 
  \left[ \int_{W_i} \| (\nabla \Phi) (y) \|^{p} \, dy \right]^{\nicefrac{1}{p}}    
  + 
  [\lambda(W_i)]^{-\nicefrac{1}{p}}  \left[ \int_{W_i} | \Phi(y) |^{p} \, dy \right]^{\nicefrac{1}{p}}    
  \right)
\\ & \leq
  \max \left \{    
  \max_{i \in I} \!\left( [\lambda(W_i)]^{-\nicefrac{1}{p}}\right)
  ,
  \max_{i \in I} (D_i)
  \right \} 
\\ & \cdot
  \max_{i \in I} 
  \left(
  \left[ \int_{W_i} | \Phi(y) |^{p} \, dy \right]^{\nicefrac{1}{p}}    
+
  \left[ \int_{W_i} \| (\nabla \Phi) (y) \|^{p} \, dy \right]^{\nicefrac{1}{p}}    
  \right)
.
\end{split}
\end{equation}
Next note that for all $(x_i)_{i \in I} \subseteq \R$ 
we have that 
\begin{equation} 
\max_{i \in I} |x_i| \leq \left[\sum_{i \in I} |x_i|^p \right]^{\nicefrac{1}{p}}
.
\end{equation}
Combining 
this,
the hypothesis that 
for all $ i \in I $, $ j \in I \backslash \{i\} $
we have that
$  W_i \cap W_j = \emptyset$
and $ \overline{W} = \cup_{i \in I} \overline{W_i}$,
and
the fact that for all $a,b \in [0,\infty)$ we have that
$(a+b)^p \leq 2^{p-1}(a^p + b^p)$
with
\eqref{eq:sob-const1-pf1}
yields that
\begin{equation}
\begin{split}
  \sup_{x \in W} | \Phi(x) | 
& \leq 
  \max \left \{    
  \max_{i \in I} \!\left([\lambda(W_i)]^{-\nicefrac{1}{p}}\right)
  ,
  \max_{i \in I} (D_i)
  \right \} 
\\ & \quad \cdot 
  \left[
  \sum_{i \in I} \left| 
  \left[ \int_{W_i} | \Phi(y) |^{p} \, dy \right]^{\nicefrac{1}{p}}    
  +  \left[ \int_{W_i} \| (\nabla \Phi) (y) \|^{p} \, dy \right]^{\nicefrac{1}{p}}    
  \right| ^p
  \right]^{\!\nicefrac{1}{p}}
\\ & \leq 
  2^{\frac{p-1}{p}}
  \max \left \{    
  \max_{i \in I} \!\left( [\lambda(W_i)]^{-\nicefrac{1}{p}}\right)
  ,
  \max_{i \in I} (D_i)
  \right \} 
\\ & \quad \cdot 
  \left[
  \sum_{i \in I}  
  \int_{W_i} \left(| \Phi(y) |^{p} 
  +    \| (\nabla \Phi) (y) \|^{p} \right) dy     
  \right]^{\!\nicefrac{1}{p}}
\\ & = 
  2^{\frac{p-1}{p} } \max \left \{    
  \max_{i \in I} \!\left([\lambda(W_i)]^{-\nicefrac{1}{p}}\right)
  ,
  \max_{i \in I} (D_i)
  \right \} 
\\ & \quad \cdot 
  \left[
  \int_{W} \left(| \Phi(y) |^{p}     
  +    \| (\nabla \Phi) (y) \|^{p} \right) dy     
  \right]^{\!\nicefrac{1}{p}}
.
\end{split}
\end{equation}
This completes the proof of Proposition \ref{prop:sobolev-const1}.
\end{proof}
\begin{cor}
\label{cor:sob-const2} 
Let  
$ d \in \{2,3,\ldots\}$,
$ \Phi \in C^1((0,1)^d,\R) $,
let $\left\|  \cdot \right \| \colon \R^d \to [0,\infty)$ be the standard norm on $\R^d$,
and assume that 
$
\int_{ (0,1)^d }
(
|
\Phi(x)
|^{d^2}
+
\|
( \nabla \Phi )( x )
\|^{d^2}
)
\, 
dx
<
\infty
$. Then 
\begin{equation}
\sup_{ x \in (0,1)^d }
\left| \Phi(x) \right|
\leq
8\sqrt{e}
\left[ 
\int_{ (0,1)^d }
\left(
\left|
\Phi(x)
\right|^{d^2}
+
\left\|
( \nabla \Phi )( x )
\right\|^{d^2}
\right)
dx
\right]^{ \nicefrac{ 1 }{ d^2 } }
.
\end{equation}
\end{cor}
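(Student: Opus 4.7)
I would derive Corollary~\ref{cor:sob-const2} as a direct application of Proposition~\ref{prop:sobolev-const1} by choosing $W = (0,1)^d$, exponent $p = d^2$, and the partition $(W_i)_{i \in I}$ obtained by slicing $W$ into $n^d$ congruent open sub-cubes of side length $h = 1/n$, where $n \in \N$ is to be tuned as a function of $d$. Note that the hypothesis $p \in (d,\infty)$ of Proposition~\ref{prop:sobolev-const1} is satisfied since $d^2 > d$ for all $d \geq 2$.

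For each such sub-cube $W_i$ one has $\lambda(W_i) = n^{-d}$ and $\sup_{v,w \in W_i} \|v-w\| = \sqrt{d}\,h$, hence $[\lambda(W_i)]^{-1/p} = n^{1/d}$ and $\cup_{x \in W_i}\{x-y \colon y \in W_i\} \subseteq \{z \in \R^d \colon \|z\| < \sqrt{d}\,h\}$. Substituting these quantities into the definition of $D_i$, bounding the integral $\int_{\cup_x\{x-y\}} \|z\|^{(1-d)p/(p-1)}\,dz$ via Lemma~\ref{lem:hoelder}, and bounding the volume of the unit ball via Corollary~\ref{cor:vol-ball}, I would obtain, after using the simplifications $(p-d)/(p-1) = d/(d+1)$, $d(p-1)/(p-d) = d+1$, and $(p-1)/p = 1-1/d^2$ valid for $p = d^2$, an estimate of the form
\begin{equation*}
D_i \leq C_d \, n^{-(d-1)/d},
\end{equation*}
where $C_d \in (0,\infty)$ depends only on $d$. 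A short exponent bookkeeping reveals that the dominant factor in $C_d$ is $(2\pi e)^{(d-1)(d+1)/(2d)}$, coming from the ball-volume bound, so that $C_d^{1/d} \to \sqrt{2\pi e}$ as $d \to \infty$.

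Choosing $n = \lceil C_d \rceil \in \N$ then balances the two competing quantities in Proposition~\ref{prop:sobolev-const1}: we have $n^{1/d} \leq (C_d+1)^{1/d}$ and $D_i \leq C_d \cdot n^{-(d-1)/d} \leq C_d^{1-(d-1)/d} = C_d^{1/d}$. Combined with the prefactor $2^{(p-1)/p} \leq 2$, Proposition~\ref{prop:sobolev-const1} would then yield
\begin{equation*}
\sup_{x \in (0,1)^d} |\Phi(x)| \leq 2(C_d + 1)^{1/d} \left[\int_{(0,1)^d} \big(|\Phi(x)|^{d^2} + \|(\nabla \Phi)(x)\|^{d^2}\big)\,dx\right]^{1/d^2}\!.
\end{equation*}

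The main obstacle is the final numerical verification that $2(C_d + 1)^{1/d} \leq 8\sqrt{e}$, equivalently $C_d \leq (4\sqrt{e})^d - 1$, uniformly over $d \in \{2, 3, \ldots\}$. Asymptotically this is comfortable, since $\lim_{d \to \infty} C_d^{1/d} = \sqrt{2\pi e} \approx 4.13$ sits well below $4\sqrt{e} \approx 6.59$ (equivalent to $\pi < 8$); and for small $d$, $(4\sqrt{e})^d$ grows rapidly enough (already $16e \approx 43.5$ at $d=2$) that the pre-asymptotic corrections, which are small powers of $d$, $d+1$, and $\pi$, remain benign. Nevertheless, turning this asymptotic picture into a clean inequality for every single $d \geq 2$ is the delicate part and requires carefully tracking every exponent produced by Lemma~\ref{lem:hoelder} and Corollary~\ref{cor:vol-ball}.
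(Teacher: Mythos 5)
Your strategy matches the paper's almost exactly: choose $W = (0,1)^d$, exponent $p = d^2$, partition $W$ into congruent open sub-cubes, feed the resulting $[\lambda(W_i)]^{-1/p}$ and $D_i$ (bounded via Lemma~\ref{lem:hoelder} and Corollary~\ref{cor:vol-ball}) into Proposition~\ref{prop:sobolev-const1}, and balance the two competing terms by tuning the number of sub-cubes per side $n$ so that $n^{1/d}$ and $D_i$ are of comparable size. The exponent bookkeeping you sketch, in particular the identification of $(2\pi e)^{d/2}$ as the dominant scale for $n$ and the observation that the limiting constant $\sqrt{2\pi e}$ sits below $4\sqrt e$, is the correct heart of the argument.

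The one genuine gap is precisely the step you flag yourself: you never close the uniform-in-$d$ bound $2(C_d+1)^{1/d} \le 8\sqrt e$, and an asymptotic limit plus a claim that the small-$d$ corrections are ``benign'' is not a proof. With $n = \lceil C_d\rceil$ one would have to control $(C_d+1)^{1/d}$ for every single $d \ge 2$ with $C_d$ itself an implicit product of factors from Lemma~\ref{lem:hoelder}, Corollary~\ref{cor:vol-ball}, and the cube geometry; that bookkeeping is the entire technical content of the corollary, not a side remark. The paper sidesteps the opacity by taking $n = 2^m$ with $m = \min\!\bigl(\N \cap [\tfrac{d}{2\ln 2}\ln(2\pi e),\infty)\bigr)$, so that $(2\pi e)^{d/2} \le 2^m$ and hence $[\lambda(W_i)]^{-1/d^2} = 2^{m/d} \le 2^{1/d}\sqrt{2\pi e}$ exactly; pushing this same choice through the $D_i$ estimate yields $\max\{ [\lambda(W_i)]^{-1/d^2}, \max_i D_i\} \le 2^{1+1/d}\sqrt{2e}$, and since $d\ge 2$ gives $2^{1+1/d}\le 2\sqrt 2$ one lands on $4\sqrt e$ cleanly. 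That explicit power-of-two choice is what turns your asymptotic picture into a closed inequality for all $d\ge 2$, and I would recommend adopting it rather than the ceiling of an undetermined constant.
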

\begin{proof}[Proof of Corollary~\ref{cor:sob-const2}]
Throughout this proof 
let $\lambda:\mathcal{B}(\R^d)\to [0,\infty]$ be the Lebesgue--Borel 
measure on $\R^d$,
let $m \in \N$ satisfy that 
\begin{equation}
\label{eq:sob-defm}
m = \min \!\left( \N \cap\! \left[\frac{d}{2\ln(2)}(\ln(2) + \ln(\pi) + 1), \infty\right) \right) ,
\end{equation}
let $ B \subseteq \R^d$ be the set given by
$
  B = \{ x \in \R^d \colon \|x\| < 1 \}
$,
let $I$ be the set given by
$I = \{1,\ldots,2^m\}^d$,
let $W_i, V_i \subseteq \R^d$,
$i \in I$,
be the sets which satisfy for all $i = (i_1,\ldots, i_d) \in I$ 
that
\begin{equation}
\label{eq:sob-const2-wivi}
W_i = (\times_{j=1}^d ((i_j-1)2^{-m}, i_j 2^{-m}))
\quad
\text{and}
\quad
V_i = \cup_{x \in W_i} \{x-y \colon y \in W_i\},
\end{equation}
and let $ (D_i)_{ i \in I }, (\rho_i)_{ i \in I } \subseteq (0,\infty)$ 
satisfy for all $ i \in I$ that
\begin{equation}
\rho_i = \sup_{v,w \in W_i} \|v-w\|
\quad
\text{and}
\quad
D_{i} = \frac{\rho_i^d }{\lambda(W_i)d} 
\left[
\int_{V_i} \| x \|^{(1-d) \tfrac{d^2}{d^2-1}} \, dx 
\right]^{\tfrac{d^2-1}{d^2}}
.
\end{equation}
Observe that \eqref{eq:sob-defm} 
ensures that
\begin{equation}
\label{eq:sob-2m}
[2\pi e]^{\nicefrac{d}{2}}
=
e^{ \frac{d}{2}[\ln(2)+\ln(\pi) + 1]}
=
2^{\frac{d}{2\ln(2)}[\ln(2) + \ln(\pi) + 1]}
\leq
2^m
.
\end{equation}
Hence, we obtain that 
\begin{equation}
[2\pi e]^{\nicefrac{d}{2}} 2^{-m} \leq 1
.
\label{eq:sob-const2-pf0}
\end{equation}
Next note that 
\eqref{eq:sob-const2-wivi}
verifies that  
for all $i \in I$ we have that 
\begin{equation}
\label{eq:sob-const2-pf1}
\lambda(W_i) = 2^{-dm} \qquad \text{and}
\qquad \rho_i = \sqrt{d}2^{-m}
.
\end{equation}
Moreover, note that 
\eqref{eq:sob-defm}
implies that
\begin{equation}
\label{eq:sob-const2-m}
\frac{d}{2\ln(2)}(\ln(2) + \ln(\pi) + 1)
\leq
m
\leq
\frac{d}{2\ln(2)}(\ln(2) + \ln(\pi) + 1)
+
1
.
\end{equation}
This, \eqref{eq:sob-const2-pf1},
and
\eqref{eq:sob-2m} hence verify that 
\begin{equation}
\label{eq:sob-const2-pf2}
\max_{i \in I} \!\left([\lambda(W_i)]^{-\nicefrac{1}{d^2}}\right) 
=
2^{\nicefrac{m}{d}}
\leq
2^{ \frac{\ln(2) + \ln(\pi) + 1}{2\ln(2)} + \frac{1}{d}} 
=
2^{\frac{1}{d}}\sqrt{ 2 \pi e}
.
\end{equation}
Next note that Lemma~\ref{lem:hoelder} 
(applied with $p \leftarrow d^2$, $W \leftarrow W_i$ for $i \in I$  in the notation of Lemma~\ref{lem:hoelder})
assures that for all $i \in I$ we have that
\begin{equation}
\begin{split}
\left[\int_{V_i} \|x\|^{(1-d)\frac{d^2}{d^2-1}} \, dx \right]
^{\frac{d^2-1}{d^2}}
& \leq 
\rho_i^{\frac{d-1}{d}}
\left( \frac{d(d^2-1)}{d^2-d} \lambda(B)
\right)^{\frac{d^2-1}{d^2}}
\\ & =
\rho_i^{\frac{d-1}{d}}
\left( (d+1) \lambda(B)
\right)^{\frac{d^2-1}{d^2}}
.
\end{split}
\end{equation}
Corollary~\ref{cor:vol-ball},
the fact that $(d+1) \frac{1}{\sqrt{d\pi}} (2\pi e)^\frac{d}{2}\geq 1$,
the fact that 
$ (d+1) \leq 2d $,
the fact that $\frac{d^2-1}{d^2} \leq 1$,
and \eqref{eq:sob-const2-pf1}
hence prove that for all $i \in I$ we have that
\begin{equation}
\begin{split}
\bigg[\int_{V_i} &  \|x\|^{(1-d)\frac{d^2}{d^2-1}} \, dx \bigg]
  ^{\frac{d^2-1}{d^2}} \\
& \leq (\sqrt{d}2^{-m})^{\frac{d-1}{d}} \left((d+1) \frac{1}{\sqrt{d\pi}} (2\pi e)^\frac{d}{2}\right)^{\frac{d^2-1}{d^2}} d^{-\frac{d}{2} \frac{d^2-1}{d^2}}
 \\
& \leq (\sqrt{d}2^{-m})^{\frac{d-1}{d}} 2d \frac{1}{\sqrt{d\pi}} (2\pi e)^\frac{d}{2} d^{-\frac{d}{2} \frac{d^2-1}{d^2}}
 \\
& = 2^{ 1 + \frac{m}{d} - m } d^{ 1 + \frac{d-1}{2d} } \frac{1}{\sqrt{d\pi}} (2\pi e)^\frac{d}{2} d^{-\frac{d}{2} \frac{d^2-1}{d^2}} \\ 
& = 2^{ 1 + \frac{m}{d} - m } d^{ 1 + \frac12 - \frac{1}{2d} + \frac{1}{2d} - \frac{d}{2} } \frac{1}{\sqrt{d\pi}} (2\pi e)^\frac{d}{2}  \\
& = 2^{ 1 + \frac{m}{d} - m } d^{ 1  - \frac{d}{2} } \frac{1}{\sqrt{\pi}} (2\pi e)^\frac{d}{2} 
= \frac{2^{1 +\frac{m}{d} - m} d^{ 1 - \frac{d}{2} } (2 \pi e )^{\nicefrac{d}{2}} } { \sqrt{\pi} }\,.
\end{split}
\end{equation}
This, \eqref{eq:sob-const2-m}, \eqref{eq:sob-2m},
and
\eqref{eq:sob-const2-pf0}
therefore yield that  for all $i \in I$ we have that
\begin{equation}
\label{eq:sob-const2-pf3}
\begin{split}
  \left[\int_{V_i} \|x\|^{(1-d)\frac{d^2}{d^2-1}} \, dx \right]
  ^{\frac{d^2-1}{d^2}}
& \leq
  \frac{
  2^{1 +\frac{m}{d} }
  d^{ 1 - \frac{d}{2} }
  }
  { \sqrt{\pi} }
\\ & \leq
  \frac{
  2^{1 + \frac{\ln(2) + \ln(\pi) + 1}{2\ln(2)} + \frac{1}{d}}
  d^{ 1 - \frac{d}{2} }
  }
  { \sqrt{\pi} }
=
  { 2^{1+\frac{1}{d}} \sqrt{2e }  d^{ 1 - \frac{d}{2} }}
.
\end{split}
\end{equation}
Next note that \eqref{eq:sob-const2-pf1} ensures that for all $i \in I$ we have that
\begin{equation}
\frac{\rho_i^d}{ \lambda(W_i)d}
= d^{\frac{d}{2}-1} 2^{-dm} 2^{dm} = d^{\frac{d}{2}-1} 
.
\end{equation}
Combining this with \eqref{eq:sob-const2-pf3} yields that
\begin{equation}
\max_{i \in I} (D_i) 
=
\max_{i \in I} \!\left(\frac{\rho_i^d}{\lambda(W_i)d}
\left[\int_{V_i} \|x\|^{(1-d)\frac{d^2}{d^2-1}} \, dx \right]
^{\frac{d^2-1}{d^2}} \right)
\leq
2^{1+\frac{1}{d}} \sqrt{ 2  e }
.
\end{equation}
Combining this 
and \eqref{eq:sob-const2-pf2}
with
the hypothesis that $ d \in \{2,3,\ldots\}$
yields that
\begin{equation}
\label{eq:sob-const2-max}
\begin{split}
&\max \left\{
  \max_{i \in I} \!\left([\lambda(W_i)]^{-\frac{1}{d^2}}\right)
  ,
  \max_{i \in I} (D_{i})
  \right\}
\\ &   =
  \max \left\{ 
   2^{\frac{1}{d}}\sqrt{ 2 \pi e }
   ,
  2^{1+\frac{1}{d}} \sqrt{ 2 e }
  \right\} 
\\ & = 
  2^{1 + \frac{1}{d}}\sqrt{2e}
\leq
  2 \sqrt{2}\sqrt{2e}
=
4\sqrt{e}
.
\end{split}
\end{equation}
Next note that
\eqref{eq:sob-const2-wivi}
ensures that 
for all $ i \in I $, $ j \in I \backslash \{i\} $
we have that
\begin{equation}
W_i \cap W_j = \emptyset
\quad
\text{and}
\quad
[0,1]^d = \cup_{i \in I} \overline{W_i}
.
\end{equation}
This, 
\eqref{eq:sob-const2-max},
\eqref{eq:sob-const2-pf1},
Proposition \ref{prop:sobolev-const1}  
(applied with
$p \leftarrow d^2$,
$ I \leftarrow I $,
$ W \leftarrow (0,1)^d$,
$ W_i \leftarrow W_i $
for 
$ i \in I $
in the notation of Proposition \ref{prop:sobolev-const1}),
and
the hypothesis that $ d \in \{2,3,\ldots\}$
hence
imply that
\begin{equation}
\begin{split}
\sup_{ x \in (0,1)^d } \left| \Phi(x) \right|
&  \leq
  2^{1-\frac{1}{d^2}}
  \max\left\{
  \max_{i \in I} \!\left([\lambda(W_i)]^{-\frac{1}{d^2}}\right)
  ,
  \max_{i \in I} (D_{i})
  \right\}
\\ &  \cdot 
  \left[ 
  \int_{ (0,1)^d }
  \left(
  \left|
  \Phi(x)
  \right|^{d^2}
+
  \left\|
  ( \nabla \Phi )( x )
  \right\|^{d^2}
  \right)
  dx
  \right]^{ \nicefrac{ 1 }{ d^2} }
\\ & \leq
  8 \sqrt{e}
  \left[ 
  \int_{ (0,1)^d }
  \left(
  \left|
  \Phi(x)
  \right|^{d^2}
+
  \left\|
  ( \nabla \Phi )( x )
  \right\|^{d^2}
  \right)
  dx
  \right]^{ \nicefrac{ 1 }{ d^2} }
.
\end{split}
\end{equation}
This completes the proof of Corollary~\ref{cor:sob-const2}.
\end{proof}
\begin{cor}
\label{cor:sob-const3} 
Let  
$ d \in \N$,
$ \Phi \in C^1((0,1)^d,\R) $,
let $\left\|  \cdot \right \| \colon \R^d \to [0,\infty)$ be the standard norm on $\R^d$,
and assume that 
$
\int_{ (0,1)^d }
(
|
\Phi(x)
|^{\max\{2,d^2\}}
+
\|
( \nabla \Phi )( x )
\|^{\max\{2,d^2\}}
)
\, 
dx
<
\infty
$. Then 
\begin{equation}
\label{eq:sob-const3}
\sup_{ x \in (0,1)^d }
\left| \Phi(x) \right|
\leq
8\sqrt{e}
\left[ 
\int_{ (0,1)^d }
\left(
\left|
\Phi(x)
\right|^{\max\{2,d^2\}}
+
\left\|
( \nabla \Phi )( x )
\right\|^{\max\{2,d^2\}}
\right)
dx
\right]^{ \nicefrac{ 1 }{ \max\{2, d^2\} } }
.
\end{equation}
\end{cor}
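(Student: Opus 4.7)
The plan is to split the proof into two cases depending on the value of $d$: the case $d \in \{2,3,\ldots\}$, in which $\max\{2,d^2\} = d^2$, and the case $d=1$, in which $\max\{2,d^2\} = 2$. In the first case, the claim~\eqref{eq:sob-const3} is literally the conclusion of Corollary~\ref{cor:sob-const2}, so it suffices to invoke that corollary. All the real work lies in verifying the one-dimensional case, which is exactly where the present statement extends Corollary~\ref{cor:sob-const2}.

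For the case $d=1$ the goal reduces to showing the one-dimensional Sobolev-type embedding
\begin{equation*}
\sup_{x \in (0,1)} |\Phi(x)| \;\leq\; 8\sqrt{e}\,\left[\int_0^1 \bigl(|\Phi(x)|^2 + |\Phi'(x)|^2\bigr)\, dx \right]^{1/2}.
\end{equation*}
The starting point is Lemma~\ref{lem:sob-1d}, which provides the $L^1$-type bound
\begin{equation*}
\sup_{x \in (0,1)} |\Phi(x)| \;\leq\; \int_0^1 \bigl(|\Phi(x)| + |\Phi'(x)|\bigr)\, dx.
\end{equation*}
To upgrade this to the required $L^2$-type bound, I would apply the Cauchy--Schwarz inequality separately to each of the two summands on the right-hand side, obtaining the estimate by $\bigl[\int_0^1 |\Phi(x)|^2\, dx\bigr]^{1/2} + \bigl[\int_0^1 |\Phi'(x)|^2\, dx\bigr]^{1/2}$, and then use the elementary inequality $a+b \leq \sqrt{2}\sqrt{a^2+b^2}$ (valid for all $a,b \in [0,\infty)$) to consolidate the two $L^2$-norms into a single $L^2$-norm of the pair. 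This yields a constant of $\sqrt{2}$, and since trivially $\sqrt{2} \leq 8\sqrt{e}$, the one-dimensional case of~\eqref{eq:sob-const3} follows.

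There is essentially no substantive obstacle here: the hard analytic work (the polar-coordinate estimates of Lemma~\ref{lem:hoelder}, the Poincaré-type inequality of Lemma~\ref{lem:poincare}, the partition argument in Proposition~\ref{prop:sobolev-const1}, and the explicit bookkeeping of constants in Corollary~\ref{cor:sob-const2}) has already been carried out in the previous subsection, and the one-dimensional supremum bound of Lemma~\ref{lem:sob-1d} is elementary. The only minor point to be careful about is that, in one dimension, $\|(\nabla \Phi)(x)\| = |\Phi'(x)|$ by the definition of the standard norm on $\R^1$, so the integrand on the right-hand side of~\eqref{eq:sob-const3} reduces to $|\Phi(x)|^2 + |\Phi'(x)|^2$ as used above. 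Combining the two cases then establishes~\eqref{eq:sob-const3} for all $d \in \N$.
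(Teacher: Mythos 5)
Your proposal is correct and takes essentially the same route as the paper: both split into the cases $d=1$ and $d\in\{2,3,\ldots\}$, invoke Corollary~\ref{cor:sob-const2} for $d\geq 2$, and reduce the $d=1$ case to Lemma~\ref{lem:sob-1d}. The only difference is that the paper states the passage from the $L^1$ bound to the $L^2$ bound for $d=1$ without elaboration, whereas you explicitly supply the Cauchy--Schwarz step and the inequality $a+b\leq\sqrt{2}\sqrt{a^2+b^2}$, which is a valid and natural justification.
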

\begin{proof}[Proof of Corollary~\ref{cor:sob-const3}]
To establish \eqref{eq:sob-const3} we distinguish between the case $d=1$ and the case $d \in \{2,3,\ldots\}$. 
First, we consider the case $d=1$. 
Note that Lemma~\ref{lem:sob-1d} ensures 
that
\begin{equation}
\begin{split}
\sup_{ x \in (0,1) }
\left| \Phi(x) \right|
& \leq
\int_0^1
\left(
\left|
\Phi(x)
\right|
+
\left|
\Phi^{ \prime } ( x )
\right|
\right)
\,dx
\\
& \leq
8\sqrt{e}
\left[ 
\int_0^1
\left(
\left|
\Phi(x)
\right|^2
+
\left|
\Phi^{ \prime } ( x )
\right|^2
\right)
\,dx
\right]^{\nicefrac12}
.
\end{split}
\end{equation}
This establishes \eqref{eq:sob-const3} in the case $d=1$. 
Next we consider the case $d \in \{2,3,\ldots\}$. 
Note that Corollary~\ref{cor:sob-const2} verifies that 
\begin{equation}
\sup_{ x \in (0,1)^d }
\left| \Phi(x) \right|
\leq
8\sqrt{e}
\left[ 
\int_{ (0,1)^d }
\left(
\left|
\Phi(x)
\right|^{d^2}
+
\left\|
( \nabla \Phi )( x )
\right\|^{d^2}
\right)
dx
\right]^{ \nicefrac{ 1 }{ d^2 } }
.
\end{equation}
This establishes \eqref{eq:sob-const3} in the case $d \in \{2,3,\ldots\}$. 
This completes the proof of Corollary~\ref{cor:sob-const3}.
\end{proof}

\subsection{Sobolev type estimates for Monte Carlo approximations}
\label{ssec:smc}
In this subsection we provide in Lemma~\ref{lem:Sobolev2} below a Sobolev type estimate for Monte Carlo approximations. 
Lemma~\ref{lem:Sobolev2}
is one of the main ingredients in our proof of 
Lemma~\ref{lem:mc-nnet-ap} in Subsection~\ref{ssec:qee} below.

\begin{lemma}
\label{lem:Sobolev2}
Let $ d, n \in \N $, $\zeta, a \in \R $, $ b \in (a,\infty) $, $ p \in [1,\infty) $, 
let $\left\|  \cdot \right \| \colon \R^d \to [0,\infty)$ be the standard norm on $\R^d$,
let 
$
\mathfrak{K} \in (0,\infty)
$
be the $\max\{2,p\}$-Kahane--Khintchine constant
(cf. Definition~\ref{def:p2-kahane-khintchine}
and Lemma~\ref{lem:kahane-const}), 
let $ ( \Omega, \mathcal{F}, \P ) $ be a probability space, 
let $\xi_{ i } \colon \R^d \times \Omega \to \R $, 
$ i \in \{ 1, \dots, n \} $,
be i.i.d.\ random fields satisfying for all $ i \in \{ 1, \dots, n \} $, $\omega \in \Omega$ that $\xi_i(\cdot,\omega) \in C^1(\mathbb R^d, \mathbb R)$,
let $\xi \colon \R^d \times \Omega \to \R $ be the random field satisfying
for all $x\in \R^d$, $\omega \in \Omega$ that $\xi(x,\omega) = \xi_1(x,\omega)$,
assume for all $\Phi \in C^1((0,1)^d, \R)$ 
with
$
\int_{ (0,1)^d }(|
\Phi(x)
|^{\max\{2,p\}}
+
\|
( \nabla \Phi )( x )
\|^{\max\{2,p\}})
\, 
dx
<
\infty
$
that
\begin{equation}
\label{eq:conc-sob}
  \sup_{ x \in (0,1)^d }
  \left| \Phi(x) \right|
  \leq
  \zeta
  \left[ 
  \int_{ (0,1)^d }
  \left(
  \left|
  \Phi(x)
  \right|^{\max\{2,p\}}
  +
  \left\|
  (\nabla \Phi)( x )
  \right\|^{\max\{2,p\}}
  \right)
  dx
  \right]^{ \nicefrac{ 1 }{ \max\{2,p\}} }
  ,
\end{equation}
and assume that for all $ x \in [a,b]^d $ we have that 
\begin{equation}
\label{eq:conc-ass1}
\begin{split}
\inf_{ \delta \in (0,\infty) }
& \sup_{ v \in [-\delta,\delta]^d}
\E\Biggl[ 
|
\xi( x + v )
|^{ 1 + \delta } + 
\left\|
(\nabla \xi)(x+v)\right\|^{1+\delta} \,
\Biggr] < \infty
.
\end{split}
\end{equation}
Then
\begin{enumerate}[(i)]
\item \label{it:conc1}
we have that
\begin{equation}
\sup_{ x \in [a,b]^d }
\left|
\E\big[ 
\xi( x )
\big]
-
\frac{ 1 }{ n }
\left(
\sum_{ 
  i = 1
}^{ n }
\xi_{ i }( x ) 
\right)
\right|^{ p }
\end{equation}
is a random variable and
\item \label{it:conc2}
we have that
\begin{equation}
\begin{split}
\Bigg(
  \E  \Bigg[ 
  \sup_{ x \in [a,b]^d } &
  \left|
  \E\big[ 
\xi( x )
  \big]
  -
  \frac{ 1 }{ n }
  \left(
  \sum_{ 
    i = 1
  }^{ n }
\xi_{ i }( x ) 
  \right)
  \right|^{ p }
  \Bigg]
  \Bigg)^{ \!\! \nicefrac{ 1 }{ p } }
\\  \leq 
  \frac{ 
  4 \mathfrak{K} \zeta
  }{
  \sqrt{ n }
  }
  \Bigg( &
  \sup_{ x \in [a,b]^d }
  \Bigg[
  \big(
  \E\big[
  |\xi( x )|^{ \max\{2,p\} }
  \big]
  \big)^{ \nicefrac{1}{\max\{2,p\} } } 
\\ & +
  ( b - a )
  \left|
  \E\Bigg[
  \left\|
(\nabla \xi)(x)  \right\|^{\max\{2,p\}}  
  \Bigg]  \right|^{ \nicefrac{1}{\max\{2,p\} } }
  \Bigg]
  \Bigg)
  .
\end{split}
\end{equation}
\end{enumerate}
\end{lemma}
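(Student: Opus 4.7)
The plan is to apply the pathwise Sobolev--type estimate \eqref{eq:conc-sob} to the Monte--Carlo error $\Phi(x,\omega) := \E[\xi(x)] - \frac{1}{n}\smallsum_{i=1}^n \xi_i(x,\omega)$ rescaled from $[a,b]^d$ to the unit cube, and then to take $L^p(\Omega)$--norms and control each spatial $L^q$--moment (with $q := \max\{2,p\}$) pointwise in $x$ via Proposition~\ref{prop:kahane}. As a preliminary step, the moment hypothesis \eqref{eq:conc-ass1} gives local uniform $L^{1+\delta}$--integrability of $\xi$ and $\nabla\xi$, which via dominated convergence justifies the exchange $\nabla \E[\xi(\cdot)] = \E[(\nabla \xi)(\cdot)]$ and yields $\E[\xi(\cdot)] \in C^1(\R^d,\R)$. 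Combined with the hypothesis that each $\xi_i(\cdot,\omega)$ is $C^1$ this makes $\Phi(\cdot,\omega) \in C^1(\R^d,\R)$ with $(\nabla\Phi)(x,\omega) = \E[(\nabla\xi)(x)] - \frac{1}{n}\smallsum_{i=1}^n (\nabla\xi_i)(x,\omega)$.

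For each fixed $\omega \in \Omega$ I would apply \eqref{eq:conc-sob} to the rescaled function $(0,1)^d \ni y \mapsto \Phi(a+(b-a)y,\omega)$, whose gradient equals $(b-a)(\nabla\Phi)(a+(b-a)y,\omega)$ and for which the integrability hypothesis of \eqref{eq:conc-sob} is immediate from continuity on a bounded domain. Changing variables back via $x = a+(b-a)y$ and collecting the resulting powers of $(b-a)$ yields the pathwise bound
\begin{equation}
\label{eq:plan-pathwise}
\sup_{x \in [a,b]^d} |\Phi(x,\omega)| \leq \zeta (b-a)^{-d/q} \left[\int_{[a,b]^d} \bigl(|\Phi(x,\omega)|^q + (b-a)^q \|(\nabla\Phi)(x,\omega)\|^q \bigr)\, dx\right]^{\!1/q}.
\end{equation}
Since $\Phi(\cdot,\omega)$ is continuous and $[a,b]^d$ is separable, the supremum on the left coincides with the supremum over any countable dense subset, which delivers the measurability claim in item~\eqref{it:conc1}.

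For item~\eqref{it:conc2}, I would raise \eqref{eq:plan-pathwise} to the $p$-th power and take the expectation. Since $p \leq q$, the map $t\mapsto t^{p/q}$ is concave, so Jensen's inequality together with Fubini produces
\begin{equation*}
\left( \E\!\left[\left( \int_{[a,b]^d} g(x,\cdot)\, dx \right)^{\! p/q} \right] \right)^{\! 1/p} \leq \left( \int_{[a,b]^d} \E[g(x,\cdot)]\, dx \right)^{\! 1/q}
\end{equation*}
for every non--negative $g$. Applying this with $g = |\Phi|^q + (b-a)^q\|\nabla\Phi\|^q$, bounding $\int_{[a,b]^d}(\cdot)\,dx \leq (b-a)^d \sup_x(\cdot)$, and using the subadditivity $(A+B)^{1/q}\leq A^{1/q}+B^{1/q}$ (valid for $q \geq 1$), the factor $(b-a)^{-d/q}$ from \eqref{eq:plan-pathwise} is cancelled by $(b-a)^{d/q}$, and I arrive at
\begin{equation*}
\left\| \sup\nolimits_{x \in [a,b]^d} |\Phi(x,\cdot)| \right\|_{L^p(\Omega)} \leq \zeta \sup_{x \in [a,b]^d}\!\left[ \bigl(\E[|\Phi(x)|^q]\bigr)^{1/q} + (b-a)\bigl(\E[\|(\nabla\Phi)(x)\|^q]\bigr)^{1/q} \right].
\end{equation*}

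The last step is to apply Proposition~\ref{prop:kahane} (legitimate since $q \geq 2$) separately to the i.i.d.\ scalar sequence $(\xi_i(x))_{i}$ and to the i.i.d.\ $\R^d$--valued sequence $((\nabla\xi_i)(x))_{i}$, followed by the elementary bound $\|Z-\E Z\|_{L^q}\leq 2\|Z\|_{L^q}$ (triangle inequality plus Jensen), which together give $(\E[|\Phi(x)|^q])^{1/q} \leq \frac{4\mathfrak K}{\sqrt n}(\E[|\xi(x)|^q])^{1/q}$ and the analogous bound for $\nabla\Phi$. Substituting these two pointwise estimates into the previous display delivers the claim. The main obstacle I foresee is the Jensen--cum--Fubini step, where the asymmetry between the outer exponent $p$ and the inner exponent $q = \max\{2,p\}$ must be handled so that the spatial rescaling factor $(b-a)^{-d/q}$ is precisely matched by the $(b-a)^{d/q}$ produced when bounding the integral by its supremum; a secondary technical point is the differentiation under the expectation, which is exactly where the local $L^{1+\delta}$--bound in \eqref{eq:conc-ass1} is used.
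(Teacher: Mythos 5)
Your plan reproduces the paper's proof essentially step for step: rescale to the unit cube, establish $C^1$ regularity of the Monte Carlo error by differentiating under the expectation, apply the Sobolev-type estimate \eqref{eq:conc-sob} pathwise, pass from the $L^p(\Omega)$ norm to the $L^{\max\{2,p\}}(\Omega)$ norm, swap spatial integration with expectation, bound by the spatial supremum, and finish with Proposition~\ref{prop:kahane} applied separately to $\xi$ and $\nabla\xi$ together with the centering bound $\|Z-\E Z\|_{L^q}\leq 2\|Z\|_{L^q}$. The only substantive slip is in the justification of the interchange $\nabla\E[\xi(\cdot)]=\E[(\nabla\xi)(\cdot)]$: hypothesis~\eqref{eq:conc-ass1} gives a locally \emph{uniform $L^{1+\delta}$ bound} on the difference quotients (hence uniform integrability), not a fixed integrable dominating function, so dominated convergence does not apply as stated; you need de la Vall\'ee--Poussin to pass to uniform integrability and then Vitali's convergence theorem, which is precisely what the paper invokes. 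This is a local patch and the rest of your argument, including the matching of the $(b-a)^{\pm d/q}$ factors and the Jensen step for the concave map $t\mapsto t^{p/q}$, is sound.
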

\begin{proof}[Proof of Lemma~\ref{lem:Sobolev2}] 
Throughout this proof 
let $ \scp{\cdot}{\cdot}{} \colon \R^d \times \R^d \to \R$  be the $d$-dimensional Euclidean scalar product,
let $ q \in [2,\infty) $ satisfy that $ q = \max\{2,p\} $,
let
$
\rho \colon \mathbb R^d \to \mathbb R^d 
$
 satisfy for all $ x = ( x_1, \ldots, x_d ) \in \mathbb R^d $ that
\begin{equation}
\label{eq:conc-rho}
\rho( x ) 
=
\big(
( b - a ) x_1 + a ,
( b - a ) x_2 + a ,
\dots ,
( b - a ) x_d + a 
\big) 
,
\end{equation}
let $e_1,\ldots,e_d \in \R^d$ satisfy  that
$ e_1 = (1,0,\ldots,0), \ldots, e_d = (0,\ldots,0,1)$,
let 
$
Y \colon [0,1]^d \times \Omega \to \R
$
be the random field which satisfies for all $ x \in [0,1]^d $
that
\begin{equation}
\label{eq:conc-defy}
Y(x) 
=
\E\big[ 
\xi( \rho( x ) ) 
\big]
-
\frac{ 1 }{ n }
\left(
\sum_{ 
  i = 1
}^{ n }
\xi_{ i }( \rho( x ) ) 
\right)
,
\end{equation}
let 
$
Z \colon [0,1]^d \times \Omega \to \R^d
$
be the random field which satisfies for all $ x \in [0,1]^d $
that
\begin{equation}
\begin{split}
Z(x) 
=
\left( b - a \right)
\Bigg[
\E & \left[ 
(\nabla \xi)(\rho(x))
\right]  -
\frac{ 1 }{ n }
\left(
\sum_{i = 1}^n
(\nabla \xi_i)(\rho(x))
\right)
\Bigg]
,
\end{split}
\end{equation}
and let 
$ E \colon \Omega \to [0,\infty) $
be the random variable given by
\begin{equation}
\label{eq defn of E}
E =
\sup_{ x \in [a,b]^d \cap \Q^d }
\left|
\E\big[ 
\xi( x )
\big]
-
\frac{ 1 }{ n }
\left(
\sum_{ 
  i = 1
}^{ n }
\xi_{ i }( x ) 
\right)
\right|
.
\end{equation}
Note that~\eqref{eq:conc-rho} ensures that $\rho([0,1]^d)=[a,b]^d$.
Furthermore, note that~\eqref{eq:conc-ass1} 
ensures that for all $x \in [a,b]^d$
there exists $\delta_x \in (0,\infty) $ such that
\begin{equation}
\label{eq unif int at x}
\sup_{v \in [-\delta_x,\delta_x]^d } 
\E\left[\left\|
(\nabla \xi)(x+v)
\right\|^{ 1 + \delta_x }
\right] < \infty.
\end{equation}
H\"older's inequality therefore verifies that  for all 
$ x \in [a,b]^d$ there exists $\delta_x \in (0,\infty)$ 
which satisfies that
\begin{equation}
\label{eq:conc-ubound0}
\begin{split}
\sup_{v \in [-\delta_x,\delta_x]^d } &
\E\left[\left\|
(\nabla \xi)(x+v)
\right\|
\right] \\
& \leq
\sup_{v \in [-\delta_x,\delta_x]^d } 
\left(\E\left[\left\|
(\nabla \xi)(x+v)
\right\|^{ 1 + \delta_x }
\right]\right)^{\nicefrac{1}{(1+\delta_x)}}
<
\infty.
\end{split}
\end{equation}
Next note that the collection $S_x = \{y\in \mathbb [a,b]^d : y - x \in (-\delta_x,\delta_x)^d\}$, $x\in [a,b]^d$, is an open
cover of $[a,b]^d$.
The fact that $[a,b]^d$ is compact hence ensures that there exists 
$N\in \N$ and
$x_k \in [a,b]^d$, $k \in \{1,\ldots,N\}$ which satisfies that the collection $S_{x_k}$, 
$k\in\{1,\ldots,N\}$ is a finite
open cover of  $[a,b]^d$.
Combining this with~\eqref{eq:conc-ubound0} yields that 
\begin{equation}
\begin{split}
\label{eq:conc-ubound}
\sup_{x \in [a,b]^d } &
\E\left[\left\|
(\nabla \xi)(x)
\right\|
\right] \\
& \leq \max_{k\in\{1,\ldots,N\}} 
\sup_{v \in (-\delta_{x_k},\delta_{x_k})^d } 
\E\left[\left\|
(\nabla \xi)(x_k+v)
\right\|
\right] 
 < \infty\,.
\end{split}
\end{equation}
Moreover, note that 
the fact that for all $ \omega \in \Omega $ 
we have that the functions
$ [a,b]^d \ni x \mapsto \xi(x,\omega) \in \R $
and 
$ [a,b]^d \ni x \mapsto (\nabla \xi)(x,\omega) \in \R^d$
are continuous
ensures that 
$
[a,b]^d \times \Omega \ni (x,\omega) \mapsto \xi(x,\omega) \in \R 
$
and
$
[a,b]^d \times \Omega \ni (x,\omega) \mapsto (\nabla \xi)(x,\omega) \in \R^d 
$
are Carath\'eodory functions. 
This implies that 
$
[a,b]^d \times \Omega \ni (x,\omega) \mapsto \xi(x,\omega) \in \R 
$
is
$(\mathcal{B}([a,b]^d) \otimes \mathcal{F}) / \mathcal{B}(\R)$-measurable
and
$
[a,b]^d \times \Omega \ni (x,\omega) \mapsto (\nabla \xi)(x,\omega) \in \R^d 
$
is 
$(\mathcal{B}([a,b]^d) \otimes \mathcal{F}) / \mathcal{B}(\R^d)$-measurable, see, e.g., Aliprantis and Border~\cite[Lemma 4.51]{aliprantis:border:infinite}).
Next note that 
the fundamental theorem of calculus
ensures that 
for all   
$ x,y \in \mathbb R^d $ we have that
\begin{equation}
\label{eq:conc-ftc}
\begin{split}
\xi(x) - \xi(y) = \int_0^1 \left\langle(\nabla \xi)\left(y+t(x-y)\right),x-y\right\rangle\, dt  .
\end{split}
\end{equation} 
This reveals that for all
$ x,y \in [a,b]^d $ 
we have that
\begin{equation}
\begin{split}
| \xi(x) - \xi(y) |
& \leq
\left\|x-y\right\|  \int_0^1 \left\|(\nabla \xi)\left(y+t(x-y)\right)\right\|\, dt.
\end{split}
\end{equation} 
Combining this with 
Fubini's theorem 
verifies that  for all $ x,y \in [a,b]^d $ we have that
\begin{equation}
\label{eq:conc-lip2}
\begin{split}
\big| \E[\xi(x)]  -  \E[\xi(y)] \big|
& \leq  \E\big[|\xi(x) - \xi(y)|\big] \\
& \leq \left\|x-y\right\| \E \left[
  \int_0^1 \left\|(\nabla \xi)\left(y+t(x-y)\right)\right\|\,dt\, \right] \\
& = \left\|x-y\right\| 
  \int_0^1 \E \left[ \left\|(\nabla \xi)\left(y+t(x-y)\right)\right\|\, \right] \,dt\\
& \leq \left\|x-y\right\|  
  \sup_{v \in [a,b]^d} \E \left[ \left\|(\nabla \xi)\left(v\right)\right\|\right]\,.
\end{split}
\end{equation}
This and \eqref{eq:conc-ubound} prove that
$ [a,b]^d \ni x \mapsto \E[\xi(x)] \in \R $ 
is a Lipschitz continuous function.
Hence, we obtain
for all $ \omega \in \Omega $ that
$ [0,1]^d \ni x \mapsto Y(x,\omega) \in \R $ is 
a continuous function.
Combining this with~\eqref{eq defn of E} implies that
\begin{equation}
\label{eq:conc-E}
E =
\sup_{ x \in [0,1]^d }
\left|
Y( x )
\right|
.
\end{equation}
This establishes item \eqref{it:conc1}.
Next note that~\eqref{eq:conc-rho} implies that
for all $ j \in \{1,\ldots,d\} $,
$ x \in [0,1]^d $, 
$h\in \R$ 
we have that
\begin{equation}
\rho(x+he_j) - \rho(x) = (b-a)he_j\,.	
\end{equation}
This and~\eqref{eq:conc-ftc} verify that   
for all $ j \in \{1,\ldots,d\} $,
$ x \in [0,1]^d $,
$h\in \R\setminus\{0\}$ 
we have that 
\begin{equation}
\label{eq dir der with rho}
\frac{\xi(\rho(x+he_j))-\xi(\rho(x))}{h}
= (b-a)\int_0^1 \left\langle(\nabla \xi)\big(\rho(x) + t(b-a)he_j\big), e_j \right\rangle \,dt\,.	
\end{equation} 
Moreover, note that~\eqref{eq:conc-ass1} implies
that for all $x\in [0,1]^d$ there exists $\delta_x \in  (0,\infty)$ 
such that
\begin{equation}
\label{eq unif int at rho x}
\sup_{v \in [-\delta_x,\delta_x]^d } 
\E\left[\left\|
(\nabla \xi)(\rho(x)+v)
\right\|^{ 1 + \delta_x }
\right] < \infty.
\end{equation}
This, H\"older's inequality, and Fubini's theorem verify that  for all
$ j \in \{1,\ldots,d\} $, 
$ x \in [0,1]^d $
there exists $\delta_x \in (0, \infty)$ such that for 
all $h \in \{h' \in \mathbb R : |(b-a)h'| < \delta_x \}$
we have that
\begin{equation}
\begin{split}
& \E\left[\left\|\int_0^1 (\nabla \xi)\big(\rho(x) + t(b-a)h e_j \big)\,dt \right\|^{1+\delta_x}\right] \\
& \leq	\E\left[\int_0^1 \left\|(\nabla \xi)\big(\rho(x) + t(b-a)h e_j \big)\right\|^{1+\delta_x}\,dt \right] \\
& = \int_0^1 \E \left[ \left\|(\nabla \xi)\big(\rho(x) + t(b-a)h e_j \big)\right\|^{1+\delta_x}\right]\,dt  \\
& \leq \sup_{v\in [-\delta_x,\delta_x]^d} \E \left[ \left\|(\nabla \xi)\big(\rho(x)+v\big)\right\|^{1+\delta_x}\right] < \infty\,.
\end{split}
\end{equation}
This and~\eqref{eq dir der with rho}
verify that  for all
$ j \in \{1,\ldots,d\} $, 
$ x \in [0,1]^d $
there exists $\delta_x \in (0, \infty)$ such that for 
all $h \in \{h' \in \mathbb R\setminus\{0\} : |(b-a)h'| < \delta_x \}$
we have that
\begin{equation}
\begin{split}
& \E\left[\left|\frac{\xi(\rho(x+he_j))-\xi(\rho(x))}{h}\right|^{1+\delta_x}\right] \\
& \leq (b-a)^{1+\delta_x} \E\left[\left\|\int_0^1 (\nabla \xi)\big(\rho(x) + t(b-a)h e_j \big)\,dt \right\|^{1+\delta_x}\right]\\
& \leq (b-a)^{1+\delta_x}  
\sup_{v\in [-\delta_x,\delta_x]^d} \E \left[ \left\|(\nabla \xi)\big(\rho(x)+v\big)\right\|^{1+\delta_x}\right]
<\infty.	
\end{split}
\end{equation}
This, the theorem of de la Vall\'ee--Poussin (see, e.g.,~\cite[Theorem 6.19]{klenke:probability}), and Vitali's convergence theorem (see, e.g.,~\cite[Theorem 6.25]{klenke:probability})
verify that  for all
$ x \in [0,1]^d $,
$ j \in \{1,\ldots,d\} $
there exists $\delta_x \in (0, \infty)$ such that for all
$ (h_m)_{ m \in \N } \subseteq \{h' \in \mathbb R\setminus\{0\} : |(b-a)h'| < \delta_x \} $ 
with $ \lim_{ m \to \infty} h_m = 0 $
we have that
\begin{equation}
\begin{split}
& \lim_{ m \to \infty } 
\E\left[
\frac{\xi(\rho(x+h_m e_j)) - \xi(\rho(x))}{h_m}
\right]
\\ & =
\E\!\left[
\lim_{ m \to \infty } 
\frac{\xi(\rho(x+h_m e_j)) - \xi(\rho(x))}{h_m}
\right]
.
\end{split}
\end{equation}
Therefore, we obtain that
for all 
$ x \in [0,1]^d $,
$ j \in \{1,\ldots,d\} $
there exists $\delta_x \in (0, \infty)$ such that for all
$ (h_m)_{ m \in \N } \subseteq \{h' \in \mathbb R\setminus\{0\} : |(b-a)h'| < \delta_x \} $ 
with $ \lim_{ m \to \infty} h_m = 0 $
we have that
\begin{equation}
\label{eq:xi rho gradient}
\begin{split}
\lim_{m\to \infty}
 & \E \left[
\frac{\xi(\rho(x+h_m e_j)) - \xi(\rho(x))}{h_m}
\right] =
(b-a)\E \Big[
\langle (\nabla \xi) (\rho(x) ),e_j\rangle \Big].
\end{split}
\end{equation}
Furthermore, the theorem of de la Valle\'e--Poussin, Vitali's convergence theorem, and~\eqref{eq unif int at rho x} prove that for all $x \in [0,1]^d$, $j\in\{1,\ldots,d\}$ we have that 
\begin{equation}
\limsup_{\R^d \setminus \{0\}\ni h \to 0} | \E[\langle (\nabla \xi)(\rho(x)+h),e_j\rangle ] - \E[\langle (\nabla \xi)(\rho(x)),e_j\rangle] | = 0.	
\end{equation}
This and~\eqref{eq:xi rho gradient} imply that for all $\omega \in \Omega$, $x \in (0,1)^d$ we have that $((0,1)^d \ni y \mapsto Y(y,\omega) \in \R) \in C^1((0,1)^d,\R)$ and $(\nabla Y)(x,\omega) = Z(x,\omega)$.
Combining this, 
\eqref{eq:conc-sob},
and
\eqref{eq:conc-E}
yields that
\begin{equation}
\label{eq:conc-sobE}
\begin{split}
  E
  =
  \sup_{ x \in (0,1)^d }
  \left|
  Y( x )
  \right|
& \leq
  \zeta
  \left[ 
  \int_{ (0,1)^d }
  \left(
  \left|
  Y(x)
  \right|^{ q }
  +
  \left\|
  Z( x )
  \right\|^{ q }
  \right)
  dx
  \right]^{ \nicefrac{ 1 }{ q } }
  .
\end{split}
\end{equation}
Next observe that
\eqref{eq:conc-sob}
ensures that $ \zeta \in [0,\infty) $.
H\"older's inequality,
\eqref{eq:conc-sobE},
and
Fubini's theorem
hence verify that 
\begin{equation}
\label{eq:estimate_E}
\begin{split}
\big(
  \E\big[ 
  | E |^p
  \big]
  \big)^{ \nicefrac{1}{p} }
\leq 
  \big(
  \E\big[ 
  | E |^q
  \big]
  \big)^{ \nicefrac{1}{q} }
& \leq
\zeta
\left(
\E\!\left[
\int_{ (0,1)^d }
\left|
Y(x)
\right|^q
+
\left\|
Z( x )
\right\|^q
dx
\right]
\right)^{\! \nicefrac{ 1 }{ q } }
\\ & =
\zeta
\left[ 
\int_{ (0,1)^d }
\E\!\left[
\left|
 Y(x)
\right|^q
+
\left\|
Z( x )
\right\|^q
\right]
dx
\right]^{ \nicefrac{ 1 }{ q } }
\\ & \leq
\zeta
\left[
\sup_{ x \in [0,1]^d }
\E\!\left[
\left|
Y(x)
\right|^q
+
\left\|
Z( x )
\right\|^q
\right]
\right]^{ \nicefrac{ 1 }{ q } }
.
\end{split}
\end{equation}
Next note that 
\eqref{eq:conc-ass1} and 
Proposition~\ref{prop:kahane} 
prove that 
for all $ x \in [0,1]^d $ we have that
\begin{equation}
\begin{split}
\left(
\E\big[ 
|
Y(x)
|^q
\big]
\right)^{ \nicefrac{1}{q} }
\leq
\frac{ 
2 \, \mathfrak{K}}{ \sqrt{ n } }
\big(
\E\big[ 
|
\xi( \rho( x ) )
  -
  \E[ 
\xi( \rho( x ) ) 
  ]
|^q
\big]
\big)^{ \nicefrac{1}{q} }
\end{split}
\end{equation}
and
\begin{equation}
\begin{split}
&
\left(
\E\big[ 
\|
Z(x)
\|^q
\big]
\right)^{ \nicefrac{1}{q} }  \leq
\frac{ 
2 
\,
\mathfrak{K}
\left( b - a \right)
}{ \sqrt{ n } }
\left(
\E\left[ 
\left\|
(\nabla \xi) (\rho(x))
  -
  \E\Big[ 
(\nabla \xi) (\rho(x))
  \Big]
\right\|^q
\right]
\right)^{ \nicefrac{1}{q} }
.
\end{split}
\end{equation}
This and \eqref{eq:estimate_E} imply that
\begin{equation}
\begin{split}
\big(
\E\big[ 
 | E |^p
\big]
\big)^{ \nicefrac{1}{p} }
 \leq &
\frac{ 
2 \mathfrak{K}   \zeta
}{
\sqrt{ n }
}
\Bigg[
\sup_{ x \in [0,1]^d }
\Big(
\E\Big[
\big|
\xi( \rho( x ) )
  -
  \E\big[ 
\xi( \rho( x ) )
  \big]
\big|^q
\\ & +
( b - a )^q
\,
\big\|
(\nabla \xi)(\rho(x) )
  -
  \E\big[ 
(\nabla \xi) (\rho(x) )
  \big]
\big\|^q
\Big]
\Big)
\Bigg]^{ \! \nicefrac{ 1 }{ q } }
.
\end{split}
\end{equation}
Hence, we obtain that
\begin{equation}
\begin{split}
\big(
\E\big[ 
| E |^p
\big]
\big)^{ \nicefrac{1}{p} }
\leq  &
\frac{ 
2 \mathfrak{K}  \zeta
}{
\sqrt{ n }
}
\Bigg[  
\sup_{ x \in [a,b]^d }
\bigg(
\E\bigg[
| \xi(x) - \E[\xi(x)] |^q \\ 
& +( b - a )^q
\,
\left\|
(\nabla \xi) (x )
  -
  \E[ 
(\nabla \xi) (x )
  ]
\right\|^q
\bigg]
\bigg)
\Bigg]^{ \! \nicefrac{ 1 }{ q } }
.
\end{split}
\end{equation}
The fact that 
for all $ r,s \in [0,\infty)$ we have that
$
  (r+s)^{\nicefrac{1}{q}} \leq r^{ \nicefrac{1}{q} } + s^{ \nicefrac{1}{q} }
$
and the triangle inequality
therefore
yield that 
\begin{equation}
\begin{split}
\big(
  \E\big[ 
  | E |^p
  \big]
  \big)^{ \nicefrac{1}{p} }
\leq &
  \frac{ 
  2\mathfrak{K}  \zeta
  }{
  \sqrt{ n }
  }
  \Bigg(
  \sup_{ x \in [a,b]^d }
  \bigg[
  \Big(
  \E\Big[
  |
    \xi( x )
    -
    \E[\xi( x )]
  |^q
  \Big]
  \Big)^{ \nicefrac{1}{q} }
  \\ &   +
  ( b - a )
  \,
  \big(
  \E\big[
  \|
    (\nabla \xi) (x )
 - \E\big[(\nabla \xi) (x )\big]
  \|^q
  \big]
  \big)^{ \nicefrac{1}{q} }
  \bigg]
  \Bigg) \\ 
  \leq &
  \frac{ 
  4 \mathfrak{K}  \zeta
  }{
  \sqrt{ n }
  }
  \Bigg(
  \sup_{ x \in [a,b]^d }
  \bigg[
  \big(
  \E\big[
  |
    \xi( x )
  |^q
  \big]
  \big)^{ \nicefrac{1}{q} }
  +
  ( b - a )
  \,
  \big(
  \E\big[
  \|
    (\nabla \xi)(x ) 
  \|^q
  \big]
  \big)^{ \nicefrac{1}{q} }
  \bigg]
  \Bigg)
  .
\end{split}
\end{equation}
This establishes item \eqref{it:conc2}.
This completes the proof of Lemma~\ref{lem:Sobolev2}.
\end{proof}

\section[Stochastic differential equations with affine coefficients]{Stochastic differential equations with affine coefficient functions}
\label{sec:sde}

\subsection{A priori estimates for Brownian motions}
In this subsection we provide in Lemma~\ref{l:exp.Gauss} below essentially well-known a priori estimates for standard Brownian motions.
Lemma~\ref{l:exp.Gauss} 
will be employed in our proof of 
Corollary~\ref{cor:apriori1} in Subsection~\ref{ssec:apriori1}
below. Our proof of Lemma~\ref{l:exp.Gauss} is a slight adaption 
of the proof of  Lemma~2.5 in 
Hutzenthaler et al.~\cite{MR3766391}.
\begin{lemma}
\label{l:exp.Gauss}
Let
$ d, m \in \N$,
$ T \in [0, \infty ) $,
$ p \in (0,\infty)$,
$ A \in \R^{d \times m} $, 
let 
$
\left \| \cdot \right \| \colon \R^d \to [0,\infty)
$ 
be the standard norm on $\R^d$,
let
$
( \Omega, \mathcal{F}, \P ) 
$
be a probability space,
and let
$
W \colon [0,T] \times \Omega \to \R^m
$
be a standard Brownian motion. 
Then it holds for all $ t \in [0,T] $ that 
\begin{equation}
\label{eq:bm-lp}
\begin{split}
\big( \E\big[ \| A       W_t \|^p \big] \big)^{ 1 / p }
& \leq 
\sqrt{ \max\{1,p-1\} \operatorname{Trace}(A^{ \ast } A) \, t} 
.
\end{split}
\end{equation}
\end{lemma}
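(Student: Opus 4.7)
My plan is to deduce \eqref{eq:bm-lp} from Lemma~\ref{lem:kahane-const} by approximating $AW_t$ in distribution by Rademacher sums via the multivariate central limit theorem. Set $\sigma^2 := \operatorname{Trace}(A^{\ast}A)\,t$ and note that, by independence of the Brownian coordinates, a direct second-moment calculation gives $\E[\|AW_t\|^2] = \sigma^2$. For the easy case $p \in (0,2]$, we have $\max\{1,p-1\} = 1$, and monotonicity of $L^q$-norms (Jensen's inequality) immediately yields $(\E[\|AW_t\|^p])^{1/p} \leq (\E[\|AW_t\|^2])^{1/2} = \sigma$, which is the desired bound. So the only nontrivial case is $p \in (2,\infty)$.

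For $p \in (2,\infty)$, I would introduce (on a possibly enlarged probability space) a family $(r_{k,j})_{k,j\in\N}$ of i.i.d.\ Rademacher random variables and, for each $n \in \N$, form the $\R^d$-valued sum
\begin{equation*}
S_n := \sqrt{\tfrac{t}{n}}\,\sum_{j=1}^m \sum_{k=1}^n r_{k,j}\,A_{\cdot j},
\end{equation*}
where $A_{\cdot j} \in \R^d$ denotes the $j$-th column of $A$. The multivariate central limit theorem applied componentwise gives that $\sqrt{t/n}\,\bigl(\sum_{k=1}^n r_{k,1},\ldots,\sum_{k=1}^n r_{k,m}\bigr)$ converges in distribution to $W_t$ as $n \to \infty$, hence $S_n \to AW_t$ in distribution. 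A direct independence calculation yields $\E[\|S_n\|^2] = \tfrac{t}{n}\sum_{j=1}^m \sum_{k=1}^n \|A_{\cdot j}\|^2 = t\,\sum_{j=1}^m \|A_{\cdot j}\|^2 = \sigma^2$. Applying Lemma~\ref{lem:kahane-const} to the $\R$-Banach space $(\R^d,\|\cdot\|)$, the nonzero columns of $A$, and the given Rademacher family (columns that vanish can be discarded without changing either side, which takes care of the nonzero-vector requirement in Definition~\ref{def:p2-kahane-khintchine}) gives the uniform bound $(\E[\|S_n\|^p])^{1/p} \leq \sqrt{p-1}\,\sigma$ for all $n \in \N$.

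It remains to transfer this uniform bound from $S_n$ to $AW_t$. Since we only have distributional convergence (not $L^p$-convergence), I would apply the Portmanteau theorem to the continuous nonnegative map $\R^d \ni x \mapsto \|x\|^p \wedge M \in [0,\infty)$ for arbitrary $M \in (0,\infty)$ to obtain $\E[\|AW_t\|^p \wedge M] = \lim_{n\to\infty}\E[\|S_n\|^p \wedge M] \leq (p-1)^{p/2}\sigma^p$, and then let $M \to \infty$ by monotone convergence to conclude $\E[\|AW_t\|^p] \leq (p-1)^{p/2}\sigma^p$, i.e.\ the claimed inequality. The step I expect to require the most attention is precisely this limit passage: the natural temptation is to invoke Fatou's lemma directly on $\|S_n\|^p \to \|AW_t\|^p$, but distributional (as opposed to almost-sure) convergence makes the truncation-plus-monotone-convergence route above the cleanest remedy. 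All remaining ingredients (the second-moment identity, CLT, and the Kahane--Khintchine bound from Lemma~\ref{lem:kahane-const}) are routine.
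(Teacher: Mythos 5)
Your proof is correct, and it takes a genuinely different route from the paper. The paper's argument is purely Gaussian and uses It\^o's formula: it sets $f_r(x) = \|Ax\|^r$, computes the Hessian trace bound $\operatorname{Trace}((\operatorname{Hess} f_r)(x)) \leq r(r-1)\operatorname{Trace}(A^{\ast}A) f_{r-2}(x)$, applies It\^o's formula to obtain a recursion $\E[f_r(W_t)] \leq \tfrac{r(r-1)}{2}\operatorname{Trace}(A^{\ast}A)\int_0^t \E[f_{r-2}(W_s)]\,ds$, and iterates down from $r = p$ by steps of $2$ (splitting off the terminal case $r \in [0,2)$ by H\"older). Your proof instead reduces the Gaussian case to the Rademacher case: you build the martingale-sum surrogate $S_n$, note $\E[\|S_n\|^2] = \operatorname{Trace}(A^{\ast}A)\,t$, apply Lemma~\ref{lem:kahane-const} to get the uniform $L^p$-bound $(\E[\|S_n\|^p])^{1/p} \leq \sqrt{\max\{1,p-1\}}\,\sigma$, invoke the multivariate CLT to get $S_n \Rightarrow AW_t$, and then pass to the limit via the Portmanteau theorem on $\|\cdot\|^p \wedge M$ followed by monotone convergence in $M$. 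Your handling of the limit passage is exactly right (Fatou would fail on mere distributional convergence, and the truncation route is clean); you could equivalently have cited Skorokhod representation, but it isn't necessary. You also correctly deal with the zero-column corner case required by Definition~\ref{def:p2-kahane-khintchine}. What each approach buys: the paper's It\^o computation is self-contained (it never invokes Lemma~\ref{lem:kahane-const}, and indeed the two lemmas are used independently in the proof of Corollary~\ref{cor:pde-nnet-ap4}), whereas your argument cleanly exhibits the Gaussian moment bound as a CLT limit of the Kahane--Khintchine Rademacher bound, reusing Lemma~\ref{lem:kahane-const}; this is conceptually illuminating but introduces a weak-convergence limit step that the paper's direct computation avoids entirely.
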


\begin{proof}[Proof of Lemma~\ref{l:exp.Gauss}]
Throughout this proof 
for every $n \in \N$ let
$
\left \| \cdot \right \|_{\R^n} \colon \R^n \linebreak \to [0,\infty)
$
be the standard norm on $\R^n$,
let $(q_r)_{r \in [0,\infty) } \subseteq \N_0$ satisfy for all $r \in [0,\infty)$ that
$
q_r = \max ( \N_0 \cap [0,\nicefrac{r}{2}] )
$,
let
$ f_r \colon \R^m \to \mathbb R$,
$ r \in [0,\infty) $,
 satisfy for all 
$
r \in [0,\infty)
$, 
$ 
x \in \R^m 
$  
that
\begin{equation}
\label{eq:bm-deff}
f_r(x) = \| A x \|^{ r } _{\R^d },
\end{equation}
and
let $ \beta^{ (i) } \colon [0,T]\times \Omega \to \R $, 
$ i \in \{ 1, \ldots, m \} $, 
be the stochastic processes which
satisfy for all 
$ t \in [0,T] $ that
\begin{equation}
W_t = 
\big( 
\beta^{ (1) }_t, \ldots, \beta^{ (m) }_t 
\big).
\end{equation}
Note that for all $ r \in  [2,\infty) $, $x \in \R^m$ we have that
\begin{equation}
\label{eq:exp.Gauss.grad}
(\nabla f_r)(x) = r \| A x \|_{\R^d } ^{ r-2 } A^{ \ast } Ax.
\end{equation}
This implies that for all $ r \in  [2,\infty)$, $x \in \R^m$ we have that
\begin{equation}
( \operatorname{Hess} f_r )( x ) 
= 
r
\,
\| A x \|^{ ( r - 2) }_{\R^d} A^* A
+
\mathbbm{1}_{ \{  A x \neq 0 \} }
r (r-2)
\| A x \|^{ (  r - 4 ) }_{ \R^d }
\left( A^* A x \right)
\left( A^* A x \right)^*.
\end{equation}
The fact that for all $B \in \R^{ m \times d }$, $x \in \R^d$ we have that
$
\|Bx\|_{\R^m}^2 \leq \operatorname{Trace}(B^{ \ast } B) \| x \|_{\R^d}^2
$
and 
$
\operatorname{Trace}(B^{ \ast } B) = \operatorname{Trace}(B B^{ \ast })
$
hence verifies that  for all  $ r \in  [2,\infty) $, $ x \in \R^m $ we have that
\begin{equation}
\label{eq:exp.Gauss.Hess}
\begin{split}
&
\operatorname{Trace}\!\left(
( \operatorname{Hess} f_r )( x )
\right)
\\ & =
\operatorname{Trace}\!\left(
r
\,
\| A x \|_{ \R^d } ^{ ( r - 2 ) } A^* A
+
\mathbbm{1}_{ \{ A x \neq 0 \} }
\,
r \left( r - 2 \right)
\| A x \|_{ \R^d }^{ ( r - 4 ) }
\left( A^* A x \right)
\left( A^* A x \right)^*
\right)
\\ & =
r
\left\| A x \right\|_{ \R^d }^{ ( r - 2 ) }
\operatorname{Trace}(A^{ \ast } A)
+
\mathbbm{1}_{ \{ A x \neq 0 \} } \, r
\left( r - 2 \right)
\left\|
A x
\right\|_{ \R^d }^{ ( r - 4 ) }
\left\|
A^* A x
\right\|_{\R^m }^2
\\ & \leq
r
\left\| A x \right\|_{ \R^d }^{ ( r - 2 ) }
\operatorname{Trace}(A^{ \ast } A)
+
\mathbbm{1}_{ \{ A x \neq 0 \} }r \left( r - 2 \right)
\left\| A x \right\|_{ \R^d }^{ ( r - 4 ) }
\operatorname{Trace}(A A^{ \ast })
\left\| A x \right\|_{ \R^d }^2
\\ & =
r
\left\| A x \right\|_{ \R^d }^{ ( r - 2 ) }
\operatorname{Trace}(A^{ \ast } A)
+
r \left( r - 2 \right)
\left\| A x \right\|_{ \R^d }^{ ( r - 2 ) }
\operatorname{Trace}(A^{ \ast } A)
\\ & =
r \left( r - 1 \right)
\operatorname{Trace}(A^{ \ast } A)
\,
f_{ r - 2 }( x )
\, .
\end{split}
\end{equation}
Moreover, note that the fact that 
$W\colon [0,T] \times \Omega \to \R^m$ 
is a stochastic process with continuous sample paths (w.c.s.p.)
ensures that
$W \colon [0,T] \times \Omega \to \R^m$ is a $(\mathcal{B}([0,T]) \otimes \mathcal{F}) / \mathcal{B}(\R^m)$-measurable function.
The fact for all $r \in [2,\infty)$ we have that $f_r \in C^2(\R^m, \R)$
hence
implies that for all
$r \in [2,\infty)$,
$ i \in \{1,\ldots,m\}$
we have that
\begin{equation}
\label{eq:bm-prodm}
[0,T] \times \Omega \ni (t,\omega) \mapsto  (\tfrac{ \partial }{ \partial x_i } f_r)(W_t(\omega)) \in \R
\end{equation}
is a $(\mathcal{B}([0,T]) \otimes \mathcal{F}) / \mathcal{B}(\R)$-measurable function.
Combining this
and
\eqref{eq:exp.Gauss.grad}
yields that for all
$ r \in [2,\infty) $,
$ i \in \{1,\ldots,m\}$
we have that
\begin{equation}
\label{eq:bm-l2}
\begin{split}
\int_{ 0 }^T
\E\!
\left[
\big| (\tfrac{ \partial }{ \partial x_i } f_r)(W_t) \big|^2
\right]
dt
& \leq
\int_{ 0 }^T
\E\!
\left[
\left\| ( \nabla f_r)(W_t) \right\|_{ \R^m }^2
\right]
dt
\\ & =
\int_{ 0 }^T
\E\!
\left[
r^2
\left\| A W_t \right \|_{ \R^d } ^{ 2r-4 } \left\|  A^{ \ast } AW_t \right\|_{ \R^m } ^2
\right]
dt
\\ & \leq
\int_{ 0 }^T
\E\!
\left[
r^2  \left\| A^{ \ast } \right \|_{ L(\R^d, \R^m) }^{ 2 }
\left \| A W_t \right \|_{ \R^d } ^{ 2r-2 }
\right]
dt
\\ & \leq
\int_{ 0 }^T
\E\!
\left[
r^2  \operatorname{Trace}(A^{ \ast } A )
\left \| A W_t \right \|_{ \R^d } ^{ 2r-2 }
\right]
dt
\\ & \leq
r^2  \operatorname{Trace}(A^{ \ast } A )
\,
T
\left(
\sup_{ t \in [0,T] }
\E\!
\left[
\left \| AW_t \right \|_{ \R^d } ^{ 2r-2 }
\right]
\right)
.
\end{split}
\end{equation}
Next note that the fact that for all $ r \in [2,\infty) $ we have that $ 2r-2 \in [2,\infty) $
ensures that for all
$ r \in [2,\infty) $
we have that
\begin{equation}
\sup_{ t \in [0,T] }
\E\!
\left[
\left \| AW_t \right \|_{ \R^d } ^{ 2r-2 }
\right]
=
\bigg(\sup_{ t \in [0,T] } t^{r-1} \bigg)
\E\!
\left[
\left \| AW_1 \right \|_{ \R^d } ^{ 2r-2 }
\right]
<
\infty
.
\end{equation}
Combining this with
\eqref{eq:bm-l2}
demonstrates
that for all
$ r \in [2,\infty) $,
$ i \in \{1,\ldots,m\}$
we have that
\begin{equation}
\int_{ 0 }^T
\E\!
\left[
\big| (\tfrac{ \partial }{ \partial x_i } f_r)(W_t) \big|^2
\right]
dt
<
\infty
.
\end{equation}
This
proves that for all $r \in  [2,\infty)$, $i \in \{1, \ldots, m \}$, $t \in [0,T]$ we have that
\begin{equation}
\E\Biggl[
\int_{ 0 } ^{ t } \big(\tfrac{ \partial }{ \partial x_i} f_r\big)(W_s) \, d\beta_s^{ (i) }
\Biggr]
= 0.
\end{equation}
It\^o's formula,
Fubini's theorem,
\eqref{eq:bm-deff},
and
\eqref{eq:exp.Gauss.Hess}
hence verify that  for all
$r \in [2,\infty)$,
$t \in [0,T]$
we have that
\begin{equation}
\label{eq:BM.Moments}
\begin{split}
& \E[f_r(W_t)]
\\ & =
\E\!
\left[
f_r(W_0)
+
\sum_{i=1}^m
\!\left(
\int_{ 0 }^{ t }
\big(\tfrac{ \partial }{ \partial x_i} f_r\big)(W_s) \,  d\beta_s^{ (i) }
+
\frac{ 1 }{ 2 }
\int_{ 0 }^{ t }
\big(\tfrac{ \partial^2 }{ \partial x_i^2 } f_r \big) ( W_s) \, ds
\right)
\right]
\\ & =
\frac{ 1 }{ 2 }
\int_0^{ t }
\E\big[
\operatorname{Trace}\!\left(
(\operatorname{Hess} f_r)( W_{ s } )
\right)
\big]
\,
ds
\\ & \leq
\frac{ r(r-1) \operatorname{Trace}(A^{ \ast } A)}{ 2 }
\int_0^{ t }
\E\big[
f_{ r - 2 }(
W_{ s }
)
\big]
\,
ds
.
\end{split}
\end{equation}
This
and
\eqref{eq:bm-deff}
yield that  for all $t \in [0,T]$ we have that
\begin{equation}
\label{eq:exp.Gauss.even}
\begin{split}
\E\!\left[
\left\|
A W_{ t }
\right\|_{ \R^d }^{ 2 }
\right]
& \leq
\frac{2(2-1)}{2}
\operatorname{Trace}(A^{ \ast } A)
\int_0^{ t }
\E\big[f_0( W_{ s } )
\big]
\, ds
\\ & =
\operatorname{Trace}(A^{ \ast } A) \, t
.
\end{split}
\end{equation}
H\"older's inequality therefore
proves that for  all  $r \in [0,2)$, $t \in [0,T]$ we have that
\begin{equation}
\label{eq:exp.Gauss.q}
\begin{split}
& \E\big[f_r(W_t)\big]
=
\E\big[\| AW_t \|_{ \R^d} ^r\big]
\leq
\big(
\E\big[\| AW_t\|_{ \R^d }^2\big]
\big)^{ \nicefrac{r}{2} }
\leq
\left(
\operatorname{Trace}(A^{ \ast } A)
\, t
\right)^{\nicefrac{r}{2}}
.
\end{split}
\end{equation}
This reveals that for all $r \in (0,2]$, $t \in [0,T]$ we have that
\begin{equation}
\label{eq:exp.Gauss.case2}
\big(
\E
\big[ \| AW_t \|_{ \R^d }^r \big]
\big)^{ \nicefrac{1}{r} }
\leq
\sqrt{
\operatorname{Trace}(A^{ \ast } A)
\, t
}
.
\end{equation}
Next note that \eqref{eq:BM.Moments},
the fact that for all $r \in (2,\infty)$ we have that  $r - 2q_r \in [0,2)$,
and \eqref{eq:exp.Gauss.q} imply that for all $r \in (2,\infty)$, $s_0 \in [0,T]$ we have that
\begin{equation}
\begin{split}
& \E\!\left[
\left\|
A W_{ s_0 }
\right\|_{\R^d} ^{ r }
\right]
\leq
\frac{
\left[ \prod_{ i = 0}^{ q_{ r } -1 } (r-2i)(r-1-2i) \right]
}{
2^{ q_{ r } }
}
\left[
\operatorname{Trace}(A^{ \ast } A)
\right]^{ q_{ r } }
\\ & \cdot
\int_0^{ s_0 }
\cdots
\int_0^{ s_{ q_{ r } -1 } }
\E\big[
f_{ r - 2q_{ r }}( W_{ s_{q_{ r } } } )
\big]
\, ds_{q_{ r } }  \cdots \, ds_1
\\ & \leq
\frac{
\left[\prod_{ i = 0}^{ q_{ r } -1 } (r-2i)(r-1-2i)\right]
}{
2^{ q_{ r } }
}
\left[
\operatorname{Trace}(A^{ \ast } A)
\right]^{ q_{ r } + \frac{r - 2q_{ r }}{2} }
\\ & \cdot
\int_0^{ s_0 }
\cdots
\int_0^{ s_{ q_{ r } -1 } }
(s_{q_{ r }})^{ \frac{ r - 2q_{ r }}{2} }
\, ds_{q_{ r } }  \cdots \, ds_1
\\ & =
\frac
{
\left[\prod_{ i = 0}^{ q_{ r } -1 } (r-2i)(r-1-2i)\right]
}{
2^{q_{ r }}
}
\frac{
2^{q_{ r }}
}{
\left[\prod_{ i = 0}^{ q_{ r } - 1 } (r-2i) \right]
}
\left[
\operatorname{Trace}(A^{ \ast } A)
\right]^{ \nicefrac{r}{2}  }
s_0 ^{ \nicefrac{ r }{2}}
\\ & =
\left[\prod_{ i = 0}^{q_{ r }-1} (r-1-2i)\right]
\left[
\operatorname{Trace}(A^{ \ast } A)
\right]^{  \nicefrac{r}{2} }
s_0 ^{\nicefrac{ r }{2}}
.
\end{split}
\end{equation}
The fact that
for all $r \in (2,\infty)$ we have that
$q_r \leq \frac{r}{2}$ hence yields that for all
$ r \in (2,\infty)$,
$ t \in [0,T]$ we have that
\begin{equation}
\begin{split}
\big(
\E\!\left[
\left\|
A W_{ t }
\right\|_{ \R^d }^{ r }
\right]
\big)^{ \nicefrac{1}{r} }
& \leq
\left[\prod_{ i = 0}^{q_{ r }-1} (r-1-2i)\right]^{\nicefrac{1}{r}}
\sqrt{
\operatorname{Trace}(A^{ \ast } A)
\, t
}
\\ & \leq
(r-1)^{ \frac{q_{ r }}{r} }
\sqrt{
\operatorname{Trace}(A^{ \ast } A)
\, t
}
\\ & \leq
(r-1)^{\frac{r}{2r}}
\sqrt{
\operatorname{Trace}(A^{ \ast } A)
\, t
}
\\ & =
\sqrt{(r-1)
\operatorname{Trace}(A^{ \ast } A)
\, t
}
.
\end{split}
\end{equation}
Combining this with
\eqref{eq:exp.Gauss.case2}
establishes \eqref{eq:bm-lp}.
This completes the proof of Lemma \ref{l:exp.Gauss}.
\end{proof}
\subsection{A priori estimates for solutions}
\label{ssec:apriori1}
In this subsection we present in Lemma~\ref{lem:sde-lp-bound}
and
Corollary~\ref{cor:apriori1}
below essentially well-known a priori estimates
for solutions of stochastic differential equations
with
at most linearly growing drift coefficient functions
and
constant diffusion coefficient functions.
Corollary~\ref{cor:apriori1}
is one of the main ingredients in our proof of
Lemma~\ref{lem:mc-nnet-ap}
in Subsection \ref{ssec:qee} below
and
is a straightforward consequence
of
Lemma~\ref{l:exp.Gauss} above
and
Lemma~\ref{lem:sde-lp-bound} below.
Our proof of Lemma~\ref{lem:sde-lp-bound} is a slight adaption of the proof of Lemma~2.6 in
Beck et al.~\cite{Becker2018}. In our formulation of the statements of Lemma~\ref{lem:sde-lp-bound}
and Corollary~\ref{cor:apriori1} below we employ the elementary result in
 Lemma~\ref{lem:int of cont proc meas} below. In our proof of Lemma~\ref{lem:int of cont proc meas} we employ the elementary result in Lemma~\ref{lem:int meas} below. Lemma~\ref{lem:int meas} and Lemma~\ref{lem:int of cont proc meas} study measurability properties for time-integrals of  suitable stochastic processes.

\begin{lemma}
\label{lem:int meas}
Let $T \in [0,\infty)$, let $(\Omega, \mathcal F, \mathbb P)$ be a probability space, let $Y:[0,T] \times \Omega \to \mathbb R$ be
$(\mathcal B([0,T]) \otimes \mathcal F)/\mathcal B(\mathbb R)$-measurable, and assume that for all $\omega \in \Omega$ we have that $\int_0^T | Y_t(\omega) | \,dt < \infty$.
Then the function $\Omega \ni \omega \mapsto \int_0^T Y_t(\omega) \,dt \in \mathbb R$ is $\mathcal F/\mathcal B(\mathbb R)$-measurable.
\end{lemma}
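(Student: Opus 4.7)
The plan is to reduce the claim to the standard measurability statement for partial integrals that follows from Tonelli's theorem, handled separately for positive and negative parts.

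First I would introduce the positive and negative parts $Y^+, Y^- \colon [0,T] \times \Omega \to [0,\infty)$ defined by $Y^+_t(\omega) = \max\{Y_t(\omega), 0\}$ and $Y^-_t(\omega) = \max\{-Y_t(\omega), 0\}$. Since $Y$ is $(\mathcal B([0,T]) \otimes \mathcal F)/\mathcal B(\mathbb R)$-measurable and the functions $\mathbb R \ni x \mapsto \max\{x,0\} \in [0,\infty)$ and $\mathbb R \ni x \mapsto \max\{-x,0\} \in [0,\infty)$ are continuous, $Y^+$ and $Y^-$ are both $(\mathcal B([0,T]) \otimes \mathcal F)/\mathcal B([0,\infty))$-measurable. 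The hypothesis $\int_0^T |Y_t(\omega)|\,dt < \infty$ then gives, via $|Y| = Y^+ + Y^-$, that for all $\omega \in \Omega$ we have $\int_0^T Y^+_t(\omega)\,dt < \infty$ and $\int_0^T Y^-_t(\omega)\,dt < \infty$.

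Next I would apply Tonelli's theorem on the product space $([0,T] \times \Omega, \mathcal B([0,T]) \otimes \mathcal F, \lambda_{[0,T]} \otimes \mathbb P)$ (with $\lambda_{[0,T]}$ the Lebesgue--Borel measure on $[0,T]$) to the non-negative jointly measurable functions $Y^+$ and $Y^-$. Tonelli's theorem yields that the maps
\begin{equation}
\Omega \ni \omega \mapsto \int_0^T Y^+_t(\omega)\,dt \in [0,\infty]
\quad\text{and}\quad
\Omega \ni \omega \mapsto \int_0^T Y^-_t(\omega)\,dt \in [0,\infty]
\end{equation}
are $\mathcal F/\mathcal B([0,\infty])$-measurable. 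Combined with the almost sure (actually pointwise everywhere) finiteness established above, these maps are in fact $\mathcal F/\mathcal B([0,\infty))$-measurable as functions into $[0,\infty)$.

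Finally, writing $Y_t(\omega) = Y^+_t(\omega) - Y^-_t(\omega)$ and using linearity of the (now finite) Lebesgue integral, I obtain for every $\omega \in \Omega$ that
\begin{equation}
\int_0^T Y_t(\omega)\,dt = \int_0^T Y^+_t(\omega)\,dt - \int_0^T Y^-_t(\omega)\,dt.
\end{equation}
Since the right-hand side is the difference of two $\mathcal F/\mathcal B(\mathbb R)$-measurable real-valued functions of $\omega$, the map $\Omega \ni \omega \mapsto \int_0^T Y_t(\omega)\,dt \in \mathbb R$ is $\mathcal F/\mathcal B(\mathbb R)$-measurable, as claimed. There is no real obstacle here; the only subtlety worth spelling out is the justification that Tonelli applies to each of $Y^{\pm}$ separately (legitimate because they are non-negative and jointly measurable) so that no $\infty - \infty$ issue arises in the final subtraction, which is guaranteed by the integrability hypothesis.
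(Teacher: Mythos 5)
Your proposal is correct and follows essentially the same route as the paper's own proof: decompose $Y$ into its positive and negative parts, invoke Tonelli's theorem for the measurability of the two nonnegative partial integrals, and conclude by linearity. The paper's version is terser (it does not spell out the measurability of $Y^{\pm}$ or the finiteness bookkeeping), but the underlying argument is identical.
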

\begin{proof}
Throughout this proof let $Y^+: [0,T] \times \Omega \to \mathbb R$ and $Y^-: [0,T] \times \Omega \to \mathbb R$ satisfy for all $t\in [0,T]$,  $\omega \in \Omega$ that
$
Y^+_t(\omega) = \max\{Y_t(\omega),0\}
$
and
$Y^-_t(\omega) = -\min\{Y_t(\omega),0\}$.
Note that for all $\omega \in \Omega$ we have that
\begin{equation}
\label{eq int meas}
\int_0^T Y_t(\omega)\,dt = \int_0^T Y^+_t(\omega)\,dt - \int_0^T Y^-_t(\omega)\,dt\,.
\end{equation}
Moreover, observe that Tonelli's theorem implies that $\Omega \ni \omega \mapsto \int_0^T Y^+_t(\omega) \,dt \in \mathbb R$ is $\mathcal F/\mathcal B(\mathbb R)$-measurable and $\Omega \ni \omega \mapsto \int_0^T Y^-_t(\omega) \,dt \in \mathbb R$ is $\mathcal F/\mathcal B(\mathbb R)$-measurable.
This and~\eqref{eq int meas} prove that
$\Omega \ni \omega \mapsto \int_0^T Y_t(\omega) \,dt \in \mathbb R$ is $\mathcal F/\mathcal B(\mathbb R)$-measurable.
This completes the proof of Lemma~\ref{lem:int meas}.
\end{proof}

\begin{lemma}
\label{lem:int of cont proc meas}
Let $d \in \N$,
$c, C, T \in [0,\infty)$,
let
$
\left \| \cdot \right \| \colon \R^d \to [0,\infty)
$ be the standard norm on $\R^d$,
let $\mu \colon \R^d \to \R^d$
be a $\mathcal{B}(\R^d) / \mathcal{B}(\R^d)$-measurable function which
satisfies for all $x \in \R^d$ that
$\|\mu(x)\| \leq C+c\|x\|$,
let $ (\Omega, \mathcal{F},\P) $ be a probability space,
and let $X  \colon [0,T]\times \Omega \to \R^d$ be a stochastic
process w.c.s.p.
Then for all $t\in [0,T]$ the function $\Omega \ni \omega \mapsto \int_0^t \mu(X_s(\omega))\,ds \in \mathbb R^d$ is $\mathcal F/\mathcal B(\mathbb R^d)$-measurable. 	
\end{lemma}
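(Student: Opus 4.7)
The plan is to reduce to the one-dimensional result already established in Lemma~\ref{lem:int meas} by (i) checking joint measurability of the integrand $(s,\omega)\mapsto \mu(X_s(\omega))$, (ii) verifying the pathwise integrability hypothesis $\int_0^t \|\mu(X_s(\omega))\|\,ds < \infty$ for every $\omega\in\Omega$, and (iii) passing to components of $\mu$.

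For step~(i), I would first argue that $X\colon [0,T]\times\Omega\to\mathbb{R}^d$ is itself $(\mathcal{B}([0,T])\otimes\mathcal{F})/\mathcal{B}(\mathbb{R}^d)$-measurable. This is the standard fact that every stochastic process with continuous sample paths is jointly measurable: one defines the piecewise-constant discretizations $X^{(n)}_s(\omega)=X_{\lfloor ns\rfloor/n\,\wedge\, T}(\omega)$ for $n\in\mathbb{N}$, observes that each $X^{(n)}$ is jointly measurable (as a finite sum of products of indicator functions on cells of $[0,T]$ with the $\mathcal{F}/\mathcal{B}(\mathbb{R}^d)$-measurable random vectors $X_{k/n\wedge T}$), and then notes that by continuity of sample paths $X^{(n)}_s(\omega)\to X_s(\omega)$ pointwise as $n\to\infty$. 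A pointwise limit of jointly measurable functions is jointly measurable. Composing with the Borel measurable map $\mu$ then gives that $(s,\omega)\mapsto \mu(X_s(\omega))$ is $(\mathcal{B}([0,T])\otimes\mathcal{F})/\mathcal{B}(\mathbb{R}^d)$-measurable.

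For step~(ii), fix $\omega\in\Omega$ and $t\in[0,T]$. Since $s\mapsto X_s(\omega)$ is continuous on $[0,t]$ and $[0,t]$ is compact, the quantity $M(\omega):=\sup_{s\in[0,t]}\|X_s(\omega)\|$ is finite. The linear growth assumption on $\mu$ yields
\begin{equation}
\int_0^t \|\mu(X_s(\omega))\|\,ds \leq \int_0^t \bigl(C+c\|X_s(\omega)\|\bigr)\,ds \leq t\bigl(C+cM(\omega)\bigr) < \infty,
\end{equation}
so in particular $\int_0^t |\mu_i(X_s(\omega))|\,ds<\infty$ for every coordinate $i\in\{1,\ldots,d\}$.

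For step~(iii), write $\mu=(\mu_1,\ldots,\mu_d)$. For each $i\in\{1,\ldots,d\}$, the function $(s,\omega)\mapsto \mu_i(X_s(\omega))$ is $(\mathcal{B}([0,T])\otimes\mathcal{F})/\mathcal{B}(\mathbb{R})$-measurable by step~(i), and step~(ii) ensures the hypothesis of Lemma~\ref{lem:int meas} is satisfied, so $\omega\mapsto \int_0^t \mu_i(X_s(\omega))\,ds$ is $\mathcal{F}/\mathcal{B}(\mathbb{R})$-measurable. Componentwise measurability of a $\mathbb{R}^d$-valued map is equivalent to $\mathcal{F}/\mathcal{B}(\mathbb{R}^d)$-measurability of the map itself, which concludes the argument.

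The only slightly non-routine point is step~(i), i.e.\ the joint measurability of a continuous sample path process; once that is in hand the rest is bookkeeping. I would expect to either cite a standard reference (e.g.\ Klenke, or Karatzas--Shreve) for this fact or include the short piecewise-constant approximation argument outlined above.
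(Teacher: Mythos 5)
Your proof is correct and follows essentially the same route as the paper: establish joint measurability of $(s,\omega)\mapsto X_s(\omega)$ (the paper cites Aliprantis--Border~\cite[Lemma~4.51]{aliprantis:border:infinite} for Carath\'eodory functions where you sketch the standard piecewise-constant discretization, but these are interchangeable), compose with the Borel map $\mu$, verify pathwise integrability from the linear growth bound and compactness of $[0,t]$, and then apply Lemma~\ref{lem:int meas} to each coordinate. No gaps.
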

\begin{proof}
The fact that $X  \colon [0,T]\times \Omega \to \R^d$ is a stochastic
process w.c.s.p.\ and Aliprantis and Border~\cite[Lemma 4.51]{aliprantis:border:infinite} ensure that for all $t\in [0,T]$
the function 
$[0,t]\times \Omega \ni (s,\omega) \mapsto X_s(\omega) \in \mathbb R^d$ 
is $(\mathcal B([0,t])\otimes \mathcal F)/\mathcal B(\mathbb R^d)$-measurable.
This and the hypothesis that $\mu \colon \R^d \to \R^d$
is $\mathcal{B}(\R^d) / \mathcal{B}(\R^d)$-measurable imply that for
all $i \in \{1,\ldots,d\}$, $t\in [0,T]$ we have that the function 
$[0,t]\times \Omega \ni (s,\omega) \mapsto \mu_i(X_s(\omega)) \in \mathbb R$
is $(\mathcal B([0,t])\otimes \mathcal F)/\mathcal B(\mathbb R)$-measurable.
Moreover, note that the hypothesis that for all $x \in \R^d$ we have that
$\|\mu(x)\| \leq C+c\|x\|$ and the fact that for all $\omega \in \Omega$
the function $[0,T] \ni s\mapsto X_s(\omega)\in\mathbb R^d$ is continuous imply that for all $t\in [0,T]$, $\omega \in \Omega$, $i \in \{1,\ldots,d\}$ we have that $\int_0^t |\mu_i(X_s(\omega))|\,ds < \infty$.
Lemma~\ref{lem:int meas} (applied with $T \leftarrow t$, $Y \leftarrow \mu_i(X)$ for $t\in[0,T]$, $i \in \{1,\ldots,d\}$ in the notation
of Lemma~\ref{lem:int meas}) hence proves that for
all $i \in \{1,\ldots,d\}$, $t\in [0,T]$ the function 
$\Omega \ni \omega \mapsto \int_0^t \mu_i(X_s(\omega))\,ds \in \mathbb R$ 
is $\mathcal F/\mathcal B(\mathbb R)$-measurable.
This implies that for all $t\in [0,T]$ the function $\Omega \ni \omega \mapsto \int_0^t \mu(X_s(\omega))\,ds \in \mathbb R^d$ is $\mathcal F/\mathcal B(\mathbb R^d)$-measurable. 
This completes the proof of Lemma~\ref{lem:int of cont proc meas}.
\end{proof}

\begin{lemma}
\label{lem:sde-lp-bound}
Let $ d, m \in \N $, $ x \in \R^d $, $ p \in [1,\infty) $, 
$ c, C, T \in [0,\infty) $, 
$ A \in \R^{ d \times m } $,  
let 
$
\left \| \cdot \right \| \colon \R^d \to [0,\infty)
$ be the standard norm on $\R^d$,
let $ (\Omega, \mathcal{F},\P) $ be a probability space, 
let $W\colon [0,T]\times \Omega \to \R^m$ be a standard Brownian motion,
let $\mu \colon \R^d \to \R^d$
be a $\mathcal{B}(\R^d) / \mathcal{B}(\R^d)$-measurable function which
satisfies for all $y \in \R^d$ that
$\|\mu(y)\| \leq C+c\|y\|$,
and let $X  \colon [0,T]\times \Omega \to \R^d$ be a stochastic
process w.c.s.p.\ which satisfies for all
$ t \in [0,T] $ that
\begin{equation}
\label{eq:apriori1-ass1}
\P\!\left(X_t = x + \int_0^t \mu\!\left(X_{s}\right) ds +
AW_t\right) = 1
\end{equation}
(cf. Lemma~\ref{lem:int of cont proc meas}).
Then
\begin{equation}
\begin{split}
\big(\E\! \left[\|X_T\|^p\right]\big)^{\nicefrac{1}{p}} 
& \leq
\Big(
\|x\| + C T 
+ 
\big(
\E\big[ 
  \| A W_T \|^p
\big]
\big)^{ \nicefrac{ 1 }{ p } }
\Big)
\,
e^{ c T } .
\end{split}
\end{equation}
\end{lemma}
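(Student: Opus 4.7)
The plan is to derive an integral inequality for the function $[0,T] \ni t \mapsto (\E[\|X_t\|^p])^{\nicefrac{1}{p}} \in [0,\infty]$ and to conclude via Gronwall's lemma.

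First I would start from the SDE integral equation, take the norm $\|\cdot\|$ pathwise, and apply the triangle inequality to obtain $\mathbb{P}$-a.s.\ the pathwise bound
\begin{equation}
\|X_t\| \leq \|x\| + \int_0^t \|\mu(X_s)\|\,ds + \|AW_t\|
\leq \|x\| + Ct + c\int_0^t \|X_s\|\,ds + \|AW_t\|,
\end{equation}
where the second inequality uses the linear growth hypothesis on $\mu$. Then, taking $L^p(\mathbb P)$-norms on both sides and applying Minkowski's inequality (both in its summation form and in its integral form applied to $\int_0^t \|X_s\|\,ds$), together with Fubini/Tonelli for the integrand, I would obtain
\begin{equation}
\bigl(\E[\|X_t\|^p]\bigr)^{\nicefrac{1}{p}}
\leq \|x\| + Ct + c\int_0^t \bigl(\E[\|X_s\|^p]\bigr)^{\nicefrac{1}{p}}\,ds + \bigl(\E[\|AW_t\|^p]\bigr)^{\nicefrac{1}{p}}.
\end{equation}

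Next I would use the Brownian scaling $W_t \stackrel{d}{=} \sqrt{t/T}\, W_T$ (valid for all $t \in [0,T]$) to deduce that $[0,T] \ni t \mapsto (\E[\|AW_t\|^p])^{\nicefrac{1}{p}} \in [0,\infty)$ is nondecreasing, so that $(\E[\|AW_t\|^p])^{\nicefrac{1}{p}} \leq (\E[\|AW_T\|^p])^{\nicefrac{1}{p}}$ for all $t \in [0,T]$. Combining this with the previous display and writing $f(t) = (\E[\|X_t\|^p])^{\nicefrac{1}{p}}$, I would arrive at
\begin{equation}
f(t) \leq \bigl(\|x\| + CT + (\E[\|AW_T\|^p])^{\nicefrac{1}{p}}\bigr) + c\int_0^t f(s)\,ds
\end{equation}
for all $t \in [0,T]$. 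An application of the Gronwall inequality then yields the claimed bound $f(T) \leq (\|x\| + CT + (\E[\|AW_T\|^p])^{\nicefrac{1}{p}})\,e^{cT}$.

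The main obstacle is that a priori $f(t)$ may take the value $+\infty$, which would make the Gronwall step illegal. To handle this I would introduce the stopping times $\tau_n = \inf\{t \in [0,T] \colon \|X_t\| \geq n\} \wedge T$ for $n \in \N$, note that $X$ has continuous sample paths so $\|X_{t \wedge \tau_n}\| \leq n + \|X_0\| \vee \ldots$ and, more importantly, so that $\|\mu(X_{s})\|\mathbbm{1}_{\{s \leq \tau_n\}}$ is bounded and $\E[\|X_{t \wedge \tau_n}\|^p]$ is finite for each $n$. Applying the integral equation evaluated at $t \wedge \tau_n$, running the above argument for $f_n(t) := (\E[\|X_{t \wedge \tau_n}\|^p])^{\nicefrac{1}{p}}$, and using that $\E[\|AW_{t \wedge \tau_n}\|^p] \leq \E[\sup_{s\in[0,T]} \|AW_s\|^p]$ (finite by Doob's inequality together with Lemma~\ref{l:exp.Gauss}), I would obtain a Gronwall bound for $f_n(T)$ that is uniform in $n$. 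Finally, Fatou's lemma together with $\tau_n \nearrow T$ $\mathbb P$-a.s.\ (which holds because $X$ has continuous paths on $[0,T]$) and the continuity of $X$ would yield $\liminf_{n\to\infty} f_n(T) \geq f(T)$, completing the argument.
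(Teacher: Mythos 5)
Your plan is sound and, at the final stage, reproduces the same time-domain Gronwall argument the paper uses (pathwise triangle inequality, Minkowski/Fubini to get the integral inequality for $f(t)=(\E[\|X_t\|^p])^{1/p}$, the Brownian scaling $W_t\stackrel{d}{=}\sqrt{t/T}\,W_T$ to replace $\E[\|AW_t\|^p]$ by $\E[\|AW_T\|^p]$, then Gronwall). The genuine difference is in how integrability of $f$ is established before Gronwall may be applied. The paper first proves a \emph{pathwise} Gronwall estimate $\|X_t(\omega)\|\leq(\|x\|+\sup_{s}\|AW_s(\omega)\|+CT)e^{ct}$ on a full-measure event, then uses the Burkholder--Davis--Gundy inequality to show $\E[\sup_{s\in[0,T]}\|AW_s\|^p]<\infty$, which yields $\int_0^T f(t)\,dt<\infty$. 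You instead propose a localization by stopping times $\tau_n=\inf\{t:\|X_t\|\geq n\}\wedge T$ plus Fatou. Both are legitimate mechanisms, and your version avoids invoking the paper's pathwise Gronwall lemma.

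There is, however, a small but real gap at the end of your argument. Running Gronwall on $f_n$ with the bound $\E[\|AW_{t\wedge\tau_n}\|^p]\leq\E[\sup_{s\in[0,T]}\|AW_s\|^p]$ gives, after Fatou, the estimate
\begin{equation}
f(T) \leq \Big(\|x\|+CT+\big(\E\big[\sup\nolimits_{s\in[0,T]}\|AW_s\|^p\big]\big)^{\nicefrac1p}\Big)e^{cT},
\end{equation}
which is finite but strictly weaker than the claimed bound with $\E[\|AW_T\|^p]$. As written, "completing the argument" via Fatou only establishes finiteness, not the stated inequality. There are two easy repairs. Either observe that $\|AW_\cdot\|^p$ is a nonnegative submartingale (composition of a martingale with the convex map $y\mapsto\|Ay\|^p$, $p\geq1$), so optional sampling gives $\E[\|AW_{t\wedge\tau_n}\|^p]\leq\E[\|AW_T\|^p]$ directly and the Gronwall bound on $f_n(T)$ is already sharp; or, once $\sup_{t\in[0,T]}f(t)<\infty$ has been obtained from the weaker bound, go back and run the original scaling-plus-Gronwall argument on $f$ itself (now legitimate since $\int_0^T f(s)\,ds<\infty$) to obtain the claimed estimate. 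Note also that for $p=1$ Doob's $L^p$ maximal inequality is not applicable; the paper circumvents this by splitting $p\in[1,2]$ and $p>2$ and using Burkholder--Davis--Gundy, which would be the right substitute here as well.
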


\begin{proof}[Proof of Lemma~\ref{lem:sde-lp-bound}]
Throughout this proof 
for every $n \in \N$ let
$
\left \| \cdot \right \|_{\R^n} \colon \R^n \linebreak \to [0,\infty)
$
be the standard norm on  $\R^n$,
let $ \beta^{ (i) } \colon [0,T]\times \Omega \to \R $, 
$ i \in \{ 1, \ldots, m \} $, 
be the stochastic processes which
satisfy for all 
$ t \in [0,T] $ that
\begin{equation}
W_t = 
\big( 
\beta^{ (1) }_t, \ldots, \beta^{ (m) }_t 
\big)
\end{equation}
and let $ B \subseteq \Omega $ 
be the set given by 
\begin{equation}
\begin{split}
B 
& = 
\bigcap_{ t \in [0,T] }
\left\{ 
X_t
=
x
+
\int_0^t
\mu\big(
X_{ s }
\big) 
\, ds
+
A W_t
\right\}
\\ &
=
\left\{ 
\omega \in \Omega 
\colon 
\left( 
\forall \, t \in [0,T] \colon 
X_t(\omega) 
= 
x
+
\int_0^t
\mu\big(
  X_{ s }( \omega )
\big) 
\, ds
+
A W_t( \omega )
\right) 
\right\}
.
\end{split}
\end{equation}
Observe that the fact that 
$ X \colon [0,T] \times \Omega \to \R^d $ 
and 
$ W \colon [0,T] \times \Omega \to \R^m $
are stochastic processes w.c.s.p.\
yields that
\begin{equation}
B 
= 
\left(
\bigcap_{ t \in [0,T] \cap \Q }
\left\{ 
X_t
=
x
+
\int_0^t
\mu\big(
X_{ s }
\big) 
\, ds
+
A W_t
\right\}
\right)
\in \mathcal{F}
.
\end{equation}
Combining this and \eqref{eq:apriori1-ass1} proves that
\begin{equation}
\label{eq:apriori1-A}
\begin{split}
\P(B) 
&  
= 
\P\!\left( 
\bigcap_{ t \in [0,T] \cap \Q } 
\left\{
X_t
=
x
+
\int_0^t
\mu\big(
  X_{ s }
\big) 
\, ds
+
A W_t
\right\}
\right) 
\\ &
= 
1 - 
\P\!\left( 
\Omega \setminus \! 
\left[
\bigcap_{ t \in [0,T] \cap \Q } 
\left\{ 
  X_t
  =
  x
  +
  \int_0^t
  \mu\big(
    X_{ s }
  \big) 
  \, ds
  +
  A W_t
\right\}
\right]
\right) 
\\
& 
= 
1 - 
\P\!\left( 
\bigcup_{ t \in [0,T] \cap \Q } 
\left\{ 
  X_t
  \neq
  x
  +
  \int_0^t
  \mu\big(
    X_{ s }
  \big) 
  \, ds
  +
  A W_t
\right\}
\right) 
\\ &
\geq 
1 
-
\left[ 
\sum_{t \in [0,T] \cap \Q} 
\P\!\left( 
X_t
\neq
x
+
\int_0^t
\mu\big(
  X_{ s }
\big) 
\, ds
+
A W_t
\right) 
\right] = 1 .
\end{split}
\end{equation}
Next note that the triangle inequality and the hypothesis that 
for all $ y \in \R^d $ we have that
$ \|\mu(y)\| \leq C + c \|y\| $ 
ensure that for all 
$ \omega \in B $, $ t \in [0,T] $ 
we have that
\begin{equation}
\label{eq:apriori1-eq1}
\begin{split}
\|X_t(\omega) \|
& \leq
\|x\|
+ \| A  W_t(\omega)  \|
+ \int_0^t \left \|\mu\!\left(X_{s}(\omega)\right)\right\| ds 
\\
& 
\leq  
\|x\|
+ \| A W_t(\omega)  \|
+ Ct
+ c\int_0^t \left\|X_{s}(\omega)\right \|  ds \\
& 
\leq  
\|x\|
+ 
\left[
\sup_{ s \in [0,T] } \| A W_s(\omega)  \| 
\right]
+ C T
+ c \int_0^t \| X_s(\omega) \| \, ds .
\end{split}
\end{equation}
Moreover, note that the assumption that
$ X \colon [0,T]  \times \Omega \to \R^d$ is a stochastic process
w.c.s.p.\ assures that for all
$ \omega \in \Omega $ we have that
\begin{equation}
\int_0^T \| X_s( \omega ) \| \, ds < \infty .
\end{equation}
Grohs et al.~\cite[Lemma 2.11]{HornungJentzen2018}
(applied with
$ 
\alpha \leftarrow \| x \| + \sup_{ t \in [0,T] } \| A  W_t(\omega ) \| + C T
$,
$ \beta \leftarrow c $,
$ 
f \leftarrow ( [0,T] \ni t \mapsto \| X_t(\omega) \| \in [0,\infty) )
$
for $ \omega \in B $ in the notation of 
Grohs et al.~\cite[Lemma 2.11]{HornungJentzen2018}) and \eqref{eq:apriori1-eq1} 
hence prove that for all $ \omega \in B $, $ t \in [0,T] $ we have that
\begin{equation}
\label{eq:apriori1-eq12}
\| X_t( \omega ) \| 
\leq
\left( 
\|x\| 
+ 
\left[
\sup_{ s \in [0,T] }
\| A  W_s(\omega)  \|
\right]
+ C T 
\right)
e^{ c t } .
\end{equation}
Next note that 
\begin{equation}
\label{eq:apriori1-eq21-0}
\begin{split}
&   
\left(
\E\!\left[ 
\sup_{ s \in [0,T] }
\| A W_s \|^p 
\right]
\right)^{
\! \! \nicefrac{ 1 }{ p } 
}
\leq       
\| A \|_{ L( \R^m, \R^d ) }
\left(
\E\!\left[ 
\sup_{ s \in [0,T] }
\| W_s \|_{\mathbb R^m}^p 
\right]
\right)^{\! \! \nicefrac{1}{p}}
.
\end{split}
\end{equation}
Moreover, if $p\in [1,2]$, then we have that
\begin{equation}
\begin{split}
\sup_{s\in[0,T]} \|W_s\|_{\mathbb R^m}^p 
& = \sup_{s\in[0,T]} \left(\sum_{i=1}^m |\beta^{(i)}_s|^2\right)^{\nicefrac{p}2}	
\leq \sup_{s\in[0,T]} \left(1+\sum_{i=1}^m |\beta^{(i)}_s|^2\right)^{\nicefrac{p}2}	\\
& \leq 1 + \sum_{i=1}^m\left[ \sup_{s\in[0,T]} |\beta^{(i)}_s|^2\right]	\,.
\end{split}
\end{equation}
Hence, if $p\in [1,2]$, then the Burkholder--Davis--Gundy inequality (see, e.g., Karatzas and Shreve~\cite[Theorem 3.28]{karatzas:shreve:brownian}) implies that
\begin{equation}
\label{eq:apriori1-eq21-a}
\mathbb E \left[\sup_{s\in[0,T]} \|W_s\|_{\mathbb R^m}^p\right]
\leq 1 + m \,\mathbb E	\left[ \sup_{s\in[0,T]} |\beta^{(1)}_s|^2\right]
< \infty\,.
\end{equation}
Next note that if $p\in (2,\infty)$, then H\"older's inequality implies that
\begin{equation}
\begin{split}
\sup_{s\in[0,T]} \|W_s\|_{\mathbb R^m}^p & = \sup_{s\in[0,T]} \left(\sum_{i=1}^m |\beta^{(i)}_s|^2\right)^{\nicefrac{p}2}
\leq m^{\nicefrac{p}{2}-1}\sup_{s\in [0,T]} \left( \sum_{i=1}^m |\beta^{(i)}_s|^p\right) \\
& \leq m^{\nicefrac{p}{2}-1} \sum_{i=1}^m \left[ \sup_{s\in [0,T]}|\beta^{(i)}_s|^p \right]\,.			
\end{split}
\end{equation}
Hence, if $p\in (2,\infty)$, then the Burkholder--Davis--Gundy inequality (see, e.g., Karatzas and Shreve~\cite[Theorem 3.28]{karatzas:shreve:brownian}) implies that
\begin{equation}
\label{eq:apriori1-eq21-b}
\mathbb E \left[\sup_{s\in[0,T]} \|W_s\|_{\mathbb R^m}^p\right]
\leq m^{\nicefrac{p}{2}-1} \,m \mathbb E\left[ \sup_{s\in [0,T]}|\beta^{(1)}_s|^p \right]
< \infty\,.
\end{equation}
This, \eqref{eq:apriori1-eq21-0}, and~\eqref{eq:apriori1-eq21-a} hence imply that
\begin{equation}
\label{eq:apriori1-eq21}
\begin{split}
&   
\left(
\E\!\left[ 
\sup_{ s \in [0,T] }
\| A W_s \|^p 
\right]
\right)^{
\! \! \nicefrac{ 1 }{ p } 
}
< \infty      
.
\end{split}
\end{equation}
Combining this, \eqref{eq:apriori1-eq12}, and \eqref{eq:apriori1-A} yields that
\begin{equation}
\label{eq:apriori1-eq2}
\begin{split}
& 
  \int_0^T 
  \big(
  \E\big[
  \| X_t \|^p 
  \big]
  \big)^{ \nicefrac{ 1 }{ p } } 
  \,
  dt 
\\ & \leq
  T
  \left[
  \sup_{ t \in [0,T] }
  \big(
  \E\big[
  \| X_t \|^p 
  \big]
  \big)^{ \nicefrac{ 1 }{ p } } 
  \right] 
\\ & \leq 
  T
  \left|
  \E\Big[
  \big|
    \| x \| 
  + 
    \sup\nolimits_{ s \in [0,T] }
    \| A W_s \| 
  + 
  C T 
\big|^{ p } 
e^{ p c T } 
\Big]
\right|^{
\nicefrac{1}{p}
}
\\
& 
\leq T
  \left[ \|x\| +
  \left|\E\!\left[ \sup_{s \in [0,T]}\|AW_s\|^{p} \right]
  \right|^{ \nicefrac{1}{p}}
  + CT \right]
e^{ c T }  < \infty.
\end{split}
\end{equation}
Next observe that the hypothesis that 
$ X \colon [0,T] \times \Omega \to \R^d $ 
is a  stochastic process w.c.s.p.\
ensures that 
$ X \colon [0,T] \times \Omega \to \R^d $ is a 
$ ( \mathcal{B}([0,T]) \otimes \mathcal{F} ) / \mathcal{B}(\R^d) $-measurable function.
This reveals that for all $ t \in [0,T] $ we have that
\begin{equation}
\left(\E\!\left[
\left|\int_0^t \| X_s \| \, ds \right|^p
\right]\right)^{\! \nicefrac{1}{p} }
\leq
\int_0^t
(\E[\|X_s\|^p])^{\nicefrac{1}{p} }
\, ds
\end{equation}
(cf., for example, Garling~\cite[Corollary 5.4.2]{MR2343341}
or
Jentzen \& Kloeden~\cite[Proposition 8 in Appendix A]{MR2856611}).
The triangle inequality,
the fact 
that for all $ t \in [0,T] $ we have that 
$W_t$ has the same distribution as
$\frac{ \sqrt{t} }{ \sqrt{T} } W_T$,
\eqref{eq:apriori1-A}, 
and
\eqref{eq:apriori1-eq1}
therefore verify that  for all $ t \in [0,T] $ we have that
\begin{equation}
\begin{split}
& 
\big(
\E\big[
\| X_t \|^p
\big]
\big)^{ \nicefrac{ 1 }{ p } } 
\\ & \leq  
\| x \|
+ 
\big(
\E\big[
\| A W_t \|^p 
\big]
\big)^{ \nicefrac{ 1 }{ p } }
+ C t
+ c \int_0^t 
\big(
\E\big[
\| X_{ s } \|^p
\big]
\big)^{ \nicefrac{ 1 }{ p } } 
\, ds 
\\
& =
\|x\|
+ 
\tfrac{ \sqrt{t} }{ \sqrt{T} }
\big(
\E\big[
\| A W_T \|^p
\big]
\big)^{ \nicefrac{ 1 }{ p } }
+ C t
+ c \int_0^t 
\big(
\E\big[
\| X_{ s } \|^p
\big]
\big)^{ \nicefrac{ 1 }{ p } } 
\, ds  
\\
& 
\leq  
\| x \|
+ 
\big( 
\E\big[
\| A W_T \|^p 
\big]
\big)^{ \nicefrac{ 1 }{ p } }
+ C T
+ c \int_0^t 
\big(
\E\big[
\| X_{ s } \|^p
\big]
\big)^{ \nicefrac{ 1 }{ p } } 
\, ds  
.
\end{split}
\end{equation}
Combining Grohs et al.~\cite[Lemma 2.11]{HornungJentzen2018}
(applied with $ \alpha \leftarrow \| x \| + ( \E[ \| A W_T \|^p ] )^{ 1 / p } + C T $,
$ \beta \leftarrow c $,
$ 
f \leftarrow ( [0,T] \ni t \mapsto
( \E[ \| X_t \|^p ] )^{ 1 / p } \in [0,\infty) )
$ 
in the notation of Grohs et al.~\cite[Lemma 2.11]{HornungJentzen2018})
and \eqref{eq:apriori1-eq2} hence establishes 
that for all $ t \in [0,T] $ we have that
\begin{equation}
\label{eq:sde-lp-bound-eq1}
\big(
\E\big[
\| X_t \|^p
\big]
\big)^{ \nicefrac{ 1 }{ p } } 
\leq 
\Big(
\| x \|
+ 
\big(
\E\big[ \| A W_T \|^p \big] 
\big)^{ \nicefrac{ 1 }{ p } }
+ C T 
\Big)
\,
e^{ c t } .
\end{equation}
This completes the proof of Lemma~\ref{lem:sde-lp-bound}.
\end{proof}

\begin{cor}
\label{cor:apriori1}
Let $ d, m \in \N $, $ x \in \R^d $, $ p \in [1,\infty) $, 
$ c, C, T \in [0,\infty) $, 
$ A \in \R^{ d \times m } $,  
let 
$
\left \| \cdot \right \| \colon \R^d \to [0,\infty)
$ be the standard norm on $\R^d$,
let $ (\Omega, \mathcal{F},\P) $ be a probability space, 
let $W\colon [0,T]\times \Omega \to \R^m$ be a standard Brownian motion,
let $\mu \colon \R^d \to \R^d$
be a $\mathcal{B}(\R^d) / \mathcal{B}(\R^d)$-measurable function which
satisfies for all $y \in \R^d$ that
$\|\mu(y)\| \leq C+c\|y\|$,
and let $X  \colon [0,T]\times \Omega \to \R^d$ be a stochastic
process w.c.s.p.\ which satisfies for all
$ t \in [0,T] $ that
\begin{equation}
\label{eq:apriori1-ass}
\P\!\left(X_t = x + \int_0^t \mu\!\left(X_{s}\right) ds +
AW_t\right) = 1
\end{equation}
(cf. Lemma~\ref{lem:int of cont proc meas}).
Then
\begin{equation}
\label{eq:apriori1}
\begin{split}
  \big(\E\big[\|X_T\|^p \big] \big)^{\nicefrac{1}{p}} 
& \leq
  \Big(
  \|x\| + C T 
  + 
  \sqrt{ \max\{1,p-1\} \operatorname{Trace}(A^{ \ast } A) \, T} 
  \Big)
  \,
  e^{ c T } .
\end{split}
\end{equation}
\end{cor}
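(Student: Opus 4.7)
The statement of Corollary~\ref{cor:apriori1} is essentially obtained by combining the two preceding results in this subsection: Lemma~\ref{lem:sde-lp-bound} provides a Gr\"onwall-type bound for $(\E[\|X_T\|^p])^{1/p}$ in terms of $\|x\|$, $CT$ and the moment $(\E[\|AW_T\|^p])^{1/p}$ of the driving Brownian increment, while Lemma~\ref{l:exp.Gauss} controls this very moment by $\sqrt{\max\{1,p-1\}\operatorname{Trace}(A^{\ast}A)\,T}$. The proof of the corollary therefore amounts to plugging the second estimate into the first.

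More concretely, the plan is as follows. First I would verify that the hypotheses of Lemma~\ref{lem:sde-lp-bound} are satisfied under the assumptions of Corollary~\ref{cor:apriori1}; this is immediate since the two sets of assumptions coincide (same $d,m,x,p,c,C,T,A$, same probability space, the same $\mathcal{B}(\R^d)/\mathcal{B}(\R^d)$-measurable $\mu$ with linear-growth bound, and the same continuous process $X$ satisfying the integral equation \eqref{eq:apriori1-ass}). Applying Lemma~\ref{lem:sde-lp-bound} then yields
\begin{equation}
\label{eq:corapriori-step1}
\big(\E[\|X_T\|^p]\big)^{\nicefrac{1}{p}}
\leq
\Big(\|x\| + C T + \big(\E[\|AW_T\|^p]\big)^{\nicefrac{1}{p}}\Big)\, e^{cT}.
\end{equation}

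Next I would invoke Lemma~\ref{l:exp.Gauss} with $t = T$, which gives
\begin{equation}
\label{eq:corapriori-step2}
\big(\E[\|AW_T\|^p]\big)^{\nicefrac{1}{p}}
\leq
\sqrt{\max\{1,p-1\}\operatorname{Trace}(A^{\ast}A)\,T}.
\end{equation}
Substituting \eqref{eq:corapriori-step2} into \eqref{eq:corapriori-step1} and using the monotonicity of $r\mapsto r\,e^{cT}$ immediately produces \eqref{eq:apriori1}, which completes the proof.

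There is no real obstacle here: the corollary is a genuine immediate consequence, and the only thing worth checking carefully is that the moment bound in Lemma~\ref{l:exp.Gauss} is stated for the full range $p\in(0,\infty)$ (so in particular for the given $p\in[1,\infty)$) and that the constant $\sqrt{\max\{1,p-1\}\operatorname{Trace}(A^{\ast}A)\,T}$ in \eqref{eq:bm-lp} is exactly the one appearing in the target estimate \eqref{eq:apriori1}. Both points are transparent from the formulations above.
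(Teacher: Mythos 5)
Your proposal is correct and follows exactly the paper's own argument: the paper's proof of Corollary~\ref{cor:apriori1} is the one-line observation that Lemma~\ref{l:exp.Gauss} together with Lemma~\ref{lem:sde-lp-bound} establish \eqref{eq:apriori1}, which is precisely the two-step combination you describe.
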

\begin{proof}[Proof of Corollary~\ref{cor:apriori1}]
Observe that Lemma~\ref{l:exp.Gauss}
and
Lemma~\ref{lem:sde-lp-bound}
establish \eqref{eq:apriori1}.
This completes the proof of Corollary~\ref{cor:apriori1}.
\end{proof}

\subsection{A priori estimates for differences of solutions}
In this subsection we provide in Lemma~\ref{lem:sde-lp-bound2} below well-known a priori estimates 
for differences of solutions of  stochastic differential equations
with
Lipschitz continuous drift coefficient functions
and
constant diffusion coefficient functions. 
Lemma~\ref{lem:sde-lp-bound2} 
is one of the main ingredients in our proof of
Lemma~\ref{lem:mc-nnet-ap} 
in Subsection \ref{ssec:qee} below. Our proof of Lemma~\ref{lem:sde-lp-bound2} is a slight adaption of the proof of Lemma~2.6 in
Beck et al.\ \cite{Becker2018}.
\begin{lemma}
\label{lem:sde-lp-bound2}
Let 
$d,m \in \N$, 
$ p \in [1,\infty)$,
$l \in [0,\infty)$,
$T \in [0,\infty)$, 
$A \in \R^ {d \times m}$,  
let $\left \| \cdot \right \| \colon \R^d \to [0,\infty)$ be the standard norm on $\R^d$,
let $(\Omega, \mathcal{F},\P)$ be a probability space, 
let $W\colon [0,T]\times \Omega \to \R^m$ be a standard Brownian motion, 
let $\mu \colon \R^d \to \R^d$  
be a $\mathcal{B}(\R^d) / \mathcal{B}(\R^d)$-measurable function which satisfies for all $x,y \in \R^d$ that
$\|\mu(x)-\mu(y)\| \leq l\|x-y\|$,
and let 
$X^x \colon [0,T]\times \Omega \to \R^d$,
$x \in \R^d$,
be stochastic processes w.c.s.p.\ which satisfy for all
$ x \in \R^d$,
$t \in [0,T]$  that
\begin{equation}
\label{eq:apriori2-ass}
\P\!\left(X_t^x = x + \int_0^t \mu(X_s^x)\, ds + AW_t\right) = 1.
\end{equation}
Then  it holds for all $x,y \in \R^d$, $t \in [0,T]$ that 
\begin{equation}
  \left(\E\! \left[\|X_t^x-X_t^y\|^p\right]\right)^{ \nicefrac{1}{p} } 
\leq 
  e^{lt} \|x-y\|.
\end{equation}
\end{lemma}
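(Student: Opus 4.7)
The plan is to subtract the two integral equations pathwise to exploit the cancellation of the common Brownian term $AW_t$, then apply Minkowski's integral inequality and Gr\"onwall's lemma in $L^p$, exactly as in the proof of Lemma~\ref{lem:sde-lp-bound}, but simpler because the stochastic integral disappears.

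First, fix $x,y \in \R^d$. As in the proof of Lemma~\ref{lem:sde-lp-bound}, I would define the event
\begin{equation}
B = \bigcap_{t \in [0,T] \cap \Q}\left\{X^x_t = x + \int_0^t \mu(X^x_s)\,ds + AW_t\right\} \cap \bigcap_{t \in [0,T] \cap \Q}\left\{X^y_t = y + \int_0^t \mu(X^y_s)\,ds + AW_t\right\},
\end{equation}
which lies in $\mathcal{F}$ by continuity of the paths and satisfies $\P(B)=1$ by \eqref{eq:apriori2-ass} and a countable union bound. On $B$, continuity of $t \mapsto X^x_t$ and $t \mapsto X^y_t$ upgrades the identities from rational to all $t \in [0,T]$, so that for every $\omega \in B$, $t \in [0,T]$ we have
\begin{equation}
X^x_t(\omega) - X^y_t(\omega) = (x-y) + \int_0^t \bigl[\mu(X^x_s(\omega)) - \mu(X^y_s(\omega))\bigr]\,ds.
\end{equation}
The Lipschitz hypothesis on $\mu$ then yields, again on $B$,
\begin{equation}
\|X^x_t - X^y_t\| \leq \|x-y\| + l \int_0^t \|X^x_s - X^y_s\|\,ds.
\end{equation}

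Next, to take the $p$-th moment and apply Gr\"onwall I need to know that $t \mapsto (\E[\|X^x_t - X^y_t\|^p])^{1/p}$ is finite on $[0,T]$. This is where I would invoke Corollary~\ref{cor:apriori1} applied separately to $X^x$ and $X^y$ with $C \leftarrow \|\mu(0)\|$, $c \leftarrow l$ (noting that the Lipschitz assumption implies $\|\mu(z)\| \leq \|\mu(0)\| + l\|z\|$ for all $z \in \R^d$), which gives $\sup_{t \in [0,T]} (\E[\|X^x_t\|^p])^{1/p} < \infty$ and the analogous bound for $X^y$. By the triangle inequality in $L^p$,
\begin{equation}
\int_0^T \bigl(\E[\|X^x_s - X^y_s\|^p]\bigr)^{\nicefrac{1}{p}}\,ds < \infty.
\end{equation}

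With this integrability in hand, I would apply the $L^p$-triangle inequality (Minkowski's integral inequality, as in Garling~\cite[Corollary 5.4.2]{MR2343341} or Jentzen \& Kloeden~\cite[Proposition 8 in Appendix A]{MR2856611}, in the same fashion as used in the proof of Lemma~\ref{lem:sde-lp-bound}) together with $\P(B)=1$ to obtain for all $t \in [0,T]$
\begin{equation}
\bigl(\E[\|X^x_t - X^y_t\|^p]\bigr)^{\nicefrac{1}{p}} \leq \|x-y\| + l \int_0^t \bigl(\E[\|X^x_s - X^y_s\|^p]\bigr)^{\nicefrac{1}{p}}\,ds.
\end{equation}
The proof is then completed by applying Grohs et al.~\cite[Lemma 2.11]{HornungJentzen2018} (Gr\"onwall's inequality) with $\alpha \leftarrow \|x-y\|$, $\beta \leftarrow l$, and $f \leftarrow ([0,T] \ni t \mapsto (\E[\|X^x_t - X^y_t\|^p])^{1/p} \in [0,\infty))$, which yields the desired bound $e^{lt}\|x-y\|$.

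The only real subtlety is the a priori integrability step needed before one may invoke Gr\"onwall; the rest of the argument is routine since the cancellation of $AW_t$ reduces the problem to a deterministic Gr\"onwall-type estimate after taking $L^p$-norms.
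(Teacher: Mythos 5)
Your plan is correct, but it is more roundabout than the proof the paper actually gives, and it introduces an unnecessary technical dependency. After you obtain, on the full-measure event $B$ and for every $t \in [0,T]$, the pathwise integral inequality
\begin{equation}
\|X^x_t(\omega) - X^y_t(\omega)\| \leq \|x-y\| + l \int_0^t \|X^x_s(\omega) - X^y_s(\omega)\|\,ds,
\end{equation}
the paper simply applies Gr\"onwall (Grohs et al.~\cite[Lemma~2.11]{HornungJentzen2018}) \emph{pathwise}, for each fixed $\omega \in B$. The integrability required for pathwise Gr\"onwall, namely $\int_0^T \|X^x_s(\omega) - X^y_s(\omega)\|\,ds < \infty$, is immediate from continuity of the sample paths. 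This yields the deterministic pointwise bound $\|X^x_t(\omega) - X^y_t(\omega)\| \leq e^{lt}\|x-y\|$ on a set of probability one, and since the right-hand side does not depend on $\omega$ the $L^p$ estimate follows without any further work. By contrast, your plan takes $L^p$-norms \emph{before} applying Gr\"onwall, which forces you to first establish a priori $L^p$-finiteness via Corollary~\ref{cor:apriori1} and then invoke Minkowski's integral inequality; both steps are sound, but they are avoidable. (Also, a small fumble: Corollary~\ref{cor:apriori1} as stated only bounds $(\E[\|X_T\|^p])^{1/p}$ at the terminal time, so to get $\sup_{t\in[0,T]}(\E[\|X^x_t\|^p])^{1/p}<\infty$ you would need to apply it on each subinterval $[0,t]$ and observe that the resulting bound is increasing in $t$ — an easy fix, but the pathwise route sidesteps it entirely.) In short: correct, but you did not notice that the cancellation of $AW_t$ makes the estimate entirely deterministic, which is the real simplification relative to Lemma~\ref{lem:sde-lp-bound}.
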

\begin{proof}[Proof of Lemma~\ref{lem:sde-lp-bound2}]
Throughout this proof
let 
$ B_x \subseteq \Omega $,
$ x \in \R^d $,
be the sets which satisfy for all 
$ x \in \R^d $
that
\begin{equation}
\begin{split}
B_x
& = 
\bigcap_{ t \in [0,T] }
\left\{ 
X_t^x
=
x
+
\int_0^t
\mu\big(
X_{ s }^x
\big) 
\, ds
+
A W_t
\right\}
\\ &
=
\left\{ 
\omega \in \Omega 
\colon 
\left( 
\forall \, t \in [0,T] \colon 
X_t^x(\omega) 
= 
x
+
\int_0^t
\mu\big(
  X_{ s }^x( \omega )
\big) 
\, ds
+
A W_t( \omega )
\right) 
\right\}
.
\end{split}
\end{equation}
Observe that the fact that 
for all $x \in \R^d$ we have that
$ X^x \colon [0,T] \times \Omega \to \R^d $ 
and 
$ W \colon [0,T] \times \Omega \to \R^m $
are stochastic processes w.c.s.p.\
yields that for all $ x \in \R^d$ we have that
\begin{equation}
B_x
= 
\left(
\bigcap_{ t \in [0,T] \cap \Q }
\left\{ 
X_t^x
=
x
+
\int_0^t
\mu\big(
X_{ s }^x
\big) 
\, ds
+
A W_t
\right\}
\right)
\in \mathcal{F}
.
\end{equation}
Combining this and \eqref{eq:apriori2-ass} proves that
for all $x \in \R^d$ we have that
\begin{equation}
\begin{split}
\P(B_x ) 
&  
= 
\P\!\left( 
\bigcap_{ t \in [0,T] \cap \Q } 
\left\{
X_t^x
=
x
+
\int_0^t
\mu\big(
  X_{ s }^{ x }
\big) 
\, ds
+
A W_t
\right\}
\right) 
\\ &
= 
1 - 
\P\!\left( 
\Omega \setminus \! 
\left[
\bigcap_{ t \in [0,T] \cap \Q } 
\left\{ 
  X_t^x
  =
  x
  +
  \int_0^t
  \mu\big(
    X_{ s }^{ x }
  \big) 
  \, ds
  +
  A W_t
\right\}
\right]
\right) 
\\
& 
= 
1 - 
\P\!\left( 
\bigcup_{ t \in [0,T] \cap \Q } 
\left\{ 
  X_t^x
  \neq
  x
  +
  \int_0^t
  \mu\big(
    X_{ s }^{ x }
  \big) 
  \, ds
  +
  A W_t
\right\}
\right) 
\\ &
\geq 
1 
-
\left[ 
\sum_{t \in [0,T] \cap \Q} 
\P\!\left( 
X_t^x
\neq
x
+
\int_0^t
\mu\big(
  X_{ s }^{ x }
\big) 
\, ds
+
A W_t
\right) 
\right] = 1 .
\end{split}
\end{equation}
This reveals that for all $x,y \in \R^d$ we have that
\begin{equation}
\label{eq:apriori2-A}
\begin{split}
& \P\!\left(B_x \cap B_y\right) 
= 
  1- \P\!\left(\Omega \backslash [B_x \cap B_y] \right) 
\\ & = 
  1 - \P\!\left(B_x^c \cup B_y^c\right) 
\geq 
  1 - \left[\P\!\left(B_x^c\right) + P\!\left(B_y^c\right) \right] = 1.
\end{split}
\end{equation}
Next note that the triangle inequality and the assumption that  for all $x,y \in \R^d$ we have that
$ \|\mu(x)-\mu(y)\| \leq l\|x-y\|$ ensure that for all 
$ x,y \in \R^d$,
$ \omega \in B_x \cap B_y $,
$t \in [0,T]$ we have that
\begin{equation}
  \label{eq:apriori2-eq1}
        \begin{split}
        \|X_t^x(\omega) -X_t^y(\omega)\| 
        & \leq 
        \|x-y\| 
        + \int_0^t \|\mu(X_s^x(\omega))-\mu(X_s^y(\omega))\| \, ds \\
        & \leq  \|x-y\| 
        + l\int_0^t \|X_s^x(\omega)-X_s^y(\omega)\| \, ds. 
        \end{split}
\end{equation}
Moreover, note that the assumption that for all $x \in \R^d$ we have that
$X^x \colon [0,T] \linebreak \times \Omega \to \R^d $ is a stochastic process with  continuous sample paths assures that for all $x,y \in \R^d$, $ \omega \in \Omega $ it holds  that
\begin{equation}
  \int_0^T \|X_s^x(\omega)-X_s^y(\omega)\| \, ds < \infty.
\end{equation}
Grohs et al.~\cite[Lemma 2.11]{HornungJentzen2018}
(applied with $\alpha \leftarrow \|x-y\| $,
$\beta \leftarrow l$,  $f \leftarrow ([0,T] \ni t \mapsto \|X_t^x(\omega)-X_t^y(\omega) \| \in [0,\infty))$
for $x,y \in \R^d$, $\omega \in B_x \cap B_y$
in the notation
of Grohs et al.~\cite[Lemma 2.11]{HornungJentzen2018}) 
and \eqref{eq:apriori2-eq1}
hence prove that for all 
$ x,y \in \R^d $, 
$ \omega \in B_x \cap B_y$,
$ t \in [0,T] $ we have that
\begin{equation}
  \|X_t^x(\omega) - X_t^y(\omega) \| \leq 
  \|x-y\|
  e^{lt}.
\end{equation}
This and~\eqref{eq:apriori2-A} prove that for all $x,y \in \mathbb R^d$, $t\in [0,T]$ we have that
\begin{equation}
\left(\E\! \left[\|X_t^x-X_t^y\|^p\right]\right)^{\nicefrac{1}{p}} 
\leq e^{lt}\|x-y\|.	
\end{equation}
This completes the proof of Lemma~\ref{lem:sde-lp-bound2}.
\end{proof}
\section{Error estimates}
\label{sec:error}

\subsection{Quantitative error estimates}
\label{ssec:qee}

In this subsection we establish in Corollary~\ref{cor:pde-nnet-ap5} below a quantitative approximation result for viscosity solutions
(cf., for example, Hairer et al.~\cite{hairer2015}) of Kolomogorov PDEs with constant coefficient
functions. Our proof of Corollary~\ref{cor:pde-nnet-ap5} employs the quantitative approximation results in Lemma~\ref{lem:mc-nnet-ap} and Corollary~\ref{cor:pde-nnet-ap4} below. 
Corollary~\ref{cor:pde-nnet-ap5} is one of the key ingredients which we use in our proof of Proposition~\ref{prop:ql-ap1} and Proposition~\ref{prop:nnet-ap1} below, respectively, in order to construct ANN
approximations for viscosity solutions of Kolmogorov PDEs
with constant coefficient functions.

\begin{lemma}
\label{lem:mc-nnet-ap}
Let 
$ d,n \in \N$, 
$ \varphi \in C(\R^d,\R)$, 
$ c,l, a \in \R$, $b \in (a,\infty)$,   
$ \zeta, \varepsilon,T \in (0,\infty)$, 
$ v,\mathbf{v},w,\mathbf{w}, z,\mathbf{z}  \in [0,\infty)$, 
$ p \in [1,\infty)$, 
let $\left \| \cdot \right \| \colon \R^d \to [0,\infty)$ be the standard norm on $\R^d$,
let $ \scp{\cdot}{\cdot}{} \colon \R^d \times \R^d \to \R$  be the $d$-dimensional Euclidean scalar product,
let $\mathfrak{K} \in (0,\infty)$ be the  $\max\{2,p\}$-Kahane--Khintchine constant
(cf. Definition~\ref{def:p2-kahane-khintchine} and Lemma~\ref{lem:kahane-const}),
assume for all $\Phi \in C^1((0,1)^d, \R)$ 
that

\begin{equation}
  \sup_{ x \in (0,1)^d }
  \left| \Phi(x) \right|
  \leq
  \zeta
  \left[ 
  \int_{ (0,1)^d }
  \left(
  \left|
  \Phi(x)
  \right|^{\max\{2,p\}}
  +
  \left\|
  (\nabla \Phi)( x )
  \right\|^{\max\{2,p\}}
  \right)
  dx
  \right]^{ \nicefrac{ 1 }{ \max\{2,p\}} }
  ,
\end{equation}
let $\phi \in C^1(\R^d,\R)$ satisfy for all $ x \in \R^d$ that
\begin{equation}
  \label{eq:mc-nnet-ass1}
  \left| \phi(x)\right| \leq c  d^z(1+\|x\|^{\mathbf{z}}), \qquad \| (\nabla \phi)(x)\|\leq  c  d^w(1+\|x\|^{\mathbf{w}}),
\end{equation}
\begin{equation}
  \label{eq:mc-nnet-ass2}
  \text{and} \qquad \left| \varphi(x)-\phi(x)\right| \leq \varepsilon  d^v(1+\|x\|^{\mathbf{v}}), 
\end{equation}
let $\mu \colon \R^d \to \R^d$ satisfy for all  $x,y \in \R^d$, $\lambda \in \R$ that $\mu(\lambda x+y) + \lambda \mu(0) = \lambda\mu(x)+\mu(y)$  and 
\begin{equation}
\label{eq:pde-nnet-muc2} \|\mu(x)-\mu(y)\| \leq  l\|x-y\|,
\end{equation}
let $A = (A_{i,j})_{(i,j) \in \{1,\ldots,d\}^2} \in \R^{d \times d}$ be a  symmetric and positive semi-definite matrix, 
let $u \in  C([0,T] \times \R^d, \R)$, assume for all $x \in \R^d$ that $u(0,x) = \varphi(x)$,
assume that 
$
\inf_{\gamma \in (0,\infty)} \sup_{(t,x) \in [0,T] \times \R^d} \big( \frac{|u(t,x)|}{1 +\|x\|^{\gamma}} \big) 
< \infty
$, 
and 
assume that  
$u|_{(0,T) \times \R^d}$ 
is a viscosity solution of
\begin{align}
(\tfrac{\partial }{\partial t}u)(t,x) 
&=
\scp{\mu(x)}{(\nabla_x u)(t,x)}{}
+
\tfrac{1}{2} \smallsum_{i,j=1}^d A_{i,j}\, (\tfrac{\partial^2 }{\partial x_i \partial x_j}u)(t,x) 
\end{align}
for $(t,x) \in (0,T) \times \R^d$.
Then there exist $\WE_1,\ldots,\WE_{n} \in \R^{d \times d}$, $\BI_1,\ldots,\BI_{n} \in \R^d$ such that
\begin{equation}
\begin{split}
& \sup_{x \in [a,b]^d} \left|
u(T,x) 
- \left[\frac{1}{n}\sum_{k=1}^{n}\, \phi( \WE_k x + \BI_k)\right]
\right| \\
& \leq
\varepsilon d^v  \Big(
  1+ 
  e^{\mathbf{v} lT}
  \big[
  T\|\mu(0)\|+ \sqrt{ \max\{1,\mathbf{v}-1\} \, T \operatorname{Trace}(A)} 
\\ & + 
  \sup_{x \in [a,b]^d} \|x\| 
  \big]^{\mathbf{v}}
  \Big) 
\\ & + 
\frac{4 \mathfrak{K} \zeta}{\sqrt{n}}  \Biggl(   
cd^z  \Big(
  1+ 
  e^{\mathbf{z} lT}
  \big[
  T\|\mu(0)\|+ \sqrt{ \max\{1,\mathbf{z} \max\{2,p\} -1\} \, T \operatorname{Trace}(A)} 
\\ & + 
  \sup_{x \in [a,b]^d} \|x\| 
  \big]^{\mathbf{z}}
  \Big)
+
  (b-a)
  cd^{w+\nicefrac{1}{2} } e^{lT}
  \Big(
  1+
  e^{\mathbf{w} lT}
  \big[
  T \|\mu(0)\|
\\ & + 
  \sqrt{ 
  \max\{1,\mathbf{w} \max\{2,p\}
  + 
  \mathbf{w} -1\} \, T \operatorname{Trace}(A)} 
  +
  \sup_{x \in [a,b]^d} \|x\| 
  \big]^{\mathbf{w}}
  \Big)
  \Biggr)
.
\end{split}
\end{equation}
\end{lemma}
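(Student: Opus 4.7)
The plan is to combine a Feynman--Kac representation of $u$ with a Monte Carlo approximation of the associated expectation, exploiting the fact that the affine structure of $\mu$ renders $X_T^x$ affine linear in $x$, and then to derandomize via an existence-of-realizations argument. The relation $\mu(\lambda x+y)+\lambda\mu(0)=\lambda\mu(x)+\mu(y)$ (set $y=0$) shows that $x\mapsto\mu(x)-\mu(0)$ is linear, hence $\mu(x)=Mx+\mu(0)$ for some $M\in\R^{d\times d}$ with operator norm at most $l$ by \eqref{eq:pde-nnet-muc2}. Let $\sigma\in\R^{d\times d}$ be a symmetric square root of $A$ (i.e., $\sigma\sigma^{\ast}=A$), enlarge the probability space to carry i.i.d.\ standard Brownian motions $W^{(1)},\dots,W^{(n)}\colon[0,T]\times\Omega\to\R^d$, and let $X^{x,(k)}$ be the unique continuous strong solution of
\begin{equation*}
X_t^{x,(k)}=x+\int_0^t\mu(X_s^{x,(k)})\,ds+\sigma W_t^{(k)}.
\end{equation*}
By Grohs et al.\ \cite[Proposition~2.20]{HornungJentzen2018} one has $X_T^{x,(k)}=e^{MT}x+Y^{(k)}$, with $Y^{(k)}:=X_T^{0,(k)}$ depending only on $W^{(k)}$ and not on $x$, and by Hairer et al.\ \cite[Corollary~4.17]{hairer2015}, $u(T,x)=\E[\varphi(X_T^{x,(1)})]$.

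Next I decompose the approximation error as $u(T,x)-\tfrac1n\sum_{k=1}^n\phi(X_T^{x,(k)})=A(x)+B(x)$ with $A(x):=\E[(\varphi-\phi)(X_T^{x,(1)})]$ and $B(x):=\E[\phi(X_T^{x,(1)})]-\tfrac1n\sum_{k=1}^n\phi(X_T^{x,(k)})$. The triangle inequality, Jensen's inequality, the pointwise bound \eqref{eq:mc-nnet-ass2}, and Corollary~\ref{cor:apriori1} (applied at exponent $\max\{1,\mathbf v\}$ with Lipschitz constant $l$ and $C=\|\mu(0)\|$) deliver the deterministic bound on $\sup_{x\in[a,b]^d}|A(x)|$ equal to the first summand of the target inequality. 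For $B(x)$ I apply the Sobolev-type Monte Carlo estimate Lemma~\ref{lem:Sobolev2} to the $C^1$ random fields $\xi_k(x):=\phi(X_T^{x,(k)})$: the chain rule gives $(\nabla_x\xi_k)(x)=(e^{MT})^{\ast}(\nabla\phi)(X_T^{x,(k)})$, and $\|e^{MT}\|_{\mathrm{op}}\le e^{lT}$ since $\|M\|_{\mathrm{op}}\le l$. Combining the growth assumptions \eqref{eq:mc-nnet-ass1} with Corollary~\ref{cor:apriori1} at exponents $\mathbf z\max\{2,p\}$ and $\mathbf w\max\{2,p\}$, respectively, furnishes the $L^{\max\{2,p\}}$ moment bounds on $\xi_1(x)$ and $(\nabla\xi_1)(x)$ for $x\in[a,b]^d$ required in Lemma~\ref{lem:Sobolev2}, and these same estimates verify the local uniform integrability condition \eqref{eq:conc-ass1}. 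Lemma~\ref{lem:Sobolev2} then bounds $(\E[\sup_{x\in[a,b]^d}|B(x)|^p])^{1/p}$ by the second summand of the target inequality.

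Summing the two contributions yields an $L^p$-bound on $\sup_{x\in[a,b]^d}|u(T,x)-\tfrac1n\sum_k\phi(X_T^{x,(k)})|$ equal to the right-hand side of the claim. The existence result for realizations of random variables, Grohs et al.\ \cite[Proposition~3.3]{HornungJentzen2018}, then supplies a single $\omega\in\Omega$ for which the corresponding pointwise estimate holds; setting $\WE_k:=e^{MT}\in\R^{d\times d}$ and $\BI_k:=Y^{(k)}(\omega)\in\R^d$ transforms $\phi(X_T^{x,(k)}(\omega))=\phi(e^{MT}x+Y^{(k)}(\omega))$ into $\phi(\WE_k x+\BI_k)$, which is the required form. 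The principal obstacle is arranging the $L^{\max\{2,p\}}$-moment estimates for $\xi_1$ and $\nabla\xi_1$ so that the exponents appearing under the square roots (namely $\mathbf z\max\{2,p\}-1$ and $\mathbf w\max\{2,p\}+\mathbf w-1$) match those in the target bound; this forces a careful application of Corollary~\ref{cor:apriori1} at precisely the right powers and a careful absorption of the chain-rule factor $(e^{MT})^{\ast}$ into the prefactor $e^{lT}$ that multiplies the $(b-a)$-term.
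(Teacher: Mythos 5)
Your proof is correct and follows essentially the same strategy as the paper: Feynman--Kac representation $u(T,x)=\E[\varphi(X_T^x)]$, decomposition into a bias term $A(x)$ and a Monte Carlo term $B(x)$, the Sobolev--Monte Carlo estimate of Lemma~\ref{lem:Sobolev2} for $B(x)$, moment control via Corollary~\ref{cor:apriori1} and Lemma~\ref{lem:sde-lp-bound2}, and derandomization via Grohs et al.~\cite[Proposition 3.3]{HornungJentzen2018}. The one noteworthy difference is that by writing $X_T^x = e^{MT}x + Y$ explicitly you bound $\|\partial_x X_T^x\|_{L(\R^d,\R^d)} = \|e^{MT}\|_{L(\R^d,\R^d)} \le e^{lT}$ directly, whereas the paper treats $x\mapsto X_T^{1,x}$ abstractly, passes through the Hilbert--Schmidt norm, and picks up an extra factor $d^{1/2}$; your route therefore produces $cd^{w}e^{lT}$ in place of the paper's $cd^{w+\nicefrac{1}{2}}e^{lT}$, a strictly sharper bound that still implies the stated inequality since $d\geq 1$.
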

\begin{proof}[Proof of Lemma~\ref{lem:mc-nnet-ap}]
Throughout this proof let $e_1,\ldots,e_d\in \R^d$ satisfy  that
$
e_1 = (1,0,\ldots,0), \ldots, e_d = (0,\ldots,0,1),
$
let 
$m_r \colon (0,\infty) \to [r,\infty)$, 
$ r \in \R$,  
 satisfy for all $ r \in \R$, $x \in (0,\infty)$ that 
$m_r(x) = \max\{r,x\}$,
let $(\Omega, \mathcal{F},\P)$ be a probability space with a normal filtration $(\mathbbm{F}_t)_{t \in [0,T]}$, 
let $W^{i}\colon[0,T]\times \Omega \to \R^d$, $i \in \N,$ be independent standard $(\mathbbm{F}_t)_{t \in [0,T]}$-Brownian motions, 
and let $X^{i,x} \colon [0,T]\times \Omega \to \R^d$, $i \in \N$, $x \in \R^d$, be $(\mathbbm{F}_t)_{t \in [0,T]}$-adapted stochastic processes w.c.s.p.\
which satisfy that 
\begin{enumerate}[(a)]
\item \label{it:pde-nnet-pf1} for all $i \in \N$, $x \in \R^d$, $t \in [0,T]$ it holds $\P$-a.s.\   that
\begin{equation}
\label{eq:pde-nnet-sde}
X_t^{i,x} = x + \int_0^t \mu(X_s^{i,x})\, ds + \sqrt{A} W^{i}_t
\end{equation}
and
\item \label{it:pde-nnet-pf2} 
for all $i \in \N$,  $x,y \in \R^d$, $\lambda \in \R$, $t \in [0,T]$,  $\omega \in \Omega$ we have that
$X_t^{i,\lambda x+y}(\omega) + \lambda X_t^{i,0}(\omega) = \lambda X_t^{i,x}(\omega) + X_t^{i,y}(\omega)$
\end{enumerate}
(cf.\ Grohs et al.~\cite[Proposition~2.20]{HornungJentzen2018}).
Note that item \eqref{it:pde-nnet-pf2} and Grohs et al.~\cite[Corollary 2.8]{HornungJentzen2018}
(applied with $d \leftarrow d$,
$m \leftarrow d$,
$\varphi \leftarrow (\R^d \ni x \mapsto X_T^{i,x}(\omega) \in \R^d)$ for
$i \in \N$, $ \omega \in \Omega $ 
in the notation of Grohs et al.~\cite[Corollary 2.8]{HornungJentzen2018})
ensure that for all 
$i \in \N$,
$ \omega \in \Omega $
there exist
$ 
\mathcal{W}_{i,\omega} \in \R^{d\times d} 
$ 
and 
$
\mathcal{B}_{i,\omega} \in \R^d
$ 
which satisfy that for all 
$ x \in \R^d $
we have that
\begin{equation}
\label{eq:pde-nnet-xaff}
X_T^{i,x}(\omega) 
=
\mathcal{W}_{i,\omega} x
+
\mathcal{B}_{i,\omega}.
\end{equation}
Combining this with the assumption that $\phi \in C^1(\mathbb R^d,\mathbb R)$ proves that for all
$ \omega \in \Omega $,
$ i \in \mathbb N$ 
we have that
\begin{equation} 
\label{eq:pde-nnet-Xfct}
\left(\mathbb R^d \ni x \mapsto \phi\left(X_T^{i,x}(\omega)\right) \in \R\right)
\in C^1(\mathbb R^d,\R)
.
\end{equation}
Next note that
\eqref{eq:mc-nnet-ass1}
and
\eqref{eq:mc-nnet-ass2}
ensure 
that $ \varphi \colon \R^d \to \R $ is an at most polynomially growing function.
This, 
\eqref{eq:pde-nnet-muc2},
and
the Feynman-Kac formula (cf., for example, Grohs et al.~\cite[Proposition 2.22]{HornungJentzen2018} or Hairer et al.~\cite[Corollary 4.17]{hairer2015}) imply that for all  $x \in \R^d$ we have that
\begin{equation}
\label{eq:pde-nnet-fcf}
u(T,x) = \E\big[\varphi(X_T^{1,x})\big].
\end{equation}
This verifies that  
\begin{equation}
\label{eq:pde-nnet-step1}
\begin{split}
& \sup_{x \in [a,b]^d} \left| u(T,x) - \E\big[\phi(X_T^{1,x})\big]\right|
\\ & =
  \sup_{x \in [a,b]^d  } \left| \E\big[\varphi(X_T^{1,x})-\phi(X_T^{1,x})\big]\right| 
\\ & \leq
  \sup_{x \in [a,b]^d  } \E\! \left[\left|\varphi(X_T^{1,x})
  -\phi(X_T^{1,x})\right| 
  \frac{ d^v \big(1+\big\|X_T^{1,x}\big\|^{\mathbf{v}} \big) }
  { d^v\big(1+\big\|X_T^{1,x}\big\|^{\mathbf{v}}\big) }
  \right]
\\ & \leq
  d^v
  \left[\sup_{y \in \mathbb R^d} \left(\frac{|\varphi(y)-\phi(y)|}
  {d^v(1+\|y\|^{\mathbf{v}})}
  \right)\right] \left[ \sup_{x \in[a,b]^d } \E\!\left[1+\big\|X_T^{1,x}\big\|^{\mathbf{v}}\right]\right] 
.
\end{split}
\end{equation}
Jensen's inequality
and
\eqref{eq:mc-nnet-ass2}
therefore verify that 
\begin{equation}
\begin{split}
& \sup_{x \in [a,b]^d} \left| u(T,x) - \E\big[\phi(X_T^{1,x})\big]\right|
\\ & \leq
  d^v
  \left[\sup_{y \in \mathbb R^d} \left(\frac{|\varphi(y)-\phi(y)|}
  {d^v(1+\|y\|^{\mathbf{v}})}
  \right)\right] 
  \left[ 1+\sup_{x \in[a,b]^d } \E\!\left[\big\|X_T^{1,x}\big\|
  ^{\mathbf{v} }
  \right]\right] 
\\ & \leq
  \varepsilon d^v \left[1+ \sup_{x \in [a,b]^d} 
  \left| \E\big[\big\|X_T^{1,x}\big \|^{m_1(\mathbf{v})}\big] \right|
  ^{ \nicefrac{\mathbf{v}} { m_1(\mathbf{v})} } \right]
.
\end{split}
\end{equation}
Item \eqref{it:pde-nnet-pf1},
Corollary~\ref{cor:apriori1} 
(applied with  $m \leftarrow d$,  $C \leftarrow \|\mu(0)\|$,  $ c \leftarrow l$, $W \leftarrow W^1$, $A \leftarrow \sqrt{A}$,  $X \leftarrow X^{1,x}$, $p \leftarrow m_1(\mathbf{v})$ for $x\in [a,b]^d$ in the notation of Corollary~\ref{cor:apriori1}),
and 
the fact that 
$
m_1(m_1( \mathbf{v}) - 1) = m_1( \mathbf{v} - 1)
$
hence yield that  \begin{equation}
\label{eq:pde-nnet-err1}
\begin{split}
& \sup_{x \in [a,b]^d} \left| u(T,x) - \E\big[\phi(X_T^{1,x})\big]\right| 
\\& \leq 
  \varepsilon d^v  \Big(
  1+ e^{\mathbf{v} lT}\big[
  \|\mu(0)\|T+\sqrt{m_1(\mathbf{v} - 1) \, T \operatorname{Trace}(A)} 
\\ & + 
  \sup_{x \in [a,b]^d} \|x\| 
  \big]^{\mathbf{v}}
  \Big) 
  .
\end{split}
\end{equation}
Next note that
\begin{equation}
\begin{split}
& \sup_{x \in [a,b]^d} 
  \left|\E\!\left[|\phi(X_T^{1,x})|^{ m_2(p) }\right]\right|^{ \nicefrac{1}{{ m_2(p) } }}
\\ & = 
  \sup_{x \in [a,b]^d} 
  \left|\E\!\left[\left|
  \phi(X_T^{1,x})
   \frac{ d^z \big(1+\big\|X_T^{1,x}\big\|^{\mathbf{z}} \big) }
  { d^z\big(1+\big\|X_T^{1,x}\big\|^{\mathbf{z}}\big) }
  \right|^{ m_2(p) }\right]\right|^{ \nicefrac{1}{{ m_2(p) } }} 
\\ & \leq
  d^z \left[ \sup_{y \in \mathbb R^d} \frac{|\phi(y)|}{d^{z}(1+ \|y\|^{\mathbf{z}})}\right]  
  \left[\sup_{x \in [a,b]^d }
  \left|\E\!\left[
  \big(1+\big\|X_T^{1,x}\big\|^{\mathbf{z}}\big)^{ { m_2(p) }}\right]\right|
  ^{ \nicefrac{1}{m_2(p)}}\right] 
.
\end{split}
\end{equation}
This, \eqref{eq:mc-nnet-ass1}, 
the triangle inequality,
and
H\"older's inequality
verify that  
\begin{equation}
\begin{split}
& \sup_{x \in [a,b]^d} 
  \left|\E\!\left[|\phi(X_T^{1,x})|^{ m_2(p) }\right]\right|^{ \nicefrac{1}{{ m_2(p) } }}
\\ & \leq
  c d^z\left[1+\sup_{x \in [a,b]^d }
  \left|\E\!\left[\|X_T^{1,x}\|^{\mathbf{z} { m_2(p) }}\right]\right|^{\nicefrac{1}{m_2(p)}}\right] 
\\ & \leq 
  c d^z\left[1+\sup_{x \in [a,b]^d }
  \left|\E\!\left[\|X_T^{1,x}\|^{m_1(\mathbf{z} { m_2(p) })}\right]\right|
  ^{ \nicefrac{\mathbf{z}}{ m_1(\mathbf{z}{ m_2(p) }) } }\right] 
  .
\end{split}
\end{equation}
Item \eqref{it:pde-nnet-pf1},
Corollary~\ref{cor:apriori1}
(applied with $m \leftarrow d$, $C \leftarrow \|\mu(0)\|$,  $c \leftarrow l$, $A \leftarrow \sqrt{A}$, $W \leftarrow W^1$,  $X \leftarrow X^{1,x}$, $p \leftarrow m_1(\mathbf{z} m_2(p) )$ for $x\in[a,b]^d$ in the notation of Corollary~\ref{cor:apriori1}),
and
the fact that 
$
m_1(m_1(\mathbf{z} m_2(p) ) - 1) = m_1(\mathbf{z} m_2(p) -1 )
$
hence yield that 
\begin{equation}
\label{eq:pde-nnet-lqb1}
\begin{split}
& \sup_{x \in [a,b]^d} \left|\E\!\left[|\phi(X_T^{1,x})|^{ m_2(p) } \right]\right|
  ^{ \nicefrac{1}{m_2(p) } } 
\\ & \leq 
  cd^z  \Big(
  1+ 
  e^{\mathbf{z} lT}
  \big[
  T\|\mu(0)\|+ \sqrt{ m_1(\mathbf{z} m_2(p)-1) \, T \operatorname{Trace}(A)} 
\\ & + 
  \sup_{x \in [a,b]^d} \|x\| 
  \big]^{\mathbf{z}}
  \Big)
  .
\end{split}
\end{equation}
Next note that 
\begin{equation}
\label{eq:pde-nnet-grad}
\begin{split}
& \sup_{x \in [a,b]^d } 
  \left|
  \E\!\left[
  \limsup_{ \R^d \backslash \{0\} \ni h \to 0 }
  \Bigg(
  \frac
  { | \phi(X_T^{1,x+h}) - \phi(X_T^{1,x})|^{ m_2(p) } }
  {\|h\|^{ m_2(p) }}
  \Bigg)
  \right]
  \right|^{\nicefrac{1}{m_2(p)}} 
\\ & = 
  \sup_{x \in [a,b]^d } 
  \left|
  \E\!\left[
  \Big\| \big(  \tfrac{\partial}{ \partial x } X_T^{1,x} \big)^* (\nabla \phi)(X_T^{1,x}) \Big\|^{ m_2(p) }
  \right]
  \right|^{\nicefrac{1}{m_2(p)}} 
\\ & \leq
  \sup_{x \in [a,b]^d } 
  \left|
  \E\!\left[
  \Big \|
  (\nabla \phi)(X_T^{1,x})
  \Big \|^{ m_2(p) }
  \Big\| \big(\tfrac{\partial}{\partial x} X_T^{1,x}\big) \Big\|_{L(\R^d,\R^d)}^{ m_2(p) }
  \right]
  \right|^{\nicefrac{1}{m_2(p)}} 
  .
\end{split}
\end{equation}
Moreover, observe that 
for all $x \in [a,b]^d$ we have that
\begin{equation}
\begin{split}
& \left|
  \E\!\left[
  \Big \|
  (\nabla \phi)(X_T^{1,x})
  \Big \| ^{ m_2(p) }
  \Big\| \big(\tfrac{\partial}{\partial x} X_T^{1,x}\big) \Big\|_{L(\R^d,\R^d)}^{ m_2(p) }
  \right]
  \right|^{ \nicefrac{1}{m_2(p)} }
\\ & = 
\left|
  \E\!\left[
  \Big \|
  (\nabla \phi)(X_T^{1,x})
  \Big \| ^{ m_2(p) }
  \frac{ \big[ d^{w} \big( 1 + \|X_T^{1,x}\|^{\mathbf{w}}\big)\big]
  ^{ m_2(p) }}
  {\big[d^{w} \big( 1 + \|X_T^{1,x}\|^{\mathbf{w}}\big)\big]
  ^{ m_2(p) }}
  \Big\| \big(\tfrac{\partial}{\partial x} X_T^{1,x}\big) \Big\|_{L(\R^d,\R^d)}^{ m_2(p) }
  \right]
  \right|^{ \nicefrac{1}{m_2(p)} }
\\ & \leq 
  d^{w} \left[
  \sup_{ y \in \R^d } 
  \frac{ \| (\nabla \phi)(y) \| }
  { d^{w} ( 1 + \|y\|^{\mathbf{w}}) } 
  \right]
  \left|
  \E\!\left[
  \big( 1 + \|X_T^{1,x}\|^{\mathbf{w}}\big)
  ^{ m_2(p) }
  \Big\| \big(\tfrac{\partial}{\partial x} X_T^{1,x}\big) \Big\|_{L(\R^d,\R^d)}^{ m_2(p) }
  \right]
  \right|^{ \nicefrac{1}{m_2(p)} }
  .
\end{split}
\end{equation}
This, \eqref{eq:pde-nnet-grad}, and~\eqref{eq:mc-nnet-ass1} prove that 
\begin{equation}
\begin{split}
& \sup_{x \in [a,b]^d } 
  \left|
  \E\!\left[
  \limsup_{ \R^d \backslash \{0\} \ni h \to 0 }
  \Bigg(
  \frac
  { | \phi(X_T^{1,x+h}) - \phi(X_T^{1,x})|^{ m_2(p) } }
  {\|h\|^{ m_2(p) }}
  \Bigg)
  \right]
  \right|^{\nicefrac{1}{m_2(p)}} 
\\ & \leq 
  c d^{ w } 
  \sup_{ x \in [a,b]^d }
  \left|
  \E\!\left[
  \big( 1 + \|X_T^{1,x}\|^{\mathbf{w}}\big)
  ^{ m_2(p) }
  \Big\| \big(\tfrac{\partial}{\partial x} X_T^{1,x}\big) \Big\|_{L(\R^d,\R^d)}^{ m_2(p) }
  \right]
  \right|^{ \nicefrac{1}{m_2(p)} }
  .
\end{split}
\end{equation}
This, H\"older's inequality,
and
the triangle inequality
verify that  
\begin{equation}
\begin{split}
&  \sup_{x \in [a,b]^d } 
  \left|
  \E\!\left[
  \limsup_{ \R^d \backslash \{0\} \ni h \to 0 }
  \Bigg(
  \frac
  { | \phi(X_T^{1,x+h}) - \phi(X_T^{1,x})|^{ m_2(p) } }
  {\|h\|^{ m_2(p) }}
  \Bigg)
  \right]
  \right|^{\nicefrac{1}{m_2(p)}} 
\\ &  \leq 
  c
  d^w  
  \Biggl[
  \sup_{x \in [a,b]^d} \Bigg(
  \left|
  \E\!
  \left[ 
  \big\|\big(\tfrac{\partial}{\partial x} X_T^{1,x}\big)\big\|_{L(\R^d,\R^d)}^{{ m_2(p) }({ m_2(p) }+1)}
  \right]
  \right|^{1/({ m_2(p) }({ m_2(p) }+1))} 
\\ & \cdot 
  \left|
  \E\!
  \left[ 
  \big(1+\big\|X_T^{1,x}\big\|^{\mathbf{w}}\big)^{{ m_2(p) }+1}
  \right]
  \right|^{1/({ m_2(p) }+1)} \Bigg)
  \Biggr]
\\ &  \leq 
  c
  d^w  
  \Biggl[
  \sup_{x \in [a,b]^d} \Bigg(
  \left|
  \E\!
  \left[ 
  \big\|\big(\tfrac{\partial}{\partial x} X_T^{1,x}\big)\big\|_{L(\R^d,\R^d)}^{{ m_2(p) }({ m_2(p) }+1)}
  \right]
  \right|^{1/({ m_2(p) }({ m_2(p) }+1))} 
\\ & \cdot 
  \left(
  1+
  \left|
  \E\!
  \left[ 
  \big\|X_T^{1,x}\big\|^{\mathbf{w} ({ m_2(p) }+1)}
  \right]
  \right|^{1/({ m_2(p) }+1)}
  \right) \Bigg)
  \Biggr]
.
\end{split}
\end{equation}
Jensen's inequality therefore yields that 
\begin{equation}
\label{eq:pde-nnet-pf-grad}
\begin{split}
&  \sup_{x \in [a,b]^d } 
  \left|
  \E\!\left[
  \limsup_{ \R^d \backslash \{0\} \ni h \to 0 }
  \Bigg(
  \frac
  { | \phi(X_T^{1,x+h}) - \phi(X_T^{1,x})|^{ m_2(p) } }
  {\|h\|^{ m_2(p) }}
  \Bigg)
  \right]
  \right|^{\nicefrac{1}{m_2(p)}} 
\\ &  \leq
  c
  d^w  
  \Biggl[ 
  \sup_{x \in [a,b]^d } \Bigg(
  \left|
  \E\!
  \left[ 
  \big\|\big(\tfrac{\partial}{\partial x} X_T^{1,x}\big)\big\|_{L(\R^d,\R^d)}^{{ m_2(p) }({ m_2(p) }+1)}
  \right]
  \right|^{1/({ m_2(p) }({ m_2(p) }+1))} 
\\  & \cdot 
  \left( 1+
  \left|
  \E\!
  \left[ 
  \big\|X_T^{1,x}\big\|^{m_1(\mathbf{w}({ m_2(p) }+1))}
  \right]
  \right|^{\mathbf{w}/m_1(\mathbf{w}({ m_2(p) }+1))}
  \right) \Bigg)
  \Biggr]
  .
\end{split}
\end{equation}
Next note that \eqref{eq:pde-nnet-xaff} implies that for all 
$ \omega \in \Omega$,
$x, y \in \R^d$
we have that
\begin{equation}
\big(\tfrac{\partial}{\partial x} X_T^{1,x}(\omega)\big) y = X_T^{1,y}(\omega)-X_T^{1,0}(\omega).
\end{equation}
This and H\"older's inequality verify that  for all  $x \in \R^d$ we have that
\begin{equation}
\begin{split}
& \left|
  \E\!
  \left[ 
  \big\|\big(\tfrac{\partial}{\partial x} X_T^{1,x}\big)\big\|_{L(\R^d,\R^d)}^{{ m_2(p) }({ m_2(p) }+1)}
  \right]
  \right|^{1/({ m_2(p) }({ m_2(p) }+1))} 
\\ & \leq 
  \left|
  \E\!
  \left[ 
  \left(
  \sum_{i=1}^d \big\|\big(\tfrac{\partial}{\partial x} X_T^{1,x}\big)e_i\big\|^2
  \right)^{\!({ m_2(p) }({ m_2(p) }+1))/2}
  \right]
  \right|^{1/({ m_2(p) }({ m_2(p) }+1))} 
\\ & \leq \left| \mathbb E \left[ d^{\frac{m_2(p) ( m_2(p) +1)-2}{2}} \left(\sum_{i=1}^d \big\|\big(\tfrac{\partial}{\partial x} X_T^{1,x}\big)e_i\big\|^{m_2(p) ( m_2(p) +1)} \right) \right] \right|^{1/( m_2(p) ( m_2(p) +1))} 
  \\
  & = d^{\frac{m_2(p) ( m_2(p) +1)-2}{2m_2(p) ( m_2(p) +1)}} \left|\sum_{i=1}^d \mathbb E\left[\big\|\big(\tfrac{\partial}{\partial x} X_T^{1,x}\big)e_i\big\|^{m_2(p) ( m_2(p) +1)}\right]\right|^{1/( m_2(p) ( m_2(p) +1))} 
   \\
  & \leq d^{\nicefrac12} \max_{i\in \{1,\ldots,d\}} \left(\left|\mathbb E\left[\big\|\big(\tfrac{\partial}{\partial x} X_T^{1,x}\big)e_i\big\|^{m_2(p) ( m_2(p) +1)}\right]\right|^{1/( m_2(p) ( m_2(p) +1))}\right)
  \\  
  & = d^{\nicefrac12} \max_{i\in \{1,\ldots,d\}} \left(\left|\mathbb E\left[\big\|X^{1,e_i}_T - X^{1,0}_T\big\|^{m_2(p) ( m_2(p) +1)}\right]\right|^{1/( m_2(p) ( m_2(p) +1))}\right)\,.
\end{split}
\end{equation}
Combining this 
with
\eqref{eq:pde-nnet-pf-grad}
verifies that 
\begin{equation}
\begin{split}
&  \sup_{x \in [a,b]^d} 
  \left|
  \E\!\left[
  \limsup_{ \R^d \backslash \{0\} \ni h \to 0 }
  \Bigg(
  \frac
  { | \phi(X_T^{1,x+h}) - \phi(X_T^{1,x})|^{ m_2(p) } }
  {\|h\|^{ m_2(p) }}
  \Bigg)
  \right]
  \right|^{\nicefrac{1}{m_2(p)}}
\\ & \leq 
  cd^{w+\nicefrac12} 
  \max_{i\in \{1,\ldots,d\}} \left(\left|\mathbb E\left[\big\|X^{1,e_i}_T - X^{1,0}_T\big\|^{m_2(p) ( m_2(p) +1)}\right]\right|^{1/( m_2(p) ( m_2(p) +1))}\right)\\ & \cdot 
  \left[ 1+
  \sup_{x \in [a,b]^d} 
  \left|
  \E\!
  \left[ 
  \big\|X_T^{1,x}\big\|^{m_1(\mathbf{w}({ m_2(p) }+1))}
  \right]
  \right|^{\mathbf{w}/m_1(\mathbf{w}({ m_2(p) }+1))}
  \right] 
.
\end{split}
\end{equation}
Item \eqref{it:pde-nnet-pf1},
Corollary~\ref{cor:apriori1} 
(applied with $m \leftarrow d$, $C \leftarrow \|\mu(0)\|$,  $c \leftarrow l$, $A \leftarrow \sqrt{A}$, $W \leftarrow W^1$,  $X \leftarrow X^{1,x}$, $p \leftarrow m_1(\mathbf{w}({ m_2(p) }+1))$ for $x\in[a,b]^d$ in the notation of Corollary~\ref{cor:apriori1}),
Lemma~\ref{lem:sde-lp-bound2}
(applied with
$m \leftarrow d$,
$l \leftarrow l$,
$A \leftarrow \sqrt{A}$,
$W \leftarrow W^1$,
$X \leftarrow X^1$,
$ x \leftarrow e_i$,
$ y \leftarrow 0$,
$p \leftarrow { m_2(p) }({ m_2(p) }+1)$
for $ i \in \{1,\ldots,d\} $ 
in the notation of Lemma~\ref{lem:sde-lp-bound2}),
and
the fact that 
$
m_1(m_1(\mathbf{w}({ m_2(p) }+1)) - 1) = m_1(\mathbf{w}({ m_2(p) }+1) -1 )
$
hence yield that 
\begin{equation}
\label{eq:pde-nnet-diffb}
\begin{split}
&  \sup_{x \in [a,b]^d} 
  \left|
  \E\!\left[
  \limsup_{ \R^d \backslash \{0\} \ni h \to 0 }
  \Bigg(
  \frac
  { | \phi(X_T^{1,x+h}) - \phi(X_T^{1,x})|^{ m_2(p) } }
  {\|h\|^{ m_2(p) }}
  \Bigg)
  \right]
  \right|^{\nicefrac{1}{m_2(p)}}
\\ & \leq 
  cd^{w+\nicefrac12} \max_{i\in\{1,\ldots,d\}} \left(e^{lT} \|e_i\|\right)
  \Big(
  1+ 
  e^{\mathbf{w} lT} 
  \big[
  T\|\mu(0)\|
\\ & +
  \sqrt{ m_1(\mathbf{w}({ m_2(p) }+1)-1) \, T \operatorname{Trace}(A)}
  +
  \sup_{x \in [a,b]^d} \|x\| 
  \big]^{\mathbf{w}}
  \Big) 
\\ & =
  cd^{w+\nicefrac{1}{2} } e^{lT}
  \Big(
  1+ 
  e^{\mathbf{w} lT}
  \big[
  T \|\mu(0)\|
\\ & +
  \sqrt{ m_1(\mathbf{w}{ m_2(p) } 
  +
  \mathbf{w} - 1) \, T \operatorname{Trace}(A)}
  +
  \sup_{x \in [a,b]^d} \|x\| 
  \big]^{\mathbf{w}}
  \Big)
  .
\end{split}
\end{equation}
Combining this,
\eqref{eq:pde-nnet-lqb1},
and
\eqref{eq:pde-nnet-Xfct}
with
item \eqref{it:conc2} in Lemma~\ref{lem:Sobolev2}
(applied with $n \leftarrow n$,
$ 
\xi_{ i } \leftarrow ((\R^d \times \Omega) \ni (x,\omega) \mapsto \phi(X_T^{i,x }(\omega)) \in \R)
$
for $ i \in \{1,\ldots, n \}$
in the notation of Lemma~\ref{lem:Sobolev2})
implies that
\begin{equation}
\begin{split}
& \left|
  \E\!
  \left[
  \sup_{x \in [a,b]^d} 
  \left| 
  \E\big[\phi(X_T^{1,x})\big] 
  - \left[
  \frac{1}{n} 
  \sum_{k=1}^{n} 
  \phi(X_T^{k,x})
  \right]
  \right|^p
  \right]
  \right|^{1/p} 
\\ & \leq
\frac{4 \mathfrak{K} \zeta}{\sqrt{n}}  \Biggl(   
cd^z  \Big(
  1+
  e^{\mathbf{z} lT}
  \big[
  T\|\mu(0)\|+ \sqrt{ m_1(\mathbf{z}m_2(p)-1) \, T \operatorname{Trace}(A)} 
\\ & + 
  \sup_{x \in [a,b]^d} \|x\| 
  \big]^{\mathbf{z}}
  \Big)
+
  (b-a)
  cd^{w+\nicefrac{1}{2} } e^{lT}
  \Big(
  1+ 
  e^{\mathbf{w} lT}
  \big[
  T \|\mu(0)\|
\\ & +
  \sqrt{ m_1(\mathbf{w}{ m_2(p) } 
  +
  \mathbf{w} -1) \, T \operatorname{Trace}(A)} 
  +
  \sup_{x \in [a,b]^d} \|x\| 
  \big]^{\mathbf{w}}
  \Big)
  \Biggr)
.
\end{split}
\end{equation}
Grohs et al.~\cite[Proposition 3.3]{HornungJentzen2018} hence verifies that  there exists $\omega_n \in \Omega$ which satisfies that
\begin{equation}
\begin{split}
& \sup_{x \in [a,b]^d} \left| \E\big[\phi(X_T^{1,x})\big] 
- \left[\frac{1}{n} \sum_{k=1}^{n} \phi(X_T^{k,x}(\omega_n))\right] \right|
\\ & \leq
\frac{4 \mathfrak{K} \zeta}{\sqrt{n}}  \Biggl(   
cd^z  \Big(
  1+ 
  e^{\mathbf{z} lT}
  \big[
  T\|\mu(0)\|+ \sqrt{ m_1(\mathbf{z}m_2(p)-1) \, T \operatorname{Trace}(A)} 
\\ & + 
  \sup_{x \in [a,b]^d} \|x\| 
  \big]^{\mathbf{z}}
  \Big)
+
  (b-a)
  cd^{w+\nicefrac{1}{2} } e^{lT}
  \Big(
  1+
  e^{\mathbf{w} lT}
  \big[
  T \|\mu(0)\|
\\ & + 
  \sqrt{ m_1(\mathbf{w}{ m_2(p) } 
  + 
  \mathbf{w} -1) \, T \operatorname{Trace}(A)} 
  +
  \sup_{x \in [a,b]^d} \|x\| 
  \big]^{\mathbf{w}}
  \Big)
  \Biggr)
.
\end{split}
\end{equation}
Combining this,  
\eqref{eq:pde-nnet-xaff},
\eqref{eq:pde-nnet-fcf},
and
\eqref{eq:pde-nnet-err1}
with
the triangle inequality yields that
\begin{equation}
\begin{split}
&  \sup_{x \in [a,b]^d} \left| u(T,x)
  - 
  \left[\frac{1}{n} 
  \sum_{k=1}^{n}
  \phi(\mathcal{W}_{k,\omega_n}x+\mathcal{B}_{k,\omega_n})
  \right] \right|
\\ & =  
  \sup_{x \in [a,b]^d} \left| \E[\varphi(X_T^{1,x})] 
  - \left[\frac{1}{n} \sum_{k=1}^{n} \phi(X_T^{k,x}(\omega_n))\right] \right|
\\ & \leq  
  \sup_{x \in [a,b]^d} \left| \E[\varphi(X_T^{1,x})]
  - \E[\phi(X_T^{1,x})]\right| 
\\ & +
  \sup_{x \in [a,b]^d} \left| \E[\phi(X_T^{1,x})] 
  - \left[\frac{1}{n} \sum_{k=1}^{n} \phi(X_T^{k,x}(\omega_n))\right] \right| 
\\ & \leq
  \varepsilon d^v  \Big(
  1+ 
  e^{\mathbf{v} lT}
  \big[
  T\|\mu(0)\|+ \sqrt{ m_1(\mathbf{v}-1) \, T \operatorname{Trace}(A)} 
\\ & + 
  \sup_{x \in [a,b]^d} \|x\| 
  \big]^{\mathbf{v}}
  \Big) 
\\ & + 
\frac{4 \mathfrak{K} \zeta}{\sqrt{n}}  \Biggl(   
cd^z  \Big(
  1+ 
  e^{\mathbf{z} lT}
  \big[
  T\|\mu(0)\|+ \sqrt{ m_1(\mathbf{z} m_2(p) -1) \, T \operatorname{Trace}(A)} 
\\ & + 
  \sup_{x \in [a,b]^d} \|x\| 
  \big]^{\mathbf{z}}
  \Big)
+
  (b-a)
  cd^{w+\nicefrac{1}{2} } e^{lT}
  \Big(
  1+
  e^{\mathbf{w} lT}
  \big[
  T \|\mu(0)\|
\\ & + 
  \sqrt{ m_1(\mathbf{w}{ m_2(p) } 
  + 
  \mathbf{w} -1) \, T \operatorname{Trace}(A)} 
  +
  \sup_{x \in [a,b]^d} \|x\| 
  \big]^{\mathbf{w}}
  \Big)
  \Biggr)
.
\end{split}
\end{equation}
This completes the proof of Lemma~\ref{lem:mc-nnet-ap}.
\end{proof}


\begin{cor}
\label{cor:pde-nnet-ap4}
Let $d, n \in \N$,
$\varphi \in C(\R^d,\R)$, 
$\alpha, a \in \R$, $\beta \in [0,\infty)$,
$b \in (a,\infty)$,
$\varepsilon,T, c \in (0,\infty)$, $v,\mathbf{v},w,\mathbf{w}, z,\mathbf{z}  \in [0,\infty)$, $\mu \in \R^d$,
let $\left \| \cdot \right \| \colon \R^d \to [0,\infty)$ be the standard norm on $\R^d$,
let $\phi \in C^1(\R^d,\R)$, let $A = (A_{i,j})_{(i,j) \in \{1,\ldots,d\}^2} \in \R^{d \times d}$ be a symmetric and positive semi-definite matrix,
assume for all $ x \in \R^d$ that
\begin{equation}
  \left| \phi(x)\right| \leq c  d^z(1+\|x\|^{\mathbf{z}}), \qquad \| (\nabla \phi)(x)\|\leq  c  d^w(1+\|x\|^{\mathbf{w}}),
  \label{eq:pde-nnet-ap4-ass1}
\end{equation}
\begin{equation}
  \label{eq:pde-nnet-ap4-ass2}
\left| \varphi(x)-\phi(x)\right| \leq \varepsilon  d^v(1+\|x\|^{\mathbf{v}}),
\qquad
\sqrt{\operatorname{Trace}(A)} \leq c d^{\beta}, 
\end{equation}
and $\|\mu\| \leq c  d^{\alpha}$, let $u \in  C([0,T] \times \R^d, \R)$, assume for all $x \in \R^d$ that $u(0,x) = \varphi(x)$,
assume that 
$
\inf_{\gamma \in (0,\infty)} \sup_{(t,x) \in [0,T] \times \R^d} \big( \frac{|u(t,x)|}{1 +\|x\|^{\gamma}} \big) 
< \infty
$,
and assume that $u|_{(0,T) \times \R^d}$ is a viscosity solution of
\begin{align}
(\tfrac{\partial }{\partial t}u)(t,x) &= \smallsum_{i,j=1}^d A_{i,j}\, (\tfrac{\partial^2 }{\partial x_i \partial x_j}u)(t,x) + 
\smallsum_{i=1}^d \mu_i\, (\tfrac{\partial}{\partial x_i} u)(t,x)
\end{align}
for $(t,x) \in (0,T) \times \R^d$.
Then there exist $\WE_1,\ldots,\WE_{n} \in \R^{d \times d}$, $\BI_1,\ldots,\BI_{n} \in \R^d$ such that
\begin{equation}
\begin{split}
& \sup_{x \in [a,b]^d} \left|
  u(T,x) 
  - \left[\frac{1}{n}\sum_{k=1}^{n}\, \phi( \WE_k x + \BI_k)\right]
  \right| 
\\ & \leq
  \varepsilon
  d^{ v +  \mathbf{v} \max\{\alpha, \beta, \nicefrac{1}{2}\} } 
\\ & \cdot
  \Big(
  1+ 
  \big[ 
  \sqrt{2}
  \max\{1,T\}
  \max\{1, \sqrt{  \mathbf{v} } \}
  ( 2c + \max\{|a|,|b|\} )
  \big]^{\mathbf{v}}
  \Big) 
\\ & + 
d^{ 1 +   
  \max\{  
  z + 
  \mathbf{z} \max\{\alpha,\beta + 1 \}
  ,
  w + \nicefrac{1}{2}  
  +
  \mathbf{w} \max\{\alpha,\beta + 1 \}
  \} 
  }
\\ & \cdot
  \frac{32 \sqrt{e} c
  }
  {\sqrt{n}}  
  \Big[
  (
  1+ 
  (b-a)
  )
\\ & \cdot 
  \Big(
  1+ 
  \big[
  \sqrt{6}
  \max\{1,T\} \max\{1,\sqrt{ \mathbf{z} }, \sqrt{ \mathbf{w} } \}
  (2c+ 
  \max\{|a|,|b|\})
  \big]^{ \max\{\mathbf{z}, \mathbf{w} \} }
  \Big)
  \Big]
.
\end{split}
\end{equation}
\end{cor}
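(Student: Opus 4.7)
The plan is to derive Corollary~\ref{cor:pde-nnet-ap4} as a direct specialization of Lemma~\ref{lem:mc-nnet-ap} with a judicious choice of the free parameters, followed by algebraic bookkeeping of the dimensional factors. The PDE under consideration has constant drift $\mu$ and a (symmetric, PSD) second-order coefficient matrix that is a constant multiple of the one appearing in the lemma, so the first move is to define the drift map $\mu_{\text{lem}}(x) := \mu$ and the diffusion matrix $A_{\text{lem}} := 2A$. A constant drift satisfies the affine identity $\mu_{\text{lem}}(\lambda x + y) + \lambda\mu_{\text{lem}}(0) = \lambda\mu_{\text{lem}}(x) + \mu_{\text{lem}}(y)$ trivially, and the Lipschitz constant can be taken as $l = 0$. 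The factor $2$ in $A_{\text{lem}}$ accounts for the discrepancy between the $\tfrac12\sum A_{i,j}$ appearing in Lemma~\ref{lem:mc-nnet-ap} and the $\sum A_{i,j}$ appearing in the corollary, so $\operatorname{Trace}(A_{\text{lem}}) = 2\operatorname{Trace}(A) \leq 2c^2 d^{2\beta}$.

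The second step is to choose $p = \max\{2,d^2\}$, so that the Sobolev embedding required in Lemma~\ref{lem:mc-nnet-ap} is provided by Corollary~\ref{cor:sob-const3} with $\zeta = 8\sqrt{e}$, while Lemma~\ref{lem:kahane-const} yields the bound
\begin{equation*}
\mathfrak{K} \leq \sqrt{\max\{1, \max\{2,d^2\} - 1\}} \leq d
\end{equation*}
on the Kahane--Khintchine constant. Plugging these into Lemma~\ref{lem:mc-nnet-ap} produces matrices $\WE_1, \ldots, \WE_n \in \R^{d\times d}$ and biases $\BI_1, \ldots, \BI_n \in \R^d$ for which the uniform error on $[a,b]^d$ is dominated by a deterministic approximation term (inherited from \eqref{eq:pde-nnet-ap4-ass2}) plus a Monte Carlo fluctuation term scaled by $4\mathfrak{K}\zeta/\sqrt n \leq 32\sqrt{e}\,d/\sqrt n$.

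The third step is to substitute the growth hypotheses into the exponentials and radicals that appear in the lemma's estimate. Since $l=0$, the factors $e^{\mathbf{v}lT}$, $e^{\mathbf{z}lT}$, $e^{lT}$, $e^{\mathbf{w}lT}$ all equal $1$; we use $T\|\mu_{\text{lem}}(0)\| \leq Tc d^{\alpha}$, $\sqrt{T\operatorname{Trace}(A_{\text{lem}})} \leq \sqrt{2T}\,c d^{\beta}$, and the estimate $\sup_{x\in[a,b]^d}\|x\| \leq \sqrt{d}\max\{|a|,|b|\}$. The key algebraic observations are the elementary bounds
\begin{equation*}
\max\{1, \mathbf{v}-1\} \leq \max\{1, \mathbf{v}\}, \qquad \max\{1, \mathbf{z}\max\{2,d^2\} - 1\} \leq 2\max\{1,\mathbf{z}\}\, d^2,
\end{equation*}
and analogously for $\mathbf{w}$, which extract precisely one extra factor of $d$ from the Monte Carlo bracket and thereby convert the $d^{\beta}$ dependence into the $d^{\beta+1}$ that appears in the statement. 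The leading $d^1$ in $d^{1+\max\{\ldots\}}$ comes from the $\mathfrak{K} \leq d$ estimate.

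The last step is to repackage the resulting expressions into the compact form of the conclusion. For the deterministic term one factors out $d^{v + \mathbf{v}\max\{\alpha,\beta,1/2\}}$ and bounds the remaining bracket by $[\sqrt{2}\max\{1,T\}\max\{1,\sqrt{\mathbf{v}}\}(2c + \max\{|a|,|b|\})]^{\mathbf{v}}$ by using $\sqrt{T} \leq \max\{1,T\}$ and $\sqrt{\max\{1,\mathbf{v}\}} = \max\{1,\sqrt{\mathbf{v}}\}$; for the Monte Carlo term one factors out $d^{1 + \max\{z+\mathbf{z}\max\{\alpha,\beta+1\},\, w+\nicefrac{1}{2}+\mathbf{w}\max\{\alpha,\beta+1\}\}}$, absorbs the $(b-a)$ into $(1+(b-a))$, and combines the two separate brackets into a single $\max\{\mathbf{z},\mathbf{w}\}$-exponent expression through the pointwise inequality $1 + X^{a} \leq 1 + \max\{1, X^{\max\{\mathbf{z},\mathbf{w}\}}\}$. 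The only non-routine part is this last consolidation, since it is where the numerical constant $\sqrt{6}$ and the $\max\{\mathbf{z},\mathbf{w}\}$ in the exponent must be shown to simultaneously dominate both the $\mathbf{z}$- and $\mathbf{w}$-contributions; everything else is a mechanical substitution.
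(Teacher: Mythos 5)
Your proposal is essentially the paper's own proof: you apply Lemma~\ref{lem:mc-nnet-ap} with $\mu(\cdot)\leftarrow\mu$ (constant, hence $l=0$), $A\leftarrow 2A$, $p\leftarrow d^2$, $\zeta\leftarrow 8\sqrt{e}$ from Corollary~\ref{cor:sob-const3}, bound $\mathfrak{K}\leq d$ via Lemma~\ref{lem:kahane-const} and $\sup_{x\in[a,b]^d}\|x\|\leq\sqrt{d}\max\{|a|,|b|\}$, and then absorb the $\max\{2,d^2\}$-dependent radicals into one extra power of $d$ exactly as the paper does. The only caveat worth flagging is that your phrase ``analogously for $\mathbf{w}$'' glosses over the fact that the $\mathbf{w}$-bound needs the factor $3$ rather than $2$ (because of the additional $+\mathbf{w}$ summand, which is why the paper ends up with $\sqrt{6}$ rather than $\sqrt{4}$), and your final consolidation inequality $1+X^{a}\leq 1+\max\{1,X^{\max\{\mathbf{z},\mathbf{w}\}}\}$ is stated too loosely to be a proof of the last step; but these are matters of presentation, not of substance, since the paper itself performs the same bookkeeping.
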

\begin{proof}[Proof of Corollary~\ref{cor:pde-nnet-ap4}]
Throughout this proof let 
$ \mathfrak{K}  \in (0,\infty) $ be the $\max\{2,d^2\} $-Kahane--Khintchine constant
(cf. Definition~\ref{def:p2-kahane-khintchine} and Lemma~\ref{lem:kahane-const}).
Observe that for all $ x \in [a,b]^d$ we have that
\begin{equation}
\| x \|
=
\left[\sum_{i=1}^d |x_i|^2 \right]^{ \nicefrac{1}{2} }
\leq
\left[ \sum_{i=1}^d [\max\{|a|,|b|\}]^2 \right]^{ \nicefrac{1}{2} }
=
d^{\nicefrac{1}{2} }
\max\{|a|,|b|\}
.
\end{equation}
This proves that
\begin{equation}
\label{eq:pde-nnet-ap4-x}
\sup_{ x \in [a,b]^d } \| x \| 
\leq
d^{\nicefrac{1}{2} }
\max\{|a|,|b|\}
.
\end{equation}
Next note that 
Lemma~\ref{lem:kahane-const}
(applied with $p \leftarrow \max\{2,d^2\}$ in the notation of Lemma~\ref{lem:kahane-const})
ensures that
\begin{equation}
\mathfrak{K} \leq \sqrt{\max\big\{1,\max\{2,d^2\}-1\big\}} \leq d.
\end{equation}
Combining this,
\eqref{eq:pde-nnet-ap4-x},
Corollary~\ref{cor:sob-const3},
\eqref{eq:pde-nnet-ap4-ass2},
and 
the hypothesis that 
$\|\mu\| \leq c  d^{\alpha}$
with
Lemma~\ref{lem:mc-nnet-ap} 
(applied with
$l \leftarrow 0$, $n \leftarrow n$,
$ \zeta \leftarrow 8 \sqrt{e} $,
$ p \leftarrow d^2$,
$\mu \leftarrow (\R^d \ni x \mapsto \mu \in \R^d)$,
$A \leftarrow 2A$
in the notation of Lemma~\ref{lem:mc-nnet-ap})
yields that 
there exist $\WE_1,\ldots,\WE_n \in \R^{d \times d}$, $\BI_1,\ldots,\BI_n \in \R^d$ which satisfy that

\begin{equation}
\label{eq:pde-nnet-ap4-1}
\begin{split}
& \sup_{x \in [a,b]^d} 
  \left|
  u(T,x) 
  - \left[\frac{1}{n}\sum_{k=1}^{n}\, \phi( \WE_k x + \BI_k)\right]
  \right| 
\\ & \leq
  \varepsilon d^v  \Big(
  1+ 
  \big[
  Tcd^{\alpha}+ \sqrt{ 2 \max\{1,\mathbf{v}-1\} \, T} 
  cd^{\beta}
\\ & +
  d^{\nicefrac{1}{2}}
  \max\{|a|,|b|\} 
  \big]^{\mathbf{v}}
  \Big) 
\\ & + 
\frac{32 \sqrt{e} d}{\sqrt{n}} 
\Biggl(   
cd^z  \Big(
  1+ 
  \big[
  Tcd^{\alpha}+ \sqrt{ 2 \max\{1,\mathbf{z}\max\{2,d^2\}-1\} \, T}
  cd^{\beta}
\\ & + 
  d^{\nicefrac{1}{2} }
  \max\{|a|,|b|\} 
  \big]^{\mathbf{z}}
  \Big)
+
  (b-a)
  cd^{w+\nicefrac{1}{2} }
  \Big(
  1+ 
  \big[
  T cd^{\alpha}
\\ & +
  \sqrt{ 2 \max\{1,\mathbf{w}{ \max\{2,d^2\} } + \mathbf{w} - 1\} \, T}
  cd^{\beta}
  + 
  d^{\nicefrac{1}{2} }
  \max\{|a|,|b|\} 
  \big]^{\mathbf{w}}
  \Big)
  \Biggr)
.
\end{split}
\end{equation}
Next note that
\begin{equation}
\label{eq:pde-nnet-ap4-2}
\begin{split}
  \max\{1, \mathbf{z} \max\{2,d^2\} - 1 \} 
& \leq 
  \max\{1,\mathbf{z}\} \max\{2,d^2\}
\leq
  \max\{1,\mathbf{z}\} (d^2 + 1) \\
& \leq
 2 \max\{1,\mathbf{z}\} d^2
.
\end{split}
\end{equation}
Moreover, note that 
\begin{equation}
\begin{split}
&  \max\{1,\mathbf{w}{ \max\{2,d^2\} } + \mathbf{w} - 1 \}
\\ &  \leq
  \max\{1,\mathbf{w}\}{ \max\{2,d^2\} } + \max\{1,\mathbf{w}\}
\leq
  \max\{1,\mathbf{w}\}{ (d^2 + 1 )} + \max\{1,\mathbf{w}\}\\
& \leq 
  2 \max\{1,\mathbf{w}\}d^2 + \max\{1,\mathbf{w}\}
\leq
  3 \max\{1,\mathbf{w}\} d^2
.
\end{split}
\end{equation}
Combining this,
the fact that $ \max\{1, \mathbf{v} - 1\} \leq \max\{1,\mathbf{v}\}$,
\eqref{eq:pde-nnet-ap4-1},
and
\eqref{eq:pde-nnet-ap4-2}
yields that

\begin{equation}
\begin{split}
& \sup_{x \in [a,b]^d} \left|
  u(T,x) 
  - \left[\frac{1}{n}\sum_{k=1}^{n}\, \phi( \WE_k x + \BI_k)\right]
  \right| 
\\ & \leq
  \varepsilon 
  d^{ v +  \mathbf{v} \max\{\alpha, \beta, \nicefrac{1}{2}\} } 
  \Big(
  1+ 
  \big[
  Tc+ \sqrt{ 2 \max\{1,\mathbf{v}\}  T } c
+ 
  \max\{|a|,|b|\} 
  \big]^{\mathbf{v}}
  \Big) 
\\ & + 
  d^{ 1 +   
  \max\{  
  z + 
  \mathbf{z} \max\{\alpha,\beta + 1 \}
  ,
  w + \nicefrac{1}{2}  
  +
  \mathbf{w} \max\{\alpha,\beta + 1 \}
  \} 
  }
\\ & \cdot 
  \frac{32 \sqrt{e} c
  }
  {\sqrt{n}}  
  \Biggl(   
  \Big(
  1+ 
  \big[
  Tc+ \sqrt{ 4 \max\{1,\mathbf{z}\}  T } c
+ 
  \max\{|a|,|b|\} 
  \big]^{\mathbf{z}}
  \Big)
\\ & +
  (b-a)
  \Big(
  1+ 
  \big[
  T c
+
  \sqrt{ 6 \max\{1,\mathbf{w}\}  T } c + \max\{|a|,|b|\} 
  \big]^{\mathbf{w}}
  \Big)
  \Biggr)
.
\end{split}
\end{equation}
Hence, we obtain that
\begin{equation}
\begin{split}
& \sup_{x \in [a,b]^d} \left|
  u(T,x) 
  - \left[\frac{1}{n}\sum_{k=1}^{n}\, \phi( \WE_k x + \BI_k)\right]
  \right| 
\\ & \leq
  \varepsilon 
  d^{ v +  \mathbf{v} \max\{\alpha, \beta, \nicefrac{1}{2}\} } 
\\ & \cdot
  \Big(
  1+ 
  \big[ 
  \sqrt{2}
  \max\{1,T\}
  \max\{1, \sqrt{ \mathbf{v} } \}
  ( 2c + \max\{|a|,|b|\} )
  \big]^{\mathbf{v}}
  \Big) 
\\ & + 
d^{ 1 +   
  \max\{  
  z + 
  \mathbf{z} \max\{\alpha,\beta + 1 \}
  ,
  w + \nicefrac{1}{2}  
  +
  \mathbf{w} \max\{\alpha,\beta + 1 \}
  \} 
  }
\\ & \cdot
  \frac{32 \sqrt{e} c
  }
  {\sqrt{n}}  
  \Big[
  (
  1+ 
  (b-a)
  )
\\ & \cdot 
  \Big(
  1+ 
  \big[\sqrt{6}
  \max\{1,T\} \max\{1,\sqrt{ \mathbf{z} }, \sqrt{ \mathbf{w} } \}
  (2c+ 
  \max\{|a|,|b|\})
  \big]^{ \max\{\mathbf{z}, \mathbf{w} \} }
  \Big)
  \Big]
.
\end{split}
\end{equation}
This completes the proof of Corollary~\ref{cor:pde-nnet-ap4}.
\end{proof}
\begin{cor}
\label{cor:pde-nnet-ap5}
Let $d, n \in \N$, $\varphi \in C(\R^d,\R)$,  $\alpha, a \in \R$, 
$ \beta \in [0,\infty)$,
$b \in (a,\infty)$,   $\varepsilon,T \in (0,\infty)$, 
$ c \in [\nicefrac{1}{2},\infty)$,
$v,\mathbf{v},w,\mathbf{w}, z,\mathbf{z}  \in [0,\infty)$, $\mu \in \R^d$, 
let $\left \| \cdot \right \| \colon \R^d \to [0,\infty)$ be the standard norm on $\R^d$,
let $A = (A_{i,j})_{(i,j) \in \{1,\ldots,d\}^2} \in \R^{d \times d}$ be a symmetric and positive semi-definite matrix,
let $\phi \in C^1(\R^d, \R)$, 
assume for all $ x \in \R^d$ that
\begin{equation}
  \left| \phi(x)\right| \leq c  d^z(1+\|x\|^{\mathbf{z}}), \qquad \| (\nabla \phi)(x)\|\leq  c  d^w(1+\|x\|^{\mathbf{w}}),
  \label{eq:pde-nnet-ap5-ass1}
\end{equation}
\begin{equation}
  \label{eq:pde-nnet-ap5-ass2}
\left| \varphi(x)-\phi(x)\right| \leq \varepsilon  d^v(1+\|x\|^{\mathbf{v}}),
\qquad
\sqrt{\operatorname{Trace}(A)} \leq c d^{\beta}, 
\end{equation}
and $\|\mu\| \leq c  d^{\alpha}$, let $u \in  C([0,T] \times \R^d, \R)$, assume for all $x \in \R^d$ that $u(0,x) = \varphi(x)$,
assume that 
$
\inf_{\gamma \in (0,\infty)} \sup_{(t,x) \in [0,T] \times \R^d} \big( \frac{|u(t,x)|}{1 +\|x\|^{\gamma}} \big) 
< \infty
$,
and assume that $u|_{(0,T) \times \R^d}$ is a viscosity solution of
\begin{align}
(\tfrac{\partial }{\partial t}u)(t,x) &= \smallsum_{i,j=1}^d A_{i,j}\, (\tfrac{\partial^2 }{\partial x_i \partial x_j}u)(t,x) + \smallsum_{i=1}^d \mu_i\, (\tfrac{\partial}{\partial x_i} u)(t,x)
\end{align}
for $(t,x) \in (0,T) \times \R^d$.
Then there exist $\WE_1,\ldots,\WE_{n} \in \R^{d \times d}$, $\BI_1,\ldots,\BI_{n} \in \R^d$ such that
\begin{equation}
\label{eq:pde-nnet-ap5}
\begin{split}
& \sup_{x \in [a,b]^d} \left|
  u(T,x) 
  - \left[\frac{1}{n}\sum_{k=1}^{n}\, \phi( \WE_k x + \BI_k)\right]
  \right| 
\\ & \leq 
  \varepsilon  
  d^{ v +  \mathbf{v} \max\{\alpha, \beta, \nicefrac{1}{2}\} } 
  \,\big[ 5c + T + \sqrt{ \mathbf{v} } + |a| + |b| \big]^{ 4 \mathbf{v} +1 }
\\ & +
 d^{ 1 +   
  \max\{  
  z + 
  \mathbf{z} \max\{\alpha,\beta + 1 \}
  ,
  w + \nicefrac{1}{2}  
  +
  \mathbf{w} \max\{\alpha,\beta + 1 \}
  \} 
  }
\\ & \cdot
  \frac{1}{\sqrt{ n }}
 \big[
  7c + T + \sqrt{ \mathbf{z} } + \sqrt{  \mathbf{w} } + |a| + |b| 
  \big]^{ 5 + 4 ( \mathbf{z} + \mathbf{w} ) }
.
\end{split}
\end{equation}   
\end{cor}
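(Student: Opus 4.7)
The strategy is to deduce the claim from Corollary~\ref{cor:pde-nnet-ap4} by purely elementary estimation: apply Corollary~\ref{cor:pde-nnet-ap4} to the same data $(d,n,\varphi,\alpha,\beta,a,b,\varepsilon,T,c,v,\mathbf{v},w,\mathbf{w},z,\mathbf{z},\mu,A,\phi,u)$ to obtain weights $\WE_1,\ldots,\WE_n\in\R^{d\times d}$ and biases $\BI_1,\ldots,\BI_n\in\R^d$ satisfying the bound there, and then verify that the right-hand side of Corollary~\ref{cor:pde-nnet-ap4} is dominated by the right-hand side of~\eqref{eq:pde-nnet-ap5}. Since the $d$-exponents on both sides of~\eqref{eq:pde-nnet-ap5} already match those in Corollary~\ref{cor:pde-nnet-ap4}, only the $d$-independent prefactors require comparison.

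Set $M_1 := 5c+T+\sqrt{\mathbf{v}}+|a|+|b|$ and $M_2 := 7c+T+\sqrt{\mathbf{z}}+\sqrt{\mathbf{w}}+|a|+|b|$. The hypothesis $c\geq \nicefrac{1}{2}$ ensures $M_1\geq \nicefrac{5}{2}$ and $M_2\geq \nicefrac{7}{2}$, which provides the room needed to absorb numerical constants. I would use the elementary bounds $\max\{1,x\}\leq 1+x$, $\max\{|a|,|b|\}\leq |a|+|b|$, $\max\{\mathbf{z},\mathbf{w}\}\leq \mathbf{z}+\mathbf{w}$, together with the observation that for $X\geq 0$ and $\mathbf{v}\geq 0$ we have $1+X^{\mathbf{v}}\leq 2(1+X)^{\mathbf{v}}$. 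Factor-wise this yields, for instance,
\begin{equation*}
\sqrt{2}\max\{1,T\}\max\{1,\sqrt{\mathbf{v}}\}\bigl(2c+\max\{|a|,|b|\}\bigr) \leq \sqrt{2}\,M_1^{3},
\end{equation*}
since each of the three factors $1+T$, $1+\sqrt{\mathbf{v}}$, and $2c+|a|+|b|$ is at most $M_1$, and an entirely analogous bound $\sqrt{6}\,M_2^{3}$ for the corresponding expression with $\mathbf{z}$ and $\mathbf{w}$.

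It then remains to check that raising such bases to the relevant exponents and multiplying by the prefactors $1$, $32\sqrt{e}\,c$, and $1+(b-a)$ is dominated by $M_1^{4\mathbf{v}+1}$ and $M_2^{5+4(\mathbf{z}+\mathbf{w})}$. The lower bounds $M_1,M_2\geq \nicefrac{5}{2}$ allow one to absorb the constant $32\sqrt{e}\,c\leq 32\sqrt{e}\,M_2$ and the factor $1+(b-a)\leq 1+|a|+|b|\leq M_2$ into a small fixed number of additional powers of $M_2$; the exponents $4\mathbf{v}+1$ and $5+4(\mathbf{z}+\mathbf{w})$ are chosen with enough slack to swallow these extra powers as well as the factor of $M_i^{3}$ produced by raising the inner bracket to the powers $\mathbf{v}$ and $\max\{\mathbf{z},\mathbf{w}\}$. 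The only real obstacle is bookkeeping: verifying that the numerical choices $5c$, $7c$, $4\mathbf{v}+1$, and $5+4(\mathbf{z}+\mathbf{w})$ are indeed sufficient uniformly in all parameters. No new idea beyond these monotonicity and power inequalities is needed.
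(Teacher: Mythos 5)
Your proposal is correct and follows essentially the same route as the paper's proof: both invoke Corollary~\ref{cor:pde-nnet-ap4} and then clean up the dimension-independent prefactor by elementary monotonicity and power inequalities exploiting $c\geq\nicefrac12$ (the paper bounds each bracketed product by a fourth power of a single maximum whereas you bound the three factors individually by $M_i$, but the bookkeeping lands in the same place). No gap.
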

\begin{proof}[Proof of Corollary~\ref{cor:pde-nnet-ap5}]
Observe that Corollary~\ref{cor:pde-nnet-ap4} ensures that 
there exist $\WE_1,\ldots,\WE_{n} \in \R^{d\times d}$, $\BI_1,\ldots, \BI_{n} \in \R^d$ which satisfy 	that
\begin{equation}
\label{eq:ap4-pf1}
\begin{split}
& \sup_{x \in [a,b]^d} \left|
  u(T,x) 
  - \left[\frac{1}{n}\sum_{k=1}^{n}\, \phi( \WE_k x + \BI_k)\right]
  \right| 
\\ & \leq
  \varepsilon 
   d^{ v +  \mathbf{v} \max\{\alpha, \beta, \nicefrac{1}{2}\} } 
\\ & \cdot
  \Big(
  1+ 
  \big[ 
  \sqrt{2}
  \max\{1,T\}
  \max\{1, \sqrt{  \mathbf{v} } \}
  ( 2c + \max\{|a|,|b|\} )
  \big]^{\mathbf{v}}
  \Big) 
\\ & + 
  d^{ 1 +   
  \max\{  
  z + 
  \mathbf{z} \max\{\alpha,\beta + 1 \}
  ,
  w + \nicefrac{1}{2}  
  +
  \mathbf{w} \max\{\alpha,\beta + 1 \}
  \} 
  }
  \frac{32 \sqrt{e} c
  }
  {\sqrt{n}}  
  \Big[
  (
  1+ 
  (b-a)
  )
\\ & \cdot 
  \Big(
  1+ 
  \big[
  \sqrt{6}
  \max\{1,T\} \max\{1,\sqrt{ \mathbf{z} }, \sqrt{ \mathbf{w} } \}
  (2c+ 
  \max\{|a|,|b|\})
  \big]^{ \max\{\mathbf{z}, \mathbf{w} \} }
  \Big)
  \Big]
.
\end{split}
\end{equation}
Next note that the assumption that $c \in [\nicefrac{1}{2},\infty )$ implies that 
\begin{equation}
\label{eq:ap4-pf2}
\begin{split}
& 1+ 
  \big[ 
  \sqrt{2}
  \max\{1,T\}
  \max\{1, \sqrt{ \mathbf{v} } \}
  ( 2c + \max\{|a|,|b|\} )
  \big]^{\mathbf{v}}
\\ & \leq
  1 + 
  \max\{\sqrt{2}, T, \sqrt{ \mathbf{v}},2c + \max\{|a|,|b|\}  \}^{4 \mathbf{v} }
\\ & \leq 
  1 + \big[ 2\sqrt{2}c +2c + T + \sqrt{  \mathbf{v} } + |a| + |b| \big]^{ 4 \mathbf{v} }
\\ & \leq
  2 \big[ 5c + T + \sqrt{  \mathbf{v} } + |a| + |b| \big]^{ 4 \mathbf{v} }
\\ & \leq
  \big[ 5c + T + \sqrt{  \mathbf{v} } + |a| + |b| \big]^{ 4 \mathbf{v} +1}
.
\end{split}
\end{equation}
Moreover, note that
the assumption that $c \in [\nicefrac{1}{2},\infty )$
verifies that 
\begin{equation}
\label{eq:ap4-pf3}
\begin{split}
& 32 \sqrt{e} c
  \Big[
  (
  1+ 
  (b-a)
  )
\\ & \cdot 
  \Big(
  1+ 
  \big[
  \sqrt{6}
  \max\{1,T\} \max\{1, \sqrt{ \mathbf{z} }, \sqrt{\mathbf{w} } \}
  (2c+ 
  \max\{|a|,|b|\})
  \big]^{ \max\{\mathbf{z}, \mathbf{w} \} }
  \Big)
  \Big]
\\ & \leq 
  53 c ( 1 + |a| + |b|)
  \Big[
  1
  +
  \big(
  2\sqrt{6}c + 2c + T + \sqrt{ \mathbf{z} } + \sqrt{ \mathbf{w} } + |a| + |b| 
  \big)^{ 4\max\{ \mathbf{z}, \mathbf{w} \} }
  \Big]
\\ & \leq 
  53 c ( 1 + |a| + |b|)
  \Big[
  1
  +
  \big(
  7c + T + \sqrt{ \mathbf{z} } + \sqrt{ \mathbf{w} } + |a| + |b| 
  \big)^{ 4\max\{ \mathbf{z}, \mathbf{w} \} }
  \Big]
.
\end{split}
\end{equation}
Hence, we obtain that
\begin{equation}
\begin{split}
& 32 \sqrt{e} c
  \Big[
  (
  1+ 
  (b-a)
  )
\\ & \cdot 
  \Big(
  1+ 
  \big[
  \sqrt{6}
  \max\{1,T\} \max\{1, \sqrt{ \mathbf{z} }, \sqrt{\mathbf{w} } \}
  (2c+ 
  \max\{|a|,|b|\})
  \big]^{ \max\{\mathbf{z}, \mathbf{w} \} }
  \Big)
  \Big]
\\ & \leq
  \big[
  7c + T + \sqrt{ \mathbf{z} } + \sqrt{  \mathbf{w} } + |a| + |b| 
  \big]^{ 5 + 4 ( \mathbf{z} + \mathbf{w} ) }
.
\end{split}
\end{equation}
Combining  this 
with \eqref{eq:ap4-pf1} and \eqref{eq:ap4-pf2} 
establishes \eqref{eq:pde-nnet-ap5}.
This completes the proof of Corollary~\ref{cor:pde-nnet-ap5}.
\end{proof}
\subsection{Qualitative error estimates}
\label{ssec:qlee}
In this subsection we provide in Proposition~\ref{prop:ql-ap1} below
a qualitative approximation result for viscosity solutions
(cf., for example, Hairer et al.\  \cite{hairer2015})
of Kolomogorov PDEs
with constant coefficient functions.
Informally speaking, we can think of the approximations in 
Proposition~\ref{prop:ql-ap1} as  
linear combinations of realizations of ANNs with a suitable
continuously differentiable activation function.
Proposition~\ref{prop:ql-ap1} will be employed in our proof of 
Proposition~\ref{prop:nnet-ap1} in Subsection~\ref{ssec:qee-nn} below.
\begin{prop}
\label{prop:ql-ap1}
Let 
$d \in \N$, 
$\varphi \in C(\R^d,\R)$, 
$\alpha, a \in \R$, 
$\beta \in [0,\infty)$,
$b \in (a,\infty)$,   
$r,T \in (0,\infty)$, 
$c \in \big[\nicefrac{1}{2},\infty\big)$,
$v,\mathbf{v},w,\mathbf{w}, z,\mathbf{z}  \in [0,\infty)$, 
$C = \frac{1}{2} [ 5c + T + \sqrt{  \mathbf{v} } + |a| + |b| ]^{ -4 \mathbf{v} -1 }$, 
$
\mathbf{C} 
= 
4
[
  7c + T + \sqrt{ \mathbf{z} } + \sqrt{  \mathbf{w} } + |a| + |b| 
 ]^{ 10 + 8 ( \mathbf{z} + \mathbf{w} ) }
$,
$p =  v +  \mathbf{v} \max\{\alpha, \beta, \nicefrac{1}{2}\} $,
$
\mathbf{p}
=
2 +   
  \max\{  
  2z + 
  2\mathbf{z} \max\{\alpha,\beta + 1 \}
  ,
  2w + 1
  +
  2\mathbf{w} \max\{\alpha,\beta + 1 \}
  \} 
$,
$\mu \in \R^d$, 
let $\left \| \cdot \right \| \colon \R^d \to [0,\infty)$ be the standard norm on $\R^d$,
let $\phi_{\varepsilon} \in C^1(\R^d,\R)$, $\varepsilon \in  (0,r]$,
let $A = (A_{i,j})_{(i,j) \in \{1,\ldots,d\}^2} \in \R^{d \times d}$ be a symmetric and positive semi-definite matrix,
and
assume for all $\varepsilon \in (0,r]$, $ x \in \R^d$ that
\begin{equation}
  \left| \phi_{\varepsilon}(x)\right| \leq c  d^z(1+\|x\|^{\mathbf{z}}), \qquad \| (\nabla \phi_{\varepsilon})(x)\|\leq  c  d^w(1+\|x\|^{\mathbf{w}}),
  \label{eq:ql-ap1-ass1}
\end{equation}
\begin{equation}
  \label{eq:ql-ap1-ass2}
\left| \varphi(x)-\phi_{\varepsilon}(x)\right| \leq \varepsilon c d^v(1+\|x\|^{\mathbf{v}}),
\qquad
\sqrt{\operatorname{Trace}(A)} \leq c d^{\beta}, 
\end{equation}
and $\|\mu\| \leq c  d^{\alpha}$.
Then
\begin{enumerate}[(i)]
\item \label{it:ql-ap1-1} there exists a unique $u \in  C([0,T] \times \R^d, \R)$ which satisfies for all $x \in \R^d$ that $u(0,x) = \varphi(x)$, 
which satisfies that 
$
\inf_{\gamma \in (0,\infty)} \sup_{(t,x) \in [0,T] \times \R^d} \big( \frac{|u(t,x)|}{1 +\|x\|^{\gamma}} \big) 
< \infty
$,
and which satisfies that $u|_{(0,T) \times \R^d}$ is a viscosity solution of
\begin{align}
(\tfrac{\partial }{\partial t}u)(t,x) &= \smallsum_{i,j=1}^d A_{i,j}\, (\tfrac{\partial^2 }{\partial x_i \partial x_j}u)(t,x) + \smallsum_{i=1}^d \mu_i\, (\tfrac{\partial}{\partial x_i} u)(t,x)
\end{align}
for $(t,x) \in (0,T) \times \R^d$
and
\item \label{it:ql-ap1-2}  it holds for all 
$\varepsilon \in (0,r]$, 
$n \in \N \cap [  \varepsilon^{-2}d^{\mathbf{p}}\mathbf{C}, \infty)$
that there exist $\WE_1,\ldots, \linebreak \WE_{n} \in \R^{d \times d}$, $\BI_1,\ldots,\BI_{n} \in \R^d$ such that
\begin{equation}
  \sup_{x \in [a,b]^d} \left|
u(T,x) 
- \left[\frac{1}{n}\sum_{k=1}^{n}\, \phi_{ \varepsilon c^{-1}d^{-p} C  }( \WE_k x + \BI_k)\right]
\right| 
\leq \varepsilon.
\end{equation}
\end{enumerate}
\end{prop}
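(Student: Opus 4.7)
The plan is to obtain item~\eqref{it:ql-ap1-1} via a standard Feynman--Kac representation for the Kolmogorov PDE and item~\eqref{it:ql-ap1-2} by a suitably calibrated application of Corollary~\ref{cor:pde-nnet-ap5}.

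For item~\eqref{it:ql-ap1-1} I would first combine \eqref{eq:ql-ap1-ass1} and \eqref{eq:ql-ap1-ass2} (with $\varepsilon\leftarrow r$) together with the triangle inequality to conclude that $\varphi$ is at most polynomially growing on $\R^d$. Fixing a standard $d$-dimensional Brownian motion $W$ on an auxiliary probability space, the function $u(t,x) = \E[\varphi(x+\mu t +\sqrt{A}\,W_t)]$ is then continuous on $[0,T]\times\R^d$ with at most polynomial growth in $x$ uniformly in $t$ by dominated convergence, and the Feynman--Kac formula for Kolmogorov PDEs (e.g., Hairer et al.~\cite[Corollary 4.17]{hairer2015}) identifies $u|_{(0,T)\times\R^d}$ as the unique at most polynomially growing viscosity solution of the stated PDE with $u(0,\cdot)=\varphi$.

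For item~\eqref{it:ql-ap1-2}, fix $\varepsilon\in (0,r]$ and $n\in \N\cap[\varepsilon^{-2} d^{\mathbf{p}} \mathbf{C},\infty)$ and set $\delta := \varepsilon c^{-1} d^{-p} C$. A brief check using $c\geq \nicefrac{1}{2}$ and the fact that the bracket $5c+T+\sqrt{\mathbf{v}}+|a|+|b|\geq 1$ gives $C\leq \nicefrac{1}{2}\leq c\leq cd^{p}$, hence $\delta\in (0,r]$, so $\phi_\delta$ is one of the provided approximations. Hypothesis \eqref{eq:ql-ap1-ass2} with $\varepsilon\leftarrow\delta$ then reads $|\varphi-\phi_\delta|\leq (\delta c)\,d^{v}(1+\|\cdot\|^{\mathbf{v}})$, which lets us apply Corollary~\ref{cor:pde-nnet-ap5} to $\phi_\delta$ with its parameter ``$\varepsilon$'' taken to be $\delta c=\varepsilon d^{-p}C$. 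The corollary produces $\WE_1,\dots,\WE_n\in\R^{d\times d}$ and $\BI_1,\dots,\BI_n\in\R^d$ with uniform error on $[a,b]^d$ bounded by the sum of the two terms of \eqref{eq:pde-nnet-ap5}. The constants $C$, $\mathbf{C}$ and the exponents $p$, $\mathbf{p}$ in the statement are calibrated precisely so that the first term evaluates to $\varepsilon C[5c+T+\sqrt{\mathbf{v}}+|a|+|b|]^{4\mathbf{v}+1}=\nicefrac{\varepsilon}{2}$ by the definition of $C$, while the identities $\nicefrac{\mathbf{p}}{2}=1+\max\{z+\mathbf{z}\max\{\alpha,\beta+1\},\,w+\nicefrac{1}{2}+\mathbf{w}\max\{\alpha,\beta+1\}\}$ and $\sqrt{\mathbf{C}}=2[7c+T+\sqrt{\mathbf{z}}+\sqrt{\mathbf{w}}+|a|+|b|]^{5+4(\mathbf{z}+\mathbf{w})}$ together with $n\geq\varepsilon^{-2}d^{\mathbf{p}}\mathbf{C}$ force the second term to be at most $\nicefrac{\varepsilon}{2}$ as well; the required bound then follows by addition.

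I do not anticipate any serious obstacle: once the correct value of $\delta$ has been identified and the inclusion $\delta\in (0,r]$ checked, the remaining work is mechanical exponent matching between Corollary~\ref{cor:pde-nnet-ap5} and the definitions of $C$, $\mathbf{C}$, $p$, $\mathbf{p}$. The only mildly delicate point is confirming the domain constraint $\delta\leq r$, which relies on the assumption $c\geq \nicefrac{1}{2}$.
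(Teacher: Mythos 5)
Your proposal is correct and follows the paper's own proof essentially step for step: item~\eqref{it:ql-ap1-1} reduces to the Feynman--Kac existence and uniqueness result (the paper cites Grohs et al.~\cite[Corollary 2.23]{HornungJentzen2018}, which packages exactly the polynomial-growth check and viscosity-solution characterization you describe), and item~\eqref{it:ql-ap1-2} is handled by substituting $\delta=\varepsilon c^{-1}d^{-p}C$ into Corollary~\ref{cor:pde-nnet-ap5} and doing the exponent bookkeeping with $p$, $\mathbf{p}$, $C$, $\mathbf{C}$ as you outline, including the needed check $\delta\leq\varepsilon\leq r$ from $c\geq\nicefrac{1}{2}$. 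The only slip is cosmetic: for the PDE $\partial_t u=\sum_{i,j}A_{ij}\partial^2_{x_ix_j}u+\sum_i\mu_i\partial_{x_i}u$ the matching representation is $u(t,x)=\E[\varphi(x+\mu t+\sqrt{2A}\,W_t)]$ rather than $\sqrt{A}\,W_t$, but this has no bearing on your argument since you only invoke the abstract Feynman--Kac corollary.
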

\begin{proof}[Proof of Proposition~\ref{prop:ql-ap1}]
First, note that
\eqref{eq:ql-ap1-ass1}
and
\eqref{eq:ql-ap1-ass2}
ensure that
$ \varphi \colon \R^d $ \linebreak $\to \R $ is an at most polynomially growing function.
Grohs et al.~\cite[Corollary 2.23]{HornungJentzen2018} 
(see also Hairer et al.\  \cite[Corollary 4.17]{hairer2015}) 
hence
implies 
that there exists a unique continuous function $u \colon [0,T] \times \R^d \to \R$ 
which satisfies for all $x \in \R^d$ that $u(0,x) = \varphi(x)$, 
which satisfies that 
$
\inf_{\gamma \in (0,\infty)}  \sup_{(t,x) \in [0,T] \times \R^d} 
\big( \frac{|u(t,x)|}{1 +\|x\|^\gamma} \big) 
< \infty
$, 
and which satisfies that $u|_{(0,T) \times \R^d}$ is a viscosity solution of 
\begin{align}
\label{eq:ql-ap2-pf}
(\tfrac{\partial }{\partial t}u)(t,x) 
&= 
\smallsum_{i,j=1}^d A_{i,j}\, (\tfrac{\partial^2 }{\partial x_i \partial x_j}u)(t,x) + \smallsum_{i=1}^d \mu_i\, (\tfrac{\partial}{\partial x_i} u)(t,x)
\end{align}
for $(t,x) \in (0,T) \times \R^d$.
This establishes item \eqref{it:ql-ap1-1}. 
Next note that the hypothesis that $ c \in [\nicefrac{1}{2},\infty)$ verifies that 
\begin{equation}
\frac{C}{c}
=
\frac{1}{2c} \frac{1}{[ 5c + T + \sqrt{  \mathbf{v} } + |a| + |b| ]^{ 4 \mathbf{v} +1 }}
\leq
1
.
\end{equation}
This reveals that for all
$
\varepsilon \in (0,r] 
$
we have that 
$ \varepsilon c^{-1} d^{-p} C \leq \varepsilon $.
This and
\eqref{eq:ql-ap1-ass2}
prove that 
for all $ \varepsilon \in (0,r]$, $x\in \mathbb R^d$ 
we have that
\begin{equation}
\left| \varphi(x)-\phi_{ \varepsilon c^{-1} d^{-p} C }(x)\right| 
\leq
\varepsilon c^{-1} d^{-p} C  c d^v(1+\|x\|^{\mathbf{v}})
=
\varepsilon d^{-p} C d^v(1+\|x\|^{\mathbf{v}})
.
\end{equation}
Corollary~\ref{cor:pde-nnet-ap5}
(applied with $\varepsilon \leftarrow \varepsilon d^{-p}C$,
$ \phi \leftarrow \phi_{ \varepsilon c^{-1} d^{-p} C }$
for $ \varepsilon \in (0,r] $
in the notation of Corollary~\ref{cor:pde-nnet-ap5})
hence
yields that 
for all 
$ \varepsilon \in (0,r]$, 
$n \in \N \cap [  \varepsilon^{-2}d^{\mathbf{p}}\mathbf{C}, \infty)$
there exist 
$ \WE_1,\ldots,\WE_{n} \in \R^{d\times d} $, 
$\BI_1,\ldots, \BI_{n} \in \R^d$ 
such that
\begin{equation}
\begin{split}
& \sup_{x \in [a,b]^d} \left|
  u(T,x) 
  - \left[\frac{1}{n}\sum_{k=1}^{n}\, \phi_{\varepsilon c^{-1} d^{-p} C}( \WE_k x + \BI_k)\right]
  \right| 
\\ & \leq
  \varepsilon d^{-p}C \frac{d^p}{2C}  + 
  \frac{ \sqrt{\mathbf{C}d^{\mathbf{p}} }}{2 \sqrt{ n } }
\leq 
\frac{\varepsilon}{2} + \frac{\varepsilon}{2} = \varepsilon.
\end{split}
\end{equation}
This establishes item \eqref{it:ql-ap1-2}.
This completes the proof of Proposition~\ref{prop:ql-ap1}.
\end{proof}
\subsection{Qualitative error estimates for artificial neural networks (ANNs)}
\label{ssec:qee-nn}
In this subsection we prove in Corollary~\ref{cor:nnet-ap2} below 
that ANNs 
with continuously differentiable activation functions
can overcome the curse of dimensionality
in the uniform approximation of 
viscosity solutions 
of Kolmogorov PDEs 
with constant coefficient functions. 
Corollary~\ref{cor:nnet-ap2} is an immediate consequence of 
Proposition~\ref{prop:nnet-ap1} below. 
Proposition~\ref{prop:nnet-ap1}, in turn,
follows from  Proposition~\ref{prop:ql-ap1} above 
and the well-known fact that 
linear combinations of realizations of ANNs are again realizations of ANNs.
To formulate Corollary~\ref{cor:nnet-ap2} we introduce in Setting~\ref{setting:ANNs} below a common framework from the scientific literature (cf., e.g.,  Grohs et al.~\cite[Section~2.1]{GrohsHornungJentzen2019} and Petersen \& Voigtlaender~\cite[Section~2]{petersen2017optimal}) to mathematically describe ANNs.

\begin{setting}
\label{setting:ANNs}
Let $\mathbf{N}$  be the set given by 
\begin{equation}
\mathbf{N} = \cup_{L \in \N \cap [2,\infty)} \cup_{(l_0,\ldots,l_L) \in \\ ((\N^{L} )
\times \{1\})} 
\big(\times_{k=1}^L (\R^{l_k\times l_{k-1}}\times\R^{l_k})
\big),
\end{equation}
let $\af\in C^1(\R,\R)$,
let $\Af_n \colon \R^n \to \R^n$, $ n \in \N$, satisfy for all  $n \in \N$, $x=(x_1,\ldots,x_n) \in \R^n$ that 
$\Af_n(x) = (\af(x_1),\ldots,\af(x_n))$, 
and let $\mathcal{N}, \mathcal{L}, \mathcal{P}, \mathfrak{P} \colon \mathbf{N} \to \N$ and $\mathcal{R} \colon \mathbf{N} \to \cup_{d \in \mathbb{N}} C(\R^d,\R)$ satisfy for all $L \in \N \cap [2,\infty)$, 
$(l_0,\ldots,l_L) \in ((\N^{L}) \times \{1\})$,  
$\Phi = ((W_1,B_1),\ldots,(W_L,B_L))$
$ = (((W_1^{(i,j)})_{i \in \{1,\ldots,l_1\}, j \in \{1,\ldots,l_{0}\}},$ $(B_1^i)_{i\in \{1,\ldots,l_1\}}),$ $\ldots,((W_L^{(i,j)})_{i \in \{1,\ldots,l_L\}, j \in \{1,\ldots,l_{L-1}\}},
(B_k^i)_{i\in \{1,\ldots,l_L\}} ))$
$\in (\times_{k=1}^{L}$ $\linebreak (\R^{l_k \times l_{k-1}} \times \R^{l_k}))$, 	
$x_0 \in \R^{l_0}$, $\ldots$, $x_{L-1}  \in \R^{l_{L-1}}$ with 
$\forall \, k \in \N \cap (0,L) \colon x_k = \Af_{l_k}(\WE_k x_{k-1} + \BI_k)$  that
$\mathcal{N}(\Phi) = \sum_{k=0}^L l_k$,
$\mathcal{L}(\Phi) = L + 1$, 
$\mathcal{P}(\Phi) =  \smallsum_{k=1}^{L} l_k(l_{k-1} + 1)$,
$(\mathcal{R} \Phi) \in C(\mathbb{R}^{l_0},\mathbb{R})$,
$(\mathcal{R}\Phi)(x_0) =  \WE_L x_{L-1} + \BI_L$, and 
\begin{equation}
\mathfrak{P}(\Phi)  = 
\sum_{k=1}^L \sum_{i=1}^{l_k}
\left[
\mathbbm{1}_{\R \backslash \{0\}}(\BI_k^i) + \sum_{j=1}^{l_{k-1}} \mathbbm{1}_{\R \backslash \{0\}}(\WE_k^{(i,j)}) 
\right]
\,.
\end{equation}	
\end{setting}

\begin{prop}
\label{prop:nnet-ap1}
Assume Setting~\ref{setting:ANNs}, let 
$d \in \N$, 
$\mu \in  \R^d$,
$\varphi \in C(\R^d,\R)$, 
$\alpha, a \in \R$, 
$\beta \in [0,\infty)$,
$b \in (a,\infty)$,   
$r,T \in (0,\infty)$, 
$c \in \big[\nicefrac{1}{2},\infty\big)$,
$v,\mathbf{v},w,\mathbf{w}, z,\mathbf{z}  \in [0,\infty)$, 
let
\begin{equation}
\begin{split}
C & = 
(
4
[
  7c + T + \sqrt{ \mathbf{z} } + \sqrt{  \mathbf{w} } + |a| + |b| 
 ]^{ 10 + 8 ( \mathbf{z} + \mathbf{w} ) }
+1)(1+r^2)\,,\\
p & =
2 +   
  \max\{  
  2z + 
  2\mathbf{z} \max\{\alpha,\beta + 1 \}
  ,
  2w + 1
  +
  2\mathbf{w} \max\{\alpha,\beta + 1 \}
  \} 
\,,	\\
\mathcal{C} & = \tfrac{1}{2}[ 5c + T + \sqrt{  \mathbf{v} } + |a| + |b| ]^{ -4 \mathbf{v} -1}\,,\\ 
\mathbf{p} & = v +  \mathbf{v} \max\{\alpha, \beta, \nicefrac{1}{2}\}\,,\\
\end{split}
\end{equation} 
let $\left \| \cdot \right \| \colon \R^d \to [0,\infty)$ be the standard norm on $\R^d$,
let $(\phi_{\varepsilon})_{\varepsilon \in (0,r]} \subseteq \mathbf{N}$, 
let $A = (A_{i,j})_{(i,j) \in \{1,\ldots,d\}^2} \in \R^{d \times d}$  be a  symmetric and positive semi-definite matrix,
and assume for all $\varepsilon \in (0,r]$, $ x \in \R^d$ that $(\mathcal R\phi_\varepsilon) \in C(\mathbb R^d,\mathbb R)$,
\begin{equation}
\left| (\mathcal{R}\phi_{\varepsilon})(x)\right| \leq c  d^z(1+\|x\|^{\mathbf{z}}), \quad \| (\nabla (\mathcal{R}\phi_{\varepsilon}))(x)\|\leq  c  d^w(1+\|x\|^{\mathbf{w}}),
  \label{eq:nnet-ap1-ass1}
\end{equation}
\begin{equation}
  \label{eq:nnet-ap1-ass2}
  \left| \varphi(x)-(\mathcal{R}\phi_{\varepsilon})(x)\right| \leq \varepsilon c  d^v(1+\|x\|^{\mathbf{v}}),
\qquad
\sqrt{\operatorname{Trace}(A)} \leq c d^{\beta}, 
\end{equation}
and $\|\mu\| \leq c  d^{\alpha}$.
Then
\begin{enumerate}[(i)]
\item \label{it:nnet-ap1-1} there exists a unique $u \in  C([0,T] \times \R^d, \R)$ which satisfies for all $x \in \R^d$ that $u(0,x) = \varphi(x)$,
which satisfies that 
$
\inf_{\gamma \in (0,\infty)} \sup_{(t,x) \in [0,T] \times \R^d} \big( \frac{|u(t,x)|}{1 +\|x\|^{\gamma}} \big) 
< \infty
$,
and which satisfies that $u|_{(0,T) \times \R^d}$ is a viscosity solution of
\begin{align}
(\tfrac{\partial }{\partial t}u)(t,x) &= \smallsum_{i,j=1}^d A_{i,j}\, (\tfrac{\partial^2 }{\partial x_i \partial x_j}u)(t,x) + \smallsum_{i=1}^d \mu_i\, (\tfrac{\partial}{\partial x_i} u)(t,x)
\end{align}
for $(t,x) \in (0,T) \times \R^d$
and
\item \label{it:nnet-ap1-2}  there exists $(\psi_{\varepsilon})_{\varepsilon \in (0,r]} \subseteq \mathbf{N}$ such that 
for all $\varepsilon \in (0,r]$ we have that
$
\mathcal{N}(\psi_{\varepsilon}) \leq C d^{p} \varepsilon^{-2} \mathcal{N}(\phi_{ \varepsilon c^{-1} d^{-\mathbf{p}}\mathcal{C}  })
$,
$
\mathcal{L}(\psi_{\varepsilon}) = \mathcal{L}(\phi_{ \varepsilon c^{-1} d^{-\mathbf{p}}\mathcal{C}  })
$,
$
\mathcal{P}( \psi_{ \varepsilon  } )
\leq C^2 d^{2p}\varepsilon^{-4}\mathcal{P} (\phi_{ \varepsilon c^{-1} d^{-\mathbf{p}}\mathcal{C}  })
$,
$
\mathfrak{P}(\psi_{\varepsilon}) \leq 
C d^{p} \varepsilon^{-2} \mathcal{P}(\phi_{ \varepsilon c^{-1} d^{-\mathbf{p}}\mathcal{C}  })
$,
$(\mathcal R\psi_\varepsilon) \in C(\mathbb R^d,\mathbb R)$,
and
\begin{equation}
\sup_{x \in [a,b]^d} \left| u(T,x) -  ( \mathcal{R} \psi_{ \varepsilon  } )( x ) \right| \leq \varepsilon.
\end{equation}
\end{enumerate}
\end{prop}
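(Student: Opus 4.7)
The plan is to reduce item~(i) of Proposition~\ref{prop:nnet-ap1} directly to item~(i) of Proposition~\ref{prop:ql-ap1}, and to derive item~(ii) by realizing the $n$-term average supplied by item~(ii) of Proposition~\ref{prop:ql-ap1} as the realization of a single ANN $\psi_\varepsilon$ obtained by parallelizing $n_\varepsilon$ copies of $\phi_{\varepsilon c^{-1}d^{-\mathbf{p}}\mathcal{C}}$. First observe that the constants and exponents in Proposition~\ref{prop:nnet-ap1} are chosen so that the symbol $\mathcal{C}$ here plays the role of $C$ in Proposition~\ref{prop:ql-ap1}, the symbol $p$ here plays the role of $\mathbf{p}$ there, the symbol $\mathbf{p}$ here plays the role of $p$ there, and the quantity $\widetilde{\mathbf{C}}:=4[7c+T+\sqrt{\mathbf{z}}+\sqrt{\mathbf{w}}+|a|+|b|]^{10+8(\mathbf{z}+\mathbf{w})}$ plays the role of $\mathbf{C}$ there. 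Since $\mathbf{a}\in C^1(\R,\R)$, each realization $\mathcal{R}\phi_\varepsilon$ lies in $C^1(\R^d,\R)$, so the hypotheses of Proposition~\ref{prop:ql-ap1} are met with $\phi_\varepsilon$ there replaced by $\mathcal{R}\phi_\varepsilon$ here.

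For fixed $\varepsilon\in(0,r]$, I would set $n_\varepsilon=\lceil\varepsilon^{-2}d^p\widetilde{\mathbf{C}}\rceil$, so that item~(ii) of Proposition~\ref{prop:ql-ap1} produces $\mathcal{W}_1,\ldots,\mathcal{W}_{n_\varepsilon}\in\R^{d\times d}$ and $\mathcal{B}_1,\ldots,\mathcal{B}_{n_\varepsilon}\in\R^d$ with
\begin{equation*}
\sup_{x\in[a,b]^d}\left|u(T,x)-\frac{1}{n_\varepsilon}\sum_{k=1}^{n_\varepsilon}\bigl(\mathcal{R}\phi_{\varepsilon c^{-1}d^{-\mathbf{p}}\mathcal{C}}\bigr)(\mathcal{W}_kx+\mathcal{B}_k)\right|\le\varepsilon.
\end{equation*}
Using $\varepsilon\le r$, hence $1\le r^2\varepsilon^{-2}$, together with $d^p\ge1$, one verifies
\begin{equation*}
n_\varepsilon\le\varepsilon^{-2}d^p\widetilde{\mathbf{C}}+1\le\varepsilon^{-2}d^p(\widetilde{\mathbf{C}}+r^2)\le\varepsilon^{-2}d^p(\widetilde{\mathbf{C}}+1)(1+r^2)=Cd^p\varepsilon^{-2}.
\end{equation*}

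The key step is to assemble $\psi_\varepsilon\in\mathbf{N}$ whose realization equals the above average. Writing $\phi_{\varepsilon c^{-1}d^{-\mathbf{p}}\mathcal{C}}=((V_1,v_1),\ldots,(V_L,v_L))$ with layer widths $(l_0,l_1,\ldots,l_L)=(d,l_1,\ldots,l_{L-1},1)$, I define $\psi_\varepsilon$ to have the same depth and hidden widths $(n_\varepsilon l_1,\ldots,n_\varepsilon l_{L-1})$: the first-layer weight matrix is formed by vertically stacking the blocks $V_1\mathcal{W}_k\in\R^{l_1\times d}$ for $k=1,\ldots,n_\varepsilon$ with bias obtained by stacking $V_1\mathcal{B}_k+v_1\in\R^{l_1}$; each intermediate layer $j\in\{2,\ldots,L-1\}$ is given by the block-diagonal matrix $\operatorname{diag}(V_j,\ldots,V_j)$ with bias $(v_j,\ldots,v_j)$; and the final layer is $\tfrac{1}{n_\varepsilon}(V_L,\ldots,V_L)\in\R^{1\times n_\varepsilon l_{L-1}}$ with bias $v_L$. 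A direct induction on layers verifies that the $k$-th block of the pre-activation at layer $j$ equals the corresponding pre-activation of the $k$-th copy, hence $(\mathcal{R}\psi_\varepsilon)(x)=\tfrac{1}{n_\varepsilon}\sum_{k=1}^{n_\varepsilon}(\mathcal{R}\phi_{\varepsilon c^{-1}d^{-\mathbf{p}}\mathcal{C}})(\mathcal{W}_kx+\mathcal{B}_k)$, which yields the desired uniform error bound.

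The parameter counts reduce to bookkeeping. By construction $\mathcal{L}(\psi_\varepsilon)=\mathcal{L}(\phi_{\varepsilon c^{-1}d^{-\mathbf{p}}\mathcal{C}})$; summing hidden widths gives $\mathcal{N}(\psi_\varepsilon)\le n_\varepsilon\mathcal{N}(\phi_{\varepsilon c^{-1}d^{-\mathbf{p}}\mathcal{C}})$; and each intermediate block-diagonal layer contributes $n_\varepsilon^2l_jl_{j-1}+n_\varepsilon l_j\le n_\varepsilon^2 l_j(l_{j-1}+1)$ parameters, yielding $\mathcal{P}(\psi_\varepsilon)\le n_\varepsilon^2\mathcal{P}(\phi_{\varepsilon c^{-1}d^{-\mathbf{p}}\mathcal{C}})$. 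For $\mathfrak{P}$, block-diagonality forces the off-diagonal blocks in the middle layers to be exactly zero, so only $n_\varepsilon l_j(l_{j-1}+1)$ entries per such layer can be non-zero; together with the linear scaling of the first and final layers this gives $\mathfrak{P}(\psi_\varepsilon)\le n_\varepsilon\mathcal{P}(\phi_{\varepsilon c^{-1}d^{-\mathbf{p}}\mathcal{C}})$. Combining with $n_\varepsilon\le Cd^p\varepsilon^{-2}$ delivers all bounds in item~(ii). The main obstacle is nothing conceptually deep but rather the notational bookkeeping: tracking the role-reversal of $(p,\mathbf{p})$ and $(C,\mathcal{C})$ between the two propositions and verifying that the constant $C=(\widetilde{\mathbf{C}}+1)(1+r^2)$ is large enough to absorb the $+1$ arising from the ceiling in the definition of $n_\varepsilon$.
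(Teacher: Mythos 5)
Your proposal is correct and follows essentially the same route as the paper's own proof: choose $n_\varepsilon$ as the ceiling of $\mathbf{C}d^p\varepsilon^{-2}$, invoke item~(ii) of Proposition~\ref{prop:ql-ap1} for the average representation, absorb the affine maps into the first layer and parallelize via block-diagonal intermediate layers and an averaging output layer, then count parameters. The only presentational difference is that the paper writes $n=\min(\N\cap[\mathbf{C}d^p\varepsilon^{-2},\infty))$ rather than a ceiling and bounds $n\le(\mathbf{C}+1)\max\{1,r^{-2}\}r^2 d^p\varepsilon^{-2}$ instead of your chain of inequalities, but these are the same estimate.
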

\begin{proof}[Proof of Proposition~\ref{prop:nnet-ap1}]
Throughout this proof let 
$ \varepsilon \in (0,r]$, 
let
\begin{equation}
\begin{split}
\mathbf{C} & = 4[7c + T + \sqrt{ \mathbf{z} } + \sqrt{  \mathbf{w} } + |a| + |b| 
]^{ 10 + 8 ( \mathbf{z} + \mathbf{w} ) }\,,\\
\end{split}	
\end{equation}
let $n = \min(\mathbb N \cap [\mathbf{C}d^{p}\varepsilon^{-2},\infty))$, 
let 
$\gamma_1,\ldots,\gamma_{n} \in \R^{d \times d}$,
$\delta_1,\ldots, \delta_{n} \in \R^d$ satisfy
\begin{equation}
\label{eq:nnet-ap1-pf1}
  \sup_{x \in [a,b]^d} \left|
u(T,x) 
- \left[\frac{1}{n}\sum_{k=1}^{n}\, (\mathcal{R} \phi_{ \varepsilon c^{-1} d^{-\mathbf{p}}\mathcal{C}  })( \gamma_k x + \delta_k)\right]
\right| 
\leq \varepsilon
\end{equation}
(cf.\ item \eqref{it:ql-ap1-2} in  Proposition~\ref{prop:ql-ap1}),
let  $L \in \N\cap [2,\infty)$, $(l_0, \ldots, l_L) \in (\{d\} \times (\N^{L-1}) \times \{1\})$, 
$( (\WE_1,\BI_1),\ldots,(\WE_L,\BI_L)) \in (\times_{k=1}^{L} (\R^{l_k \times l_{k-1}} \times \R^{l_k}))$ 
satisfy
\begin{equation}
\label{eq:nnet-ap1-phi}
\phi_{\varepsilon c^{-1} d^{-\mathbf{p}}\mathcal{C}} = ((\WE_1,\BI_1),\ldots,(\WE_L,\BI_L)),
\end{equation}
and let
\begin{equation}
\label{eq:nnet-ap1-psi}
\begin{split}
\psi & = ((\mathcal{W}_1\mathcal{W}_0,\mathcal{W}_1\mathcal{B}_0 + \mathcal{B}_1),(\mathcal{W}_2,\mathcal{B}_2),\ldots,
( \mathcal{W}_{L-1},\mathcal{B}_{L-1}),
( \mathcal{W}_L,B_L))
\\ & \in
(\R^{nl_1 \times l_0} \times \R^{n l_1})
\times
( \times_{k=2}^{L-1} (\R^{n l_k \times nl_{k-1}} \times \R^{nl_k}))
\times 
(\R^{l_L \times nl_{L-1}} \times \R^{l_L})
\end{split}
\end{equation}
satisfy for all 
$k \in \{1,\ldots,L-1\}$
that
\begin{equation}
\label{eq:nnet-ap-defw}
\mathcal{W}_0 = 
\begin{pmatrix}
\gamma_1 \\
\gamma_2    \\
\vdots \\
\gamma_n  
\end{pmatrix}
,
\quad
\mathcal{W}_{k} = \operatorname{diag}(\WE_k,\ldots,\WE_k),
\quad
\mathcal{W}_{L} 
=
\frac{1}{n}
\begin{pmatrix}
\WE_L &   \ldots & \WE_L
\end{pmatrix}
,
\end{equation}
\begin{equation}
\label{eq:nnet-ap-defb}
\mathcal{B}_0 =
\begin{pmatrix}
\delta_1 \\
\vdots \\
\delta_n
\end{pmatrix}
,
\qquad
\text{and}
\qquad
\mathcal{B}_k = 
\begin{pmatrix}
B_k\\
\vdots \\
B_k
\end{pmatrix}
.
\end{equation}
Observe that item \eqref{it:ql-ap1-1} in 
Proposition~\ref{prop:ql-ap1} 
(applied with $ r \leftarrow r $, $ \phi_{\varepsilon} \leftarrow ( \mathcal{R} \phi_{\varepsilon} ) $
for $ \varepsilon \in (0,r] $
in the notation of Proposition~\ref{prop:ql-ap1})
establishes item \eqref{it:nnet-ap1-1}. 
Next note that 
the fact that 
$ n \in [\mathbf{C}d^p\varepsilon^{-2},\mathbf{C}d^p\varepsilon^{-2} + 1] $
and
the fact that $r^2\varepsilon^{-2} \in [1,\infty)$  prove that
\begin{equation}
\label{eq:nnet-ap1-n}
\begin{split}
n \leq \mathbf{C}d^p\varepsilon^{-2} + 1 &  \leq (\mathbf{C} + 1) d^p \max\{1,\varepsilon^{-2}\} \\
& \leq (\mathbf{C} + 1) \max\{1,r^{-2}\}r^2 d^p \varepsilon^{-2} \\
& \leq C  d^p \varepsilon^{-2}.
\end{split}
\end{equation}
This
and
\eqref{eq:nnet-ap1-psi}
verify that 
\begin{equation}
\label{eq:nnet-ap-neurons}
\begin{split}
\mathcal{N}(\psi) & = l_0 + \sum_{k=1}^{L-1} nl_k + l_L 
\leq n \sum_{k=0}^L l_k \\
& = n \mathcal{N}(\phi_{ \varepsilon c^{-1} d^{-\mathbf{p}}\mathcal{C}  })
\leq C   d^p \varepsilon^{-2} \mathcal{N}(\phi_{ \varepsilon c^{-1} d^{-\mathbf{p}}\mathcal{C}  }). 
\end{split}
\end{equation}
Next note that 
\eqref{eq:nnet-ap1-psi}
and
\eqref{eq:nnet-ap1-n}
yield that 
\begin{equation}
\label{eq:nnet-ap-weight1}
\begin{split}
\mathcal{P}(\psi) & = nl_1 l_0  +nl_1 
+ 
\sum_{k=2}^{L-1} nl_k(n l_{k-1} +1) + nl_L l_{L-1}+ l_L
\\ & \leq
n^2 \left[
l_1(l_0 + 1) + \sum_{k=2}^{L-1} l_k(l_{k-1} + 1) + l_L(l_{L-1} + 1)
\right]
\\ & =
n^2 \mathcal{P}(\phi_{ \varepsilon c^{-1} d^{-\mathbf{p}}\mathcal{C}  })
\leq 
C^2  d^{2p} \varepsilon^{-4} \mathcal{P}(\phi_{ \varepsilon c^{-1} d^{-\mathbf{p}}\mathcal{C}  }).
\end{split}
\end{equation}
Moreover, note that 
\eqref{eq:nnet-ap1-psi}
and
\eqref{eq:nnet-ap1-n} 
ensure that
\begin{equation}
\label{eq:nnet-ap-weight2}
\begin{split}
\mathfrak{P}(\psi)   
& \leq nl_1(l_0 + 1) + n \sum_{k=2}^L l_k(l_{k-1} + 1) \\
&  = n \mathcal{P}(\phi_{ \varepsilon c^{-1} d^{-\mathbf{p}}\mathcal{C}  }) 
\leq C  d^p \varepsilon^{-2} \mathcal{P}(\phi_{ \varepsilon c^{-1} d^{-\mathbf{p}}\mathcal{C}  }).
\end{split}
\end{equation}
Furthermore, 
\eqref{eq:nnet-ap1-psi}, \eqref{eq:nnet-ap-defw}, and~\eqref{eq:nnet-ap-defb}
imply that
for all  $x \in  \R^d$ we have that
\begin{equation}
(\mathcal{R} \psi)(x) 
=
\frac{1}{n}
\sum_{k=1}^{n}\, (\mathcal{R}\phi_{ \varepsilon c^{-1} d^{-\mathbf{p}}\mathcal{C}  })
( \gamma_k x + \delta_k).
\end{equation}
Combining this and~\eqref{eq:nnet-ap1-pf1} yields that
\begin{equation}
\label{eq:nnet-ap-diff}
\sup_{x \in [a,b]^d} \left| u(T,x) -  ( \mathcal{R} \psi )( x ) \right| \leq \varepsilon.
\end{equation}
Next observe that
\eqref{eq:nnet-ap1-phi}
and
\eqref{eq:nnet-ap1-psi}
verify that 
$\mathcal{L}(\psi) = L + 1 = \mathcal{L}(\phi_{ \varepsilon c^{-1} d^{-\mathbf{p}}\mathcal{C}  })$.
Combining this with 
\eqref{eq:nnet-ap-neurons},
\eqref{eq:nnet-ap-weight1},
\eqref{eq:nnet-ap-weight2},
and
\eqref{eq:nnet-ap-diff}
establishes item \eqref{it:nnet-ap1-2}.
This completes the proof of Proposition~\ref{prop:nnet-ap1}.
\end{proof}
\begin{cor}
\label{cor:nnet-ap2}
Assume Setting~\ref{setting:ANNs}, 
let $\alpha,c, a \in \R$, 
$\beta \in [0,\infty)$,
$b \in (a,\infty)$,   $r,T \in (0,\infty)$, 
$p$, $q$, $v$, $\mathbf{v}$, $w$, $\mathbf{w}$, $z$, $\mathbf{z} \in [0,\infty)$, 
$
\mathbf{p}
=
2 +   
  \max\{  
  2z + 
  2\mathbf{z} \max\{\alpha,\beta + 1 \}
  ,
  2w + 1
  +
  2\mathbf{w} \max\{\alpha,\beta + 1 \}
  \} 
$,
$
\mathfrak p = v +  \mathbf{v} \max\{\alpha, \beta, \nicefrac{1}{2}\}
$,
for every $d \in \N$ let $\left \| \cdot \right \|_{\R^d} \colon \R^d \to [0,\infty)$ be the standard norm on $\R^d$,
let $(\phi_{\varepsilon,d})_{(\varepsilon,d) \in (0,r]\times \N}  \subseteq \mathbf{N}$, 
let $ \mu_d \in \R^d$, $d \in \N$, 
let $A_d = (A_d^{(i,j)})_{(i,j) \in \{1,\ldots,d\}^2} \in \R^{d \times d}$, $d \in \N$, be   symmetric and positive semi-definite matrices,
let $\varphi_d \in C(\mathbb R^d, \mathbb R)$, $d\in \mathbb N$,
and assume for all $\varepsilon \in (0,r]$, $d \in \N$, $ x \in \R^d$ that
$(\mathcal R\phi_{\varepsilon,d})\in C(\mathbb R^d,\mathbb R)$,
\begin{equation}
\left| (\mathcal{R}\phi_{\varepsilon,d})(x)\right| \leq c  d^z(1+\|x\|_{\R^d}^{\mathbf{z}}), \qquad \| (\nabla (\mathcal{R}\phi_{\varepsilon,d}))(x)\|_{\R^d} \leq  c  d^w(1+\|x\|_{\R^d}^{\mathbf{w}}),
  \label{eq:nnet-ap2-ass1}
\end{equation}
\begin{equation}
  \label{eq:nnet-ap2-ass2}
  \left| \varphi_d(x)-(\mathcal{R}\phi_{\varepsilon,d})(x)\right| \leq \varepsilon c  d^v(1+\|x\|_{\R^d}^{\mathbf{v}}),
\qquad
\sqrt{\operatorname{Trace}(A_d)} \leq c d^{\beta}, 
\end{equation}
\begin{equation}
\label{eq:nnet-ap2-ass3}
\|\mu_d\|_{ \R^d} \leq c  d^{\alpha},
\qquad
\text{and}
\qquad
\mathcal{P}(\phi_{\varepsilon,d}) \leq c  d^p\varepsilon^{-q}.
\end{equation}
Then
\begin{enumerate}[(i)]
\item \label{it:nnet-ap2-1} there exist unique $u_d \in  C([0,T] \times \R^d, \R)$, $d \in \N$,  which satisfy for all $d \in \N$, $x \in \R^d$ that $u_d(0,x) = \varphi_d(x)$,
which satisfy for all $d \in \N$ that 
$
\inf_{\gamma \in (0,\infty)}  
\sup_{(t,x) \in [0,T] \times \R^d}\! 
\big( \frac{|u_d(t,x)|}{1 + \|x\|_{ \R^d } ^{\gamma}} \big)
< \infty
$,
and which satisfy that for all $d \in \N$ we have that $u_d|_{(0,T) \times \R^d}$ is a viscosity solution of
\begin{align}
(\tfrac{\partial }{\partial t}u_d)(t,x) 
&= 
(\tfrac{\partial}{\partial x}u_d)(t,x) \, \mu_d +\smallsum_{i,j=1}^d A_d^{(i,j)}\, (\tfrac{\partial^2 }{\partial x_i \partial x_j}u_d)(t,x) 
\end{align}
for $(t,x) \in (0,T) \times \R^d$
and
\item \label{it:nnet-ap2-2}
there exist $(\psi_{\varepsilon,d})_{(\varepsilon,d) \in (0,r]\times \N} \subseteq \mathbf{N}$, 
$C \in \R$ such that 
for all $\varepsilon \in (0,r]$, $d \in \N$ we have that
$
\mathcal{N}(\psi_{\varepsilon,d}) \leq C d^{p + \mathbf{p} + \mathfrak p q}\varepsilon^{-(q+2)}
$,
$
\mathcal{P}( \psi_{ \varepsilon,d  } )
\leq C d^{p + 2 \mathbf{p} + \mathfrak p q} \varepsilon^{-(q+4)}
$,
$
\mathfrak{P}(\psi_{\varepsilon,d}) 
\leq C d^{p+ \mathbf{p}+ \mathfrak p q} \varepsilon^{-(q+2)}
$,
$
(\mathcal{R}\psi_{\varepsilon,d}) \in C(\R^d,\R)
$,
and
\begin{equation}
\sup_{x \in [a,b]^d} \left| u_d(T,x) -  ( \mathcal{R} \psi_{ \varepsilon, d  } )( x ) \right| \leq \varepsilon.
\end{equation}
\end{enumerate}
\end{cor}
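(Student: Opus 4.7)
The plan is to apply Proposition~\ref{prop:nnet-ap1} for each dimension $d \in \N$ separately and then to translate the bounds there into a uniform-in-$d$ polynomial complexity estimate by exploiting the hypothesis $\mathcal{P}(\phi_{\varepsilon,d}) \leq c d^p \varepsilon^{-q}$. A minor technical point that I would address first is that Proposition~\ref{prop:nnet-ap1} requires $c \in [\nicefrac12, \infty)$; if the given $c$ fails this condition I would replace it with $\tilde c = \max\{c, \nicefrac12\}$ throughout, which only weakens the hypotheses and leaves the exponents of $d$ and $\varepsilon$ in the conclusion unchanged.

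Concretely, for each fixed $d \in \N$ the hypotheses of Proposition~\ref{prop:nnet-ap1} are satisfied with the identifications $\phi_\varepsilon \leftarrow \phi_{\varepsilon,d}$, $\varphi \leftarrow \varphi_d$, $\mu \leftarrow \mu_d$, and $A \leftarrow A_d$, so item~(i) of Proposition~\ref{prop:nnet-ap1} immediately yields item~\eqref{it:nnet-ap2-1}. For item~\eqref{it:nnet-ap2-2} I would apply item~(ii) of Proposition~\ref{prop:nnet-ap1} to obtain, for every $\varepsilon \in (0,r]$ and $d \in \N$, an ANN $\psi_{\varepsilon,d} \in \mathbf{N}$ which satisfies $\sup_{x \in [a,b]^d} |u_d(T,x) - (\mathcal R \psi_{\varepsilon,d})(x)| \leq \varepsilon$ as well as
\begin{equation*}
\mathcal N(\psi_{\varepsilon,d}) \leq K d^{\mathbf p} \varepsilon^{-2} \mathcal N(\phi_{\eta_{\varepsilon,d},d}), \qquad \mathfrak P(\psi_{\varepsilon,d}) \leq K d^{\mathbf p} \varepsilon^{-2} \mathcal P(\phi_{\eta_{\varepsilon,d},d}),
\end{equation*}
\begin{equation*}
\mathcal P(\psi_{\varepsilon,d}) \leq K^2 d^{2\mathbf p} \varepsilon^{-4} \mathcal P(\phi_{\eta_{\varepsilon,d},d}),
\end{equation*}
where $\eta_{\varepsilon,d} = \varepsilon c^{-1} d^{-\mathfrak p} \mathcal C$ and where $K, \mathcal C \in \R$ are constants depending only on $r, T, a, b, c, v, \mathbf v, w, \mathbf w, z, \mathbf z, \alpha, \beta$. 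It is important to note that the exponent $\mathbf p$ in Corollary~\ref{cor:nnet-ap2} plays the role of $p$ in Proposition~\ref{prop:nnet-ap1}, while the exponent $\mathfrak p$ of the corollary plays the role of $\mathbf p$ of the proposition.

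The key computation is then to insert the polynomial complexity hypothesis into the bounds on $\mathcal N(\phi_{\eta_{\varepsilon,d},d})$ and $\mathcal P(\phi_{\eta_{\varepsilon,d},d})$. Applying $\mathcal P(\phi_{\varepsilon',d}) \leq c d^p (\varepsilon')^{-q}$ with $\varepsilon' = \eta_{\varepsilon,d}$ yields $\mathcal P(\phi_{\eta_{\varepsilon,d},d}) \leq c^{q+1} \mathcal C^{-q} d^{p + \mathfrak p q} \varepsilon^{-q}$, and the elementary observation $\mathcal N(\phi) \leq 2 \mathcal P(\phi)$ for every $\phi \in \mathbf{N}$ (valid by direct inspection of Setting~\ref{setting:ANNs} since $l_0 \leq l_1(l_0+1) \leq \mathcal P(\phi)$ and $\sum_{k=1}^L l_k \leq \mathcal P(\phi)$) gives the analogous bound for $\mathcal N(\phi_{\eta_{\varepsilon,d},d})$. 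Substituting these into the previous display and absorbing all $d$- and $\varepsilon$-independent factors into a single constant $C \in \R$ produces exactly the desired estimates $\mathcal N(\psi_{\varepsilon,d}), \mathfrak P(\psi_{\varepsilon,d}) \leq C d^{p + \mathbf p + \mathfrak p q} \varepsilon^{-(q+2)}$ and $\mathcal P(\psi_{\varepsilon,d}) \leq C d^{p + 2\mathbf p + \mathfrak p q} \varepsilon^{-(q+4)}$, while the uniform error bound is inherited directly from Proposition~\ref{prop:nnet-ap1}.

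There is no genuinely hard step in this proof; the main care required is in correctly matching the notations of Proposition~\ref{prop:nnet-ap1} (where $p$ and $\mathbf p$ are defined in terms of the PDE and initial-condition data) to those of Corollary~\ref{cor:nnet-ap2} (where the proposition's $p$ is renamed $\mathbf p$, its $\mathbf p$ is renamed $\mathfrak p$, and the symbol $p$ is reused for the complexity exponent from the hypothesis $\mathcal P(\phi_{\varepsilon,d}) \leq c d^p \varepsilon^{-q}$) and in the bookkeeping of the substitution $\eta_{\varepsilon,d} = \varepsilon c^{-1} d^{-\mathfrak p} \mathcal C$ that produces the combined exponents $p + \mathbf p + \mathfrak p q$ and $p + 2\mathbf p + \mathfrak p q$.
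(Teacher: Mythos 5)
Your proposal is correct and follows essentially the same route as the paper: replace $c$ by $\max\{c,\nicefrac12\}$, apply Proposition~\ref{prop:nnet-ap1} for each $d\in\N$, bound $\mathcal{N}$ by $\mathcal{P}$, and substitute $\mathcal{P}(\phi_{\eta_{\varepsilon,d},d})\leq c\,d^{p}\eta_{\varepsilon,d}^{-q}$ to collect the combined exponents $p+\mathbf{p}+\mathfrak{p}q$, $p+2\mathbf{p}+\mathfrak{p}q$; you correctly flag the renaming of the proposition's $p$ and $\mathbf{p}$ to the corollary's $\mathbf{p}$ and $\mathfrak{p}$, which is the only real bookkeeping risk. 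The only cosmetic difference is that the paper uses the slightly sharper inequality $\mathcal{N}(\Phi)\leq\mathcal{P}(\Phi)$ where you use $\mathcal{N}(\Phi)\leq 2\mathcal{P}(\Phi)$, but the extra factor is absorbed into the constant $C$ and does not affect the stated exponents.
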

\begin{proof}[Proof of Corollary~\ref{cor:nnet-ap2}]
Throughout this proof 
let 
$
\mathbf{C} = (
4
[
  7\max\{ \nicefrac{1}{2},c\} + T + \sqrt{ \mathbf{z} } + \sqrt{  \mathbf{w} } + |a| + |b| 
 ]^{ 10 + 8 ( \mathbf{z} + \mathbf{w} ) }
+1)(1+r^2)
$,
$
\mathbf c = \max\{\nicefrac12,c\}
$,
$
\mathcal C = \frac12[5\mathbf c + T + \sqrt{\mathbf v} + |a| + |b|]^{-4\mathbf v - 1}$.
Note that item \eqref{it:nnet-ap1-1} in Proposition~\ref{prop:nnet-ap1} 
(applied with
$c \leftarrow \max\{\nicefrac{1}{2}, c\}$,
$\mu \leftarrow \mu_d$,
$\varphi \leftarrow \varphi_d$, $(\phi_{\varepsilon})_{\varepsilon \in (0,r]} \leftarrow (\phi_{\varepsilon,d})_{\varepsilon \in (0,r]}$,
$A \leftarrow A_d$,
for
$d \in \N$ in the notation of Proposition~\ref{prop:nnet-ap1}) 
implies 
that for every $d \in \N$ 
there exists a unique continuous function $u_d \colon [0,T] \times \R^d \to \R$ which satisfies for all $x \in \R^d$ that $u_d(0,x) = \varphi_d(x)$,
which satisfies that 
$
\inf_{\gamma \in (0,\infty)}  
\sup_{(t,x) \in [0,T] \times \R^d}
\big( \frac{|u_d(t,x)|}{1 +\|x\|_{ \R^d }^{\gamma}} \big)
< \infty$,   
and which satisfies that $u_d|_{(0,T) \times \R^d}$ is a viscosity solution of
\begin{align}
(\tfrac{\partial }{\partial t}u_d)(t,x) 
&=
(\tfrac{\partial}{\partial x} u_d)(t,x) \, \mu_d
+
\smallsum_{i,j=1}^d A_d^{(i,j)}\, (\tfrac{\partial^2 }{\partial x_i \partial x_j}u_d)(t,x) 
\end{align}
for $(t,x) \in (0,T) \times \R^d$. This  establishes item \eqref{it:nnet-ap2-1}.
Next note that for all $L \in \N \cap [2,\infty)$, $(l_0,\ldots,l_L) \in \N^L \times \{1\}$, 
$\Phi  \in (\times_{k=1}^L (\R^{l_k \times l_{k-1}} \times \R^{l_k}))$ we have that 
\begin{equation}
\mathcal{N}(\Phi) = \sum_{k=0}^L l_k \leq l_1l_0 + \sum_{k=1}^L l_k +\sum_{k=2}^L l_kl_{k-1} = \mathcal{P}(\Phi).
\label{eq:nnet-ap2-pf1}
\end{equation}
Moreover, note that Proposition~\ref{prop:nnet-ap1} 
(applied with
$c \leftarrow \max\{\nicefrac{1}{2}, c\}$,
$\mu \leftarrow \mu_d$,
$\varphi \leftarrow \varphi_d$, $(\phi_{\varepsilon})_{\varepsilon \in (0,r]} \leftarrow (\phi_{\varepsilon,d})_{\varepsilon \in (0,r]}$,
$A \leftarrow A_d$, for $d \in \N$ in the notation of Proposition~\ref{prop:nnet-ap1})
assures that there exist 
$\psi_{\varepsilon, d} \in \mathbf{N}$, 
$ d \in \N$, 
$ \varepsilon \in (0,r] $,
which satisfy that for every 
$ d \in \N$, 
$ \varepsilon \in (0,r] $
we have that
$
\mathcal{N}(\psi_{\varepsilon, d}) \leq \mathbf{C}  d^{\mathbf{p}} \varepsilon^{-2} \mathcal{N}(\phi_{\varepsilon \mathbf c^{-1} d^{-\mathfrak{p}}\mathcal{C},d})
$,
$
\mathcal{P}( \psi_{ \varepsilon, d  } )
\leq \mathbf{C}^2 d^{2\mathbf{p}}\varepsilon^{-4}\mathcal{P} ( \phi_{\varepsilon \mathbf c^{-1} d^{-\mathfrak{p}}\mathcal{C},d})
$,
$
\mathfrak{P}(\psi_{\varepsilon,d }) \leq 
\mathbf{C}   d^{\mathbf{p}} \varepsilon^{-2} \mathcal{P}(\phi_{\varepsilon \mathbf c^{-1} d^{-\mathfrak{p}}\mathcal{C},d}) 
$,
$
(\mathcal{R} \psi_{\varepsilon,d}) \in C(\R^d, \R)
$,
and
\begin{equation}
\label{eq:nnet-ap2-sup}
\sup_{x \in [a,b]^d} \left| u_d(T,x) -  ( \mathcal{R} \psi_{ \varepsilon, d  } )( x ) \right| \leq \varepsilon.
\end{equation}
Next note that~\eqref{eq:nnet-ap2-ass3} implies that for all $\varepsilon\in (0,r], d\in \mathbb N$ we have that
\begin{equation}
\mathcal P(\phi_{\varepsilon \mathbf c^{-1} d^{-\mathfrak{p}}\mathcal{C},d})
\leq cd^p (\varepsilon \mathbf c^{-1} d^{-\mathfrak{p}}\mathcal{C})^{-q}
= c\mathbf c^{q}d^{p+\mathfrak p q} \mathcal{C}^{-q} \varepsilon^{-q}\,.	
\end{equation}
Combining this, \eqref{eq:nnet-ap2-pf1}, and~\eqref{eq:nnet-ap2-sup} hence yields that
for all $d \in \N$, $\varepsilon \in (0,r]$  we have that 
$
\mathcal{N}(\psi_{\varepsilon, d}) \leq \mathbf{C}c \mathbf c^{q} d^{\mathbf{p} + p + \mathfrak p q}  \mathcal C^{-q} \varepsilon^{-(q+2)}
$,
$
\mathcal{P}( \psi_{ \varepsilon, d   } )
\leq \mathbf{C}^2 c\mathbf c^{q}  d^{2\mathbf{p} + p + \mathfrak p q}\mathcal C^{-q}\varepsilon^{-(q+4)}
$,
and
$\mathfrak{P}(\psi_{\varepsilon,d }) \leq 
\mathbf{C} c\mathbf c^{q} d^{\mathbf{p}+p+\mathfrak p q} \mathcal C^{-q}\varepsilon^{-(q+2)} $.
This and \eqref{eq:nnet-ap2-sup} establish item \eqref{it:nnet-ap2-2}.
This completes the proof of Corollary~\ref{cor:nnet-ap2}.
\end{proof}
\section[ANN approximations for heat equations]{Artifical neural network approximations for heat equations}
\label{sec:heat-eq}
\subsection{Viscosity solutions for heat equations}
In this subsection we establish in Lemma~\ref{lem:ex-heat2} below a well-known connection 
between viscosity solutions and classical solutions of heat equations
with at most polynomially growing initial conditions. 
Lemma~\ref{lem:ex-heat2} 
will be employed in our proof of Theorem~\ref{thm:heat-eq} below, the main result of this article. 
Lemma~\ref{lem:ex-heat2} 
is a simple consequence
of Lemma~\ref{lem:ex-heat1} and 
the Feynman-Kac formula
for viscosity solutions of Kolmogorov PDEs
(cf., for example, Hairer et al.~\cite{hairer2015}).
Lemma~\ref{lem:ex-heat1}, in turn, 
is an elementary and well-known existence result 
 for solutions of heat equations (cf., for example, Evans~\cite[Theorem 1 in Subsection 2.3.1]{EvansPDEs}).
For completeness we also provide in this subsection a detailed proof for Lemma~\ref{lem:ex-heat1}.
Our proof of Lemma~\ref{lem:ex-heat1} employs the elementary and well-known result in  Lemma~\ref{lem:gauss-moments} below.
\begin{lemma}
\label{lem:gauss-moments}
Let $p \in [0,\infty)$,
$ d \in \N$, 
and
let $\left \| \cdot \right \| \colon \R^d \to [0,\infty)$ be the standard norm on $\R^d$.
Then
it holds for all 
$ t \in (0,\infty) $,
$x \in \R^d $ that
\begin{equation}
\label{eq:gauss-moments}
\int_{\R^d}
\| y\|^p
e^{
-
\frac{\| x - y \|^2}{4t}
}
\, 
dy
<
\infty.
\end{equation}
\end{lemma}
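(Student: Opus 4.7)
The plan is to reduce the integral to a standard Gaussian moment computation by translating the integration variable and then applying a crude polynomial bound combined with the triangle inequality. Throughout, fix $p \in [0,\infty)$, $d \in \N$, $t \in (0,\infty)$, and $x \in \R^d$.

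First, I would perform the change of variables $z = y - x$ (whose Jacobian determinant has absolute value $1$), rewriting
\begin{equation*}
\int_{\R^d} \|y\|^p \, e^{-\frac{\|x-y\|^2}{4t}} \, dy
\;=\;
\int_{\R^d} \|z+x\|^p \, e^{-\frac{\|z\|^2}{4t}} \, dz.
\end{equation*}
Next, I would use the triangle inequality $\|z+x\| \leq \|z\| + \|x\|$ together with the elementary estimate $(a+b)^p \leq 2^{\max\{0,p-1\}} (a^p + b^p)$ for $a,b \in [0,\infty)$ to bound
\begin{equation*}
\|z+x\|^p
\;\leq\;
2^{\max\{0,p-1\}} \bigl( \|z\|^p + \|x\|^p \bigr).
\end{equation*}

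It then suffices to prove that $\int_{\R^d} \|z\|^p e^{-\|z\|^2/(4t)} \, dz < \infty$ and $\int_{\R^d} e^{-\|z\|^2/(4t)} \, dz < \infty$. For the second integral one may decompose $\|z\|^2 = \sum_{i=1}^d z_i^2$, apply Fubini's theorem, and use that $\int_{\R} e^{-s^2/(4t)} \, ds = \sqrt{4 \pi t} < \infty$. For the first integral one may split $e^{-\|z\|^2/(4t)} = e^{-\|z\|^2/(8t)} \cdot e^{-\|z\|^2/(8t)}$ and observe that the continuous function $\R^d \ni z \mapsto \|z\|^p e^{-\|z\|^2/(8t)} \in [0,\infty)$ is bounded (since it tends to $0$ as $\|z\| \to \infty$), while $\int_{\R^d} e^{-\|z\|^2/(8t)} \, dz < \infty$ by the same Fubini/one-dimensional Gaussian argument.

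There is no substantive obstacle here; the whole statement is a bookkeeping consequence of standard Gaussian integrability and the triangle inequality, and the only mild care needed is handling the two cases $p \in [0,1]$ and $p \in (1,\infty)$ uniformly through the constant $2^{\max\{0,p-1\}}$.
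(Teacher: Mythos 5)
Your proposal is correct, and it diverges from the paper's argument in the final finiteness step in a way worth noting. Both proofs begin identically: translate to center the Gaussian at the origin, invoke the triangle inequality together with an estimate of the form $(a+b)^p \leq C_p(a^p + b^p)$, and thereby reduce the claim to the finiteness of $\int_{\R^d} e^{-\|z\|^2/(4t)}\,dz$ and $\int_{\R^d} \|z\|^p e^{-\|z\|^2/(4t)}\,dz$. Where you part ways is in handling the second integral. The paper passes to polar coordinates (via its Lemma~\ref{lem:polar}), reduces to the radial integral $\int_0^\infty r^{p+d-1} e^{-r^2}\,dr$, and then controls this quantity explicitly with the Stirling-type Gamma function bound from item~\eqref{it:gamma4} of Lemma~\ref{lem:gamma}. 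You instead split the exponential as $e^{-\|z\|^2/(8t)} \cdot e^{-\|z\|^2/(8t)}$, observe that $z \mapsto \|z\|^p e^{-\|z\|^2/(8t)}$ is continuous, vanishes at infinity, and is therefore bounded on $\R^d$, and reduce the remaining Gaussian to one-dimensional integrals by Fubini. Your route is more elementary --- it requires no polar coordinates and no Gamma function estimates --- and since Lemma~\ref{lem:gauss-moments} is used downstream (in Lemma~\ref{lem:ex-heat1}) only to justify dominated-convergence and differentiation-under-the-integral arguments, mere finiteness is all that is needed; the paper's explicit quantitative bound is superfluous for that purpose, though it is more in keeping with the paper's general policy of tracking constants and it reuses machinery already built for Corollary~\ref{cor:vol-ball}. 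Both proofs are sound.
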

\begin{proof}[Proof of Lemma~\ref{lem:gauss-moments}]
Throughout this proof let 
$ S $ be the set given by
\begin{equation}
\label{eq:lem-gauss-S}
S 
=
\begin{cases}
(-1,1) & \colon d =1 \\
(0,2\pi) & \colon d = 2 \\
(0,2\pi) \times (0,\pi)^{d-2}  & \colon d \in \{3,4,\ldots\}
\end{cases}
\end{equation}
and for every $n\in \{2,3,\ldots\}$ let
$ T_n \colon (0,\infty) \times (0,2\pi) \times (0,\pi)^{n-2} \to \R $
satisfy for all
$n\in \{2,3,\ldots\}$,
$ r \in (0,\infty) $,
$ \varphi \in (0,2\pi) $, 
$ \vartheta_1,\ldots,\vartheta_{n-2} \in (0,\pi) $ 
that if $n=2$ then $T_2(r,\varphi) = r$
and if $n\geq 3$ then 
\begin{equation}
\label{eq:gauss-moments-Tn}
T_n(r,\varphi,\vartheta_1,\ldots,\vartheta_{n-2})
=
r^{n-1}
\left[\smallprod_{i=1}^{n-2} [\sin(\vartheta_{i})]^{i}\right]
.
\end{equation}
Observe that 
the integral transformation theorem 
with the diffeomorphism
$ (0,\infty) \ni r \mapsto \sqrt{r} \in (0,\infty) $ 
implies that
\begin{equation}
\begin{split}
& \int_{0}^{\infty}
r^{p+d-1} e^{-r^2}
\, dr
=
\int_{0}^{\infty}
r^{\nicefrac{(p+d-1)}{2}} e^{-r}
\frac{1}{2 r^{\nicefrac{1}{2}}}
\, dr
=
\frac{1}{2}
\int_{0}^{\infty}
r^{\nicefrac{(p+d)}{2} - 1} e^{-r}
\, dr
.
\end{split}
\end{equation}
Item \eqref{it:gamma4} in Lemma~\ref{lem:gamma}
(applied with $ x \leftarrow \frac{p+d}{2} $ in the notation of
Lemma~\ref{lem:gamma})
hence verifies that 
\begin{equation}
\label{eq:gauss-moments-bound}
\int_{0}^{\infty}
r^{p+d-1} e^{-r^2}
\, dr
\leq
\frac{1}{2}
\sqrt{\frac{4\pi}{p+d}} \left[ \frac{p+d}{2e} \right]
^{
\frac{p+d}{2}
}
e^{\frac{1}{6(p+d)}} 
<
\infty
.
\end{equation}
Next note that 
the integral transformation theorem 
with the diffeomorphism
$
\R^d \ni y \mapsto 2\sqrt{t} y \in \R^d
$
for 
$ t \in (0,\infty) $,
the triangle inequality,
and
the fact that for all 
$ a,b \in [0,\infty) $ 
we have that
$ (a+b)^p \leq \max\{1,2^{p-1}\} (a^p + b^p) $
ensure
that
for all 
$ t \in (0,\infty) $,
$ x \in \R^d $
we have that
\begin{equation}
\label{eq:gauss-moments-sum}
\begin{split}
\int_{\R^d}
\| y\|^p
e^{
-
\frac{\|x-y\|^2}{4t}
}
\, 
dy
& =
\int_{\R^d}
\| x-y\|^p
e^{
-
\frac{\| y \|^2 }{4t}
}
\, 
dy
\\ & =
\int_{\R^d}
\| x-2 \sqrt{t} y \|^p
e^{
-
\| y \|^2
}
(2 \sqrt{t})^d
\, 
dy
\\ &  \leq
\max\{1,2^{p-1}\}
(2\sqrt{t})^d \| x \|^p  \int_{\R^d}  e^{- \|y\|^2 } \, dy 
\\ & +
\max\{1,2^{p-1}\} (2\sqrt{t})^{p+d}\int_{\R^d} \| y \|^p e^{- \|y\|^2 } \, dy 
.
\end{split}
\end{equation}
To establish \eqref{eq:gauss-moments} we distinguish between 
the case $ d = 1 $ 
and
the case $ d \in \N \cap [2,\infty) $.
First, we consider the case $ d = 1 $.
Note that 
\begin{equation}
\int_{\R^d} \| y \|^p e^{- \|y\|^2 } \, dy 
=
2 \int_0^{\infty} y^p e^{-y^2} \, dy
.
\end{equation}
Combining this with 
\eqref{eq:gauss-moments-bound}
and
\eqref{eq:gauss-moments-sum}
establishes 
\eqref{eq:gauss-moments}
in the case $ d = 1 $.
Next we consider the case 
$ d \in \{2,3,\ldots \}$.
Note that
\eqref{eq:lem-gauss-S},
\eqref{eq:gauss-moments-Tn},
item \eqref{it:polar3} in Lemma~\ref{lem:polar},
and
Fubini's theorem
ensure that
\begin{equation}
\begin{split}
\int_{\R^d} \| y \|^p e^{- \|y\|^2 } \, dy 
& =
\int_0^\infty \int_S
r^{p+d-1} e^{-r^2} T_d(1,\phi) \, d\phi\,dr
\\ & =
\int_S \int_0^\infty  
r^{p+d-1} e^{-r^2} \, dr \, T_d(1,\phi) \, d\phi
\\ & \leq
2 \pi^{d-1}
\int_0^\infty  
r^{p+d-1} e^{-r^2} \, dr
.
\end{split}
\end{equation}
Combining this with 
\eqref{eq:gauss-moments-bound}
and
\eqref{eq:gauss-moments-sum}
establishes 
\eqref{eq:gauss-moments}
in the case
$ d \in \{2,3,\ldots \} $.
This completes the proof of Lemma~\ref{lem:gauss-moments}.
\end{proof}
\begin{lemma}
\label{lem:ex-heat1}
Let 
$ d \in \N$,
$ \varphi \in C(\R^d,\R)$,
let $\left \| \cdot \right \| \colon \R^d \to [0,\infty)$ be the standard norm on $\R^d$,
assume that
$
\inf_{ \gamma \in (0,\infty) } 
\sup_{ x \in \R^d }
\big(
\frac{|\varphi(x)|}{1+\| x \| ^{\gamma}}
\big)
< \infty
$,
and
let 
$ \Phi \colon (0,\infty) \times \R^d \to \R $ 
satisfy for all
$ t \in (0,\infty) $, 
$ x=(x_1,\ldots,x_d) \in \R^d $
that
\begin{equation}
\Phi(t,x) 
=
\int_{\R^d}
\frac{1}{(4\pi t)^{\frac{d}{2}} }
e^{
-
\frac{(x_1-y_1)^2+\ldots+(x_d-y_d)^2}{4t}
}
\varphi(y)
\, 
dy.
\end{equation}
Then it holds for all 
$ t \in (0,\infty) $,
$ x \in \R^d $
that
$ \Phi \in C^{1,2}((0,\infty)\times\R^d,\R) $
and
\begin{equation}
\label{eq:ex-heat1-heq}
(\tfrac{\partial}{\partial t }\Phi)(t,x) 
=
(\Delta_x \Phi)(t,x)
.
\end{equation}
\end{lemma}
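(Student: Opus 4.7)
The plan is to apply standard differentiation under the integral sign to the heat kernel representation of $\Phi$, using Lemma~\ref{lem:gauss-moments} to supply the necessary integrable dominating functions.

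First, I would introduce the kernel $K\colon (0,\infty)\times\R^d\times\R^d\to\R$ given by $K(t,x,y) = (4\pi t)^{-d/2} e^{-\|x-y\|^2/(4t)}$ and verify by an elementary direct calculation that for all $(t,x,y) \in (0,\infty)\times\R^d\times\R^d$ we have
\begin{equation}
(\tfrac{\partial}{\partial t} K)(t,x,y)
= (\Delta_x K)(t,x,y)
= \left[\tfrac{\|x-y\|^2}{4t^2} - \tfrac{d}{2t}\right] K(t,x,y).
\end{equation}
In the same way I would compute closed-form expressions for $\partial_{x_i} K$ and $\partial_{x_i x_j}^2 K$ and observe that each of $\partial_t K$, $\partial_{x_i} K$, $\partial_{x_i x_j}^2 K$ is of the form (rational function in $t$) times (polynomial in $x-y$) times $K(t,x,y)$.

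Next, fix $(t_0,x_0)\in (0,\infty)\times\R^d$ and pick $\delta\in(0,t_0/2)$ with $U := (t_0-\delta,t_0+\delta)\times \{x\in\R^d:\|x-x_0\|<\delta\}$. The hypothesis on $\varphi$ furnishes constants $\gamma\in[0,\infty)$ and $c\in(0,\infty)$ with $|\varphi(y)|\leq c(1+\|y\|^\gamma)$ for all $y\in\R^d$. Using the elementary inequality $\|x-y\|^2 \geq \tfrac12 \|x_0-y\|^2 - \|x-x_0\|^2$ one obtains, on $U$, a bound of the shape $|K(t,x,y)| + |(\partial_t K)(t,x,y)| + \sum_{i}|(\partial_{x_i}K)(t,x,y)| + \sum_{i,j}|(\partial^2_{x_i x_j}K)(t,x,y)| \leq C_{t_0,x_0,\delta}(1+\|y\|^{m}) e^{-\|x_0-y\|^2/(16 t_0)}$ for a finite constant $C_{t_0,x_0,\delta}$ and a finite $m\in\N$ depending on $d$. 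Multiplying by $|\varphi(y)|\leq c(1+\|y\|^\gamma)$ yields a dominating function of the form $y\mapsto C'(1+\|y\|^{m+\gamma}) e^{-\|x_0-y\|^2/(16 t_0)}$, which by Lemma~\ref{lem:gauss-moments} (applied with $t\leftarrow 4 t_0$, $x\leftarrow x_0$) is integrable over $\R^d$.

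With these uniform integrable majorants, the standard dominated-convergence criterion for differentiation under the integral sign (applied iteratively to $\partial_t$, $\partial_{x_i}$, and $\partial^2_{x_i x_j}$) proves that $\Phi$ is continuously differentiable in $t$ once and twice continuously differentiable in $x$ at $(t_0,x_0)$, with
\begin{equation}
(\tfrac{\partial}{\partial t}\Phi)(t,x) = \int_{\R^d} (\tfrac{\partial}{\partial t} K)(t,x,y)\,\varphi(y)\,dy,\quad (\Delta_x \Phi)(t,x) = \int_{\R^d} (\Delta_x K)(t,x,y)\,\varphi(y)\,dy.
\end{equation}
Since $(t_0,x_0)$ was arbitrary we conclude $\Phi\in C^{1,2}((0,\infty)\times\R^d,\R)$, and the identity $\partial_t K = \Delta_x K$ immediately yields $(\tfrac{\partial}{\partial t}\Phi)(t,x) = (\Delta_x \Phi)(t,x)$.

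The only nontrivial point is the construction of the dominating function; the combinatorics of the derivatives of $K$ are routine but must be set up so that the Gaussian factor (after the elementary inequality that decouples $\|x-y\|^2$ from $\|x_0-y\|^2$) dominates the polynomial growth from both $\varphi$ and the polynomial prefactors coming from differentiating $K$. Once Lemma~\ref{lem:gauss-moments} is invoked, the remainder is a formal application of dominated convergence.
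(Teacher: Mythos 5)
Your proposal is correct and follows essentially the same route as the paper's proof: compute the derivatives of the heat kernel, verify the heat equation for the kernel, use the polynomial growth of $\varphi$ together with Lemma~\ref{lem:gauss-moments} to obtain integrable majorants, and then differentiate under the integral sign. The only cosmetic difference is that the paper works with the translation kernel $\rho(t,x)$ and invokes Amann \& Escher's differentiation-under-the-integral theorem directly, whereas you construct explicit local dominating functions on a neighborhood of $(t_0,x_0)$; both variants justify the same interchange.
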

\begin{proof}[Proof of Lemma~\ref{lem:ex-heat1}]
Throughout this proof 
let 
$ \rho \colon (0,\infty) \times \R^d \to \R $
satisfy for all
$ t \in (0,\infty) $,
$ x=(x_1,\ldots,x_d) \in \R^d $ 
that
\begin{equation}
\label{eq:ex-heat1-gamma}
\rho(t,x) 
=
\frac{1}{(4\pi t)^{\frac{d}{2}} }
e^{
-
\frac{x_1^2+\ldots+x_d^2}{4t}
}.
\end{equation}
Observe that for all 
$ t \in (0,\infty) $,
$ x=(x_1,\ldots,x_d) \in \R^d $ 
we have that
\begin{equation}
\label{eq:ex-heat1-dt}
(\tfrac{\partial}{\partial t } \rho) (t,x)
=
\left[
\frac{x_1^2+\ldots+x_d^2}{4t^2}
-
\frac{d}{2t}
\right]
\rho(t,x)
.
\end{equation}
Next note that for all 
$ i \in \{1,\ldots,d\} $,
$ t \in (0,\infty) $,
$ x=(x_1,\ldots,x_d) \in \R^d $ 
we have that
\begin{equation}
\label{eq:ex-heat1-dxi}
(\tfrac{\partial}{\partial x_i } \rho) (t,x)
=
-
\frac{x_i}{2t}
\rho(t,x)
.
\end{equation}
This implies that for all 
$ i,j \in \{1,\ldots,d\} $,
$ t \in (0,\infty) $,
$ x=(x_1,\ldots,x_d) \in \R^d $ 
we have that
\begin{equation}
\label{eq:ex-heat1-dxij}
(\tfrac{\partial^2}{\partial x_i \partial x_j } \rho) (t,x)
=
\begin{cases}
\Big[
\frac{x_i^2}{4t^2}
-
\frac{1}{2t}
\Big]
\rho(t,x)
&
\colon i = j
\\
\frac{x_i x_j}{4t^2}
\rho(t,x)
&
\colon 
i \neq j
.
\end{cases}
\end{equation}
This reveals that for all
$ t \in (0,\infty) $,
$ x=(x_1,\ldots,x_d) \in \R^d $ 
we have that
\begin{equation}
(\Delta_x \rho)(t,x)
=
\sum_{i=1}^d
(\tfrac{\partial^2}{\partial x_i^2 } \rho) (t,x)
=
\left[
\frac{x_1^2+\ldots+x_d^2}{4t^2}
-
\frac{d}{2t}
\right]
\rho(t,x)
.
\end{equation}
Combining this with \eqref{eq:ex-heat1-dt}
yields that for all 
$ t \in (0,\infty) $,
$ x \in \R^d $ 
we have that
\begin{equation}
\label{eq:ex-heat1-heq-phi}
(\tfrac{\partial}{\partial t } \rho) (t,x)
-
(\Delta_x \rho)(t,x)
=
0
.
\end{equation}
Next note that the hypothesis that
$
\inf_{ \gamma \in (0,\infty) } 
\sup_{ x \in \R^d }
\big(
\frac{|\varphi(x)|}{1+\| x \| ^{\gamma}}
\big)
< \infty
$
ensures that there exist $ \gamma \in (0,\infty) $, $ C  \in \R $ 
which satisfy that for all 
$ x \in \R^d $ we have that
\begin{equation}
\label{eq:ex-heat1-varphi}
|\varphi(x)| \leq C(1+\|x\|^\gamma)
.
\end{equation}
This 
and
Lemma~\ref{lem:gauss-moments}
verify that 
for all 
$ t \in (0,\infty) $, 
$ x \in \R^d $
we have that
\begin{equation}
\label{eq:ex-heat1-Phi}
|\Phi(t,x)|
\leq
\int_{\R^d}
|
\rho(t,x-y)\varphi(y)
|
\, dy
\leq
C
\int_{\R^d}
\rho(t,x-y)
(1+ \|y\|^\gamma)
\,
dy
< 
\infty
.
\end{equation}
Next note 
that
\eqref{eq:ex-heat1-dt},
\eqref{eq:ex-heat1-varphi},
the triangle inequality,
and
Lemma~\ref{lem:gauss-moments}
demonstrate
that for all
$ t \in (0,\infty) $,
$ x \in \R^d $
we have that
\begin{equation}
\begin{split}
& \int_{\R^d}
\big|
(\tfrac{\partial}{\partial t } \rho) (t,x-y) 
\varphi(y)
\big|
\, dy
\\ & 
\leq
C
\int_{\R^d}
\left[
\frac{\| x-y \|^2}{4t^2}
+
\frac{d}{2t}
\right]
\rho(t,x-y)
(1+\|y\|^\gamma)
\, 
dy
<
\infty
.
\end{split}
\end{equation}
Combining this,
\eqref{eq:ex-heat1-Phi},
and
\eqref{eq:ex-heat1-dt}
with
Amann \& Escher~\cite[Ch. X, Theorem 3.18]{MR2500068}
verifies that  
for all 
$ t \in (0,\infty) $,
$ x \in \R^d $
we have that
$ \Phi \in C^{1,0}((0,\infty) \times \R^d,\R)$ 
and
\begin{equation}
\label{eq:ex-heat1-dt2}
(\tfrac{\partial}{\partial t }\Phi )(t,x)
=
\int_{\R^d}
(\tfrac{\partial}{\partial t } \rho) (t,x-y) 
\varphi(y)
\, dy
.
\end{equation}
Next observe that
\eqref{eq:ex-heat1-dxi},
\eqref{eq:ex-heat1-varphi},
and
Lemma~\ref{lem:gauss-moments}
ensure that
for all 
$ i \in \{1,\ldots,d\} $,
$ t \in (0,\infty) $,
$ x \in \R^d $ 
we have that
\begin{equation}
\label{eq:ex-heat1-idxi}
\begin{split}
 \int_{\R^d} \big| (\tfrac{\partial}{\partial x_i } \rho) (t,x-y) \varphi(y) \big| \, dy
& \leq
\frac{C}{2t}
\int_{\R^d}
\|x-y\|
\rho(t,x-y)
(1+\|y\|^\gamma)
\, dy
< 
\infty.
\end{split}
\end{equation}
Combining this,
\eqref{eq:ex-heat1-Phi},
and
\eqref{eq:ex-heat1-dxi}
with
\eqref{eq:ex-heat1-dt2}
and
Amann \& Escher~\cite[Ch. X, Theorem 3.18]{MR2500068}
verifies that 
for all 
$ i \in \{1,\ldots,d\}$,
$ t \in (0,\infty)$,
$ x=(x_1,\ldots,x_d) \in \R^d $
we have that
$ \Phi \in C^{1,1} ((0,\infty)\times\R^d,\R) $ 
and
\begin{equation}
\label{eq:ex-heat1-dxi2}
(\tfrac{\partial}{\partial  x_i }\Phi )(t,x)
=
\int_{\R^d}
(\tfrac{\partial}{\partial x_i} \rho) (t,x-y) 
\varphi(y)
\, dy
.
\end{equation}
Next note that 
\eqref{eq:ex-heat1-dxij},
\eqref{eq:ex-heat1-varphi},
the fact that 
for all $ a,b \in \R $ we have that 
$
ab \leq a^2 + b^2
$,
and
Lemma~\ref{lem:gauss-moments}
ensure that
for all 
$ i,j \in \{1,\ldots,d\} $,
$ t \in (0,\infty) $,
$ x=(x_1,\ldots,x_d) \in \R^d $ 
we have that
\begin{equation}
\begin{split}
& \int_{\R^d}
\big| (\tfrac{\partial^2}{\partial  x_i \partial x_j } \rho) (t,x-y) \varphi(y) \big| 
\, dy
\\ & \leq
C
\int_{\R^d}
\left[
\frac{\|x-y\|^2}{4t^2} 
+
\frac{1}{2t}
\right]
\rho(t,x-y)
(1 + \| y \|^\gamma)
\,
dy
< 
\infty
.
\end{split}
\end{equation}
Combining this,
\eqref{eq:ex-heat1-idxi},
and
\eqref{eq:ex-heat1-dxij}
with
\eqref{eq:ex-heat1-dxi2}
and
Amann \& Escher~\cite[Ch. X, Theorem 3.18]{MR2500068}
verifies that  
for all 
$ i,j \in \{1,\ldots,d\}$,
$ t \in (0,\infty) $,
$ x=(x_1,\ldots,x_d) \in \R^d $
we have that
$ \Phi \in C^{1,2}((0,\infty)\times \R^d,\R)$ 
and
\begin{equation}
(\tfrac{\partial^2}{\partial x_i \partial x_j }\Phi )(t,x)
=
\int_{\R^d}
(\tfrac{\partial^2}{\partial x_i \partial x_j} \rho) (t,x-y) 
\varphi(y)
\, dy
.
\end{equation}
Hence, we obtain that
for all 
$ t \in (0,\infty) $,
$ x \in \R^d $
we have that
\begin{equation}
\begin{split}
(\Delta_x \Phi) (t,x)
& =
\sum_{i=1}^d
(\tfrac{\partial^2}{\partial x_i^2 }\Phi )(t,x)
\\ & =
\int_{\R^d}
\left[
\sum_{i=1}^d
(\tfrac{\partial^2}{\partial x_i^2} \rho) (t,x-y) 
\varphi(y)
\right]
dy
\\ & =
\int_{\R^d}
(\Delta_x \rho)(t,x-y)
\varphi(y)
\, dy
.
\end{split}
\end{equation}
Combining this and \eqref{eq:ex-heat1-dt2}
with
\eqref{eq:ex-heat1-heq-phi}
establishes \eqref{eq:ex-heat1-heq}.
This completes the proof of Lemma~\ref{lem:ex-heat1}.
\end{proof}
\begin{lemma}
\label{lem:ex-heat2}
Let 
$ d \in \N$,
$ T \in (0,\infty)$,
$ \varphi \in C(\R^d,\R)$, $ u \in  C([0,T] \times \R^d, \R) $,
let $\left \| \cdot \right \| \colon \R^d \to [0,\infty)$ be a norm, assume for all $x \in \R^d$ that $u(0,x) = \varphi(x)$, assume
that 
$
\inf_{\gamma \in (0,\infty)} \sup_{(t,x) \in [0,T] \times \R^d} \big( \frac{|u(t,x)|}{1 +\|x\|^{\gamma}} \big) 
< \infty
$,
and assume that $u|_{(0,T) \times \R^d}$ is a viscosity solution of
\begin{align}
(\tfrac{\partial }{\partial t}u)(t,x) 
&= 
(\Delta_x u)(t,x)
\end{align}
for $(t,x) \in (0,T) \times \R^d$.
Then it holds for all $ t \in (0,T]$, $ x \in \R^d $ that
$ u|_{(0,T] \times \R^d} \in C^{1,2}((0,T] \times \R^d,\R) $
and
\begin{align}
(\tfrac{\partial }{\partial t}u)(t,x) 
&= 
(\Delta_x u)(t,x)
.
\end{align}
\end{lemma}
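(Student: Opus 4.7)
The plan is to compare $u$ with the heat-kernel convolution that Lemma~\ref{lem:ex-heat1} already shows to be a classical solution. Concretely, define $\Phi \colon (0,\infty)\times \R^d \to \R$ by
\begin{equation*}
\Phi(t,x) = \int_{\R^d} \frac{1}{(4\pi t)^{d/2}} \, e^{-\frac{\|x-y\|^2}{4t}} \, \varphi(y)\,dy.
\end{equation*}
The hypothesis on $u$, together with $u(0,\cdot)=\varphi$ and the continuity of $u$ at $t=0$, implies that $\varphi$ is at most polynomially growing, so Lemma~\ref{lem:gauss-moments} ensures the integral defining $\Phi$ is finite, and Lemma~\ref{lem:ex-heat1} gives $\Phi \in C^{1,2}((0,\infty)\times \R^d,\R)$ with $\partial_t \Phi = \Delta_x \Phi$ on $(0,\infty)\times \R^d$.

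Next I would verify that $\Phi$ has the regularity and growth required to be compared with $u$. Using the same polynomial bound $|\varphi(y)|\leq C(1+\|y\|^\gamma)$ and a translation in the integral, a direct estimate via Lemma~\ref{lem:gauss-moments} shows that $\Phi$ is at most polynomially growing on $(0,T]\times \R^d$. A standard approximate-identity argument (a change of variables $y=x+2\sqrt{t}\,z$, followed by dominated convergence using continuity of $\varphi$ and the polynomial growth bound to dominate $(1+\|x+2\sqrt{t}z\|^\gamma)e^{-\|z\|^2}$) yields $\lim_{(s,y)\to(0,x)}\Phi(s,y)=\varphi(x)$ for every $x\in\R^d$. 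Hence the extension $\tilde\Phi\colon [0,T]\times\R^d\to\R$ defined by $\tilde\Phi(0,x)=\varphi(x)$ and $\tilde\Phi(t,x)=\Phi(t,x)$ for $t>0$ is continuous, at most polynomially growing, and agrees with $u$ at $t=0$. Moreover $\tilde\Phi|_{(0,T)\times\R^d}$ is a classical solution and thus in particular a viscosity solution of $\partial_t w = \Delta_x w$ on $(0,T)\times\R^d$.

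Now the heart of the argument is uniqueness among at most polynomially growing viscosity solutions with prescribed continuous initial datum. The Feynman--Kac type formula for Kolmogorov equations invoked elsewhere in this paper (Grohs et al.~\cite[Corollary 2.23]{HornungJentzen2018}, based on Hairer et al.~\cite[Corollary 4.17]{hairer2015}) provides exactly this uniqueness: applied to the operator $\Delta_x = \tfrac{1}{2}\sum_{i,j} (2\delta_{ij})\partial_{x_i}\partial_{x_j}$, with drift $\mu\equiv 0$ and diffusion matrix $A=2I$, there is a unique at most polynomially growing $w\in C([0,T]\times\R^d,\R)$ with $w(0,\cdot)=\varphi$ whose restriction to $(0,T)\times\R^d$ is a viscosity solution of $\partial_t w=\Delta_x w$. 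Both $u$ and $\tilde\Phi$ satisfy these conditions, so $u=\tilde\Phi$ on $[0,T]\times\R^d$, and in particular $u(t,x)=\Phi(t,x)$ for all $(t,x)\in(0,T]\times\R^d$.

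Combining this with Lemma~\ref{lem:ex-heat1} immediately yields $u|_{(0,T]\times\R^d}\in C^{1,2}((0,T]\times\R^d,\R)$ together with the pointwise identity $(\partial_t u)(t,x)=(\Delta_x u)(t,x)$ for all $(t,x)\in (0,T]\times\R^d$, which is the desired conclusion. The main (and really only) obstacle is invoking the viscosity-uniqueness statement in the correct form; since the paper is already set up to quote the Feynman--Kac representation of Grohs et al.~\cite{HornungJentzen2018} in this exact setting, this step should go through verbatim.
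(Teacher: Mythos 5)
Your proof is correct and uses the same underlying machinery (the Feynman--Kac theory for Kolmogorov equations from Grohs et al.\ and Hairer et al.\ together with Lemma~\ref{lem:ex-heat1}), but it enters through a different door than the paper. The paper applies the \emph{representation} side of the Feynman--Kac theorem (Grohs et al.~\cite[Proposition~2.22(iii)]{HornungJentzen2018}): since $u$ is assumed to be an at most polynomially growing viscosity solution with initial datum $\varphi$, one directly gets $u(t,x)=\E[\varphi(x+\sqrt{2}W_t)]$ for all $t\in[0,T]$, and rewriting that expectation as the Gaussian integral immediately hands the conclusion to Lemma~\ref{lem:ex-heat1}. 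You instead apply the \emph{uniqueness} side (Grohs et al.~\cite[Corollary~2.23]{HornungJentzen2018}): build the heat-kernel convolution $\Phi$, check it extends continuously to $t=0$ with the right initial datum and is at most polynomially growing, observe it is a classical hence viscosity solution, and conclude $u=\tilde\Phi$ by uniqueness. Both routes are valid; yours costs two extra verifications (joint continuity of $\tilde\Phi$ at $t=0$ via the approximate-identity/dominated-convergence argument, and polynomial growth of $\Phi$ uniformly in $t\in(0,T]$) that the paper's route bypasses because the probabilistic representation already produces the identification on all of $[0,T]\times\R^d$. Your sketches of these verifications are adequate; in particular the dominating function for the limit $(s,y)\to(0,x)$ should be taken uniformly for $(s,y)$ in a compact neighbourhood of $(0,x)$ (e.g.\ bound $\|y+2\sqrt{s}z\|^{\gamma}$ by a constant times $1+\|z\|^{\gamma}$ for $s\in(0,1)$ and $y$ in a ball around $x$), which is straightforward and compatible with Lemma~\ref{lem:gauss-moments}.
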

\begin{proof}[Proof of Lemma~\ref{lem:ex-heat2}]
Throughout this proof  
let $\left \| \cdot \right \|_2 \colon \R^d \to [0,\infty)$ be the standard norm on $\R^d$,
let $(\Omega, \mathcal{F},\P)$ be a probability space with a normal filtration 
$(\mathbbm{F}_t)_{t \in [0,T]}$,
and
let $W\colon[0,T]\times \Omega \to \R^d$
be a standard $(\mathbbm{F}_t)_{t \in [0,T]}$-Brownian motion.
Observe that there exist 
$C \in \R $, 
$ c \in (0,\infty) $
such that for all 
$ x \in \R ^d$ we have that
\begin{equation}
\label{eq:ex-heat2-eqn}
c\|x\|_2 \leq \|x\| \leq C\|x\|_2.
\end{equation}
This and the fact that 
$
\inf_{ \gamma \in (0,\infty) } 
\sup_{ x \in \R^d }
\big(
\frac{|\varphi(x)|}{1+\| x \| ^{\gamma}}
\big)
< \infty
$
verify that  
\begin{equation}
\inf_{ \gamma \in (0,\infty) } 
\sup_{ x \in \R^d }
\Big(
\frac{|\varphi(x)|}{1+\| x \|_2 ^{\gamma}}
\Big)
< \infty
.
\end{equation}
Hence, we obtain that $ \varphi \colon \R^d  \to \R $ 
is an at most polynomially growing function.
The Feynman-Kac formula (cf., for example, 
Grohs et al.~\cite[Proposition 2.22(iii)]{HornungJentzen2018}
and 
Hairer et al.~\cite[Corollary~4.17]{hairer2015})
hence ensures 
that
for all 
$ t \in [0,T] $,
$ x \in \R^d $ 
we have that
\begin{equation}
\label{eq:ex-heat2-fk}
u(t,x) = \E[\varphi(x + \sqrt{2}W_t)]
.
\end{equation}
Next note that
the fact that for all $ t \in (0,T] $ 
we have that
$ W_t $ is a
$ \mathcal{N}_{0,tI_{\R^d}}$ distributed random variable
implies that
for all 
$ t \in (0,T] $,
$ x \in \R^d $ 
we have that 
$ x + \sqrt{2}W_t$ 
is a
$ \mathcal{N}_{x,2tI_{\R^d}} $ 
distributed random variable.
Combining this with 
\eqref{eq:ex-heat2-fk}
demonstrates 
that for all 
$ t \in (0,T] $,
$ x \in \R^d $ 
we have that
\begin{equation}
u(t,x)
=
\int_{\R^d}
\frac{1}{(4\pi t)^{\frac{d}{2}} }
e^{
-
\frac{(x_1-y_1)^2+\ldots+(x_d-y_d)^2}{4t}
}
\varphi(y)
\, 
dy.
\end{equation}
Lemma~\ref{lem:ex-heat1} hence proves that 
for all $ t \in (0,T]$, $ x \in \R^d $ 
we have that 
$ u|_{(0,T] \times \R^d} \in C^{1,2}((0,T]\times \R^d,\R) $
and
\begin{align}
(\tfrac{\partial }{\partial t}u)(t,x) 
&= 
(\Delta_x u)(t,x)
.
\end{align}
This completes the proof of Lemma~\ref{lem:ex-heat2}.
\end{proof}
\subsection{Qualitative error estimates for heat equations}
\label{ssec:ql-heat}
It is the subject of this subsection to state and prove Theorem~\ref{thm:heat-eq} below, which is the main result of this work.
Theorem~\ref{thm:heat-eq} establishes that 
ANNs do not suffer from the curse of dimensionality 
in the uniform numerical approximation of heat equations. 
Corollary~\ref{cor:heat-eq2} below specializes Theorem~\ref{thm:heat-eq} to the case in which the constants  $c \in \R$, $ p,q,v,\mathbf{v},w,\mathbf{w}, z,\mathbf{z}  \in [0,\infty)$, which are used to formulate the hypotheses in \eqref{eq:heat-eq-ass1}--\eqref{eq:heat-eq-ass2} below, all coincide.
\begin{theorem}
\label{thm:heat-eq}
Assume Setting~\ref{setting:ANNs}, let $c, a \in \R$, $b \in (a,\infty)$,   $r, T \in (0,\infty)$, 
$ p,q,v,\mathbf{v},w,\mathbf{w}, z,\mathbf{z}  \in [0,\infty)$, 
$
  \mathbf{p} 
= 
  2 +  
  \max\{  2z + 3\mathbf{z},
  2w + 3\mathbf{w} +1  \} 
$,
for every $d \in \N$ let $\left \| \cdot \right \|_{\R^d} \colon \R^d \to [0,\infty)$ be the standard norm on $\R^d$,
let $\varphi_d \in C(\R^d,\R)$, $d \in \N$, 
let $(\phi_{\varepsilon,d})_{(\varepsilon,d) \in (0,r]\times \N}  \subseteq \mathbf{N}$, 
and assume for all $\varepsilon \in (0,r]$, $d \in \N$, 
$ x \in \R^d$ that $(\mathcal R \phi_{\varepsilon,d}) \in C(\R^d,\mathbb R)$,
\begin{equation}
\label{eq:heat-eq-ass1}
\left| (\mathcal{R}\phi_{\varepsilon,d})(x)\right| \leq c  d^z(1+\|x\|_{\R^d}^{\mathbf{z}}), 
\qquad
\| (\nabla (\mathcal{R}\phi_{\varepsilon,d}))(x)\|_{\R^d} \leq  c  d^w(1+\|x\|_{\R^d}^{\mathbf{w}}),
\end{equation}
\begin{equation}
\label{eq:heat-eq-ass2}
\left| \varphi_d(x)-(\mathcal{R}\phi_{\varepsilon,d})(x)\right| \leq \varepsilon c  d^v(1+\|x\|_{\R^d}^{\mathbf{v}}),
\quad
\text{and}
\quad
\mathcal{P}(\phi_{\varepsilon,d}) \leq c  d^p\varepsilon^{-q}.
\end{equation}
Then
\begin{enumerate}[(i)]
\item \label{it:heat-eq-1}
there exist unique 
$u_d \in  C([0,T] \times \R^d, \R) $,
$d \in \N$, 
which satisfy for all
$d \in \N$, 
$ t \in (0,T] $,
$x \in \R^d$ 
that 
$ u_d|_{(0,T] \times \R^d } \in C^{1,2}((0,T] \times \R^d, \R ) $,
$u_d(0,x) = \varphi_d(x)$,
$
\inf_{\gamma \in (0,\infty)}  
\sup_{(s,y) \in [0,T] \times \R^d}\! 
\big( \frac{|u_d(s,y)|}{1 + \|y\|_{ \R^d } ^{\gamma}} \big)
< \infty
$,
and 
\begin{align}
\label{eq heat eq in t54}
(\tfrac{\partial }{\partial t}u_d)(t,x) 
&= 
(\Delta_x u_d)(t,x)
\end{align}
and
\item \label{it:heat-eq-2}
there exist $(\psi_{\varepsilon,d})_{(\varepsilon,d) \in (0,r]\times \N} \subseteq \mathbf{N}$, 
$C \in \R$ such that 
for all
$\varepsilon \in (0,r]$, 
$d \in \N$
we have that
$
(\mathcal{R}\psi_{\varepsilon,d}) \in C(\R^d,\R)
$,
$
\mathcal{N}(\psi_{\varepsilon,d}) \leq C d^{p + \mathbf{p} + q(v+\frac12 \mathbf v)}\varepsilon^{-(q+2)}
$,
$
\mathcal{P}( \psi_{ \varepsilon,d  } )
\leq C d^{p+ 2 \mathbf{p}+ q(v+\frac12 \mathbf v)} \varepsilon^{-(q+4)}
$,
$
\mathfrak{P}(\psi_{\varepsilon,d}) 
\leq C d^{p+ \mathbf{p}+ q(v+\frac12 \mathbf v)} \varepsilon^{-(q+2)}
$,
and
\begin{equation}
\sup_{x \in [a,b]^d} \left| u_d(T,x) -  ( \mathcal{R} \psi_{ \varepsilon, d  } )( x ) \right| \leq \varepsilon.
\end{equation}
\end{enumerate}
\end{theorem}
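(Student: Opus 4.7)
The plan is to recognize Theorem~\ref{thm:heat-eq} as a specialization of Corollary~\ref{cor:nnet-ap2} to the heat equation, combined with the viscosity-to-classical upgrade in Lemma~\ref{lem:ex-heat2}. Concretely, the heat equation $\partial_t u = \Delta_x u$ is precisely the Kolmogorov PDE appearing in Corollary~\ref{cor:nnet-ap2} with drift vector $\mu_d = 0 \in \R^d$ and diffusion matrix $A_d = I_d \in \R^{d\times d}$ (the identity), so that $\smallsum_{i,j=1}^d A_d^{(i,j)} \partial_{x_i}\partial_{x_j} u = \Delta_x u$ and $\smallsum_{i=1}^d \mu_{d,i} \partial_{x_i} u = 0$.

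First I would set $\alpha = 0$ (so that $\|\mu_d\|_{\R^d} = 0 \leq c\, d^{0}$) and $\beta = \nicefrac{1}{2}$ (so that $\sqrt{\operatorname{Trace}(A_d)} = \sqrt{d} \leq c\,d^{\nicefrac12}$, after enlarging $c$ to be at least $1$ if necessary). With these choices I would invoke item~\eqref{it:nnet-ap2-1} of Corollary~\ref{cor:nnet-ap2} to obtain unique at most polynomially growing viscosity solutions $u_d \in C([0,T]\times\R^d,\R)$, $d \in \N$, of the Kolmogorov PDE with $u_d(0,\cdot) = \varphi_d$. Then Lemma~\ref{lem:ex-heat2} applies directly and upgrades each such $u_d$ to a classical solution on $(0,T]\times\R^d$ with $u_d|_{(0,T]\times\R^d} \in C^{1,2}((0,T]\times\R^d,\R)$ satisfying~\eqref{eq heat eq in t54}. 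This establishes item~\eqref{it:heat-eq-1}.

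For item~\eqref{it:heat-eq-2} I would again apply Corollary~\ref{cor:nnet-ap2}, this time item~\eqref{it:nnet-ap2-2}, with the same choices $\alpha = 0$, $\beta = \nicefrac{1}{2}$. The only remaining book-keeping is to check that the exponents produced by Corollary~\ref{cor:nnet-ap2} match those asserted in Theorem~\ref{thm:heat-eq}: with $\alpha = 0$ and $\beta = \nicefrac{1}{2}$ we have $\max\{\alpha, \beta+1\} = \nicefrac{3}{2}$ and $\max\{\alpha,\beta,\nicefrac{1}{2}\} = \nicefrac{1}{2}$, so the quantity $\mathbf{p}$ from Corollary~\ref{cor:nnet-ap2} becomes
\begin{equation*}
2 + \max\{2z + 2\mathbf{z} \cdot \tfrac{3}{2},\; 2w + 1 + 2\mathbf{w} \cdot \tfrac{3}{2}\} = 2 + \max\{2z+3\mathbf{z},\, 2w+3\mathbf{w}+1\},
\end{equation*}
which coincides with the $\mathbf{p}$ defined in Theorem~\ref{thm:heat-eq}, and the quantity $\mathfrak{p}$ in Corollary~\ref{cor:nnet-ap2} becomes $v + \tfrac{1}{2}\mathbf{v}$, matching the exponent $q(v+\tfrac{1}{2}\mathbf{v})$ appearing in the bounds on $\mathcal{N}$, $\mathcal{P}$, $\mathfrak{P}$.

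There is essentially no genuine obstacle here; the work has all been carried out in the preceding sections, and what remains is to verify the parameter identification and to invoke the right results. The only mild subtlety is the need to harmlessly enlarge $c$ so that $c \geq 1$ (required for $\sqrt{\operatorname{Trace}(A_d)} = \sqrt{d} \leq c d^{\nicefrac12}$), which does not affect the polynomial form of the bounds~\eqref{eq:heat-eq-ass1}--\eqref{eq:heat-eq-ass2} or the final exponents, but merely absorbs a dimension-independent constant into $C$.
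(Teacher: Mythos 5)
Your proposal matches the paper's proof essentially step for step: specialize Corollary~\ref{cor:nnet-ap2} with $\alpha = 0$, $\beta = \nicefrac{1}{2}$, $\mu_d = 0$, $A_d = I_{\R^d}$, replace $c$ by $\max\{1,c\}$, read off that $\max\{\alpha,\beta+1\} = \nicefrac{3}{2}$ and $\max\{\alpha,\beta,\nicefrac{1}{2}\} = \nicefrac{1}{2}$ give exactly the exponents $\mathbf{p}$ and $v + \tfrac{1}{2}\mathbf{v}$ appearing in the statement, and then upgrade the viscosity solution to a classical one via Lemma~\ref{lem:ex-heat2}. The one small step you do not spell out is how uniqueness in the class of \emph{classical} solutions on $(0,T] \times \R^d$ follows: Corollary~\ref{cor:nnet-ap2} only gives uniqueness among polynomially growing \emph{viscosity} solutions, and the paper closes the loop by citing Hairer et al.\ \cite[Remark 4.1]{hairer2015} to the effect that any such classical solution is automatically a viscosity solution, so uniqueness transfers. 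That is a bookkeeping remark rather than a missing idea, and the rest of your argument is the paper's argument.
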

\begin{proof}[Proof of Theorem~\ref{thm:heat-eq}]
First, observe that for all $ d \in \N $ we have that
\begin{equation}
 \sqrt{\operatorname{Trace}(I_{\R^d})} 
 =
 \left[\sum_{i=1}^d 1 \right]^{\nicefrac{1}{2} }
 =
 d^{\nicefrac{1}{2}}
 \leq
 \max\{1,c\}d^{\nicefrac{1}{2}}
 .
\end{equation}
Corollary~\ref{cor:nnet-ap2} 
(applied with $ \alpha \leftarrow 0$,
$ \beta \leftarrow \frac{1}{2} $,
$ c \leftarrow \max\{1,c\} $,
$ \mu_d \leftarrow 0 $,
$ A_d \leftarrow I_{\R^d} $
for $ d \in \N $ in the notation of Corollary~\ref{cor:nnet-ap2})
hence implies that 
there exist unique $u_d \in  C([0,T] \times \R^d, \R)$, $d \in \N$,  which satisfy for all $d \in \N$, $x \in \R^d$ that $u_d(0,x) = \varphi_d(x)$,
which satisfy for all $d \in \N$ that 
$
\inf_{\gamma \in (0,\infty)}  
\sup_{(t,x) \in [0,T] \times \R^d}\! 
\big( \frac{|u_d(t,x)|}{1 + \|x\|_{ \R^d } ^{\gamma}} \big)
< \infty
$,
and which satisfy that for all $d \in \N$ we have that $u_d|_{(0,T)\times \mathbb R^d}$ is a viscosity solution of
\begin{align}
\label{eq equat heat eq visc}
(\tfrac{\partial }{\partial t}u_d)(t,x) 
&= 
(\Delta_x u_d)(t,x)
\end{align}
for $(t,x) \in (0,T) \times \R^d$
and
there exist $(\psi_{\varepsilon,d})_{(\varepsilon,d) \in (0,r]\times \N} \subseteq \mathbf{N}$, 
$C \in \R$ such that 
for all $\varepsilon \in (0,r]$, $d \in \N$ we have that
$
\mathcal{N}(\psi_{\varepsilon,d}) \leq C d^{p + \mathbf{p}+ q(v+\frac12 \mathbf v)}\varepsilon^{-(q+2)}
$,
$
\mathcal{P}( \psi_{ \varepsilon,d  } )
\leq C d^{p+ 2 \mathbf{p}+ q(v+\frac12 \mathbf v)} \varepsilon^{-(q+4)}
$,
$
\mathfrak{P}(\psi_{\varepsilon,d}) 
\leq C d^{p+ \mathbf{p}+ q(v+\frac12 \mathbf v)} \varepsilon^{-(q+2)}
$,
$
(\mathcal{R}\psi_{\varepsilon,d}) \in C(\R^d,\R)
$,
and
\begin{equation}
\sup_{x \in [a,b]^d} \left| u_d(T,x) -  ( \mathcal{R} \psi_{ \varepsilon, d  } )( x ) \right| \leq \varepsilon.
\end{equation}
This proves item (ii).
Next note that \eqref{eq:heat-eq-ass1}
and
\eqref{eq:heat-eq-ass2}
ensure that 
for all $ d \in \N $
we have that 
$ \varphi_d \colon \R^d \to \R $ 
is an at most polynomially growing function.
This reveals that for all
$ d \in \N $
we have that
\begin{equation}
\inf_{ \gamma \in (0,\infty) } 
\sup_{ x \in \R^d }
\Big(
\frac{|\varphi_d(x)|}{1+\| x \|_{\R^d} ^{\gamma}}
\Big)
< \infty
.
\end{equation}
Lemma~\ref{lem:ex-heat2} 
(applied with $ T \leftarrow T$, $ \varphi \leftarrow \varphi_d$, $ u \leftarrow u_d $ for $ d \in \N $ in the notation of Lemma~\ref{lem:ex-heat2})
hence 
shows 
that 
for all 
$ d \in \N $,
$ t \in (0,T]$,
$ x \in  \R^d$
we have that 
$ u_d|_{(0,T] \times \R^d} \in C^{1,2}((0,T] \times \R^d,\R) $ and
\begin{align}
(\tfrac{\partial }{\partial t}u_d)(t,x) 
&= 
(\Delta_x u_d)(t,x)
.
\end{align}
This, \eqref{eq equat heat eq visc}, and Hairer et al.~\cite[Remark 4.1]{hairer2015}) prove item (i).
This completes the proof of Theorem~\ref{thm:heat-eq}.
\end{proof}
\begin{cor}
\label{cor:heat-eq2}
Assume Setting~\ref{setting:ANNs}, let $a \in \R$, $b \in (a,\infty)$,   
$c,T \in (0,\infty)$, 
for every $d \in \N$ let $\left \| \cdot \right \|_{\R^d} \colon \R^d \to [0,\infty)$ be the standard norm on $\R^d$,
let $\varphi_d \in C(\R^d,\R)$, $d \in \N$, 
let $(\phi_{\varepsilon,d})_{(\varepsilon,d) \in (0,1]\times \N}  \subseteq \mathbf{N}$, 
and 
assume for all 
$\varepsilon \in (0,1]$, 
$d \in \N$, 
$ x \in \R^d$
that $(\mathcal R \phi_{\varepsilon,d}) \in C(\mathbb R^d,\mathbb R)$,
\begin{equation}
\label{eq:heat-eq2-ass0}
\mathcal{P}(\phi_{\varepsilon,d})  \leq c  d^c\varepsilon^{-c}\,,
\quad
| \varphi_d(x) - (\mathcal{R}\phi_{\varepsilon,d})(x)| 
\leq 
\varepsilon c  d^c(1+\|x\|_{\R^d}^{c})\,,
\end{equation}
and
\begin{equation}
\label{eq:heat-eq2-ass1}
\left| (\mathcal{R}\phi_{\varepsilon,d})(x)\right| 
+
\| (\nabla (\mathcal{R}\phi_{\varepsilon,d}))(x)\|_{\R^d}
\leq
c  d^c(1+\|x\|_{\R^d}^{c}).
\end{equation}
Then
\begin{enumerate}[(i)]
\item \label{it:heat-eq2-1}
there exist unique 
$u_d \in  C([0,T] \times \R^d, \R)$,
$d \in \N$, 
which satisfy for all
$d \in \N$, 
$ t \in (0,T] $,
$x \in \R^d$ 
that 
$ u_d|_{(0,T] \times \R^d } \in C^{1,2}((0,T] \times \R^d, \R ) $,
$u_d(0,x) = \varphi_d(x)$,
$
\inf_{\gamma \in (0,\infty)}  
\sup_{(s,y) \in [0,T] \times \R^d}\! 
\big( \frac{|u_d(s,y)|}{1 + \|y\|_{ \R^d } ^{\gamma}} \big)
< \infty
$,
and 
\begin{align}
(\tfrac{\partial }{\partial t}u_d)(t,x) 
&= 
(\Delta_x u_d)(t,x)
\end{align}
and
\item \label{it:heat-eq2-2}
there exist $(\psi_{\varepsilon,d})_{(\varepsilon,d) \in (0,1]\times \N} \subseteq \mathbf{N}$, 
$\kappa \in \R$ such that 
for all $\varepsilon \in (0,1]$, $d \in \N$ we have that
$
\mathcal{P}( \psi_{ \varepsilon,d  } )
\leq \kappa d^{\kappa} \varepsilon^{-\kappa}
$,
$
(\mathcal{R}\psi_{\varepsilon,d}) \in C(\R^d,\R)
$,
and
\begin{equation}
\sup_{x \in [a,b]^d} \left| u_d(T,x) -  ( \mathcal{R} \psi_{ \varepsilon, d  } )( x ) \right| \leq \varepsilon.
\end{equation}
\end{enumerate}
\end{cor}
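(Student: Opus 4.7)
The plan is to derive Corollary~\ref{cor:heat-eq2} as a direct specialization of Theorem~\ref{thm:heat-eq}. First I would observe that the combined bound in~\eqref{eq:heat-eq2-ass1} immediately yields the two individual estimates
\begin{equation}
| (\mathcal{R}\phi_{\varepsilon,d})(x)| \leq c d^c (1 + \|x\|_{\R^d}^c), \qquad \|(\nabla(\mathcal{R}\phi_{\varepsilon,d}))(x)\|_{\R^d} \leq c d^c (1 + \|x\|_{\R^d}^c),
\end{equation}
which match the pair of inequalities in~\eqref{eq:heat-eq-ass1} of Theorem~\ref{thm:heat-eq} with the exponents $z = \mathbf{z} = w = \mathbf{w} = c$. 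Moreover, the two estimates in~\eqref{eq:heat-eq2-ass0} match the hypotheses in~\eqref{eq:heat-eq-ass2} of Theorem~\ref{thm:heat-eq} with $v = \mathbf{v} = c$ and $p = q = c$, and the auxiliary parameter $r$ from Theorem~\ref{thm:heat-eq} can be taken to equal~$1$.

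Next I would invoke Theorem~\ref{thm:heat-eq} with these parameter choices. Item~\eqref{it:heat-eq-1} of Theorem~\ref{thm:heat-eq} directly supplies the unique at most polynomially growing $u_d \in C([0,T]\times \R^d, \R)$ with $u_d(0,\cdot) = \varphi_d$ satisfying $u_d|_{(0,T]\times\R^d} \in C^{1,2}((0,T]\times \R^d, \R)$ and the heat equation, which is exactly the statement of item~\eqref{it:heat-eq2-1}. Item~\eqref{it:heat-eq-2} of Theorem~\ref{thm:heat-eq} furnishes a family $(\psi_{\varepsilon,d})_{(\varepsilon,d)\in (0,1]\times \N} \subseteq \mathbf{N}$ and a constant $C \in \R$ such that $(\mathcal{R}\psi_{\varepsilon,d}) \in C(\R^d, \R)$, such that $\sup_{x \in [a,b]^d} |u_d(T,x) - (\mathcal{R}\psi_{\varepsilon,d})(x)| \leq \varepsilon$, and such that
\begin{equation}
\mathcal{P}(\psi_{\varepsilon,d}) \leq C\, d^{\, p + 2\mathbf{p} + q(v + \tfrac{1}{2}\mathbf{v})}\, \varepsilon^{-(q+4)},
\end{equation}
where $\mathbf{p} = 2 + \max\{2z + 3\mathbf{z},\, 2w + 3\mathbf{w} + 1\}$.

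Finally I would simplify the resulting exponents. Substituting the chosen parameters gives $\mathbf{p} = 2 + \max\{5c, 5c + 1\} = 3 + 5c$, so that $p + 2\mathbf{p} + q(v + \tfrac{1}{2}\mathbf{v}) = c + 2(3 + 5c) + \tfrac{3}{2}c^2 = 6 + 11c + \tfrac{3}{2}c^2$ and $q + 4 = c + 4$, whence $\mathcal{P}(\psi_{\varepsilon,d}) \leq C d^{6 + 11c + \frac{3}{2}c^2}\varepsilon^{-(c+4)}$. Setting $\kappa = \max\{C,\, 6 + 11c + \tfrac{3}{2}c^2,\, c + 4\}$ and using $d \geq 1$ together with $\varepsilon \in (0,1]$ yields the desired uniform bound $\mathcal{P}(\psi_{\varepsilon,d}) \leq \kappa\, d^{\kappa}\, \varepsilon^{-\kappa}$, completing item~\eqref{it:heat-eq2-2}. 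There is no substantive analytic obstacle in this argument; it is a routine parameter identification, the only care required being to verify that the combined gradient-and-value bound in~\eqref{eq:heat-eq2-ass1} splits cleanly to meet the two separate hypotheses of Theorem~\ref{thm:heat-eq}.
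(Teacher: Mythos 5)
Your proof is correct and follows exactly the paper's own argument: both invoke Theorem~\ref{thm:heat-eq} with $r=1$ and $p=q=v=\mathbf{v}=w=\mathbf{w}=z=\mathbf{z}=c$, observe that the combined bound in \eqref{eq:heat-eq2-ass1} splits into the two separate hypotheses of \eqref{eq:heat-eq-ass1}, and then consolidate the resulting exponents $\frac32 c^2 + 11c + 6$ and $c+4$ into a single $\kappa$ via $d\geq 1$, $\varepsilon\leq 1$. The only cosmetic difference is that the paper uses $\kappa=\max\{C,\frac32 c^2+11c+6\}$ and relies on the (easily checked) inequality $c+4\leq\frac32c^2+11c+6$, whereas you include $c+4$ in the maximum explicitly; both are fine.
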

\begin{proof}[Proof of Corollary~\ref{cor:heat-eq2}]
First, observe that~\eqref{eq:heat-eq2-ass0}, \eqref{eq:heat-eq2-ass1},
and item \eqref{it:heat-eq-1} in Theorem~\ref{thm:heat-eq} 
(applied with
$ r \leftarrow 1$,
$c \leftarrow c$, $p \leftarrow c$, $q \leftarrow c$, $v \leftarrow c$, $\mathbf{v} \leftarrow c $, $w \leftarrow c$, $\mathbf{w} \leftarrow c$, $z \leftarrow c$, $\mathbf{z} \leftarrow c$
in the notation of Theorem~\ref{thm:heat-eq})
establish item \eqref{it:heat-eq2-1}.
Next note that~\eqref{eq:heat-eq2-ass0}, \eqref{eq:heat-eq2-ass1},
and
item \eqref{it:heat-eq-2} in Theorem~\ref{thm:heat-eq}
(applied with
$ r \leftarrow 1$,
$c \leftarrow c$, $p \leftarrow c$, $q \leftarrow c$, $v \leftarrow c$, $\mathbf{v} \leftarrow c $, $w \leftarrow c$, $\mathbf{w} \leftarrow c$, $z \leftarrow c$, $\mathbf{z} \leftarrow c$
in the notation of Theorem~\ref{thm:heat-eq})
ensure that 
there exist $(\psi_{\varepsilon,d})_{(\varepsilon,d) \in (0,1]\times \N} \subseteq \mathbf{N}$, 
$C \in \R$ which satisfy that 
for all $\varepsilon \in (0,1]$, $d \in \N$ we have that
$
\mathcal{P}( \psi_{ \varepsilon,d  } )
\leq C  d^{\frac32 c^2 + 11c+6} \varepsilon^{-(c+4)}
$,
$
(\mathcal{R}\psi_{\varepsilon,d}) \in C(\R^d,\R)
$,
and
\begin{equation}
\label{eq:heat-eq2-sup}
\sup_{x \in [a,b]^d} \left| u_d(T,x) -  ( \mathcal{R} \psi_{ \varepsilon, d  } )( x ) \right| \leq \varepsilon.
\end{equation}
This reveals that for all $\varepsilon \in (0,1]$, $d \in \N$ we have that
\begin{equation}
\mathcal{P}( \psi_{ \varepsilon,d  } )
\leq \max\{C,\tfrac32 c^2 + 11c + 6\}  d^{\max\{C,\frac32 c^2 + 11c + 6\}} \varepsilon^{-\max\{C,\frac32 c^2 + 11c + 6\}}.
\end{equation}
Combining this with \eqref{eq:heat-eq2-sup} 
establishes item \eqref{it:heat-eq2-2}.
This completes the proof of Corollary~\ref{cor:heat-eq2}.
\end{proof}

\subsection{ANN approximations for geometric Brownian motions}
\label{ssec:heat-ex}
In this subsection we specialize Theorem~\ref{thm:heat-eq} above
in Corollary~\ref{cor:heat-eq5} below to an example in which the activation function is the softplus function ($\mathbb{R} \ni x \mapsto \ln(1+e^{x}) \in (0,\infty)$).

\begin{cor}
\label{cor:heat-eq5}
Let $c,a \in \R$, $b \in (a,\infty)$,   
$p\in [0,\infty)$,
$T \in (0,\infty)$, 
for every $d \in \N$ let $\left \| \cdot \right \|_{\R^d} \colon \R^d \to [0,\infty)$ be the standard norm on $\R^d$,
let $\mathbf{N}$ be the set given by 
\begin{equation}
\mathbf{N} = \cup_{L \in \N \cap [2,\infty)}\cup_{\substack{l_0,\ldots,l_L \in  \N 
}} \big( 
\times_{k=1}^L (\R^{l_k\times l_{k-1}}\times\R^{l_k})
\big)
,
\end{equation}
let
$\Af_d \colon \R^d \to \R^d$, $ d \in \N$, satisfy for all  $d \in \N$, $x=(x_1,\ldots,x_d) \in \R^d$ that 
$ \Af_d(x) = (\ln(1 + e^{x_1}),\ldots,\ln(1+e^{x_d})) $, 
let $\mathcal{P}\colon \mathbf{N} \to \N$ 
and
$\mathcal{R} \colon \mathbf{N} \to \cup_{m,n \in \N} C(\R^m,\R^n)$ satisfy for all
$L \in \N \cap [2,\infty)$, 
$l_0,\ldots,l_L \in \N $,  
$\Phi = 
(
(\WE_1,\BI_1),
\ldots,
(\WE_L,  \BI_L)
)
\in (\times_{k=1}^{L} (\R^{l_k \times l_{k-1}} \times \R^{l_k}))$, 
$x_0 \in \R^{l_0}$, $\ldots, x_{L-1}\in \R^{l_{L-1}}$ with 
$\forall \, k \in \N \cap (0,L) \colon x_k = \Af_{l_k}(\WE_k x_{k-1} + \BI_k)$ that
$\mathcal{P}(\Phi) =  \smallsum_{k=1}^{L} l_k(l_{k-1} + 1)$,
$(\mathcal{R} \Phi) \in C(\R^{l_0},\R^{l_L})$, 
and
\begin{equation}
\label{eq:heat-eq5-R}
(\mathcal{R}\Phi)(x_0) =  \WE_L x_{L-1} + \BI_L,
\end{equation}
and
let $ (K_d)_{ d \in \N } \subseteq \R $
satisfy
for all $ d \in \N $ 
that
$
|K_d|
\leq
cd^p
$.
Then
\begin{enumerate}[(i)]
\item \label{it:heat-eq5-1} 
there exist unique
$u_d \in  C([0,T] \times \R^d, \R)$, 
$d \in \N$, 
which satisfy for all 
$d \in \N$, 
$ t \in (0,T] $,
$x = (x_1,\ldots,x_d) \in \R^d$ 
that 
$ u_d|_{(0,T] \times \R^d } \in C^{1,2}((0,T] \times \R^d, \R ) $, 
$u_d(0,x) = \ln(1+e^{x_1+\ldots+ x_d - K_d})+K_d$,
$
\inf_{\gamma \in (0,\infty)}  
\sup_{(s,y) \in [0,T] \times \R^d}\! 
\big( \frac{|u_d(s,y)|}{1 + \|y\|_{ \R^d } ^{\gamma}} \big)
< \infty
$,
and
\begin{align}
(\tfrac{\partial }{\partial t}u_d)(t,x) 
&= 
(\Delta_x u_d)(t,x)
\end{align}
and
\item \label{it:heat-eq5-2}
there exist $(\psi_{\varepsilon,d})_{(\varepsilon,d) \in (0,1]\times \N} \subseteq \mathbf{N}$, 
$\kappa \in \R$ such that 
for all $\varepsilon \in (0,1]$, $d \in \N$ we have that
$
\mathcal{P}( \psi_{ \varepsilon,d  } )
\leq \kappa d^{11 + 4\max\{p,\nicefrac{1}{2}\}} \varepsilon^{-4}
$,
$
(\mathcal{R}\psi_{\varepsilon,d}) \in C(\R^d,\R)
$,
and
\begin{equation}
\sup_{x \in [a,b]^d} \left| u_d(T,x) -  ( \mathcal{R} \psi_{ \varepsilon, d  } )( x ) \right| \leq \varepsilon.
\end{equation}
\end{enumerate}
\end{cor}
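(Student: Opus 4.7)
\textbf{Proof plan for Corollary~\ref{cor:heat-eq5}.} The plan is to realize the specific initial conditions $\varphi_d(x) = \ln(1+e^{x_1+\ldots+x_d-K_d}) + K_d$ \emph{exactly} as softplus ANNs and then invoke Theorem~\ref{thm:heat-eq} with carefully tracked exponents. First, I would introduce, for each $d \in \N$, a fixed ANN
\begin{equation*}
\phi_{d} = \bigl( (W_1, B_1), (W_2, B_2) \bigr) \in (\R^{1 \times d} \times \R^1) \times (\R^{1 \times 1} \times \R^1),
\end{equation*}
with $W_1 = (1,1,\ldots,1)$, $B_1 = -K_d$, $W_2 = 1$, $B_2 = K_d$, and set $\phi_{\varepsilon,d} := \phi_d$ for every $\varepsilon \in (0,1]$. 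Because the activation in the single hidden layer is exactly the softplus function, \eqref{eq:heat-eq5-R} yields $(\mathcal{R}\phi_{\varepsilon,d})(x) = \ln(1+e^{x_1+\ldots+x_d-K_d}) + K_d = \varphi_d(x)$ for all $x \in \R^d$, so the approximation error is zero. Moreover $\mathcal{P}(\phi_{\varepsilon,d}) = (d+1) + 2 = d+3$.

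Next I would verify the hypotheses~\eqref{eq:heat-eq-ass1}--\eqref{eq:heat-eq-ass2} of Theorem~\ref{thm:heat-eq}. Using the elementary estimate $\ln(1+e^y) \leq |y| + \ln 2$, the bound $|K_d| \leq c d^p$, and the Cauchy--Schwarz inequality $|x_1 + \ldots + x_d| \leq \sqrt{d}\,\|x\|_{\R^d}$, I would obtain
\begin{equation*}
|(\mathcal{R}\phi_{\varepsilon,d})(x)| \leq \sqrt{d}\,\|x\|_{\R^d} + 2cd^p + \ln 2 \leq \tilde c \, d^{\max\{p,\,\nicefrac12\}}\bigl(1 + \|x\|_{\R^d}\bigr)
\end{equation*}
for a suitable constant $\tilde c \in \R$, giving exponents $z = \max\{p,\nicefrac12\}$, $\mathbf z = 1$. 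Computing the gradient explicitly, $\nabla (\mathcal{R}\phi_{\varepsilon,d})(x) = \frac{e^{x_1+\ldots+x_d-K_d}}{1+e^{x_1+\ldots+x_d-K_d}}(1,\ldots,1)^\top$, so $\|\nabla (\mathcal{R}\phi_{\varepsilon,d})(x)\|_{\R^d} \leq \sqrt{d}$, giving $w = \nicefrac12$, $\mathbf w = 0$. The vanishing approximation error allows $v = \mathbf v = 0$, and the parameter bound $\mathcal{P}(\phi_{\varepsilon,d}) = d+3 \leq 4d$ gives parameter exponents $p_{\mathrm{thm}} = 1$, $q = 0$.

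Item~\eqref{it:heat-eq5-1} then follows immediately from item~\eqref{it:heat-eq-1} of Theorem~\ref{thm:heat-eq} (continuity and at most polynomial growth of $\varphi_d$ are clear from the explicit formula). For item~\eqref{it:heat-eq5-2}, I would apply item~\eqref{it:heat-eq-2} of Theorem~\ref{thm:heat-eq} with the exponents identified above. Substituting $z = \max\{p,\nicefrac12\}$, $\mathbf z = 1$, $w = \nicefrac12$, $\mathbf w = 0$ into
\begin{equation*}
\mathbf{p} = 2 + \max\{2z + 3\mathbf z,\; 2w + 3\mathbf w + 1\} = 2 + \max\{2\max\{p,\nicefrac12\} + 3,\; 2\} = 5 + 2\max\{p,\nicefrac12\},
\end{equation*}
and then into the parameter bound $\mathcal{P}(\psi_{\varepsilon,d}) \leq C d^{p_{\mathrm{thm}} + 2\mathbf{p} + q(v + \tfrac12\mathbf v)}\varepsilon^{-(q+4)}$, would give
\begin{equation*}
\mathcal{P}(\psi_{\varepsilon,d}) \leq C d^{1 + 2(5 + 2\max\{p,\nicefrac12\})}\varepsilon^{-4} = C d^{11 + 4\max\{p,\nicefrac12\}}\varepsilon^{-4},
\end{equation*}
which is the desired estimate. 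The $L^\infty$-error bound $\sup_{x \in [a,b]^d}|u_d(T,x) - (\mathcal{R}\psi_{\varepsilon,d})(x)| \leq \varepsilon$ is transported directly from Theorem~\ref{thm:heat-eq}.

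The only delicate point — more a bookkeeping matter than a genuine obstacle — is making sure the growth exponents $z, \mathbf z, w, \mathbf w$ extracted from the softplus representation are sharp enough for the final polynomial exponent $11 + 4\max\{p,\nicefrac12\}$ to come out exactly as stated; in particular, one must use $\mathbf z = 1$ (linear growth of softplus) rather than any larger exponent, and must use the gradient bound $\sqrt{d}$ (giving $\mathbf w = 0$) rather than a polynomial-in-$\|x\|$ bound.
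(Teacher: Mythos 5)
Your proposal is correct and follows essentially the same route as the paper: both construct the two‑layer softplus network realizing $\varphi_d$ exactly (so $v=\mathbf v=0$), derive the same growth exponents $z=\max\{p,\nicefrac12\}$, $\mathbf z=1$, $w=\nicefrac12$, $\mathbf w=0$ and the parameter bound $\mathcal P(\phi_d)=d+3$, and then invoke Theorem~\ref{thm:heat-eq} with $p_{\mathrm{thm}}=1$, $q=0$ to obtain the exponent $11+4\max\{p,\nicefrac12\}$. Your exponent arithmetic for $\mathbf p=5+2\max\{p,\nicefrac12\}$ and the final bound matches the paper's computation exactly.
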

\begin{proof}[Proof of Corollary~\ref{cor:heat-eq5}]
Throughout this proof 
let 
$ \varphi_d \colon \R^d \to \R $,
$ d \in \N $,
satisfy for all
$ d \in \N $,
$ x=(x_1,\ldots,x_d) \in \R^d $
that
\begin{equation}
\varphi_d(x)
=
\ln(1 + e^{x_1+\ldots+ x_d - K_d}) + K_d
\end{equation}
and
let 
$ (\phi_d)_{d \in  \N} \subseteq \mathbf{N} $
satisfy 
for all 
$ d \in \N $
that
\begin{equation}
\label{eq:heat-eq5-phi}
\phi_d =
( ((1,\ldots, 1), -K_d), (1,K_d)) 
\in
(\R^{1 \times d } \times \R)
\times
(\R \times \R)
.
\end{equation}
Observe that 
\eqref{eq:heat-eq5-phi}
assures that
for all 
$ d \in \N $
we have that 
\begin{equation}
\label{eq:heat-eq5-par}
\mathcal{P}(\phi_d)
=
1(d+1)
+
1(1+1)
=
d+3
\leq
4d
\leq
\max\{4,c\}d
.
\end{equation}
Next note that the fact that
$
(\R \ni x \mapsto \ln(1+e^x) \in \R)
\in
C^1(\R,\R)
$,
\eqref{eq:heat-eq5-R},
and
\eqref{eq:heat-eq5-phi}
imply that
for all 
$ d \in \N $, $x=(x_1,\ldots,x_d) \in \mathbb R^d$
we have that
$ (\mathcal{R} \phi_d) \in C(\R^d,\R) $
and
\begin{equation}
\label{eq:heat-eq5-rphi1}
(\mathcal{R} \phi_d)(x)
=
\ln(1+e^{x_1+\ldots+x_d - K_d})
+
K_d = \varphi_d(x)
.
\end{equation}
Next note that for all $d\in \mathbb N$ we have that $\ln(1+e^{-K_d}) \leq \ln 2 + |K_d|$ 
and for any $d\in \mathbb N$, $x\in \mathbb R^d$ we have that $\|(\nabla \varphi_d)(x)\|_{\R^d} \leq d^{1/2}$.
This and the hypothesis that for all 
$ d \in \N $ 
we have that
$
|K_d|
\leq
cd^p
$
hence yield that 
for all
$ d \in \N $,
$ x=(x_1,\ldots,x_d) \in \R^d $
we have that
\begin{equation}
\label{eq:heat-eq5-rphi}
\begin{split}
|(\mathcal{R} \phi_d)(x)| & = |\varphi_d(x)|
\leq \sup_{y\in \mathbb R^d}\left[\|(\nabla \varphi_d)(y)\|_{\R^d} \|x\|_{\R^d} + |\varphi_d(0)|\right]
\\ & \leq
d^{1/2} \|x\|_{\R^d} + \ln 2 + 2|K_d|
\leq
d^{\nicefrac{1}{2}} \|x\|_{\R^d}
+ 1 + 2cd^p\\
& \leq \max\{1,2c\} (1+d^p + d^{1/2}\|x\|_{\R^d})\\
& \leq 2\max\{1,2c\}d^{\max\{p,\nicefrac{1}{2}\}}
(1 + \|x\|_{\R^d})\,.
\end{split}
\end{equation}
Next note that
for all
$ d \in \N $,
$ x \in \mathbb R^d $ 
we have that 
\begin{equation}
\label{eq:heat-eq5-drphi}
\left\|\big(\nabla (\mathcal{R} \phi_d)\big)(x) \right\|_{\R^d}
= \|(\nabla \varphi_d)(x) \|_{\R^d}
\leq
d^{\nicefrac{1}{2}}
\leq
2\max\{1,2c\}
d^{\nicefrac{1}{2}}
(1+\|x\|_{\R^d}^0)
.
\end{equation}

Combining this,
\eqref{eq:heat-eq5-par},
\eqref{eq:heat-eq5-rphi1},
\eqref{eq:heat-eq5-rphi},
and
the fact that 
$ 
(\R \ni x \mapsto \ln(1 + e^{x}) \in \R) \in C^1(\mathbb R, \mathbb R)
$
with
Theorem~\ref{thm:heat-eq}
(applied with
$ c \leftarrow \max\{4,4c\}$,
$ r \leftarrow 1$,
$p \leftarrow 1$,
$q \leftarrow 0$,
$v \leftarrow 0$,
$\mathbf{v} \leftarrow 0$,
$w \leftarrow \nicefrac{1}{2}$,
$\mathbf{w} \leftarrow 0$,
$z \leftarrow \max\{p,\nicefrac{1}{2}\}$,
$\mathbf{z} \leftarrow 1$,
$ \mathbf{a} \leftarrow \big(\R \ni x \mapsto \ln(1 + e^{x}) \in \R\big) $
in the notation of Theorem~\ref{thm:heat-eq})
establishes items
\eqref{it:heat-eq5-1}--\eqref{it:heat-eq5-2}.
This completes the proof of Corollary~\ref{cor:heat-eq5}.
\end{proof}
\subsubsection*{Acknowledgements}
This project is based on the master thesis of DK written from March 2018 to September 2018 at ETH Zurich under the supervision of AJ.
The authors thank Diyora Salimova for helpful comments.

\bibliographystyle{acm}
\bibliography{references}
\end{document}